\def\a{\alpha}
\def\b{\beta}
\def\c{\gamma}
\def\d{\delta}
\def\e{\varepsilon}
\def\s{\sigma}
\def\t{\tau}
\def\NN{{\mathbb N}}
\def\PP{{\mathbb P}}
\def\ZZ{{\mathbb Z}}
\def\cal{\mathcal}
\def\cC{{\cal C}}
\def\cD{{\cal D}}
\def\cF{{\cal F}}
\def\cG{{\cal G}}
\def\cH{{\cal  H}}
\def\cP{{\cal P}}
\def\cT{{\cal T}}
\def\cV{{\cal V}}
\def\fm{{\mathfrak m}}
\def\Aut{\operatorname{Aut}}
\def\add{\operatorname{add}}
\def\Coker{\operatorname{Coker}}
\def\CM{\operatorname {CM}}
\def\dim{\operatorname{dim}}
\def\Ext{\operatorname {Ext}}
\def\gldim{\operatorname{gldim}}
\def\grmod{\operatorname{grmod}}
\def\GrMod{\operatorname{GrMod}}
\def\GrAut{\operatorname{GrAut}}
\def\H{\operatorname{H}}
\def\Hom{\operatorname {Hom}}
\def\id{\operatorname {id}}
\def\Id{\operatorname{Id}}
\def\ind{\operatorname {ind}}
\def\injdim{\operatorname{injdim}}
\def\Ker{\operatorname {ker}}
\def\lin{\operatorname{lin}}
\def\min{\operatorname{min}}
\def\mod{\operatorname{mod}}
\def\Mod{\operatorname{Mod}}
\def\Obj{\operatorname{Obj}}
\def\Proj{\operatorname{Proj}}
\def\QGr{\operatorname{QGr}}
\def\qgr{\operatorname{qgr}}
\def\rank{\operatorname{rank}}
\def\tors{\operatorname{tors}}
\def\Tors{\operatorname{Tors}}
\def\tails{\operatorname{tails}}
\def\Tails{\operatorname{Tails}}
\def\uHom{\operatorname{\underline{Hom}}}
\def\ugrmod{\underline{\grmod}}
\def\uCM{\underline {\operatorname{CM}}}
\def\<{\langle}
\def\>{\rangle}
\def\NMF{\operatorname{NMF}}
\def\uNMF{\underline{\operatorname{NMF}}}
\def\MF{\operatorname{MF}}
\def\uMF{\underline{\operatorname{MF}}}
\def\TR{\operatorname{TR}}
\def\uTR{\underline{\operatorname{TR}}}
\def\umod{\underline{\operatorname{\mod}}\,}
\def\ugrmod{\underline{\operatorname{\grmod}}\,}
\def\ind{\operatorname{Ind}}
\def\Ker{\operatorname{Ker}}
\def\rnum#1{\expandafter{\romannumeral #1}}
\def\Rnum#1{\uppercase\expandafter{\romannumeral #1}}
\theoremstyle{plain} 
\newtheorem{theorem}{Theorem}[section]
\newtheorem{lemma}[theorem]{Lemma}
\newtheorem{proposition}[theorem]{Proposition}
\theoremstyle{definition}
\newtheorem{definition}[theorem]{Definition}
\newtheorem{example}[theorem]{Example}
\theoremstyle{remark}
\newtheorem{remark}[theorem]{Remark}
\numberwithin{equation}{section}
\begin{document}

\pagenumbering{arabic}

\title[Noncommutative Kn\"orrer's Periodicity Theorem]
{Noncommutative Kn\"orrer's Periodicity Theorem and Noncommutative Quadric Hypersurfaces}

\author{Izuru Mori}

\address{Department of Mathematics,
Faculty of Science,
Shizuoka University,
836 Ohya, Suruga-ku, Shizuoka 422-8529, Japan}
\email{mori.izuru@shizuoka.ac.jp}

\author{Kenta Ueyama}

\address{Department of Mathematics,
Faculty of Education,
Hirosaki University,
1 Bunkyocho, Hirosaki, Aomori 036-8560, Japan}
\email{k-ueyama@hirosaki-u.ac.jp}

\keywords{Kn\"orrer periodicity, noncommutative matrix factorizations, maximal Cohen-Macaulay modules, noncommutative hypersurfaces}

\subjclass[2010]{16G50, 16S38, 16E65, 18E30}

\thanks{The first author was supported by JSPS Grant-in-Aid for Scientific Research (C) 16K05097 and JSPS Grant-in-Aid for Scientific Research (B) 16H03923.
The second author was supported by JSPS Grant-in-Aid for Early-Career Scientists 18K13381.}

\begin{abstract}
Noncommutative hypersurfaces, in particular, noncommutative quadric hypersurfaces are major objects of study in noncommutative algebraic geometry. 
In the commutative case, Kn\"orrer's periodicity theorem is a powerful tool to study Cohen-Macaulay representation theory since it reduces the number of variables in computing the stable category $\uCM(A)$ of maximal Cohen-Macaulay modules over a hypersurface $A$.
In this paper, we prove a noncommutative graded version of Kn\"orrer's periodicity theorem. 
Moreover, we prove another way to reduce the number of variables in computing 
the stable category 
$\uCM^{\ZZ}(A)$ 
of graded maximal Cohen-Macaulay modules 
if $A$ is a noncommutative quadric hypersurface. 
Under high rank property defined in this paper, we also show that computing $\uCM^{\ZZ}(A)$
over a noncommutative smooth quadric hypersurface $A$
in up to six variables can be reduced to one or two variables cases.
In addition, we give a complete classification of $\uCM^{\ZZ}(A)$
over a smooth quadric hypersurface $A$ in a skew $\mathbb P^{n-1}$, where $n \leq 6$, without high rank property using graphical methods.
\end{abstract} 

\maketitle

\section{Introduction} 
Throughout this section, we fix an algebraically closed field $k$ of characteristic not equal to $2$.  Let $S= k[[x_1,\dots,x_n]]$ be the formal power series ring of $n$ variables, and $f \in (x_1,\dots,x_n)^2 \subset S$ a nonzero element.
A matrix factorization of $f$ is a pair $(\Phi, \Psi)$ of $r \times r$ square matrices whose entries are elements in $S$ such that
\[ \Phi\Psi =\Psi\Phi =fI_r. \]
In \cite{Ei}, Eisenbud showed the factor category $\uMF_S(f):= \MF_S(f)/\add\{(1,f), (f,1)\}$ of the category $\MF_S(f)$ of matrix factorizations of $f$ is equivalent to the stable category $\uCM(S/(f))$ of maximal Cohen-Macaulay $S/(f)$-modules
(\cite[Section 6]{Ei}, see also \cite[Theorem 7.4]{Y}).
By this equivalence, we can apply the theory of (reduced) matrix factorizations to the representation theory of Cohen-Macaulay modules (with no free summand) over hypersurfaces.
In \cite{Kn}, Kn\"orrer proved the following famous theorem, which is now called Kn\"orrer's periodicity theorem:

\begin{theorem}[{\cite[Theorem 3.1]{Kn}, see also \cite[Theorem 12.10]{Y}}]
Let $S= k[[x_1,\dots,x_n]]$ and $0 \neq f \in (x_1,\dots,x_n)^2$. Then
\[ \uCM(S/(f)) \cong \uMF_{S}(f) \cong \uMF_{S[[u,v]]}(f+u^2+v^2) \cong \uCM(S[[u,v]]/(f+u^2+v^2)). \]
\end{theorem}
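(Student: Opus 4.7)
The first and third equivalences are instances of Eisenbud's theorem recalled above, so the substance lies in the middle equivalence $\uMF_S(f) \cong \uMF_{S[[u,v]]}(f+u^2+v^2)$. Since $\fchar k \neq 2$ and $k$ is algebraically closed, the change of variables $x = u + \sqrt{-1}\,v$, $y = u - \sqrt{-1}\,v$ identifies $S[[u,v]]$ with $T := S[[x,y]]$ and $u^2+v^2$ with $xy$, so it suffices to prove $\uMF_S(f) \cong \uMF_T(g)$ for $g := f + xy$.

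The plan is to exhibit an explicit functor $F\colon \MF_S(f) \to \MF_T(g)$ defined on objects by
$$F(\Phi,\Psi) = \left( \begin{pmatrix} \Phi & yE_r \\ xE_r & -\Psi \end{pmatrix},\ \begin{pmatrix} \Psi & yE_r \\ xE_r & -\Phi \end{pmatrix} \right),$$
and extended blockwise to morphisms. Direct block multiplication, using $\Phi\Psi = \Psi\Phi = fE_r$, yields $(f+xy)E_{2r}$ in both orders, so $F$ is well defined. Elementary row/column operations that exploit the identity blocks $xE_r$, $yE_r$ send each trivial factorization $(1,f)$ and $(f,1)$ into $\add\{(1,g),(g,1)\}$, so $F$ descends to a functor $\bar F\colon \uMF_S(f) \to \uMF_T(g)$.

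The technical heart is constructing a quasi-inverse $\bar G\colon \uMF_T(g) \to \uMF_S(f)$ and verifying the two natural isomorphisms. The naive ``reduce modulo $y$ and then modulo $x$'' does not work: applied to $F(\Phi,\Psi)$ it returns $(\Phi,\Psi) \oplus (\Psi,\Phi)$, a shifted doubling, so $\bar G$ must be a ``square-root'' of this reduction. My plan is to work on the maximal Cohen--Macaulay side via Eisenbud's equivalence: on an MCM $M$ over $R := T/(g)$, the commuting operators $x, y$ satisfy $xy = -f$ in $R$ and thus equip $M$ with a Clifford-type structure, from which one extracts an MCM over $A := S/(f)$ as a suitable half or eigenspace of $M$. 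The main obstacle will then be $\bar F\bar G \cong \id$, i.e.\ proving that every object of $\uMF_T(g)$ is stably isomorphic to one in the image of $F$; the key input is that $\{x,y\}$ forms a regular sequence on any stably non-free MCM over $R$ (since $R$ is Cohen--Macaulay and $R/(x,y) \cong A$), which ensures that no information is lost in the descent once free summands are absorbed. The opposite direction $\bar G\bar F \cong \id$ is a more straightforward matrix computation after the correct ``half'' is identified.
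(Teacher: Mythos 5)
Your first half is fine and coincides with the standard construction: the change of variables $u\pm\sqrt{-1}\,v$ reducing $u^2+v^2$ to $xy$, and the functor $(\Phi,\Psi)\mapsto\bigl(\begin{smallmatrix}\Phi & y\\ x & -\Psi\end{smallmatrix}\bigr),\bigl(\begin{smallmatrix}\Psi & y\\ x & -\Phi\end{smallmatrix}\bigr)$ are exactly the commutative specialization of the functor $\cH$ used in Section 3 (with $\s=\t=\id$), and the check that it is well defined and kills the trivial factorizations is the same elementary matrix computation done there. The problem is that everything you call ``the technical heart'' is left as a gesture rather than an argument. You never construct $\bar G$: saying that $x,y$ equip an MCM module $M$ over $R=T/(f+xy)$ with ``a Clifford-type structure, from which one extracts an MCM over $S/(f)$ as a suitable half or eigenspace'' does not identify any such half (the operators $x,y$ on $M$ merely commute and satisfy $xy=-f$; there is no eigenspace decomposition to take), and the claimed key input --- that $\{x,y\}$ is a regular sequence on every stably non-free MCM $R$-module --- is true but does not by itself yield essential surjectivity or a quasi-inverse; it is exactly the naive reduction you already observed produces the doubled object $(\Phi,\Psi)\oplus(\Psi,\Phi)$, and nothing in your sketch explains how to take the required ``square root''.

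This is precisely the hard part of Kn\"orrer's theorem, and it needs genuinely different inputs from the ones you list. In \cite{Kn} and \cite[Chapter 12]{Y} density is obtained by passing through the double branched cover $S[[u]]/(f+u^2)$ with its involution $u\mapsto -u$, using $\fchar k\neq 2$ to split the resulting idempotents and the Krull--Schmidt property of $\uCM$ over a complete local ring to cancel the unwanted summands (equivalently, one can argue via the Morita-triviality of the Clifford algebra of $u^2+v^2$ over an algebraically closed field of characteristic not $2$); none of these ingredients appears in your plan. Note also that if you drop the quasi-inverse strategy and instead aim to prove that $\bar F$ is fully faithful and dense, full faithfulness is itself not automatic: in the graded noncommutative version this is the content of Lemma \ref{lem.a1n}, Lemma \ref{lem.35} and Theorem \ref{thm.ff}, proved by an explicit elimination of the off-diagonal blocks, while density is imported from \cite{CKMW}; your proposal supplies neither of these steps, so as it stands the middle equivalence $\uMF_S(f)\cong\uMF_{S[[u,v]]}(f+u^2+v^2)$ is not proved.
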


Kn\"orrer's periodicity theorem plays an essential role in representation theory of Cohen-Macaulay modules over Gorenstein rings.
For example, using Kn\"orrer's periodicity theorem,
we can show that arbitrary dimensional simple singularities are of finite Cohen-Macaulay representation type.
Furthermore, matrix factorizations are related to several areas of mathematics, including singularity categories, Calabi-Yau categories, Khovanov-Rozansky homology, and homological mirror symmetry.

In this paper, to study noncommutative hypersurfaces, which are important objects in noncommutative algebraic geometry,
we present some noncommutative graded versions of Kn\"orrer's periodicity theorem.
The notion of noncommutative (graded) matrix factorizations was introduced in \cite{MU}.
In terms of noncommutative graded matrix factorizations, the category $\uNMF^{\ZZ}_S(f)$ can be defined,
and a noncommutative graded analogue of Eisenbud's result was obtained as follows:

\begin{theorem}[{\cite[Theorem 6.6]{MU}}]
Let $S$ be a noetherian AS-regular algebra and $f\in S$ a regular normal homogeneous element.
Then the category $\uNMF^{\ZZ}_S(f)$ is equivalent to the stable category $\uCM^{\ZZ}(S/(f))$ of maximal graded Cohen-Macaulay $S/(f)$-modules.
\end{theorem}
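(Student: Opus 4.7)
The plan is to exhibit an equivalence by constructing a cokernel functor $\NMF^{\ZZ}_S(f) \to \CM^{\ZZ}(S/(f))$ together with a quasi-inverse built from minimal graded free resolutions, and then descend both to the stable/factor categories. Given a noncommutative graded matrix factorization $(\Phi,\Psi)$ of $f$, assign to it the cokernel $M = \coker \Phi$ of $\Phi \colon P_1 \to P_0$. From the defining identity $\Phi\Psi = fE_r$, the element $f$ annihilates $M$, so $M$ is naturally a graded module over $R := S/(f)$.

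First I would verify that $M$ is maximal Cohen-Macaulay over $R$. From the short exact sequence $0 \to P_1 \xrightarrow{\Phi} P_0 \to M \to 0$ over the noetherian AS-regular algebra $S$ and the noncommutative Auslander-Buchsbaum-type formula, $\depth_S M = \gldim S - \pd_S M = n - 1 = \dim R$, hence $M$ is MCM. The trivial factorizations $(1,f)$ and $(f,1)$ have cokernels $0$ and $R$ respectively, both vanishing in $\uCM^{\ZZ}(R)$, so the functor descends to a functor $\uNMF^{\ZZ}_S(f) \to \uCM^{\ZZ}(R)$ on the factor category. At the level of morphisms, one checks that homotopies between matrix factorizations (i.e.\ morphisms factoring through trivial summands) induce zero maps on cokernels.

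For the quasi-inverse, given $M \in \uCM^{\ZZ}(R)$ without free summand, take a minimal graded free $S$-resolution. Since $S$ is AS-regular of global dimension $n$ and $\depth_S M = \depth_R M = n - 1$, the Auslander-Buchsbaum formula yields $\pd_S M = 1$, so the resolution has the form $0 \to P_1 \xrightarrow{\Phi} P_0 \to M \to 0$ with $P_0, P_1$ finitely generated graded free. A rank argument using that $M$ is $f$-torsion and that $S$ is a graded domain shows $\rank P_0 = \rank P_1$, so $\Phi$ is represented by a square matrix. Since $fM = 0$, multiplication by $f$ on $P_0$ lifts through $\Phi$ to a map $\Psi$ with $\Phi\Psi = fE_r$; injectivity of $\Phi$ then forces the companion identity for $\Psi\Phi$ (appropriately $\sigma$-twisted by the normalizing automorphism of $f$) via the calculation $\Phi(\Psi\Phi - f\text{-twisted term}) = (\Phi\Psi)\Phi - \Phi\cdot(\text{twist}) = 0$, producing a noncommutative graded matrix factorization. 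Minimality of the resolution makes the construction well-defined up to the equivalence in $\uNMF^{\ZZ}_S(f)$.

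The main obstacle is handling the twist by the normalizing automorphism $\sigma$ of $f$ throughout. In the commutative case $\Psi\Phi = fE_r$ is immediate from $\Phi\Psi = fE_r$ by commutativity, but in the graded noncommutative setting one must systematically track $\sigma$-twists on morphism spaces between matrix factorizations, on the shift functors, and on the resulting cokernel modules, and verify that the cokernel functor and the resolution functor are compatible with these twists. The technical heart is to show that the natural transformations $\cok \circ \mathrm{res} \simeq \id$ and $\mathrm{res} \circ \cok \simeq \id$ become isomorphisms precisely after passing to the factor category on one side (killing the trivial factorizations $(1,f)$ and $(f,1)$) and the stable category on the other (killing free summands of MCM modules), with all twists matching on the nose.
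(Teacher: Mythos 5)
First, note that the paper does not actually prove this statement: it is imported verbatim from \cite[Theorem 6.6]{MU} (restated as Theorem \ref{thm.m4}), so your proposal can only be measured against the expected Eisenbud-type argument, which is indeed the strategy you outline (cokernel functor one way, length-one minimal $S$-free resolutions the other way, then pass to stable/factor categories). Within that outline there are two concrete gaps. The first is your rank argument: you invoke that ``$S$ is a graded domain,'' but noetherian AS-regular algebras are not known to be domains in this generality (this is a theorem only in low dimension, e.g. dimension $\leq 3$, and open in general), so as written the step rests on an unavailable hypothesis. It can be repaired without it: once you have $\Phi\Psi=fE_{r_0}$ and the twisted companion identity $\Psi\,\nu_f(\Phi)=fE_{r_1}$ (note the correct form involves $\nu_f(\Phi)$, not $\Phi$ --- this is exactly what makes the sequence with $\phi^{i+2}=\nu_f(\phi^i)$ a noncommutative graded matrix factorization in the sense of the definition), both maps become invertible over the graded localization $S[f^{-1}]$, which is noetherian and hence has invariant basis number, forcing $r_0=r_1$; a Hilbert series comparison also works. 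Relatedly, the injectivity of $\phi^0$ (needed both for the exact sequence $0\to P_1\to P_0\to M\to 0$ and for your ``injectivity forces the companion identity'' step) should be verified; it follows from regularity of $f$ via $\Phi^1\nu_f(\Phi^0)=fE$.

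The second and more serious gap is full faithfulness of $\underline{\Coker}:\uNMF^{\ZZ}_S(f)\to\uCM^{\ZZ}(S/(f))$, which is the technical heart of the theorem and is only gestured at in your last paragraph. What you actually check is the easy direction: trivial factorizations have stably trivial cokernels, and morphisms factoring through them die on cokernels. What remains is (i) fullness: every graded $A$-homomorphism $\Coker\phi\to\Coker\psi$ lifts to a morphism of matrix factorizations, where after lifting $\mu^0,\mu^1$ through the presentations one must still verify the second compatibility $\mu^1\phi^1=\psi^1\nu_f(\mu^0)$ (compose with the injective $\psi^0$ and use $\Phi\Psi=fE$); and (ii) faithfulness: if $\Coker\mu$ factors through a free $A$-module, then $\mu$ factors through objects of $\cG$, i.e. through sums of $\phi_F$ and ${}_G\phi$ --- this is the step where the $\nu_f$-twists genuinely intervene and where the commutative argument of Eisenbud/Yoshino needs real adaptation. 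Saying that ``the natural transformations become isomorphisms after passing to the factor category, with all twists matching on the nose'' names the problem rather than solving it, so as it stands the proposal establishes essential surjectivity (modulo the rank fix above) but not the equivalence.
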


This paper is organized as follows:  In Section 2, we collect some definitions and preliminary results needed in this paper. 

In Section 3, we prove the following result, which is a natural noncommutative graded analogue of Kn\"orrer's periodicity theorem:

\begin{theorem}[{Theorem \ref{thm.nkp}}] \label{thm.inkp} 
Let $S$ be a noetherian AS-regular algebra and $f\in S$ a regular normal homogeneous element of even degree.
If there exists a graded algebra automorphism $\s$ of $S$ such that 
$\s(f)=f$ and
$af=f\s^2(a)$ for every $a\in S$ (eg. if $f$ is central and $\s=\id$), then 
$$\uCM^{\ZZ}(S/(f)) \cong \uNMF_S^{\ZZ}(f) \cong \uNMF_{S[u; \s][v; \s]}^{\ZZ}(f+u^2+v^2) \cong \uCM^{\ZZ}(S[u; \s][v; \s]/(f+u^2+v^2))$$
where $S[u; \s][v; \s]$ is the Ore extension of $S$ by $\s$ with $\deg u=\deg v=\frac{1}{2}\deg f$. 
\end{theorem}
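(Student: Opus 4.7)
The first and last equivalences are direct instances of the graded Eisenbud equivalence of \cite{MU} (recalled above), applied to $(S,f)$ and $(T, f+u^2+v^2)$ respectively, where $T := S[u;\s][v;\s]$. To apply the second instance I verify that $T$ is noetherian AS-regular and that $f+u^2+v^2$ is a regular normal homogeneous element of $T$. Noetherianity and AS-regularity are preserved under iterated Ore extension by a graded automorphism; I extend $\s$ to $T$ by fixing $u$ and $v$, so that $u$ and $v$ commute with each other and (using $\s(f)=f$) with $f$. The element $f+u^2+v^2$ is homogeneous of degree $\deg f$. Combining the Ore relations $a u^2 = u^2 \s^2(a)$, $a v^2 = v^2 \s^2(a)$ with the hypothesis $af = f\s^2(a)$ gives $a(f+u^2+v^2) = (f+u^2+v^2)\s^2(a)$ for $a\in S$, while $u$ and $v$ commute with $f+u^2+v^2$; thus $f+u^2+v^2$ is normal with normalizing automorphism extending $\s^2$. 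Regularity follows by adjoining $u^2$ then $v^2$ as successive normal non-zero-divisors modulo $f$.

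It remains to prove the middle equivalence $\uNMF_S^{\ZZ}(f) \cong \uNMF_T^{\ZZ}(f+u^2+v^2)$. As a first step I reduce to the hyperbolic form $f+uv$. Since $\fchar k \ne 2$ and $k$ is algebraically closed, pick $i\in k$ with $i^2=-1$ and define a graded algebra automorphism $\phi$ of $T$ fixing $S$ pointwise and sending $u\mapsto u+iv$, $v\mapsto u-iv$. Because $u, v$ commute in $T$ and both satisfy $aw = w\s(a)$ for $a\in S$ and $w\in\{u,v\}$, $\phi$ is well-defined, and $\phi(f+uv) = f + (u+iv)(u-iv) = f+u^2+v^2$. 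This yields an algebra isomorphism $T/(f+uv) \cong T/(f+u^2+v^2)$, hence an equivalence $\uNMF_T^{\ZZ}(f+uv) \cong \uNMF_T^{\ZZ}(f+u^2+v^2)$.

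The heart of the theorem is therefore an equivalence $F\colon \uNMF_S^{\ZZ}(f) \xrightarrow{\sim} \uNMF_T^{\ZZ}(f+uv)$, which I would construct by a twisted block-matrix construction: given an NMF $(\Phi,\Psi)$ of $f$ of size $r$, set
\[ F(\Phi, \Psi) := \left(\begin{pmatrix} \Phi & -vI_r \\ uI_r & \s^{-1}(\Psi) \end{pmatrix},\ \begin{pmatrix} \Psi & vI_r \\ -uI_r & \s(\Phi) \end{pmatrix}\right). \]
A direct calculation, using the Ore relations $au = u\s(a)$, $av = v\s(a)$, the hypothesis $\s(f)=f$, and the NMF identities $\Phi\Psi = fI_r$, $\Psi\s^2(\Phi) = fI_r$, confirms that this is indeed an NMF of $f+uv$ with the induced normalization. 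One then checks that $F$ sends trivial factorizations to stably trivial ones, respects morphisms, and is fully faithful on the stable quotients. The main obstacle is essential surjectivity. For this I pass through the Eisenbud equivalence to the MCM side and construct a quasi-inverse: given a graded MCM module $\tilde M$ over $T/(f+uv)$, the pair $(u,v)$ is a regular sequence modulo $f+uv$ with $T/(u,v,f+uv) \cong S/(f)$, so a depth argument yields that $\tilde M/(u,v)\tilde M$ is a graded MCM $S/(f)$-module, corresponding via Eisenbud to an NMF $(\Phi, \Psi)$ of $f$. The final step---reconstructing $\tilde M$ from $\tilde M/(u,v)\tilde M$ via the $(u,v)$-adic filtration and matching it with $F(\Phi,\Psi)$ up to stable isomorphism---is the technical heart of the proof; keeping all $\s$-twists consistent throughout this reconstruction is precisely where the hypothesis $af = f\s^2(a)$ is essentially used.
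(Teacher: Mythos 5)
Your overall skeleton matches the paper's: both outer equivalences come from the Eisenbud-type theorem of \cite{MU} (Theorem \ref{thm.m4}), the passage from $f+uv$ to $f+u^2+v^2$ is the same change of variables $u\mapsto u+\sqrt{-1}v$, $v\mapsto u-\sqrt{-1}v$, and your block-matrix functor is (up to bookkeeping of twists and signs) the paper's functor $\cH$; your verification that $f+u^2+v^2$ is regular normal in $S[u;\s][v;\s]$ with normalizing automorphism extending $\s^2$ is also how the paper proceeds (Lemma \ref{lem.rn}). However, there are two genuine gaps where your proposal either hand-waves or goes down a path that does not work as stated.

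First, full faithfulness of the induced functor on stable categories is not something one ``then checks''; it is the technical core of the paper's Section 3. The paper needs a delicate matrix-reduction lemma (Lemma \ref{lem.a1n}, producing the off-diagonal cancellation from the single hypothesis $\a^1|_{u=v=0}=0$ using the relations forced by being a morphism of factorizations) together with an $\Ext^1$-characterization of stably trivial morphisms via the factorization $e_S(\mu)$ (Lemma \ref{lem.35}), and only then deduces faithfulness and fullness (Theorem \ref{thm.ff}). Nothing in your outline indicates how you would carry this out, and it is not a routine verification.

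Second, and more seriously, your essential-surjectivity argument is flawed. You propose to take a graded MCM module $\tilde M$ over $T/(f+uv)$, pass to $\tilde M/(u,v)\tilde M$ over $S/(f)$, and then ``reconstruct $\tilde M$'' from this reduction, matching it with $F(\Phi,\Psi)$. But $\tilde M$ is not recoverable from $\tilde M/(u,v)\tilde M$, and $F$ applied to the reduction is not stably isomorphic to $\tilde M$: already in the commutative test case $S=k[x]$, $f=x^2$, the Kn\"orrer image $\tilde M=\Coker\begin{pmatrix} x & u+\sqrt{-1}v \\ u-\sqrt{-1}v & -x\end{pmatrix}$ reduces to $k\oplus k$ over $k[x]/(x^2)$, and applying the functor to that reduction returns $\tilde M\oplus\Omega\tilde M$, not $\tilde M$. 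So the best such an argument can give is that every $\tilde M$ is a direct summand of an object in the essential image; to conclude density one then needs the essential image of a fully faithful functor to be closed under summands, i.e.\ a Krull--Schmidt/idempotent-completeness argument for $\uCM^{\ZZ}$ and $\uNMF^{\ZZ}$. This is exactly the delicate point the paper does not redo: it imports density from \cite[Theorem 5.11]{CKMW} via the identification of noncommutative with twisted matrix factorizations \cite[Proposition 4.7]{MU}, and Remark \ref{rem.cex} aside, the paper's Remark after Theorem \ref{thm.nkp} explicitly flags that the graded (``local and complete'') setting is what makes this Krull--Schmidt step available. As written, your reconstruction step would fail, and even repaired it would still owe the reader the summand-splitting argument (plus the noncommutative depth lemma guaranteeing that $u,v$ act regularly on $\tilde M$ and that the reduction is MCM without spurious free summands).
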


Since $f$ is regular normal, the category of noncommutative matrix factorizations of $f$ is equivalent to
the category of twisted matrix factorizations of $f$ by \cite[Proposition 4.7]{MU},
so the above theorem gives a categorical statement of \cite[Theorem 1.7]{CKMW}. A special case of the above theorem was recently proved in \cite [Theorem 8.2]{HY}. 

In Section 4, we focus on  
noncommutative quadric hypersurfaces. 
A homogeneous coordinate ring of a quadric hypersurface in a quantum $\PP^{n-1}$ is defined by $A=S/(f)$ where $S$ is an $n$-dimensional quantum polynomial algebra and $f\in S_2$ is a regular normal element \cite {SV}.   Kn\"orrer's periodicity theorem is a powerful tool to compute $\uCM^{\ZZ}(A)$ since it reduces the number of variables.  If $f$ is a central element, then there is another way to reduce the number of variables, which applies only in the noncommutative setting.   

\begin{theorem}[{Theorem \ref{thm.ca2}}]
\label{thm.ttpr} 
If $A=S/(f)$ is a homogeneous coordinate ring of a quadric hypersurface in a quantum $\PP^{n-1}$ where $f\in S_2$ is a regular central element, 
then 
$$\uCM^{\ZZ}(S[u; -1][v;-1]/(f+u^2+v^2))\cong \uCM^{\ZZ}(S[u;-1]/(f+u^2))\times \uCM^{\ZZ}(S[v;-1]/(f+v^2))$$
where $-1$ denotes a graded algebra automorphism defined by $a\mapsto (-1)^{\deg a}a$ for a homogeneous element $a$.    
\end{theorem}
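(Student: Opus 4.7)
The plan is to recast all three categories in the claim as categories of noncommutative graded matrix factorizations and then construct two Kn\"orrer-type block embeddings whose direct sum realizes the equivalence.

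Since $f \in S_2$ is central in $S$, the graded automorphism $\s$ of $S$ defined by $\s(a) = (-1)^{\deg a} a$ satisfies $\s(f) = f$ and $\s^2 = \id$; hence $af = f\s^2(a)$ for every $a \in S$, and each of $f+u^2$, $f+v^2$, $f+u^2+v^2$ is a regular central homogeneous element in the relevant Ore extension. By the noncommutative analogue of Eisenbud's equivalence recalled in the introduction,
\begin{align*}
\uCM^{\ZZ}(B) &\cong \uNMF^{\ZZ}_{S[u;-1][v;-1]}(f+u^2+v^2),\\
\uCM^{\ZZ}(A_1) &\cong \uNMF^{\ZZ}_{S[u;-1]}(f+u^2),\\
\uCM^{\ZZ}(A_2) &\cong \uNMF^{\ZZ}_{S[v;-1]}(f+v^2),
\end{align*}
so it suffices to establish the product decomposition at the level of matrix factorizations.

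I would then define a functor
$F_1 : \uNMF^{\ZZ}_{S[u;-1]}(f+u^2) \to \uNMF^{\ZZ}_{S[u;-1][v;-1]}(f+u^2+v^2)$
by the block rule
$$F_1(\Phi,\Psi) := \left(\begin{pmatrix}\Phi & vI\\ vI & -\Psi^{\s}\end{pmatrix},\ \begin{pmatrix}\Psi & vI\\ vI & -\Phi^{\s}\end{pmatrix}\right),$$
and $F_2$ symmetrically, swapping the roles of $u$ and $v$. A direct check, using $va = \s(a)v$, $\s(f+u^2)=f+u^2$, and $\s^2=\id$, shows that the product of either pair of blocks equals $(f+u^2+v^2)\,I$, so each $F_i$ is well-defined. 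To conclude that $F := F_1 \oplus F_2$ is an equivalence, I would verify: (i) each $F_i$ is fully faithful, via a block-matrix computation of $\Hom$'s modulo null-homotopy in the stable category; (ii) the images of $F_1$ and $F_2$ are mutually orthogonal, where the anticommutation $uv+vu=0$ in $S[u;-1][v;-1]$ furnishes explicit null-homotopies for any morphism between an $F_1$-image and an $F_2$-image; and (iii) $F$ is essentially surjective.

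The main obstacle is (iii): decomposing an arbitrary noncommutative matrix factorization of $f+u^2+v^2$ as a stable direct sum of an $F_1$-image and an $F_2$-image. My plan is to exploit the two normal regular hyperplane sections $B/(u)=A_2$ and $B/(v)=A_1$: for $M \in \uCM^{\ZZ}(B)$, the quotients $M/uM$ and $M/vM$ become maximal Cohen--Macaulay modules over $A_2$ and $A_1$ respectively, and after comparing $B$-syzygies one aims to exhibit $M$ as the desired direct sum. An alternative route is to invoke a Smith--Van den Bergh-type correspondence for noncommutative quadric hypersurfaces, thereby reducing to an algebraic identity $C(B) \cong C(A_1) \times C(A_2)$ for the associated finite-dimensional Clifford-type algebras; this in turn would follow from a direct computation with the anticommuting pair $u,v$ satisfying $u^2+v^2=-f$ in characteristic $\neq 2$.
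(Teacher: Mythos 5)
Your primary route breaks down before the step you flag as the main obstacle: the block functors $F_i$ are not fully faithful, and $F_1\oplus F_2$ cannot be an equivalence. Test the smallest case $S=k[x]$, $f=x^2$, so that $S[u;-1][v;-1]=k_{-1}[x,u,v]$ and $x,u,v$ pairwise anticommute. Then $(x+u,\,x+u)$ is a rank-one (hence indecomposable, non-trivial) graded matrix factorization of $x^2+u^2$, and $F_1$ sends it to the factorization whose blocks are both $\bigl(\begin{smallmatrix} x+u & v\\ v & x+u\end{smallmatrix}\bigr)$; conjugating by the constant invertible matrix $\bigl(\begin{smallmatrix}1&1\\1&-1\end{smallmatrix}\bigr)$ (allowed since $\fchar k\neq 2$) turns this into $\bigl(\begin{smallmatrix} x+u+v & 0\\ 0 & x+u-v\end{smallmatrix}\bigr)$. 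Thus the image of an indecomposable object is the direct sum of the two non-isomorphic rank-one factorizations $x+u\pm v$ of $f+u^2+v^2$ (cf. Example \ref{ex.smrd}): its stable endomorphism ring is $k\times k$ while $\uEnd(\Coker(x+u))\cong k$, so $F_1$ is not fully faithful, and $F_1\oplus F_2$ sends an indecomposable to a decomposable, hence is not an equivalence. The same computation shows (ii) fails as well: $\Coker(x+u+v)$ occurs as a summand of objects in the image of $F_1$ and of $F_2$, so the two images are not orthogonal. Adjoining a single square $v^2$ is a ``one-variable Kn\"orrer'' map, and such maps are not equivalences; the product decomposition in the theorem is not induced by block functors of this kind.

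Your fallback is, in outline, the paper's actual argument, but as written it omits the real work and locates the computation in the wrong algebra. The paper deduces $\uCM^{\ZZ}(A^{\dagger\dagger})\cong \uCM^{\ZZ}(A^{\dagger})\times\uCM^{\ZZ}(A^{\dagger})$ from the duality $\uCM^{\ZZ}(-)\simeq \cD^b(\mod C(-))^{\op}$ of Lemma \ref{lem.C(A)}(3) together with the isomorphism $C(A^{\dagger\dagger})\cong C(A^{\dagger})^{\times 2}$ of Theorem \ref{thm.nqh2}(3). The latter is not ``a direct computation with the anticommuting pair $u,v$ satisfying $u^2+v^2=-f$'' in $B$: it takes place in the Koszul dual, where one must first prove $(A^{\dagger})^!\cong A^![u]/(u^2-w)$ and $(A^{\dagger\dagger})^!\cong A^![u,v]/(u^2-w,\,v^2-w)$ — there the dual variables $u,v$ are \emph{central} square roots of the Clifford-type element $w$, and the identification itself requires a regularity and Hilbert-series argument — and then localize at $u$ (equivalently at $w$, using Lemmas \ref{lem.nqh1} and \ref{lem.loce}) to get $C(A^{\dagger\dagger})\cong C(A^{\dagger})[t]/(t^2-1)\cong C(A^{\dagger})^{\times 2}$, where $\fchar k\neq 2$ splits $k[t]/(t^2-1)$. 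If you pursue this route you must supply these Koszul-dual computations; as it stands, the proposal asserts rather than proves the key isomorphism, and its primary construction is genuinely false.
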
 

In Section 5, we focus on noncommutative smooth quadric hypersurfaces.
It is well-known that $A$ is the homogeneous coordinate ring of a (commutative) smooth quadric hypersurface in $\PP^{n-1}$ if and only if
$A\cong k[x_1, \dots, x_n]/(x_1^2+\cdots +x_n^2)$.  Applying the graded Kn\"orrer's periodicity theorem to the commutative case, we have 
$$\uCM^{\ZZ}(A)\cong \begin{cases} \uCM^{\ZZ}(k[x_1]/(x_1^2)) & \text { if $n$ is odd,} \\
\uCM^{\ZZ}(k[x_1, x_2]/(x_1^2+x_2^2)) & \text { if $n$ is even.}\end{cases}$$
We prove a noncommutative analogue of this result up to 6 variables under  high rank property defined in this paper, which is an extension of the notion of irreducibility of $f$.  

\begin{theorem}[{Theorem \ref{thm.sKn1}}]\label{thm.isKn1}
Let $A=S/(f)$ be a homogeneous coordinate ring of a smooth high rank quadric hypersurface in a quantum $\PP^{n-1}$, where $n \leq 6$. Then
$$\uCM^{\ZZ}(A)\cong \begin{cases} \uCM^{\ZZ}(k[x_1]/(x_1^2)) & \text { if $n$ is odd,} \\
\uCM^{\ZZ}(k[x_1, x_2]/(x_1^2+x_2^2)) & \text { if $n$ is even.}\end{cases}$$
\end{theorem}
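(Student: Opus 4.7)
The plan is to apply the noncommutative graded Kn\"orrer's periodicity theorem (\thmref{thm.inkp}) iteratively, peeling off two variables at each step, until reaching the base case dictated by the parity of $n$: namely $n=1$ if $n$ is odd and $n=2$ if $n$ is even.

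First I would dispose of the two base cases directly. For $n=1$, $S = k[x_1]$ and $f \in S_2$ must be a nonzero scalar multiple of $x_1^2$, so $A \cong k[x_1]/(x_1^2)$ and (1) holds trivially. For $n=2$, $S$ is a two-dimensional quantum polynomial algebra and $f \in S_2$ is a regular normal element defining a smooth high rank quadric. Here the claim is that, regardless of the specific pair $(S,f)$ meeting the hypotheses, $\uCM^{\ZZ}(A)$ is equivalent to that of the classical node $k[x_1,x_2]/(x_1^2+x_2^2)$. I would settle this by a direct classification of the admissible pairs $(S,f)$ and an explicit matrix factorization computation of the indecomposable graded maximal Cohen-Macaulay modules.

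For the inductive step, the crucial structural claim is the following: under the high rank property, one can choose $u,v \in S_1$ and a subalgebra $T \subset S$ of dimension $n-2$ (itself a quantum polynomial algebra), together with a graded automorphism $\s$ of $T$, such that $S \cong T[u;\s][v;\s]$ as graded algebras with $\deg u = \deg v = 1$, and $f = g + u^2 + v^2$ for some regular normal $g \in T_2$ satisfying $\s(g) = g$ and $ag = g\s^2(a)$ for all $a \in T$. Once this is secured, \thmref{thm.inkp} immediately delivers
\[ \uCM^{\ZZ}(S/(f)) \cong \uCM^{\ZZ}(T/(g)), \]
and, provided $T/(g)$ remains a smooth high rank quadric hypersurface in a quantum $\PP^{n-3}$, the argument can be iterated: odd $n \in \{3,5\}$ terminates at $n=1$ giving (1), while even $n \in \{4,6\}$ terminates at $n=2$ giving (2).

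The main obstacle is producing the decomposition $S \cong T[u;\s][v;\s]$ simultaneously with the special form $f = g + u^2 + v^2$. Commutatively, any nondegenerate quadratic form is diagonalizable, so extracting two squared terms is immediate; in the noncommutative setting, admissible coordinate changes are rigidly constrained by the relations of $S$, and the existence of $(u,v,T,\s)$ with all required compatibilities is genuinely nontrivial. The high rank property is tailored precisely to guarantee sufficient squared-term content in $f$ to make this extraction possible. A secondary obligation is to verify that smoothness and the high rank property descend to $T/(g)$ whenever a further reduction is needed (as for $n=5,6$); the ceiling of six variables in the statement is almost certainly a reflection of how quickly these structural constraints become unwieldy beyond $n=6$.
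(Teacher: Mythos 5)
Your strategy hinges on an unproved structural claim that is the whole difficulty: that the high rank property lets you write $S \cong T[u;\s][v;\s]$ with $\deg u=\deg v=1$, $T$ a quantum polynomial algebra in $n-2$ variables, and $f=g+u^2+v^2$ with $g\in T_2$ regular normal and $\s^2=\nu_g$, $\s(g)=g$. Nothing of the sort follows from the definition of rank: $\rank f\geq \lceil n/2\rceil$ is only a \emph{lower} bound on the length of expressions $f=\sum u_iv_i$, and it carries no information about $S$ itself being an iterated Ore extension of a smaller quantum polynomial algebra, nor about $f$ acquiring the split form $g+u^2+v^2$ in admissible coordinates. A general quantum polynomial algebra (already for $n=3$) need not contain any degree-one element $u$ generating an Ore-type extension over an $(n-2)$-variable subalgebra, so the "extraction" step has no candidate to extract; and even in the very special family of $(\pm 1)$-skew polynomial algebras with $f=x_1^2+\cdots+x_n^2$ treated in Section 6, the pure Kn\"orrer reduction (Lemma \ref{lem.R2}) only applies when the associated graph has an isolated segment, and the classification there needs the additional tools of mutation (Zhang twist), relative mutation, and the two points reduction coming from Theorem \ref{thm.ca2} — strong evidence that an induction using Theorem \ref{thm.nkp} alone cannot reach the stated generality. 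You would also still owe the descent of smoothness and high rank to $T/(g)$, and a classification of the $n=2$ base case, neither of which is sketched.

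The paper's proof is entirely different and avoids any structural decomposition of $(S,f)$. It works with the finite-dimensional algebra $C(A)=A^![w^{-1}]_0$: by Lemma \ref{lem.C(A)} one has a duality $\uCM^{\ZZ}(A)\to \cD^b(\mod C(A))$ and $\dim_k C(A)=2^{n-1}$; smoothness forces $C(A)$ to be semisimple (Theorem \ref{thm.smo}); the high rank hypothesis enters only through Lemma \ref{lem.rank}, which says every nonzero $C(A)$-module has dimension at least $\rank f$ (proved via reduced noncommutative matrix factorizations, Lemma \ref{lem.pfrank}); and then the purely numerical Lemma \ref{lem.lambda} pins down $C(A)$ as $M_{2^{(n-1)/2}}(k)$ or $M_{2^{(n-2)/2}}(k)^{\times 2}$, whence $\uCM^{\ZZ}(A)\cong \cD^b(\mod k)$ or $\cD^b(\mod (k\times k))$ by Morita equivalence. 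In particular the bound $n\leq 6$ is not a matter of coordinate changes becoming "unwieldy": it is exactly where the linear quantity $\rank f$ stops exceeding the exponential threshold $2^{(n-3)/2}$ (resp.\ $2^{(n-4)/2}$), as the remark after Theorem \ref{thm.sKn1} about $n=7$ makes explicit. To repair your argument you would have to prove the Ore-splitting claim, which is far stronger than the theorem itself; as it stands the proposal has a genuine gap at its central step.
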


In Section 6, we show that the above theorem fails without ``high rank property'' by considering a simple example $A=A_\e := S_\e/(f_e)$
where $S_\e =k\<x_1, \dots, x_n\>/(x_ix_j-\e_{ij}x_jx_i)$ is a skew polynomial algebra and $f_\e = x_1^2+\cdots +x_n^2 \in S_\e$.  We use graphical methods.  To do this, we associate to each skew polynomial algebra $S_{\e}$ a certain graph $G_{\e}$.  We introduce in this paper four graphical operations, called mutation, relative mutation, Kn\"orrer reduction, and two point reduction for $G_{\e}$, and show that they are very powerful in computing $\uCM^{\ZZ}(A_{\e})$ as in the following lemma (Kn\"orrer reduction is a consequence of Theorem \ref{thm.inkp} and two point reduction is a consequence of Theorem \ref{thm.ttpr}): 

\begin{lemma} Let $G_{\e}, G_{\e'}$ be graphs associated to skew polynomial algebras $S_{\e}, S_{\e'}$.  
\begin{enumerate}
\item{} If $G_{\e'}$ is obtained from $G_{\e}$ by mutation, then $\uCM^{\ZZ}(A_{\e})\cong \uCM^{\ZZ}(A_{\e'})$ (Lemma \ref{lem.ramu}). 
\item{} If $G_{\e'}$ is obtained from $G_{\e}$ by relative mutation, then $\uCM^{\ZZ}(A_{\e})\cong \uCM^{\ZZ}(A_{\e'})$ (Lemma \ref{lem.M2}). 
\item{} If $G_{\e'}$ is obtained from $G_{\e}$ by Kn\"orrer reduction, then $\uCM^{\ZZ}(A_{\e})\cong \uCM^{\ZZ}(A_{\e'})$ (Lemma \ref{lem.R2}). 
\item{} If $G_{\e'}$ is obtained from $G_{\e}$ by two point reduction, then $\uCM^{\ZZ}(A_{\e})\cong \uCM^{\ZZ}(A_{\e'})\times \uCM^{\ZZ}(A_{\e'})$ (Lemma \ref{lem.R1}). 
\end{enumerate}
\end{lemma}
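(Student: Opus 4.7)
The lemma splits into four parts that fall naturally into two groupings: parts (3) and (4) are essentially translations of the theorems proved earlier in the paper, while parts (1) and (2) require constructing explicit algebra isomorphisms out of the graphical data.

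For part (3), I plan to identify the two removed vertices with a pair of generators $u, v \in S_\e$ whose commutation relations with the remaining generators are governed by a single graded automorphism $\sigma$ with $\sigma^2 = \id$, so that $S_\e \cong S'[u;\sigma][v;\sigma]$ and the quadric decomposes as $f_\e = f' + u^2 + v^2$ with $f' \in S'$ of degree $2$. The hypotheses of \thmref{thm.inkp} are then satisfied (in particular $\sigma(f') = f'$ because $\sigma$ permutes the remaining generators up to sign and $f'$ is the sum of their squares), and the theorem yields $\uCM^{\ZZ}(A_\e) \cong \uCM^{\ZZ}(A_{\e'})$. For part (4), the plan is similar in spirit: identify the two points with generators $u, v$ of $S_\e$ that anticommute with each other and with every other generator, so that $S_\e \cong S[u;-1][v;-1]$ and $f_\e = f + u^2 + v^2$ with $f \in S$ central; an application of \thmref{thm.ttpr} then delivers the claimed product decomposition.

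For parts (1) and (2), the plan is to realize each graphical edge-toggling by an explicit change of generators of $S_\e$ that preserves the quadric $f_\e$. For mutation at a vertex $i$, I would leave $x_j$ unchanged for $j \neq i$ and replace $x_i$ by $y_i := m \cdot x_i$, where $m$ is a carefully chosen monomial in the remaining generators; because any monomial in a $(\pm 1)$-skew polynomial algebra is normal with commutation scalars in $\{\pm 1\}$, the sign pattern of $y_i$ against each $x_j$ can be arranged to match $\e'_{ij}$, and the scalar $m^2$ can be absorbed (using that the characteristic is not $2$) so that $y_i^2 = x_i^2$ and hence $f_\e$ is unchanged. Relative mutation follows the same strategy, but with the normalizer $m$ constructed inside a distinguished subalgebra so that the transformation toggles only a prescribed subset of edges; the resulting generators satisfy the relations of $S_{\e'}$, and the map descends to an isomorphism $A_\e \cong A_{\e'}$, hence to $\uCM^{\ZZ}(A_\e) \cong \uCM^{\ZZ}(A_{\e'})$. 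The main obstacle I anticipate is part (2): one must locate a normalizing monomial inside a proper subalgebra with precisely the commutation profile required to flip the prescribed edges while leaving the complementary relations and the squared terms $x_j^2$ making up $f_\e$ undisturbed. Once such an $m$ is in hand, the remaining verification is a direct reading of the defining relations of $S_{\e'}$ from the commutation scalars of the new generators.
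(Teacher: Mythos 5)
Your parts (3) and (4) are essentially correct and essentially the paper's own route: an isolated segment $[i,j]$ means $x_i,x_j$ commute with each other and anticommute with the rest, so $S_{\e}\cong S'[u;\s][v;\s]$ with $\s=-1$ and Theorem \ref{thm.nkp} applies (the paper instead first mutates at the two vertices to make $u,v$ central and then quotes Theorem \ref{thm.nkp2}; both are the same theorem in the end), and two isolated vertices give $S_{\e}\cong S[u;-1][v;-1]$ with $f$ central, to which Theorem \ref{thm.ca2} applies exactly as you say.

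The genuine gap is in parts (1) and (2), where you propose to realize mutation and relative mutation by a change of generators, i.e.\ by a graded algebra isomorphism $A_{\e}\cong A_{\e'}$. No such isomorphism exists in general: already for the complete graph, mutation turns $k[x_1,\dots,x_n]$ into an algebra with anticommuting pairs, which is not isomorphic to it. Concretely, your $y_i=m\cdot x_i$ is not of degree $1$ (so it cannot be a generator matching $S_{\e'}$ in the graded sense), its square is $\pm m^2x_i^2\neq x_i^2$ ($m^2$ is a monomial, not a scalar, so $f_{\e}$ is not preserved), and flipping \emph{all} signs at the vertex $i$ would require a monomial $m$ in the remaining variables that anticommutes with every one of them, which need not exist (in the commutative case every monomial is central). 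The paper's proof of (1) is not an isomorphism but a Zhang twist: $\theta_{\{i\}}\in\GrAut(S_{\e};f_{\e})$ satisfies $(S_{\e})^{\theta_{\{i\}}}=S_{\e'}$, so Lemma \ref{lem.ztw}, Lemma \ref{lem.I} and Lemma \ref{lem.eq2} give $\GrMod S_{\e}\cong\GrMod S_{\e'}$ and $\NMF^{\ZZ}_{S_{\e}}(f_{\e})\cong\NMF^{\ZZ}_{S_{\e'}}(f_{\e'})$, and $\uCM^{\ZZ}(A_{\e})\cong\uCM^{\ZZ}(A_{\e'})$ follows (equivalently, $C(A_{\e})\cong C(A_{\e'})$ because the triple products $\e_{ij}\e_{jk}\e_{ki}$ are mutation invariants). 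For (2) your strategy is unrescuable even in principle: relative mutation does not preserve $\GrMod S_{\e}$ or the point scheme --- Example \ref{ex.M2}(1) connects graph (3) and graph (5) of the $n=5$ classification, whose point schemes are non-isomorphic --- so neither an algebra isomorphism nor a twist can induce the equivalence; moreover Lemma \ref{lem.M2} requires, and its proof uses, the hypothesis (absent from your sketch) that some vertex is isolated, i.e.\ $\e_{kn}=-1$ for all $k$. The paper instead works one level down: using Lemma \ref{lem.ec}(2) it constructs an explicit isomorphism $C(A_{\e})\cong C(A_{\e'})$ by sending the last Clifford-type generator to the product $t_{n-1}t_{n-2}$, and then applies Lemma \ref{lem.C(A)}(3). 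Any correct proof of (1) and (2) has to go through such categorical invariants ($\GrMod$, $\NMF^{\ZZ}$, or $C(A_{\e})$), not through a change of generators of $S_{\e}$.
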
 

By using these four graphical operations, we will give a complete classification of $\uCM^{\ZZ}(A_\e)$ up to $n=6$ (Section \ref{ss.class}).

\section{Preliminaries} 

\subsection{Terminology and Notation}

Throughout this paper, we fix a field $k$. 
Unless otherwise stated, an \emph{algebra} means an algebra over $k$, and a \emph{graded ring} means a $\ZZ$-graded ring.

For a ring $A$, we denote by $\Mod A$ the category of right $A$-modules,
and by $\mod A$ the full subcategory consisting of finitely generated modules.
We denote by $A^o$ the opposite ring of $A$.

For a graded ring $A =\bigoplus_{i\in\ZZ} A_i$,
we denote by $\GrMod A$ the category of graded right $A$-modules,
and by $\grmod A$ the full subcategory consisting of finitely generated modules.
Morphisms in $\GrMod A$ are right $A$-module homomorphisms preserving degrees.
For $M \in \GrMod A$ and $n \in \ZZ$, we define the truncation $M_{\geq n} := \bigoplus_{i\geq n} M_i$ and the shift $M(n) \in \GrMod A$, which has the same underlying module structure as $M$, but which satisfies $M(n)_i = M_{n+i}$. 

Let $\cC$ be an additive category and $\cP$ a set of objects of $\cC$ closed under direct sums.
Then the \emph{factor category} $\cC/\cP$ has $\Obj(\cC/\cP) =\Obj(\cC)$ and $\Hom_{\cC/\cP}(M, N) = \Hom_{\cC}(M, N)/\cP(M,N)$ for $M, N\in \Obj(\cC/\cP)=\Obj(\cC)$,
where $\cP(M,N)$ is the subgroup consisting of all morphisms from $M$ to $N$ that factor through objects in $\cP$. Note that $\cC/\cP$ is also an additive category.

For a ring $A$, the \emph{stable category} of $\mod A$ is defined by $\umod A:= \mod A/\cP$ where $\cP:=\{P\in \mod A\mid P \; \textnormal{is projective}\}$.
The set of homomorphisms in $\umod A$ is denoted by $\uHom_A(M, N):=\Hom_A(M, N)/\cP(M, N)$ for $M, N\in \mod A$.
(For a graded ring $A$,  the \emph{stable category} of $\grmod A$ is defined by $\ugrmod A:= \grmod A/\cP$ where $\cP:=\{P\in \grmod A \mid P \; \textnormal{is projective}\}$.)

Let $A$ be a (graded) ring. A (graded) right $A$-module $M$ is called \emph{totally reflexive} if
\begin{enumerate}
\item{} $\Ext^i_A(M, A)=0$ for all $i\geq 1$, 
\item{} $\Ext^i_{A^o}(\Hom_A(M, A), A)=0$ for all $i\geq 1$, and 
\item{} the natural biduality map $M\to \Hom_{A^o}(\Hom_A(M, A), A)$ is an isomorphism.
\end{enumerate}
The full subcategory of $\mod A$ consisting of totally reflexive modules is denoted by $\TR(A)$.
(The full subcategory of $\grmod A$ consisting of graded totally reflexive modules is denoted by $\TR^{\ZZ}(A)$.)
The stable category of $\TR(A)$ is defined by $\uTR(A):= \TR(A)/\cP$. 
(The stable category of $\TR^{\ZZ}(A)$ is defined by $\uTR^{\ZZ}(A):= \TR^{\ZZ}(A)/\cP$.)

Let $A$ be a ring and $M\in \Mod A$. In this paper, we use the notation $\Omega M$ in the following sense:  we first choose a projective resolution of $M$ 
\[
\xymatrix@R=0.5pc@C=3pc{
\cdots \ar[r] &F^2 \ar[r]^{\phi^1} &F^1 \ar[r]^{\phi^0} &F^0 \ar[r]^{\e_M} &M \ar[r] &0 
}
\] and define $\Omega M=\Ker \e_M$.  It means that $\Omega M$ depends on the choice of a  projective resolution of $M$, however, such a dependency will be gone in the stable category.  In our main applications, we will always choose a minimal free resolution of $M$ to define $\Omega M$ so that it is uniquely determined up to isomorphism.

Let $A$ be a ring and $M, N\in \Mod A$.  Then the connecting homomorphism $\Hom_A(\Omega M, N)\to \Ext_A^1(M, N)$ is given by  
$h\mapsto \left\{0\to N\to \Coker \begin{pmatrix} \psi^0 & \mu^0 \\ 0 & \phi^0 \end{pmatrix} \to M\to 0\right\}$
where 
\[
\xymatrix@R=0.5pc@C=3pc{
F^2 \ar[r]^{\phi^1} &F^1 \ar[r]^{\phi^0} &F^0 \ar[r]^{\e_M} &M \ar[r] &0 \\
G^2 \ar[r]^{\psi^1} &G^1 \ar[r]^{\psi^0} &G^0 \ar[r]^{\e_N} &N \ar[r] &0
}
\] 
are projective resolutions of $M, N$, and $\mu^i:F^{i+1}\to G^i$ is a lift of 
$h\in \Hom_A(\Omega M, N)$, that is, 
\[\xymatrix@R=2pc@C=3pc{
F^{2} \ar[d]_{\mu^1} \ar[r]^{\phi^1} &F^{1} \ar[d]_{\mu^0} \ar[r]^{\e_{\Omega M}} &\Omega M \ar[d]_{h} \ar[r]^{} &0 \\
G^{1} \ar[r]^{\psi^0} &G^{0} \ar[r]^{\e_{N}} &N \ar[r]^{} &0
}\]
is a commutative diagram. 

\begin{lemma} \label{lem.ch} Let $A$ be a ring and $M, N\in \TR(A)$.
Then the map $\delta:\Hom_{A}(M, N)\to \Ext^1_{A}(M, \Omega N)$ defined by 
$$\delta(h)=\left\{0\to \Omega N\to \Coker \begin{pmatrix} \psi^1 & \mu^1 \\ 0 & \phi^0 \end{pmatrix} \to M\to 0\right\}$$ 
induces a bijection $\underline{\delta}:\uHom_{A}(M, N)\to \Ext^1_{A}(M, \Omega N)$ 
where 
\[
\xymatrix@R=0.5pc@C=3pc{
F^2 \ar[r]^{\phi^1} &F^1 \ar[r]^{\phi^0} &F^0 \ar[r]^{\e_M} &M \ar[r] &0 \\
G^2 \ar[r]^{\psi^1} &G^1 \ar[r]^{\psi^0} &G^0 \ar[r]^{\e_N} &N \ar[r] &0
}
\] 
are projective resolutions of $M, N$, and $\mu^i:F^{i}\to G^i$ is a lift of 
$h\in \Hom_A(M, N)$.  A graded version of the above statement also holds.  
\end{lemma}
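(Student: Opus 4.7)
My plan is to recognize $\delta$ as the standard connecting homomorphism in the long exact $\Ext$-sequence attached to $0 \to \Omega N \to G^0 \xrightarrow{\e_N} N \to 0$, and then exploit total reflexivity of $M$ to identify its kernel and image. Setting $E := \Coker \begin{pmatrix} \psi^1 & \mu^1 \\ 0 & \phi^0 \end{pmatrix}$, I would first show that the extension $0 \to \Omega N \to E \to M \to 0$ representing $\delta(h)$ is isomorphic to the pullback of $0 \to \Omega N \to G^0 \to N \to 0$ along $h : M \to N$. Explicitly, the assignment $[g_1, f_0] \mapsto (\mu^0 f_0 - \psi^0 g_1,\, \e_M f_0)$ defines a morphism $E \to G^0 \times_N M$, whose well-definedness on the cokernel uses the chain-map identity $\psi^0 \mu^1 = \mu^0 \phi^0$; the five-lemma then upgrades this to an isomorphism of extensions. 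In particular $\delta(h)$ does not depend on the choice of lift $\mu^{\bullet}$ and coincides, up to sign, with the Yoneda connecting map $\partial : \Hom_A(M, N) \to \Ext^1_A(M, \Omega N)$.

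Next I would apply $\Hom_A(M, -)$ to $0 \to \Omega N \to G^0 \to N \to 0$ to obtain
\[ \Hom_A(M, G^0) \to \Hom_A(M, N) \xrightarrow{\delta} \Ext^1_A(M, \Omega N) \to \Ext^1_A(M, G^0). \]
Since $M \in \TR(A)$ and $G^0$ is finitely generated free, $\Ext^1_A(M, G^0) = 0$, so $\delta$ is surjective with kernel equal to the set of morphisms $M \to N$ that factor through $G^0$. To descend $\delta$ to $\uHom_A(M, N)$ I would check that this kernel equals $\cP(M, N)$: the inclusion $\subseteq$ is trivial since $G^0 \in \cP$, and conversely any factorization $h : M \to P \to N$ through a free $P$ can be rerouted through $G^0$ by lifting $P \to N$ along the surjection $\e_N : G^0 \twoheadrightarrow N$ using projectivity of $P$. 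This shows $\underline{\delta}$ is a well-defined bijection. The graded version follows identically using graded resolutions and graded $\Hom$/$\Ext$.

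The only genuinely technical step is the diagram chase identifying $E$ with the pullback $G^0 \times_N M$; once the signs are arranged correctly using $\psi^0 \mu^1 = \mu^0 \phi^0$ and $\e_N \psi^0 = 0$, everything else follows formally from the long exact sequence together with the vanishing $\Ext^{\geq 1}_A(M, A) = 0$ built into total reflexivity of $M$.
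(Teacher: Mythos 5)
Your proof is correct, but it takes a genuinely different route from the paper's. You identify $\delta(h)$ (up to sign, and independently of the choice of lift $\mu^{\bullet}$) with the class of the pullback of $0\to\Omega N\to G^0\xrightarrow{\e_N}N\to 0$ along $h$, and then obtain surjectivity and the computation of the kernel from the long exact sequence of $\Hom_A(M,-)$ applied to this sequence, using only the vanishing $\Ext^1_A(M,G^0)=0$. The paper instead factors $\underline{\delta}$ as the composite of the syzygy bijection $\uHom_A(M,N)\to\uHom_A(\Omega M,\Omega N)$, $h\mapsto\Omega h$ (which uses that both $M$ and $N$ are totally reflexive), with the bijection $\uHom_A(\Omega M,\Omega N)\to\Ext^1_A(M,\Omega N)$ given by the connecting homomorphism recalled just before the lemma (which uses $M\in\TR(A)$); the matrix in $\delta(h)$ appears there as the mapping-cone of a lift of $\Omega h$. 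Your route is a bit more economical in hypotheses -- only $\Ext^{\geq 1}_A(M,A)=0$ enters, and total reflexivity of $N$ is never needed -- and it makes the independence of $\delta(h)$ from the lift explicit; the price is the explicit chase identifying $\Coker\left(\begin{smallmatrix}\psi^1&\mu^1\\ 0&\phi^0\end{smallmatrix}\right)$ with $G^0\times_N M$, which the paper avoids by invoking the standard description of the connecting map. One small caveat: your five-lemma step presupposes that $0\to\Omega N\to E\to M\to 0$ is exact; this is the standard mapping-cone fact (also implicit in the paper's preliminary description of the connecting homomorphism), or you can sidestep it by verifying directly that your comparison map to the pullback is an isomorphism. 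The identification of $\ker\partial$ with $\cP(M,N)$ by lifting through the surjection $\e_N$ is exactly right.
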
 

\begin{proof}
Since $M, N\in \TR(A)$, the map $\Hom_A(M, N)\to \Hom_A(\Omega M, \Omega N);
\; h\mapsto \Omega h$ induces a bijection $\uHom_A(M, N)\to \uHom_A(\Omega M, \Omega N)$.  The lift of $\Omega h$ is given by 
\[\xymatrix@R=2pc@C=3pc{
F^{2} \ar[d]_{\mu^2} \ar[r]^{\phi^1} &F^{1} \ar[d]_{\mu^1} \ar[r]^{\e_{\Omega M}} &\Omega M \ar[d]_{\Omega h} \ar[r]^{} &0 \\
G^{2} \ar[r]^{\psi^1} &G^{1} \ar[r]^{\e_{\Omega N }} &\Omega N  \ar[r]^{} &0,
}\]
so the connecting homomorphism  
$\Hom_A(\Omega M, \Omega N)\to \Ext^1_A(M, \Omega N)$ is given by 
$$\Omega h\mapsto \left\{0\to \Omega N\to
\Coker \begin{pmatrix} \psi^1 & \mu^1 \\ 0 & \phi^0 \end{pmatrix} \to M\to 0 \right\}.$$ 
Since $M\in \TR(A)$, it induces a bijection $\uHom_A(\Omega M, \Omega N)\to \Ext^1_A(M, \Omega N)$,
so we have the result. 
\end{proof}   

For a graded algebra $A =\bigoplus_{i\in\ZZ} A_i$, we say that $A$ is \emph{connected graded} if $A_i=0$ for all $i<0$ and $A_0 = k$,
and we say that $A$ is \emph{locally finite} if $\dim_k A_i <\infty$ for all $i \in \ZZ$.
If $A$ is a locally finite graded algebra and $M \in \grmod A$, then we define the \emph{Hilbert series} of $M$ by
$H_M (t) := \sum_{i \in \ZZ} (\dim_k M_i)t^i \in \ZZ[[t, t^{-1}]]$.

We recall a nice operation for graded algebras, called twisting system, introduced by Zhang \cite{Zh}.  
Let $A$ be a graded algebra. A \emph{twisting system} on $A$ is a sequence $\theta=\{\theta_i\}_{i\in \ZZ}$ of graded $k$-linear automorphisms of $A$ such that $\theta_i(a\theta_j(b))=\theta_i(a)\theta_{i+j}(b)$ for every $i, j\in \ZZ$ and every $a\in A_j, b\in A$.
The twisted graded algebra of $A$ by a twisting system $\theta $ is a graded algebra $A^{\theta}$ where $A^{\theta}=A$ as a graded $k$-vector space with the new multiplication $a^{\theta}b^{\theta}=(a\theta_{i}(b))^{\theta}$ for $a^{\theta}\in A^{\theta}_i, b^{\theta}\in A^{\theta}$.
Here we write $a^{\theta}\in A^{\theta}$ for $a\in A$ when viewed as an element of $A^{\theta}$ and the product $a\theta_{i}(b)$ is computed in $A$.
We denote by $\GrAut A$ the group of graded $k$-algebra automorphisms of $A$.  If $\theta\in \GrAut A$, then $\{\theta^i\}_{i\in \ZZ}$ is a twisting system of $A$.  In this case, we simply write $A^{\theta}:=A^{\{\theta^i\}}$.

\begin{lemma}  [{\cite {Zh}}] \label{lem.ztw} If $A$ is a graded algebra and $\theta$ is a twisting system on $A$, then $\GrMod A^{\theta}\cong \GrMod A$.  
\end{lemma}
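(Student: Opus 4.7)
The plan is to build the equivalence explicitly by defining the module-theoretic analogue of the algebra twist. Given $M \in \GrMod A$, I would set $M^{\theta}$ to equal $M$ as a graded $k$-vector space, and define a right $A^{\theta}$-action by
\[
m^{\theta} \cdot a^{\theta} := (m \, \theta_i(a))^{\theta}
\]
for homogeneous $m \in M_i$ and $a \in A$. The first thing to verify is that this is an associative action: for $m \in M_i$, $a \in A_j$, $b \in A$, one computes both $(m^{\theta} \cdot a^{\theta}) \cdot b^{\theta}$ and $m^{\theta} \cdot (a^{\theta} b^{\theta})$ and matches them using the defining identity $\theta_i(a\theta_j(b)) = \theta_i(a)\theta_{i+j}(b)$. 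This identity is exactly the cocycle condition needed, so the check goes through cleanly.

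Next, I would make $(-)^{\theta}$ into a functor: for a morphism $f \colon M \to N$ in $\GrMod A$, define $f^{\theta}(m^{\theta}) := f(m)^{\theta}$. The $A^{\theta}$-linearity is immediate since $f$ commutes with the old $A$-action and $\theta_i(a)$ is computed inside $A$, not involving $f$. Functoriality (identities and composition) is trivial from the construction. In particular, $(-)^{\theta}$ is $k$-linear and sends $A$ itself to $A^{\theta}$.

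To establish that $(-)^{\theta}$ is an equivalence, I would produce an explicit quasi-inverse rather than invoke abstract nonsense. The natural candidate is to twist $A^{\theta}$ by the inverse twisting system $\theta^{-1} = \{\theta_i^{-1}\}_{i \in \ZZ}$ on $A^{\theta}$. The main technical step is to check that $\theta^{-1}$ really is a twisting system on $A^{\theta}$ (this is a direct manipulation of the cocycle identity), and then that $(A^{\theta})^{\theta^{-1}} = A$ as graded algebras via the identification $(a^{\theta})^{\theta^{-1}} \mapsto a$. Once these identifications are in place, the composite functor $(-)^{\theta} \circ (-)^{\theta^{-1}}$ and $(-)^{\theta^{-1}} \circ (-)^{\theta}$ are the identities on objects and morphisms, giving the desired equivalence $\GrMod A^{\theta} \cong \GrMod A$.

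The main obstacle is keeping the bookkeeping consistent: the twisting on the module side uses the grading of $m$, while the twisting on the algebra side uses the grading of the left factor, and these have to be reconciled so that one can interchange ``twist of a module'' and ``module over the twist.'' Once the cocycle identity is invoked correctly in the associativity check and in the construction of $\theta^{-1}$ as a twisting system on $A^{\theta}$, everything else is bookkeeping. Since the statement is quoted directly from Zhang's paper \cite{Zh}, I would not reprove these verifications in full but indicate the construction and refer there for details.
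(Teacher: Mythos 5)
Your construction is exactly the standard one from Zhang's paper, which is all the paper itself does here: Lemma \ref{lem.ztw} is quoted from \cite{Zh} with no independent proof, and Zhang's equivalence is precisely the functor $M\mapsto M^{\theta}$ with the action $m^{\theta}\cdot a^{\theta}=(m\,\theta_i(a))^{\theta}$, with quasi-inverse given by the inverse twisting system on $A^{\theta}$. So the proposal is correct and takes essentially the same route as the cited source.
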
   

The following classes of algebras are main objects of study in noncommutative algebraic geometry.  

\begin{definition} 
A connected graded algebra $A$ is called an \emph{AS-regular} (resp. \emph{AS-Gorenstein}) algebra of dimension $n$ if
\begin{enumerate}
\item{} $\gldim A =n <\infty$ (resp. $\injdim_A A = \injdim_{A^o} A= n <\infty$), and
\item{} $\Ext^i_A(k ,A) \cong \Ext^i_{A^o}(k ,A) \cong
\begin{cases}
0 & \textnormal { if }\; i\neq n\\
k (\ell) \; \textrm{for some}\; \ell \in \ZZ 
& \textnormal { if }\; i=n
\end{cases}$
where $k:=A/A_{\geq 1}\in \GrMod A$.  
\end{enumerate}
A \emph{quantum polynomial algebra} of dimension $n$ is a noetherian AS-regular algebra $A$ of dimension $n$ with $H_A(t)=(1-t)^{-n}$.  
\end{definition}

A quantum polynomial algebra of dimension $n$ is a noncommutative analogue of the commutative polynomial algebra in $n$ variables of degree 1.

\begin{lemma} \label{lem.bal} 
Let $S$ be a connected graded algebra and $f\in S_d$ a regular normal element of positive degree.
For $n\geq 1$, $S$ is a (noetherian) AS-Gorenstein algebra of dimension $n$ if and only if $S/(f)$ is a (noetherian) AS-Gorenstein algebra of dimension $n-1$. 
\end{lemma}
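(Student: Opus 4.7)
The plan is to exploit the change-of-rings spectral sequence across $S \to A := S/(f)$ together with the standard behavior of injective dimension under killing a regular element.

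First, apply $\Hom_S(-, S)$ to the right $S$-module free resolution
$$0 \to S(-d) \xrightarrow{\cdot f} S \to A \to 0$$
of $A$. Regularity of $f$ shows $\Hom_S(A, S) = 0$ and $\Ext^q_S(A, S) = 0$ for $q \geq 2$, while the remaining term is isomorphic to $A(d)$, with right $A$-action twisted by the normalizing automorphism $\sigma$ of $f$ (defined by $af = f\sigma(a)$). Feed this into the Cartan--Eilenberg change-of-rings spectral sequence
$$E_2^{p,q} = \Ext^p_A(k, \Ext^q_S(A, S)) \;\Rightarrow\; \Ext^{p+q}_S(k, S);$$
only the row $q = 1$ survives, so the spectral sequence collapses and yields
$$\Ext^{p+1}_S(k, S) \;\cong\; \Ext^p_A(k, A)(d) \qquad (p \geq 0),$$
the $\sigma$-twist disappearing when applying $\Ext^p_A(k, -)$ because $\sigma$ acts as the identity on the one-dimensional module $k = A/A_{\geq 1}$. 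The symmetric statement on the $S^o$-side follows by the analogous argument with a left $S$-module resolution.

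This identity transports the AS condition between $S$ and $A$: concentration of $\Ext^*$ in a single cohomological degree on one side forces the same on the other, shifting cohomological degree by one and internal grading by $d$. In the $A \Rightarrow S$ direction one additionally needs $\Hom_S(k, S) = 0$, which is automatic since any nonzero element would be annihilated by the regular element $f \in S_{\geq 1}$. For injective dimension, a parallel change-of-rings argument applied to arbitrary finitely generated modules gives the standard relation $\injdim_A A = \injdim_S S - 1$ (and its opposite-side analog) whenever $\injdim_S S \geq 1$. Together these establish the AS-Gorenstein equivalence in both directions.

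For the noetherian part, one direction is immediate since $A$ is a quotient of $S$. For the converse, the $(f)$-adic filtration on $S$ is separated (as $S$ is connected graded with $\deg f \geq 1$), and its associated graded ring is a twisted polynomial ring of the form $A[t; \sigma]$, which is noetherian whenever $A$ is; a standard filtration-lifting argument then transfers noetherianness to $S$. The main technical delicacy throughout is tracking the $\sigma$-twist in the right $A$-module structure on $\Ext^1_S(A, S)$, but it washes out because $\sigma$ fixes $k$ pointwise, leaving the spectral-sequence computation clean; everything else is routine bookkeeping.
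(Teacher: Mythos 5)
The paper's own proof of this lemma is a one-line citation of Rees' Lemma (Levasseur, Proposition 3.4(b)), so you are in effect re-proving that lemma from scratch. The part of your argument that transfers the Ext-condition is essentially correct: the change-of-rings spectral sequence does collapse to give $\Ext^{p+1}_S(k,S)\cong \Ext^p_A(k,A)(d)$, and the $\nu_f$-twist on $\Ext^1_S(A,S)\cong A(d)$ is harmless because the twist of the free module $A$ by a graded automorphism is again isomorphic to $A$ as a graded right $A$-module (your remark that the automorphism fixes $k$ is the same point seen through the twisting autoequivalence). The genuine gap is the injective-dimension transfer. Your ``parallel change-of-rings argument applied to arbitrary finitely generated modules'' only controls $\Ext^{\ast}_S(N,S)$ for modules $N$ killed by $f$, i.e.\ $A$-modules; it says nothing about $\Ext^i_S(N,S)$ for a general finitely generated graded $S$-module $N$, which is exactly what must be bounded to prove $\injdim_S S\le \injdim_A A+1$, the direction needed for ``$A$ AS-Gorenstein $\Rightarrow$ $S$ AS-Gorenstein.'' Closing this requires a d\'evissage that you do not supply: split off the $f$-torsion of $N$ (an iterated extension of $A$-modules, handled by change of rings), and for the $f$-torsionfree part use the long exact sequence of $0\to N(-d)\xrightarrow{\cdot f} N\to N/Nf\to 0$ to get $\Ext^i_S(N,S)\cong \Ext^i_S(N,S)(d)$ in high cohomological degree, then kill this by a graded Nakayama/boundedness argument (which uses noetherianness, or at least resolutions by finitely generated frees). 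Alternatively one can invoke a graded Bass-type theorem that $\injdim_S S$ is detected by $\Ext^{\ast}_S(k,S)$, but that is itself a nontrivial result, not routine bookkeeping. This missing step is precisely the content of the Rees' Lemma the paper cites.

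A secondary issue is the noetherian transfer from $A$ to $S$: separatedness of the $(f)$-adic filtration is not by itself sufficient to lift noetherianity from $\gr S\cong A[t;\bar\sigma]$; the standard lifting argument needs a complete (Zariskian) filtration, or, in the present graded situation, the observation that the filtration is degreewise discrete (since $f^mS\subseteq S_{\geq md}$), so that it suffices to treat graded right ideals and the successive-approximation argument terminates. Equivalently, one can simply quote the standard Artin--Tate--Van den Bergh-type lemma that $S/(f)$ noetherian with $f$ regular normal of positive degree implies $S$ noetherian. With these two points repaired your route does work, but as written the hardest parts of the statement are asserted rather than proved, whereas the paper disposes of them by citing Levasseur.
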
 

\begin{proof}  
This follows by Rees' Lemma (eg. \cite[Proposition 3.4(b)]{L}).
\end{proof} 

If $A$ is a noetherian connected graded algebra, then the full subcategory of $\GrMod A$ consisting of direct limits of finite dimensional modules over $k$ is denoted by $\Tors A$,
the quotient category $\GrMod A/\Tors A$ is denoted by $\Tails A$, and the quotient functor is denoted by $\pi :\GrMod A\to \Tails A$.
We also write $\tors A:=\Tors A\cap \grmod A$ and $\tails A:=\grmod A/\tors A$.
We call $\tails A$ the \emph{noncommutative projective scheme} associated to $A$,
and $A$ a \emph{homogeneous coordinate ring} of $\tails A$.
Note that $\Tails A$ and $\tails A$ are also written as $\QGr A$ and $\qgr A$, respectively, by some authors.

If $A$ is a quantum polynomial algebra of dimension $n$, then we call $\tails A$ a \emph{quantum $\PP^{n-1}$}.  

Let $A$ be a noetherian AS-Gorenstein algebra of dimension $n$. 
We define the \emph{$i$-th local cohomology} of $M \in \grmod A$ by
$\H^i_\fm(M):= \lim _{n \to \infty} \Ext^i_A(A/A_{\geq n}, M)$.
Then one can show that $\H_{\fm}^i(A)=0$ for all $i\neq n$.
A graded module $M\in \grmod A$ is called \emph{maximal Cohen-Macaulay} if $\H_{\fm}^i(M)=0$ for all $i\neq n$.
By \cite[Lemma 4.6]{Mbc}, $M\in \grmod A$ is maximal Cohen-Macaulay if and only if it is totally reflexive,
so in this setting, we also use notation $\CM^{\ZZ}(A)$ and $\uCM^{\ZZ}(A)$ for $\TR^{\ZZ}(A)$ and $\uTR^{\ZZ}(A)$, respectively.

Let $A$ be a connected graded algebra.  We say that $M\in \GrMod A$ has a \emph{linear resolution} if $M$ has a free resolution $\cdots \to F^2\to F^1\to F^0\to M\to 0$ such that each $F^i$ is generated in degree $i$.  The full subcategory of $\grmod A$ consisting of modules having linear resolutions is denoted by $\lin A$.  We say that $A$ is \emph{Koszul} if $k:=A/A_{\geq 1}\in \lin A$.  Note that if $A$ is a Koszul algebra, then $A^!:=\bigoplus _{i\in \NN}\Ext^i_A(k, k)$ is also a Koszul algebra, called the \emph{Koszul dual} algebra.  

\begin{lemma} 
\label{lem.Koz}
Suppose that $A$ and $A^!$ are both noetherian Koszul AS-Gorenstein algebras.
\begin{enumerate}
\item{} The Koszul duality $E:\lin A\to \lin (A^!)^o$ defined by $E(M)=\bigoplus _{i\in \NN}\Ext^i_A(M, k)$ extends to a duality $\overline E:\cD^b(\grmod A)\to \cD^b(\grmod (A^!)^o)$ such that ${\overline E}(M[p](q))={\overline E}(M)[-p-q](q)$. 
\item{}
$\overline{E}$ induces a duality $B:\uCM^{\ZZ}(A)\to \cD^b(\tails (A^!)^o)$, which induces a duality $\uCM^{\ZZ}(A)\cap \lin A \to \tails (A^!)^o$.
\end{enumerate}
\end{lemma}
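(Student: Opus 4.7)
The plan is to construct $\overline E$ as the derived extension of the Koszul duality functor $E$ using linear truncations, and then to obtain the stable duality via Buchweitz's identification of $\uCM^{\ZZ}(A)$ with the graded singularity category.

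For part (1), since $A$ is noetherian Koszul, every $M\in\grmod A$ has a sufficiently high truncation $M_{\geq n}(n)$ lying in $\lin A$, so the thick subcategory of $\cD^b(\grmod A)$ generated by $\bigcup_{n\in\ZZ}\lin A(n)$ is all of $\cD^b(\grmod A)$. On this generating set I would extend $E$ by applying it termwise, setting $\overline E(M[p](q)):=\overline E(M)[-p-q](q)$ on shifts of linear modules and checking compatibility with morphisms and cones. The shift formula is forced by a direct computation on linear resolutions: if $M\in\lin A$ is generated in degree $0$, then the minimal resolution of $M(1)$ is the resolution of $M$ shifted by one in internal degree, which translates to the bi-shift $[-1](1)$ on the $A^!$-side. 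Contravariance $\overline E(M[1])=\overline E(M)[-1]$ comes from the $\R\Hom$ interpretation, and combining the two gives $\overline E(M[p](q))=\overline E(M)[-p-q](q)$. Well-definedness on $\cD^b(\grmod A)$ then follows from exactness of $E$ on $\lin A$ together with the usual derived-category arguments for extending a functor defined on a thickly generating subcategory.

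For part (2), invoke the graded form of Buchweitz's theorem, which for noetherian AS-Gorenstein algebras gives $\uCM^{\ZZ}(A)\cong \cD^b(\grmod A)/\thick(\grproj A)$ and the analogous equivalence for $A^!$. Under $\overline E$ the free module $A$ satisfies $\overline E(A)=k$, since $\Ext^i_A(A,k)$ vanishes for $i>0$ and equals $k$ for $i=0$, and by the shift formula $\overline E(A(n))=k[-n](n)$. Hence $\overline E$ sends $\thick(\grproj A)$ into $\thick(k)=\cD^b(\tors A^!)$, and passes to the quotients to give a duality
\[
B\colon \uCM^{\ZZ}(A)\;\longrightarrow\;\cD^b(\grmod A^!)/\cD^b(\tors A^!)\;=\;\cD^b(\tails A^!).
\]
For the restricted duality, if $M\in\uCM^{\ZZ}(A)\cap\lin A$ then $\overline E(M)=E(M)\in\lin A^!$ is concentrated in cohomological degree $0$, so $\pi E(M)\in\tails A^!\subset \cD^b(\tails A^!)$; essential surjectivity onto $\tails A^!$ follows because any $\pi N\in\tails A^!$ is represented up to shift by a linear $A^!$-module via Koszul truncation on the $A^!$-side, and on linear modules $E$ is its own inverse.

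The hardest part will be the careful bookkeeping required to make $\overline E$ well-defined on the whole $\cD^b(\grmod A)$ rather than only on linear complexes, and to verify that the shift formula holds uniformly; in particular one must check that different choices of linear truncation produce canonically isomorphic values and that morphisms in the derived category lift to chain maps of linear complexes compatible with $E$. Showing that $B$ restricts cleanly to a duality $\uCM^{\ZZ}(A)\cap\lin A\to\tails A^!$ (landing in $\tails A^!$ rather than in a larger subcategory of $\cD^b(\tails A^!)$) then reduces to translating the maximal Cohen-Macaulay condition $\Ext^{>0}_A(M,A)=0$ into the statement that $\overline E(M)$ is a module concentrated in a single cohomological degree modulo torsion on the $A^!$-side, a clean consequence of the shift formula.
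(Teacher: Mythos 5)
The paper proves this lemma purely by citation (\cite[Proposition 4.5]{Mrp} for (1), and \cite[Theorem 5.3]{Mrr} together with \cite[Theorem 3.2]{SV} for (2)), so your proposal is in effect an attempt to reprove those results, and it has genuine gaps exactly where they do the real work. In (1), you cannot construct $\overline E$ by prescribing values on shifts of linear modules and ``checking compatibility with morphisms and cones'': cones are not functorial, and a triangulated functor is neither determined by nor constructible from its values on a thick generating class. The honest construction is a derived functor, e.g. $\RHom_A(-,k)$ computed via the Koszul (bimodule) complex, which is precisely the content of \cite[Proposition 4.5]{Mrp}; your sketch defers this to ``bookkeeping'' without offering a mechanism. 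Moreover, your generation statement --- that every $M\in\grmod A$ has a high truncation lying in $\lin A$ after shift --- is not a formal consequence of noetherian Koszulity; it needs finiteness of regularity (available here because $A$ is AS-Gorenstein, by results of J{\o}rgensen type), and you use the same unproved fact again on the $A^!$ side.

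In (2), passing to Verdier quotients requires more than $\overline E(A(n))\cong k[-n](n)$: one must also check that the thick closure of the image of $\grproj A$ is all of the subcategory of complexes with finite-dimensional cohomology (easy, via filtrations by shifts of $k$, but unstated), and one must justify the identification $\cD^b(\grmod A^!)/\cD^b(\tors A^!)\cong\cD^b(\tails A^!)$. More seriously, the restricted duality $\uCM^{\ZZ}(A)\cap\lin A\to\tails A^!$ is not settled by your argument: given $\pi N\in\tails A^!$, replacing $N$ by a linear truncation changes the object by an internal shift, and, crucially, you never show that the linear $A$-module corresponding under $E$ to that truncation is maximal Cohen--Macaulay. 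Your closing claim that the condition $\Ext^{>0}_A(M,A)=0$ translates across the duality ``as a clean consequence of the shift formula'' is exactly the nontrivial correspondence established in \cite[Theorem 3.2]{SV} and \cite[Theorem 5.3]{Mrr} (compare also the role of \cite[Proposition 7.8]{MU} later in the paper); as written it is an assertion, not a proof.
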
 

\begin{proof}
(1) This follows from \cite [Proposition 4.5]{Mrp}.  

(2) This follows from \cite [Theorem 5.3]{Mrr} and \cite [Theorem 3.2]{SV}. 
\end{proof}

\subsection{Noncommutative Matrix Factorizations} 

In this subsection, we recall some background results on noncommutative matrix factorizations obtained in \cite{MU}. 

\begin{definition}[{\cite[Definition 2.1]{MU}}]
Let $S$ be a ring and $f\in S$ an element.
A \emph{noncommutative right matrix factorization} of $f$ over $S$ of rank $r$ is a sequence of right $S$-module homomorphisms $\{\phi^i:F^{i+1}\to F^i\}_{i\in \ZZ}$
where $F^i$ are free right $S$-modules of rank $r$ for some $r\in \NN$ such that there is a commutative diagram
\[\xymatrix@R=2pc@C=3pc{
F^{i+2} \ar[d]_{\cong} \ar[r]^{\phi^i\phi^{i+1}} &F^i \ar[d]^{\cong} \\
S^r \ar[r]^{f\cdot} &S^r 
}\]
for every $i\in \ZZ$.
A morphism $\mu :\{\phi^i:F^{i+1}\to F^i\}_{i\in \ZZ}\to \{\psi^i:G^{i+1}\to G^i\}_{i\in \ZZ}$
of noncommutative right matrix factorizations is a sequence of right $S$-module homomorphisms $\{\mu ^i:F^i\to G^i\}_{i\in \ZZ}$ such that the diagram 
\[\xymatrix@R=2pc@C=3pc{
F^{i+1} \ar[d]_{\mu ^{i+1}} \ar[r]^{\phi^i} &F^i \ar[d]^{\mu ^{i}} \\
G^{i+1} \ar[r]^{\psi^i} &G^{i}
}\]
commutes for every $i\in \ZZ$.
We denote by $\NMF_S(f)$ the category of noncommutative right matrix factorizations.  

Let $S$ be a graded ring and $f\in S_d$ a homogeneous element.
A \emph{noncommutative graded right matrix factorization} of $f$ over $S$ of rank $r$ is a sequence of graded right $S$-module homomorphisms $\{\phi^i:F^{i+1}\to F^i\}_{i\in \ZZ}$
where $F^i$ are graded free right $S$-modules of rank $r$ for some $r\in \NN$ such that 
there is a commutative diagram 
\[\xymatrix@R=2pc@C=0.75pc{
F^{i+2} \ar[d]_{\cong} \ar[rrr]^{\phi^i\phi^{i+1}} &&&F^i \ar[d]^{\cong} \\
\bigoplus _{s=1}^rS(-m_{i+2,s})\ar@{=}[r] &\bigoplus _{s=1}^rS(-m_{is}-d) \ar[rr]^(0.54){f\cdot} &&\bigoplus _{s=1}^rS(-m_{is})  
}\]
for some $m_{is} \in \ZZ$ and every $i\in \ZZ$. 
We can similarly define the category of noncommutative graded right matrix factorizations $\NMF^{\ZZ}_S(f)$.  
\end{definition} 

\begin{remark} \label{rem.lambda}
Let $S$ be a (graded) ring and $f\in S$ a (homogeneous) element. 
\begin{enumerate}
\item{}  Let $\{\phi^i:F^{i+1}\to F^i\}_{i\in \ZZ}$ be a noncommutative right matrix factorization of $f$ over $S$ of rank $r$.
We often assume without loss of generality that $F^i=S^r$ and $\phi^i\phi^{i+1}=f\cdot$ (see \cite[Remark 2.2 (1)]{MU}).  In this case, every $\phi^i$ is the left multiplication of a matrix $\Phi^i$ whose entries are elements in $S$, so that $\Phi^i\Phi^{i+1}=fI_r$ where $I_r$ is the identity matrix of size $r$.  
\item{}  Let $\{\phi^i:F^{i+1}\to F^i\}_{i\in \ZZ}$ be a noncommutative graded right matrix factorization of $f$ over $S$ of rank $r$ such that $F^i=\bigoplus _{s=1}^rS(-m_{is})$.
In this case, we may write $\phi^i=(\phi^i_{st})$ where $\phi^i_{st}:S(-m_{i+1, t})\to S(-m_{is})$ is the left multiplication of an element in $S_{m_{i+1, t}-m_{is}}$, so $\phi^i$ is the left multiplication of a matrix $\Phi^i$ whose entries are homogeneous elements in $S$, so that $\Phi^i\Phi^{i+1}=fI_r$ where $I_r$ is the identity matrix of size $r$.  
\end{enumerate}
\end{remark}

The following lemma is immediate.  

\begin{lemma} \label{lem.eq1}
If $\varphi:S\to S'$ is an isomorphism of rings and $f\in S$, then $\NMF_S(f)\cong \NMF_{S'}(\varphi(f))$. 
\end{lemma}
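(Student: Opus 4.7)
The plan is to construct an explicit functor $\Phi_*:\NMF_S(f)\to \NMF_{S'}(\varphi(f))$ by transporting matrices entry-wise along $\varphi$, and to show it is an isomorphism of categories (hence certainly an equivalence) by using $\varphi^{-1}$ to supply the inverse functor.

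First I would use the reformulation of Remark \ref{rem.lambda}(1): after fixing identifications $F^i = S^r$, a noncommutative right matrix factorization of $f$ of rank $r$ is nothing but a sequence of matrices $\{\Phi^i\in M_r(S)\}_{i\in \ZZ}$ satisfying $\Phi^i\Phi^{i+1}=fE_r$ for every $i\in\ZZ$, while a morphism $\mu:\{\Phi^i\}\to \{\Psi^i\}$ is a sequence $\{\mu^i\in M_r(S)\}_{i\in\ZZ}$ satisfying $\mu^i\Phi^i=\Psi^i\mu^{i+1}$. Since $\varphi:S\to S'$ is a ring isomorphism, applying it entry-wise induces a ring isomorphism $M_r(\varphi):M_r(S)\to M_r(S')$ for each $r\in\NN$, which I still denote by $\varphi$.

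Next, I define $\Phi_*$ on objects by sending $\{\Phi^i\}_{i\in\ZZ}$ to $\{\varphi(\Phi^i)\}_{i\in\ZZ}$. The matrix identity
\[ \varphi(\Phi^i)\,\varphi(\Phi^{i+1}) \;=\; \varphi(\Phi^i\Phi^{i+1}) \;=\; \varphi(fE_r) \;=\; \varphi(f)E_r \]
confirms this is an object of $\NMF_{S'}(\varphi(f))$ of the same rank $r$. On morphisms, I set $\Phi_*(\{\mu^i\})=\{\varphi(\mu^i)\}$, and the required compatibility $\varphi(\mu^i)\varphi(\Phi^i)=\varphi(\Psi^i)\varphi(\mu^{i+1})$ follows by applying $\varphi$ to $\mu^i\Phi^i=\Psi^i\mu^{i+1}$. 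Functoriality (preservation of identities and composition) is immediate because $\varphi$ is a ring homomorphism on each matrix ring.

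Finally, the same construction applied to $\varphi^{-1}:S'\to S$ yields a functor $(\Phi^{-1})_*:\NMF_{S'}(\varphi(f))\to \NMF_S(f)$. Since $\varphi^{-1}\circ\varphi=\id_S$ and $\varphi\circ\varphi^{-1}=\id_{S'}$ entry-wise, the compositions $(\Phi^{-1})_*\circ \Phi_*$ and $\Phi_*\circ (\Phi^{-1})_*$ are the identity functors, proving $\NMF_S(f)\cong \NMF_{S'}(\varphi(f))$. There is no genuine obstacle here: the only point to be careful about is the bookkeeping with the identifications $F^i\cong S^r$, but since both categories are defined purely in terms of free modules and matrix data, everything transports strictly along $\varphi$.
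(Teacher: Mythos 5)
Your proof is correct and is precisely the transport-of-structure argument the paper has in mind when it dismisses this lemma as ``immediate'' (it gives no written proof): apply $\varphi$ entrywise to the matrices representing the $\phi^i$ and $\mu^i$, and use $\varphi^{-1}$ for the inverse functor. The only cosmetic slip is writing $\mu^i\in M_r(S)$, since morphisms between factorizations of different ranks are rectangular matrices, but applying $\varphi$ entrywise works verbatim in that case as well.
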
 

For an algebra $S$ and an element $f\in S$, we define $\Aut (S; f):=\{\s\in \Aut S\mid \s(f)=\lambda f\ \textnormal{for some}\ \lambda\in k\}$, and $\Aut _0(S; f):=\{\s\in \Aut S\mid \s(f)=f\}\leq \Aut (S; f)$.
For a graded algebra $S$ and a homogeneous element $f\in S$, we define $\GrAut (S; f)$ and $\GrAut_0(S; f)$ similarly.

\begin{remark} There exists a canonical map $\Aut (S; f)\to \Aut (S/(f))$, which is not injective nor surjective in general.  For example, it is easy to see that $\Aut (k[x]; x)\to \Aut (k[x]/(x))$ is not injective, and $\Aut (k[x, y]/(x^2); y^2)\to \Aut (k[x, y]/(x^2, y^2))$ is not surjective.  
\end{remark}  

\begin{lemma}[{\cite [Theorem 3.7]{MU}}] \label{lem.eq2} 
Let $S$ be a graded algebra and $f\in S$ a homogeneous element.
For $\theta\in \GrAut (S; f)$, we have $\NMF_S^{\ZZ}(f)\cong \NMF_{S^{\theta}}^{\ZZ}(f^{\theta})$.  
\end{lemma}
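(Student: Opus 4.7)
The plan is to construct an explicit equivalence $T: \NMF^\ZZ_S(f) \to \NMF^\ZZ_{S^\theta}(f^\theta)$ by lifting Zhang's twist equivalence $(-)^\theta: \GrMod S \to \GrMod S^\theta$ from Lemma \ref{lem.ztw} to the level of noncommutative graded matrix factorizations. Since Zhang's twist sends a graded free right $S$-module $F = \bigoplus_s S(-m_s)$ to the graded free right $S^\theta$-module $F^\theta = \bigoplus_s S^\theta(-m_s)$ of the same rank and with the same shifts, the underlying free-module data of a matrix factorization transports automatically. The content of the lemma is to check that the matrix factorization equation $\Phi^i \Phi^{i+1} = f E_r$ also transports, and this is exactly where the hypothesis $\theta \in \GrAut(S; f)$ (so that $\theta(f) = \lambda f$ for some $\lambda\in k$) enters.

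Concretely, given $\{\phi^i: F^{i+1} \to F^i\}_{i \in \ZZ}$ in $\NMF^\ZZ_S(f)$ represented by matrices $\Phi^i = (a^{(i)}_{st})$ with $a^{(i)}_{st} \in S_{m_{i+1,t} - m_{is}}$ in the chosen bases, I would define $T(\{\phi^i\}) := \{\psi^i: (F^{i+1})^\theta \to (F^i)^\theta\}$ via matrices $\Psi^i$ whose entries are obtained from the $a^{(i)}_{st}$ by applying suitable powers of $\theta$ (depending on the internal degrees $m_{is}$) and then viewing the results as elements of $S^\theta$. The correct prescription is forced by the requirement that the twisted multiplication $a^\theta b^\theta = (a\,\theta^{\deg a}(b))^\theta$ in $S^\theta$ realize $\Psi^i \Psi^{i+1}$ as a scalar multiple of $f^\theta E_r$; a natural ansatz is $\Psi^i_{st} = \theta^{c_{is}}(a^{(i)}_{st})^\theta$ for an appropriate sequence of integers $c_{is}$ determined by the $m_{is}$, and any overall scalar of the form $\lambda^{?}$ that appears is uniform along the diagonal and can be absorbed by a basis change, yielding an honest matrix factorization of $f^\theta$. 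Morphisms transport by the same rule, and functoriality and additivity then follow from the corresponding properties of Zhang's twist.

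A quasi-inverse is constructed by the analogous procedure applied to $\theta^{-1}$, using that $\theta^{-1}$ induces an element of $\GrAut(S^\theta; f^\theta)$ (since $\theta(f) = \lambda f$ forces $\theta^{-1}(f) = \lambda^{-1} f$) and that $(S^\theta)^{\theta^{-1}} \cong S$ as graded algebras with $(f^\theta)^{\theta^{-1}}$ identified with $f$ up to scalar. The main obstacle is the combinatorial bookkeeping in selecting the correct powers of $\theta$ for each matrix entry: the naive copy-and-paste of entries from $S$ into $S^\theta$ does \emph{not} give a matrix factorization of $f^\theta$, because the twisted multiplication in $S^\theta$ introduces $\theta$-powers that depend on the internal degrees of the factors. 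Once the correct ansatz is identified and one checks that the compensating $\theta$-powers telescope along the matrix product (invoking that $\theta$ is a ring automorphism and $\theta(f) = \lambda f$), the verification of compositions, identities, and the unit/counit of $T$ reduces to a direct calculation with the twisted multiplication formula.
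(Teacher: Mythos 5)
The paper itself contains no proof of this lemma: it is quoted verbatim from \cite[Theorem 3.7]{MU}, so there is no internal argument to compare against. That said, your strategy is the natural one and is essentially how the cited result is established: transport a factorization through Zhang's twist equivalence of Lemma \ref{lem.ztw}, using that $(S(-m))^{\theta}\cong S^{\theta}(-m)$ via an identification that applies a power of $\theta$ depending on $m$, so each entry $a^{(i)}_{st}$ gets replaced by $\theta^{c_{is}}(a^{(i)}_{st})$ with $c_{is}$ determined by the shift $m_{is}$ of the target summand; the twisted multiplication then telescopes, and the quasi-inverse comes from $\theta^{-1}$ together with $(S^{\theta})^{\theta^{-1}}\cong S$ (note $\theta$ is indeed an automorphism of $S^{\theta}$, so this makes sense). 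One detail in your sketch is inaccurate: with the forced exponents the composite $\Psi^i\Psi^{i+1}$ comes out as the diagonal matrix whose $(s,s)$ entry is $\theta^{c_{is}}(f)=\lambda^{c_{is}}f$, and since the $m_{is}$, hence the $c_{is}$, vary with $s$, this scalar is in general \emph{not} uniform along the diagonal. This is harmless, however: the definition of $\NMF^{\ZZ}_{S^{\theta}}(f^{\theta})$ only requires $\phi^i\phi^{i+1}$ to agree with $f^{\theta}\cdot$ up to isomorphisms of the graded free modules, and an invertible diagonal scalar matrix is absorbed there (equivalently, one can rescale the identifications $F^i\cong\bigoplus_s S^{\theta}(-m_{is})$ recursively in $i$ to make the product exactly $f^{\theta}E_r$). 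With that correction your outline goes through.
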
 

Let $S$ be a (graded) ring and $f\in S$ a (homogeneous) regular normal element. 
Then there exists a unique (graded) ring automorphism $\nu_f$ of $S$ such that $af=f\nu_f(a)$ for $a\in S$.  
We call $\nu_f$ the \emph{normalizing automorphism} of $f$.  

Let $\s$ be a (graded) ring automorphism of $S$.
If $\Phi=(a_{st})$ is a matrix whose entries are (homogeneous) elements in $S$,
then we write $\s(\Phi)=(\s(a_{st}))$.
If $\phi$ is a (graded) right $S$-module homomorphism given by the left multiplication of $\Phi$,
then we write $\s(\phi)$ for the (graded) right $S$-module homomorphism given by the left multiplication of $\s(\Phi)$.  

\begin{theorem}[{\cite[Theorem 4.4 (1), (3)]{MU}}] \label{thm.nmf}
Let $S$ be a (graded) ring and $f\in S$ a (homogeneous) regular normal element (of degree $d$). 
\begin{enumerate}
\item{} If $\phi$ is a noncommutative (graded) right matrix factorization of $f$ over $S$, then $\phi^{i+2}=\nu_f(\phi^i)$ ($\phi^{i+2}=\nu_f(\phi^i)(-d)$) for every $i\in \ZZ$.  It follows that $\phi$ is uniquely determined by $\phi^0$ and $\phi^1$. 
\item{} If $\mu:\phi\to \psi$ is a morphism of noncommutative (graded) right matrix factorizations of $f$ over $S$, then $\mu^{i+2}=\nu_f(\mu^i)$ ($\mu^{i+2}=\nu_f(\mu^i)(-d)$) for every $i\in \ZZ$.  It follows that $\mu$ is uniquely determined by $\mu^0$ and $\mu^1$. 
\end{enumerate}
\end{theorem}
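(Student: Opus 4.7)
The plan is to exploit the single matrix identity $\Phi^i\Phi^{i+1}=fE_r$ (valid for each $i$ via the matrix description in Remark \ref{rem.lambda}), the defining relation $af=f\nu_f(a)$ of the normalizing automorphism, and the regularity of $f$. Both parts reduce to computing a triple product of matrices two different ways and then cancelling $f$ on the left.

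For (1), I would apply the factorization condition at both $i$ and $i+1$, so $\Phi^i\Phi^{i+1}=fE_r=\Phi^{i+1}\Phi^{i+2}$. Associativity then gives
\[
f\Phi^{i+2}=(\Phi^i\Phi^{i+1})\Phi^{i+2}=\Phi^i(\Phi^{i+1}\Phi^{i+2})=\Phi^i f.
\]
Applying the normalizing property entrywise yields $\Phi^i f=f\nu_f(\Phi^i)$, and regularity of $f$ lets us cancel the left $f$ to conclude $\Phi^{i+2}=\nu_f(\Phi^i)$. In the graded setting the computation is identical at the matrix level; the shift $(-d)$ in $\phi^{i+2}=\nu_f(\phi^i)(-d)$ comes from the fact that $F^{i+2}\cong F^i(-d)$ and $F^{i+3}\cong F^{i+1}(-d)$, which is forced by the commuting square in the definition of a graded noncommutative matrix factorization.

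For (2), I would use the commuting square $M^i\Phi^i=\Psi^iM^{i+1}$ at index $i+1$, multiply on the left by $\Psi^i$, and exploit the factorization relation on both sides:
\[
\Psi^i\Psi^{i+1}M^{i+2}=\Psi^iM^{i+1}\Phi^{i+1}=M^i\Phi^i\Phi^{i+1}.
\]
Using $\Psi^i\Psi^{i+1}=fE_r=\Phi^i\Phi^{i+1}$ collapses this to $fM^{i+2}=M^i f=f\nu_f(M^i)$, and regularity of $f$ gives $M^{i+2}=\nu_f(M^i)$. The graded case again only requires tracking the degree shift $-d$.

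The uniqueness statement is then immediate: since $\nu_f$ is a (graded) ring automorphism (bijectivity uses that $f$ is regular and $Sf=fS$), iteration of $\phi^{i+2}=\nu_f(\phi^i)$ and $\mu^{i+2}=\nu_f(\mu^i)$ recovers all $\phi^i$ from $\phi^0,\phi^1$ and all $\mu^i$ from $\mu^0,\mu^1$, for positive and (via $\nu_f^{-1}$) negative $i$. The whole argument is essentially a two-line computation in matrix algebra over $S$; the only steps needing any care are the entrywise application of $\nu_f$ and, in the graded case, the bookkeeping of the shift $(-d)$, both of which are routine since $\nu_f$ is a graded automorphism.
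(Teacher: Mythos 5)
Your argument is correct: from $\Phi^i\Phi^{i+1}=fE_r=\Phi^{i+1}\Phi^{i+2}$ one gets $f\Phi^{i+2}=\Phi^if=f\nu_f(\Phi^i)$ (and similarly $fM^{i+2}=M^if=f\nu_f(M^i)$ using the two commuting squares), so regularity of $f$ cancels the left factor, and the graded bookkeeping is right because the definition forces $F^{i+2}=F^i(-d)$ while $\nu_f$ is a graded automorphism. Note that the paper does not prove Theorem~\ref{thm.nmf} itself but imports it from \cite{MU}; your two-line matrix computation is exactly the standard argument behind that cited result, so there is nothing to add.
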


Let $S$ be a (graded) ring, $f\in S$ a (homogeneous) regular normal element,
and $A=S/(f)$.
For $a\in S$, write $\overline a\in A$, and for $\phi:F\to G$ where $F, G$ are (graded) free $S$-modules, write $\overline\phi:\overline F\to \overline G$. 
For a noncommutative (graded) right matrix factorization $\phi$ of $f$ over $S$, we define the complex $C (\phi)$ of (graded) right $A$-modules by 
\[
\xymatrix@C=3pc@R=1pc{
\cdots \ar[r]^{\overline {\phi^2}}
&\overline {F^2} \ar[r]^{\overline {\phi^1}}
&\overline {F^1} \ar[r]^{\overline {\phi^0}}
&\overline {F^0} \ar[r]^{\overline {\phi^{-1}}}
&\overline {F^{-1}}  \ar[r]^{\overline {\phi^{-2}}} 
&\cdots.}
\]
Moreover we define $\Coker \phi:=\overline{\Coker \phi^0} \cong \Coker \overline{\phi^0}\in \mod A$ ($\grmod A$).

\begin{definition}[{\cite[Definition 6.3]{MU}}] \label{def.trivnmf}
Let $S$ be a ring and $f\in S$.  For a free right $S$-module $F$, we define $\phi_F, {_F\phi}\in \NMF_S(f)$ by
$$\begin{array}{lll}
& \phi_F^{2i}=\id_F:F\to F, & \phi_F^{2i+1}=f\cdot : F\to F, \\
& {_F\phi}^{2i}=f\cdot:F\to F, & {_F\phi}^{2i+1}=\id _F: F\to F.
\end{array}$$
We define 
$\cF :=\{\phi_F\mid F\in \mod S \; \textnormal{is free} \}$,
$\cG :=\{\phi_F\oplus {_G\phi} \mid F, G\in \mod S \; \textnormal{are free} \}$ and
$\uNMF_S(f):=\NMF_S(f)/\cG$.

Let $S$ be a graded ring and $f\in S_d$.  For a graded free right $S$-module $F$, 
we define $\phi_F, {_F\phi}\in \NMF_S^{\ZZ}(f)$ by
$$\begin{array}{lll}
& \phi_F^{2i}=\id_F:F(-id)\to F(-id), & \phi_F^{2i+1}=f\cdot : F(-id-d)\to F{(-id)}, \\
& {_F\phi}^{2i}=f\cdot: F(-id-d)\to F(-id), & {_F\phi}^{2i+1}=\id _F: F(-id-d)\to F(-id-d).
\end{array}$$
We define
$\cF :=\{\phi_F\mid F\in \grmod S \; \textnormal{is free} \}$,
$\cG :=\{\phi_F\oplus {_G\phi} \mid F, G\in \grmod S \; \textnormal{are free} \}$ and
$\uNMF_S^{\ZZ}(f):=\NMF_S^{\ZZ}(f)/\cG$.
\end{definition}

\begin{theorem}[{\cite[Theorem 6.6]{MU}}]  \label{thm.m4} 
If $S$ is a noetherian AS-regular algebra, $f\in S_d$ is a regular normal element, and $A=S/(f)$, 
then the functor $\Coker:\NMF^{\ZZ}_S(f)\to \TR^{\ZZ}(A)=\CM^{\ZZ}(A)$ induces an equivalence functor
$\underline {\Coker}:\uNMF^{\ZZ}_S(f)\to \uTR^{\ZZ}(A) = \uCM^{\ZZ}(A)$.
\end{theorem}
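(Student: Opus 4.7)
The plan is to prove the equivalence via the standard four-step argument: (i) $\Coker$ maps $\NMF^{\ZZ}_S(f)$ into $\CM^{\ZZ}(A) = \TR^{\ZZ}(A)$; (ii) it descends to the factor category $\uNMF^{\ZZ}_S(f) = \NMF^{\ZZ}_S(f)/\cG$; (iii) essential surjectivity via lifting of MCM resolutions; and (iv) full faithfulness via Lemma \ref{lem.ch}. For (i), fix $\phi \in \NMF^{\ZZ}_S(f)$ with components $F^i$ of rank $r$. By Theorem \ref{thm.nmf}(1), $\phi^{i+2} = \nu_f(\phi^i)(-d)$ and $\phi^i\phi^{i+1} = f \cdot E_r$; since $f$ is regular, each $\phi^i$ is injective over $S$, and reducing mod $f$ produces an acyclic complex of graded free $A$-modules. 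Thus $C(\phi)_{\leq 0}$ is a graded free resolution of $M := \Coker\phi^0$ over $A$, and applying $\Hom_A(-,A)$ yields the dual matrix factorization, again acyclic; this gives $\Ext^{\geq 1}_A(M, A) = 0$ and, symmetrically, the corresponding vanishing on the $A^o$ side, while biduality follows from the explicit periodic structure. Hence $M \in \TR^{\ZZ}(A) = \CM^{\ZZ}(A)$. Step (ii) is immediate: $\Coker(\phi_F) = 0$ and $\Coker({_G\phi}) = \overline{G}$ is graded free over $A$, so any object of $\cG$, and any morphism factoring through $\cG$, becomes zero in $\uCM^{\ZZ}(A)$.

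For (iii), given $M \in \CM^{\ZZ}(A)$, take a minimal graded free resolution $\cdots \to P^1 \xrightarrow{\overline{\phi^0}} P^0 \to M \to 0$ over $A$ and lift $\overline{\phi^0}$ to a graded $S$-homomorphism $\phi^0 : F^1 \to F^0$ between graded free $S$-modules of equal rank, which is possible because $\Omega M \in \CM^{\ZZ}(A)$ has the same minimal number of generators as $M$. Since $M$ is MCM, its minimal resolution is two-periodic up to degree shift and $\nu_f$-twist, so we lift the next differential to $\phi^1 : F^2 \to F^1$. The composition satisfies $\phi^0\phi^1 \equiv 0 \pmod{f}$, so $\phi^0\phi^1 = f \cdot \tau$ for some endomorphism $\tau$; the classical hypersurface argument, using the injectivity of $\phi^0$ and the regularity of $f$, adjusts $\phi^1$ so that $\phi^0\phi^1 = f \cdot E_r$ and, consequently, $\phi^1\phi^0 = f \cdot E_r$ as well. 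Setting $\phi^{i+2} := \nu_f(\phi^i)(-d)$ produces $\phi \in \NMF^{\ZZ}_S(f)$ with $\Coker\phi \cong M$. For (iv), Lemma \ref{lem.ch} gives $\uHom_A(M, N) \cong \Ext^1_A(M, \Omega N)$ for $M, N \in \TR^{\ZZ}(A)$, and the explicit form of the connecting homomorphism shows that this $\Ext$ group is also computed by morphisms in $\NMF^{\ZZ}_S(f)$ modulo those factoring through $\cG$, with the bijection realized by $\underline{\Coker}$.

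The main obstacle is step (iii). Lifting the MCM resolution to $S$ initially only produces $\phi^0\phi^1 = f \cdot \tau$ for some matrix $\tau$ that need not be the identity, and arranging $\phi^0\phi^1 = f \cdot E_r$ on the nose requires careful bookkeeping with the normalizing automorphism $\nu_f$ and the degree shift by $d$, using the regularity and normality of $f$ to iteratively correct $\phi^1$. Once this rigidification is achieved, the rest of the argument follows from categorical generalities combined with Lemma \ref{lem.ch}.
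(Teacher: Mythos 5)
First, note that the paper itself contains no proof of Theorem \ref{thm.m4}: it is quoted verbatim from \cite{MU} (and re-used in Lemma \ref{lem.35}(1)), so your argument has to stand on its own, and as it stands it has a genuine gap at the step you yourself call the main obstacle, essential surjectivity. The classical Eisenbud--Yoshino argument does not lift two differentials of the minimal $A$-free resolution and then ``adjust'' $\phi^1$ until $\phi^0\phi^1=fE_r$. It first proves $\pd_S M=1$ (using that $S$ is AS-regular and $M$ is graded maximal Cohen--Macaulay over the AS-Gorenstein ring $A=S/(f)$), takes a graded $S$-free resolution $0\to F^1\xrightarrow{\ \phi^0\ }F^0\to M\to 0$, and then obtains $\phi^1$ by factoring the right $S$-module map $f\cdot\colon F^0(-d)\to F^0$ through $\Ker(F^0\to M)=\Im\phi^0\cong F^1$, which is legitimate because $Mf=0$; exactness of the $S$-resolution is what forces the product to be exactly $f$, and $\Coker\overline{\phi^0}\cong M$ follows by reduction mod $f$. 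Your route only produces $\phi^0\phi^1=fT$, and whether $T$ can be normalized to the identity by re-choosing homogeneous lifts (the ambiguity in a lift of a degree-$e$ entry is only $fS_{e-d}$) is precisely the assertion needing proof; appealing to ``the classical hypersurface argument'' is circular, since that argument is the different one just described. You also never establish $\pd_S M=1$, the injectivity of your lifted $\phi^0$, or the equality of ranks of $F^0$ and $F^1$, and the claim that $\Omega M$ has the same number of generators as $M$ is itself a consequence of the two-periodicity you are in the middle of constructing. Finally, in this setting the second identity is not ``$\phi^1\phi^0=fE_r$'' but $\phi^1\phi^2=f\cdot$ with $\phi^2=\nu_f(\phi^0)(-d)$ (Theorem \ref{thm.nmf}); it does follow from $\phi^0\phi^1=f\cdot$ using normality of $f$ and injectivity of $\phi^0$, but the $\nu_f$-twist must actually be carried through rather than asserted.

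Step (iv) has the same character: Lemma \ref{lem.ch} only gives $\uHom_A(M,N)\cong\Ext^1_A(M,\Omega N)$ for totally reflexive modules. The identification of this group with $\Hom$ in $\uNMF^{\ZZ}_S(f)$ -- that is, fullness and faithfulness of $\underline{\Coker}$, including the fact that a chain map over $A$ can be corrected to a genuine morphism of noncommutative matrix factorizations over $S$, and that morphisms whose cokernel map factors through a graded free $A$-module factor through $\cG$ -- is exactly the content to be proved, and you assert it in one sentence. Note that the paper's own Lemma \ref{lem.35}(3)--(4) runs this comparison in the opposite logical direction, only after quoting full faithfulness from \cite{MU}. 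Steps (i) and (ii) are fine in outline (exactness of $C(\phi)$ mod $f$ and of its dual can be checked with the periodicity $\phi^{i+2}=\nu_f(\phi^i)(-d)$ and regularity of $f$), but as submitted the proposal does not constitute a proof of density or of full faithfulness.
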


\section{Noncommutative Kn\"orrer's Periodicity Theorem} 

In this section, we prove a noncommutative graded version of Kn\"orrer's periodicity theorem following the methods in \cite{Y}.  In the noncommutative setting, we use an Ore extension in place of the polynomial extension following \cite {CKMW}.

\subsection{Ore Extensions} 

We first list some basic properties of an Ore extension which are needed in this paper.  Let $S$ be a ring and $\s\in \Aut S$.  An \emph{Ore extension} $S[u; \s]$ of $S$ by $\s$ is $S[u; \s]=S[u]$ as a free right $S$-module such that $au=u\s(a)$ for $a\in S$.  It is easy to see that $u\in S[u; \s]$ is a regular normal element with the normalizing automorphism $\nu_u\in \Aut (S[u; \s])$ uniquely determined by $\nu_u|_S=\s$ and $\nu _u(u)=u$.  

\begin{lemma} Let $S$ be a ring and $\s, \t\in \Aut S$.  Then there exists $\bar \t\in \Aut S[u; \s]$ such that $\bar \t|_S=\t$ and $\bar \t(u)=u$ if and only if $\s\t=\t\s$. 
\end{lemma}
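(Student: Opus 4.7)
This is an ``if and only if'' about extending automorphisms across an Ore extension, so my plan is to handle each direction by exploiting the defining relation $au=u\s(a)$ together with the fact that $S[u;\s]$ is free as a right $S$-module on the basis $1,u,u^2,\dots$; in particular, left multiplication by $u$ is injective (so $u$ is left-regular).

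For the forward direction, assume such a $\bar\t\in\Aut S[u;\s]$ exists. I would apply $\bar\t$ to the Ore relation $au=u\s(a)$ for an arbitrary $a\in S$. Using $\bar\t|_S=\t$ and $\bar\t(u)=u$, the left-hand side becomes $\t(a)u$ and the right-hand side becomes $u\,\t\s(a)$. On the other hand, the Ore relation itself applied to $\t(a)\in S$ gives $\t(a)u=u\s\t(a)$. Equating these yields $u\,\s\t(a)=u\,\t\s(a)$, and left-regularity of $u$ forces $\s\t(a)=\t\s(a)$ for every $a\in S$, i.e.\ $\s\t=\t\s$.

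For the reverse direction, assume $\s\t=\t\s$. The cleanest way is to invoke the universal property of Ore extensions: given a ring $R$, a homomorphism $\varphi:S\to R$, and $r\in R$ with $\varphi(a)r=r\varphi(\s(a))$ for all $a\in S$, there is a unique homomorphism $\tilde\varphi:S[u;\s]\to R$ extending $\varphi$ with $\tilde\varphi(u)=r$. I take $R=S[u;\s]$, $\varphi=\t$ (viewed inside $S[u;\s]$), and $r=u$. The required compatibility $\t(a)u=u\,\t\s(a)$ follows from the Ore relation $\t(a)u=u\s\t(a)$ combined with the assumption $\s\t=\t\s$. This produces a ring endomorphism $\bar\t$ with $\bar\t|_S=\t$ and $\bar\t(u)=u$; concretely, $\bar\t(\sum u^ia_i)=\sum u^i\t(a_i)$. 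Since $\t^{-1}$ also commutes with $\s$, the same construction produces $\overline{\t^{-1}}$, and checking that $\bar\t\circ\overline{\t^{-1}}$ and $\overline{\t^{-1}}\circ\bar\t$ both agree with $\id$ on $S$ and on $u$ shows (by uniqueness in the universal property) that they are the identity, so $\bar\t\in\Aut S[u;\s]$.

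There is not really a hard step here; the only mild subtlety is making sure one has left-regularity of $u$ available for the forward direction (which is clear from the free right $S$-module structure) and remembering that the universal property of $S[u;\s]$ is what makes the construction in the reverse direction go through without having to verify multiplicativity element by element.
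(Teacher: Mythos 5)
Your proof is correct and follows essentially the same route as the paper: the key step in both is to apply the (prospective) $\bar\t$ to the Ore relation $au=u\s(a)$, rewrite via $\t(a)u=u\s\t(a)$, and cancel $u$ using its regularity to get $\s\t=\t\s$. The paper compresses this into a single computation (leaving the well-definedness of $\bar\t$ in the ``if'' direction implicit), whereas you spell out the converse via the universal property of the Ore extension and the inverse $\overline{\t^{-1}}$; that is just added detail, not a different argument.
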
 

\begin{proof} Since $u$ is a regular element, $\bar \t(au-u\s(a))=\t(a)u-u\t(\s(a))=u(\s(\t(a))-\s(\t(a)))=0$ for every $a\in S$ if and only if $\s\t=\t\s$. 
\end{proof}

By abuse of notation, we write $\bar \t=\t$ in the sequel.  

\begin{lemma} \label{lem.rn} 
Let $f\in S$ be a regular normal element, $\s, \t\in \Aut _0(S; f)$. 
If $\s\t=\t\s=\nu_f$, then $f+uv\in S[u; \s][v; \t]$ is a regular normal element. 
\end{lemma}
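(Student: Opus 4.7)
The plan is to establish both the normality and the regularity of $f+uv$ inside $R := S[u;\s][v;\t]$ directly: first I would exhibit an automorphism $\nu \in \Aut R$ realizing the normalization of $f+uv$, and then I would verify right-injectivity of multiplication by $f+uv$ using the standard right $S$-basis $\{u^iv^j : i,j\geq 0\}$ of $R$.

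For normality, define $\nu$ on generators by $\nu|_S := \s\t = \nu_f$, $\nu(u):=u$, $\nu(v):=v$. Since $\s\t = \t\s$, the automorphism $\s\t$ of $S$ extends to $T:=S[u;\s]$ fixing $u$ and then to $R$ fixing $v$; moreover $\nu$ is invertible because $(\s\t)^{-1}$ extends in the same way. Note that $\t(u)=u$ forces $uv = v\t(u) = vu$ in $R$, so $u$ and $v$ commute. The normality relation $r(f+uv) = (f+uv)\nu(r)$ then only has to be checked on the generating set $S\cup\{u,v\}$: for $s\in S$, one computes $s(f+uv) = sf + suv = f\nu_f(s) + u\s(s)v = f\s\t(s) + uv\,\t\s(s) = (f+uv)\s\t(s)$, using $\s(s)v=v\t\s(s)$ together with $\s\t=\t\s$; for $r=u$, since $\s(f)=f$ gives $uf=fu$ and $uvu = u^2 v$, we get $u(f+uv) = (f+uv)u$; and the case $r=v$ is symmetric.

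For regularity, the same computation extends to give, for any $s\in S$ and $i,j\geq 0$,
\[
(u^iv^j s)(f+uv) = u^iv^j f\,\s\t(s) + u^{i+1}v^{j+1}\,\s\t(s),
\]
since $f$ commutes with both $u$ and $v$. Writing a hypothetical annihilator $r = \sum u^iv^j s_{ij}$ with finitely many nonzero $s_{ij}\in S$ and equating the coefficient of each $u^iv^j$ in $r(f+uv)=0$ yields the recurrence $f\cdot s_{ij} = -s_{i-1,j-1}$ (with the convention $s_{-1,\ast}=s_{\ast,-1}=0$). Choosing $(i_0,j_0)$ with $i_0+j_0$ maximal among indices where $s_{ij}\neq 0$, the equation at $(i_0+1,j_0+1)$ forces $s_{i_0 j_0} = -f\cdot s_{i_0+1,j_0+1} = 0$, a contradiction. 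This gives right regularity, and combined with normality it gives left regularity as well. The main obstacle I expect is simply keeping the several commutation and twisting identities ($\s\t=\t\s=\nu_f$ together with $\s(f)=\t(f)=f$) straight, as they are needed in tandem both to show that $\nu$ respects the two defining Ore relations of $R$ and to compute the normalization on the generators; beyond this, everything reduces to bookkeeping with the Ore relations.
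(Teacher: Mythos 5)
Your proof is correct and follows essentially the same route as the paper: normality is verified on the generating set $S\cup\{u,v\}$, producing the same normalizing automorphism $\nu$ with $\nu|_S=\nu_f$, $\nu(u)=u$, $\nu(v)=v$, and regularity is a leading-term argument in the total $(u,v)$-degree. The only cosmetic difference is that the paper extracts the top-degree component and invokes regularity of $uv$, whereas you run the equivalent coefficient recurrence in the right $S$-basis $\{u^iv^j\}$.
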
 

\begin{proof} If $\s\t=\nu_f$, then $a(f+uv)=f\nu_f(a)+uv\t\s(a)=(f+uv)\nu_f(a)$ for every $a\in S$, $u(f+uv)=\s^{-1}(f)u+uvu=(f+uv)u$, and $v(f+uv)=\t^{-1}(f)v+uv^2=(f+uv)v$, so $f+uv\in S[u; \s][v; \t]$ is a normal element.

Define the degree on $S[u; \s][v; \t]$ by $\deg (S)=0$ and $\deg u=\deg v=1$.  For $0\neq a=\sum _{i=0}^da_i\in S[u; \s][v; \t]$ where $\deg a_i=i$ such that $a_d\neq 0$, if $a(f+uv)=0$, then $a_duv=0$.  Since $uv$ is a regular element in $S[u; \s][v; \t]$, we have $a_d=0$, which is a contradiction, so $f+uv$ is regular.  
\end{proof} 

\begin{example} \label{ex.rn} 
Let $S$ be a ring and $f\in S$ a regular normal element.  The following pair $(\s, \t)$ of automorphisms of $S$ satisfies the conditions $\s, \t\in \Aut _0(S; f)$ 
and $\s\t=\t\s=\nu_f$.
\begin{enumerate}
\item{} $(\s, \t)=(\sqrt {\nu_f}, \sqrt{\nu_f})$ if $\sqrt {\nu_f}\in \Aut _0(S; f)$
exists (see \cite {CKMW}). 
\item{} $(\s, \t)=(\nu_f, \id), (\id, \nu_f)$, which always exist. 
\end{enumerate}
\end{example} 

\begin{remark} \label{rem.uvf} 
Let $f\in S$ be a regular normal element, $\s, \t\in \Aut _0(S; f)$ such that 
$\s\t=\t\s=\nu_f$.  By the proof of Lemma \ref{lem.rn}, $\nu_u, \nu_v, \nu_{f+uv}\in \Aut (S[u; \s][v;\t])$ are uniquely determined by 
\begin{align*}
& \nu_u|_S=\s,\; \nu_u(u)=u,\; \nu_u(v)=v, \\
& \nu_v|_S=\t,\; \nu_v(u)=u,\; \nu_v(v)=v, \\
& \nu_{f+uv}|_S=\nu_f,\; \nu_{f+uv}(u)=u,\; \nu_{f+uv}(v)=v.   
\end{align*}
\end{remark} 

Note that if $S$ is a graded ring and $\s\in \GrAut S$, then $S[u; \s]$ is naturally a graded ring.  In this subsection, we stated and proved results in the ungraded setting, but graded versions of these results also hold.  

\subsection{Noncommutative Kn\"orrer's Periodicity Theorem} 

In this subsection, we state and prove results in the graded setting although ungraded versions of those results also hold except for Theorem \ref{thm.nkp}.   
{\bf In the sequel, we tacitly assume that $S$ is an $\NN$-graded ring, and $\deg f, \deg u, \deg v>0$ such that $\deg u+\deg v=\deg f$, so that $f+uv\in S[u; \s][v; \t]$ is a homogeneous element.}

Let $S$ be a graded ring, and $\s, \t\in \GrAut S$.  A homomorphism $\phi:M\to N$ in $\GrMod S$ induces a homomorphism $\phi\otimes \id:M\otimes _SS[u; \s][v; \t]\to N\otimes _SS[u; \s][v; \t]$ in $\GrMod S[u; \s][v;\t]$, which we denote by $\phi:M[u; \s][v; \t]\to N[u; \s][v; \t]$ by abuse of notation since both maps are given by the left multiplication of the same matrix.  

\begin{lemma} Let $S$ be a graded ring, $f\in S_d$ a regular normal element, and $\s, \t\in \GrAut _0(S; f)$.  
If 
$\s\t=\t\s=\nu_f$, then
$\cH :\NMF_S^{\ZZ}(f)\to \NMF_{S[u; \s][v; \t]}^{\ZZ}(f+uv)$ defined by 
\begin{align*}
& \cH(\{\phi^i:F^{i+1}\to F^i\}_{i\in \ZZ})= \\
& \left\{\begin{pmatrix} \t(\phi^i) & u\cdot  \\ v\cdot  & -\phi^{i+1}(e) \end{pmatrix}:F^{i+1}[u; \s][v; \t]\oplus F^{i+2}[u; \s][v; \t](e) \to F^i[u; \s][v; \t]\oplus F^{i+1}[u; \s][v; \t](e)\right\}_{i\in \ZZ}, \\
& \cH(\{\mu^i\}_{i\in \ZZ})=\left\{\begin{pmatrix} \t(\mu^{i}) & 0 \\ 0 & \mu^{i+1}(e) \end{pmatrix}\right\}_{i\in \ZZ}
\end{align*} 
is a functor,  which induces a functor $\underline \cH :\uNMF_S^{\ZZ}(f)\to \uNMF_{S[u; \s][v; \t]}^{\ZZ}(f+uv)$ where $\deg v=e$ so that $\deg u=d-e$, and $u\cdot :F^{i+2}[u; \s][v; \t](e)\cong F^i[u; \s][v; \t](e-d)\to F^i[u; \s][v; \t]$. 
\end{lemma}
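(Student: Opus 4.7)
The plan is a three-step verification. First, check that $\cH(\phi)$ lies in $\NMF_{S[u;\s][v;\t]}^{\ZZ}(f+uv)$ by computing the block product $\cH(\phi)^i\cH(\phi)^{i+1}$ entrywise. The $(1,1)$-entry is $\t(\phi^i\phi^{i+1}) + uv\cdot = \t(f\cdot) + uv\cdot = (f+uv)\cdot$ using $\t\in\GrAut_0(S;f)$; the $(2,2)$-entry is $vu\cdot + \phi^{i+1}(e)\phi^{i+2}(e) = vu\cdot + f\cdot = (f+uv)\cdot$, where the crucial commutation $uv=vu$ in $R:=S[u;\s][v;\t]$ is the content of $\nu_v(u)=u$ recorded in Remark \ref{rem.uvf}. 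The off-diagonal entries vanish via the matrix identities $\t(\Phi^i)\,u = u\,\Phi^{i+2}$ and $v\,\t(\Phi^{i+1}) = \Phi^{i+1}\,v$. The first rewrites $u\,\Phi^{i+2} = u\,\nu_f(\Phi^i) = \s^{-1}\nu_f(\Phi^i)\,u$ via the commutation $uc = \s^{-1}(c)u$ in $R$, and this equals $\t(\Phi^i)\,u$ precisely because the hypothesis $\s\t = \nu_f$ gives $\s^{-1}\nu_f=\t$; the second is analogous using $vc=\t^{-1}(c)v$ combined with $\t^{-1}\t = \id$. This first step is where the two-sided hypothesis $\s\t=\t\s=\nu_f$ is used essentially.

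Second, verify functoriality. Given a morphism $\mu:\phi\to\psi$, the required identity $\cH(\psi)^i\,\cH(\mu)^{i+1} = \cH(\mu)^i\,\cH(\phi)^i$ reduces on the diagonal to the defining relation $\psi^i\mu^{i+1} = \mu^i\phi^i$ (after applying $\t$ or the shift by $e$) and on the off-diagonal to exactly the same commutation identities as in the first step, now applied to $\mu^{i+2} = \nu_f(\mu^i)$ from Theorem \ref{thm.nmf}(2). Preservation of identities and composition is then immediate from the block structure of $\cH(\mu)$.

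Third, to descend to the stable categories it suffices to show $\cH(\cG)\subseteq \cG$ up to isomorphism in $\NMF_R^{\ZZ}(f+uv)$. Computing $\cH(\phi_F)$ yields, at even indices, the matrix $\begin{pmatrix}\id & u\\ v & -f\end{pmatrix}$ and, at odd indices, $\begin{pmatrix}f & u\\ v & -\id\end{pmatrix}$, acting on the appropriate shifted sum of copies of $F$. Left multiplication of the even matrix by $\begin{pmatrix}\id & 0\\ -v & \id\end{pmatrix}$ and right multiplication by $\begin{pmatrix}\id & -u\\ 0 & \id\end{pmatrix}$ produces $\begin{pmatrix}\id & 0\\ 0 & -(f+uv)\end{pmatrix}$, using $uv=vu$; an analogous pair of operations block-diagonalizes the odd matrix. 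The plan is then to assemble these local row/column operations into a coherent family of isomorphisms $\{\mu^i\}$ in $\NMF_R^{\ZZ}(f+uv)$ that exhibits $\cH(\phi_F)$ as isomorphic to an object of the form $\phi_{F'}\oplus{_{G'}\phi}\in\cG$, and to carry out a parallel computation for $\cH({_F\phi})$. Since $\cH$ automatically sends morphisms factoring through $\cG$ to morphisms factoring through $\cG$, the induced functor $\underline{\cH}$ will be well defined.

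The main obstacle will be the third step: assembling the row/column operations at each position into a \emph{coherent} family of isomorphisms that is consistent across consecutive positions, since the changes of basis on the source of $\cH(\phi_F)^i$ and the target of $\cH(\phi_F)^{i+1}$ must agree. The off-diagonal identity in the first step is subtle but routine once recognized as the algebraic content of $\s\t=\nu_f$, and functoriality in the second step is formal once that identity is in place.
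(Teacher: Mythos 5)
Your proposal is correct and follows essentially the same route as the paper: the same block-matrix verification that $\cH(\phi)$ is a factorization (the $(1,2)$-entry vanishing via $\phi^{i+2}=\nu_f(\phi^i)$ and $\s\t=\nu_f$, the $(2,1)$-entry via $av=v\t(a)$, and $uv=vu$ on the diagonal), the same reduction of functoriality to $\mu^{i+2}=\nu_f(\mu^i)$, and the same explicit row/column operations block-diagonalizing $\cH(\phi_F)$ and $\cH({_F\phi})$. The coherence issue you flag in the third step resolves exactly as you anticipate and as the paper's two displayed identities are arranged: take the inverse of your column multiplier at index $i$ as the change of basis at index $i+1$ (these multipliers are fixed by $\nu_{f+uv}$, so the $2$-periodicity required by Theorem \ref{thm.nmf} is automatic), which glues the local operations into an isomorphism $\cH(\phi_F)\cong \phi_{F'}\oplus {_{G'}\phi}\cong \cH({_F\phi})$ in $\NMF^{\ZZ}_{S[u;\s][v;\t]}(f+uv)$, and then $\underline{\cH}$ is well defined since $\cH$ is additive and sends $\cG$ into objects isomorphic to objects of $\cG$.
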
 

\begin{proof} 
For $\phi\in \NMF_S^{\ZZ}(f)$, since $\phi^{i+2}(e)=\nu_f(\phi^i)(e-d)$
for every $i\in \ZZ$ by 
Theorem \ref{thm.nmf} (1), 
\begin{align*} 
& \begin{pmatrix} \t(\phi^i) & u\cdot \\ v\cdot & -\phi^{i+1}(e) \end{pmatrix}\begin{pmatrix} \t(\phi^{i+1}) & u\cdot \\ v\cdot & -\phi^{i+2}(e) \end{pmatrix}=\begin{pmatrix} \t(\phi^i)\t(\phi^{i+1})+uv & \t(\phi^i)u-u\phi^{i+2}(e) \\ v\t(\phi^{i+1})-\phi^{i+1}(e)v & vu+\phi^{i+1}(e)\phi^{i+2}(e) \end{pmatrix} \\
&= \begin{pmatrix} \t(\phi^i\phi^{i+1})+uv & u\s\t(\phi^i)(e-d)-u\nu_f(\phi^{i})(e-d) \\ v\t(\phi^{i+1})-v\t(\phi^{i+1}) & (\phi^{i+1}\phi^{i+2})(e)+uv \end{pmatrix} = \begin{pmatrix} (f+uv)\cdot & 0 \\ 0 & (f+uv)\cdot \end{pmatrix} 
\end{align*}
for every $i\in \ZZ$, so $\cH(\phi)\in \NMF_{S[u; \s][v; \t]}^{\ZZ}(f+uv)$.  For $\mu\in \Hom_{\NMF_S^{\ZZ}(f)}(\phi, \psi)$, since $\mu^{i+2}(e)=\nu_f(\mu^i)(e-d)$
for every $i\in \ZZ$ by 
Theorem \ref{thm.nmf} (2), 
\begin{align*}
& \begin{pmatrix} 
\t(\mu^{i}) & 0 \\ 0 & \mu^{i+1}(e) \end{pmatrix}\begin{pmatrix} \t(\phi^i) & u\cdot  \\ v\cdot  & -\phi^{i+1}(e) \end{pmatrix} \\
= & \begin{pmatrix} \t(\mu^i)\t(\phi^i) & \t(\mu^i)u  \\ \mu^{i+1}(e)v  & -\mu^{i+1}(e)\phi^{i+1}(e) \end{pmatrix} = \begin{pmatrix} \t(\mu^i\phi^i) & u\s\t(\mu^i)(e-d) \\ v\t(\mu^{i+1})  & -(\mu^{i+1}\phi^{i+1})(e) \end{pmatrix} \\
= & \begin{pmatrix} \t(\psi^i\mu^{i+1}) & u\nu_f(\mu^{i})(e-d)  \\ v\t(\mu^{i+1})  & -(\psi^{i+1}\mu^{i+2})(e) \end{pmatrix}=\begin{pmatrix} \t(\psi^i)\t(\mu^{i+1}) & u\mu^{i+2}(e)  \\ v\t(\mu^{i+1})  & -\psi^{i+1}(e)\mu^{i+2}(e) \end{pmatrix} \\
= & \begin{pmatrix} \t(\psi^i) & u\cdot  \\ v\cdot  & -\psi^{i+1}(e) \end{pmatrix}\begin{pmatrix} \t(\mu^{i+1}) & 0 \\ 0 & \mu^{i+2}(e) \end{pmatrix}
\end{align*}
for every $i\in \ZZ$, so we have $\cH(\mu)\in \Hom_{\NMF_{S[u; \s][v; \t]}^{\ZZ}(f+uv)}(\cH(\phi), \cH(\psi))$, hence $\cH:\NMF_S^{\ZZ}(f)\to \NMF_{S[u; \s][v; \t]}^{\ZZ}(f+uv)$ is a functor.  

Since 
\begin{align*}
& \begin{pmatrix}1 & u \\ 0 & -1 \end{pmatrix}\begin{pmatrix} f+uv & 0 \\ 0 & 1 \end{pmatrix}=
\begin{pmatrix} f+uv & u \\ 0 & -1 \end{pmatrix}=
\begin{pmatrix} f & u \\ v & -1 \end{pmatrix}\begin{pmatrix} 1 & 0 \\ v &  1 \end{pmatrix},\\
& \begin{pmatrix} 1 & 0 \\ v & 1 \end{pmatrix}\begin{pmatrix} 1 & 0 \\ 0 & f+uv \end{pmatrix}=\begin{pmatrix} 1 & 0 \\ v & vu+f \end{pmatrix}=
\begin{pmatrix} 1 & u \\ v & -f \end{pmatrix}\begin{pmatrix} 1 & u \\ 0 &  -1 \end{pmatrix}, 
\end{align*}
the diagram
\[\xymatrix@R=4pc@C=5pc{
F^{2i+2}[u; \s][v; \t]\oplus F^{2i+3}[u; \s][v; \t](e)
\ar[r]^{\begin{pmatrix} 1 & 0 \\ v &  1 \end{pmatrix}}_{\cong}
\ar[d]_{\phi_{(F[u; \s][v; \t])}^{2i+1}\oplus {{_{(F[u; \s][v; \t](e))}}\phi}^{2i+1}}
&F^{2i+2}[u; \s][v; \t]\oplus F^{2i+3}[u; \s][v; \t](e)
\ar[d]^{\cH(\phi_F)^{2i+1}} \\
F^{2i+1}[u; \s][v; \t]\oplus F^{2i+2}[u; \s][v; \t](e)
\ar[r]^{\begin{pmatrix} 1 & u \\ 0 & -1 \end{pmatrix}}_{\cong}
\ar[d]_{\phi_{(F[u; \s][v; \t])}^{2i}\oplus {_{(F[u; \s][v; \t](e))}}\phi^{2i}}
&F^{2i+1}[u; \s][v; \t]\oplus F^{2i+2}[u; \s][v; \t](e)
\ar[d]^{\cH(\phi_F)^{2i}} \\
F^{2i}[u; \s][v; \t]\oplus F^{2i+1}[u; \s][v; \t](e)
\ar[r]^{\begin{pmatrix} 1 & 0 \\ v & 1 \end{pmatrix}}_{\cong}
&F^{2i}[u; \s][v; \t]\oplus F^{2i+1}[u; \s][v; \t](e) 
}\]
commutes for every $i\in \ZZ$ where $F^{2i+1}=F^{2i}=F(-id)$,
so $\cH(\phi_{F})\cong \phi_{(F[u; \s][v; \t])}\oplus {_{(F[u; \s][v; \t](e))}}\phi$.
Similarly, we can show that $\cH({_{F}\phi}) \cong {_{(F[u; \s][v; \t])}}\phi \oplus \phi_{(F[u; \s][v; \t](-d+e))}$,
so $\cH$ induces a functor 
$$\underline \cH :\uNMF_S^{\ZZ}(f)\to \uNMF_{S[u; \s][v; \t]}^{\ZZ}(f+uv).$$    
\end{proof}

Let $S$ be a graded ring.  A {\em homogeneous matrix} over $S$ is a matrix whose entries are homogeneous elements in $S$.
For homogeneous matrices $\Phi, \Psi$ over $S$, we write $\Phi \sim \Psi$ if there exist invertible homogeneous matrices $P, Q$ over $S$
such that $\Psi=P\Phi Q$.  If $\phi, \psi$ are graded right $S$-module homomorphisms between graded free $S$-modules given by the left multiplications of $\Phi, \Psi$ respectively, then we write $\phi\sim \psi$ when $\Phi\sim \Psi$.  
Note that $\phi \sim \psi$ if and only if $\Coker \phi \cong \Coker \psi $ as graded $S$-modules.  

Let $f\in S_d$ and $A=S/(f)$.
If $\phi\sim \psi$, then it is easy to see that $\overline \phi\sim \overline \psi$.
For $\phi, \psi\in \NMF_S^{\ZZ}(f)$, if $\phi\cong \psi$, then $\phi^i\sim \psi^i$ for every $i\in \ZZ$.   

Let $f\in S_d$ be a regular normal element, and $\s\in \GrAut S$. 
Note that if $P, Q$ are homogeneous matrices over $S$, then $\s(PQ)=\s(P)\s(Q)$ and $Pf=f\nu_f(P)$.  

\begin{lemma} \label{lem.a1n} 
Let $S$ be a graded ring, $f\in S_d$ a regular normal element, $\s, \t\in \GrAut _0(S; f)$ such that 
$\s\t=\t\s=\nu_f$.  
Let $\mu=\left\{ \begin{pmatrix} \a^i & \b^i \\ \c^i & \d^i \end{pmatrix} \right\}_{i \in \ZZ} \in \Hom_{\NMF_{S[u; \s][v; \t]}^{\ZZ}(f+uv)}(\cH(\phi), \cH(\psi))$ where $\a^i, \b^i, \c^i, \d^i$ are given by left multiplications of the homogeneous matrices $A^i, B^i, C^i, D^i$. 
If $A^1|_{u=v=0}=0$ in $S$,
then 
$$
\begin{pmatrix} 
\t(\psi^1) & u\cdot & \a^1 & \b^1 \\ 
v\cdot & -\psi^2(e) & \c^1& \d^1 \\
0 & 0 & \t(\phi^0) & u\cdot \\
0 & 0 & v\cdot & -\phi^1(e) 
\end{pmatrix}
\sim
\begin{pmatrix} 
\t(\psi^1) & u\cdot & 0 & 0 \\ 
v\cdot & -\psi^2(e) & 0& 0 \\
0 & 0 & \t(\phi^0) & u\cdot \\
0 & 0 & v\cdot & -\phi^1(e) 
\end{pmatrix}.$$
\end{lemma}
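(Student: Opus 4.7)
The plan is to find invertible homogeneous matrices $P, Q$ over $S[u;\s][v;\t]$ realizing the equivalence. Writing the LHS in $2\times 2$ block form as $M=\begin{pmatrix} \cH(\psi)^1 & \mu^1 \\ 0 & \cH(\phi)^0 \end{pmatrix}$, a natural choice is $P=\begin{pmatrix} I & Y \\ 0 & I \end{pmatrix}$ and $Q=\begin{pmatrix} I & X \\ 0 & I \end{pmatrix}$, both automatically invertible with inverses of the same shape. A direct computation gives
$$PMQ=\begin{pmatrix} \cH(\psi)^1 & \cH(\psi)^1 X + \mu^1 + Y \cH(\phi)^0 \\ 0 & \cH(\phi)^0 \end{pmatrix},$$
so the task reduces to exhibiting block matrices $X, Y$ satisfying $\cH(\psi)^1 X + Y \cH(\phi)^0 = -\mu^1$; this is the assertion that $\mu^1$ is a ``null-homotopy'' between $\cH(\phi)^0$ and $\cH(\psi)^1$.

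The input is the hypothesis $\a^1|_{u=v=0}=0$. Since every element of $S[u;\s][v;\t]$ has a unique expansion $\sum_{i,j\geq 0}a_{ij}u^iv^j$ with $a_{ij}\in S$, this lets us write $\a^1=u\a'+\a''v$ for some homogeneous matrices $\a', \a''$ over $S[u;\s][v;\t]$. These are the primary building blocks for $X, Y$: the leading factors of $u, v$ in $\a', \a''$ match against the $u\cdot$ and $v\cdot$ entries appearing inside $\cH(\phi)^0$ and $\cH(\psi)^1$, which is precisely what enables the cancellation.

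The construction then proceeds by taking explicit $2\times 2$ candidate matrices $X, Y$ whose entries are built from $\a', \a''$, the entries of $\mu^0$, and matrices coming from $\phi, \psi$, and verifying $\cH(\psi)^1 X + Y \cH(\phi)^0 = -\mu^1$ block-entry by block-entry. The verification will use (i) the four identities obtained by expanding the morphism condition $\mu^0 \cH(\phi)^0 = \cH(\psi)^0 \mu^1$, which relate $\b^1, \c^1, \d^1$ to entries of $\mu^0$ and to the matrices of $\phi, \psi$; (ii) the matrix factorization relations $\phi^0\phi^1=fE$ and $\psi^0\psi^1=fE$; and (iii) the Ore relations $au=u\s(a)$, $av=v\t(a)$ together with $\s\t=\t\s=\nu_f$ from Lemma~\ref{lem.rn}.

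The main obstacle is the noncommutative bookkeeping. The decomposition $\a^1=u\a'+\a''v$ is not canonical, and commuting $u$ or $v$ past an entry of $S$ always introduces a twist by $\s$ or $\t$. One must therefore choose $\a', \a''$ and the auxiliary blocks of $X, Y$ compatibly with both the morphism relations and these twists, and then carefully track the $\s, \t, \nu_f$ actions through each of the four block identities. The compatibility conditions $\s\t=\t\s=\nu_f$ and $\s(f)=\t(f)=f$ are essential: without them the twists introduced during elimination would not cancel, and the block-diagonal reduction would fail.
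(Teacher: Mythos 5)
Your overall strategy does coincide with the shape of the paper's proof: both arguments perform block-unitriangular row and column operations, i.e.\ reduce the claim to solving a homotopy equation $\cH(\psi)^1X+Y\cH(\phi)^0=-\mu^1$, and both begin by using $\a^1|_{u=v=0}=0$ to write $\a^1=u\a'+\a''v$ and thereby normalize $\a^1$ to $0$. The genuine gap is in where the elimination data for $X$ and $Y$ is supposed to come from. You propose to build $X,Y$ from $\a',\a''$, the entries of $\mu^0$, and the single morphism identity $\mu^0\cH(\phi)^0=\cH(\psi)^0\mu^1$, and then ``verify block-entry by block-entry''. That toolbox is not sufficient, and it is not how the entries that kill $\b^1,\c^1,\d^1$ actually arise; with only those ingredients the verification cannot be completed, and the construction of $X,Y$ is precisely the content of the lemma, which your plan defers rather than supplies.

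Concretely, what is missing is the following chain of steps. One must also use the morphism identities at indices $1$ and $2$ (equivalently, the identity at index $1$ together with $\mu^{i+2}=\nu(\mu^i)$, $\nu=\nu_{f+uv}$), which give, in matrix form, $\b^1v=\t(\psi^1)\a^2+u\c^2$ and $u\c^3=\a^2\t(\phi^2)+\b^2v$. From the first of these one gets $f\a^2=\t(\psi^0)\t(\psi^1)\a^2=\t(\psi^0)(\b^1v-u\c^2)$, and the \emph{regularity of $f$} (nowhere in your list of ingredients) then yields $\a^2|_{u=v=0}=0$, hence a decomposition $\a^2=uP+vQ$. Next, matching $u$- and $v$-coefficients in the two displayed identities produces auxiliary matrices $P',Q'$ with $\t(\b^1)-\t^2(\psi^1)Q=uP'$, $\psi^3P+\c^2=vP'$, $\c^3-P\t(\phi^2)=vQ'$, $Q\t(\phi^2)+\t(\b^2)=uQ'$. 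It is $P,Q,P',Q'$ (suitably twisted by $\t^{-1}$ and $\nu^{-1}$), not data extracted from $\a^1$ and $\mu^0$, that populate the off-diagonal blocks $X,Y$; the index-$0$ identity you cite enters only at the very end, to cancel the remaining term in the position of $\d^1$. In short, the decomposition of $\a^1$ only achieves the preliminary normalization $\a^1=0$; the heart of the proof is the induced vanishing $\a^2|_{u=v=0}=0$ and the coefficient-matching relations above, none of which appear in your proposal.
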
 

\begin{proof} By Remark \ref{rem.uvf}, we write $\nu_u=\s, \nu_v=\t, \nu_{f+uv}=\nu\in \GrAut (S[u; \s][v; \t])$ by abuse of notation.
Suppose that $\phi^i, \psi^i$ are given by the left multiplications of the homogeneous matrices $\Phi^i, \Psi^i$.
We should calculate
\[
\begin{pmatrix} 
\t(\Psi^1) & u & A^1 & B^1 \\ 
v & -\Psi^2 & C^1& D^1 \\
0 & 0 & \t(\Phi^0) & u \\
0 & 0 & v & -\Phi^1 
\end{pmatrix}.
\]
Since $A^1|_{u=v=0}=0$ in $S$, it follows that $A^1=uR+R'v$ for some homogeneous matrices $R, R'$ over $S[u; \s][v; \t]$, so we may assume that $A^1=0$ by a (noncommutative) elementary transformation of matrices.  Since $A^{2i+1}=\nu^i(A^1)=0$,  we have 
\begin{align*}
& \begin{pmatrix} A^{2i} & B^{2i}  \\ C^{2i}  & D^{2i} \end{pmatrix}\begin{pmatrix} \t(\Phi^{2i}) & u  \\ v  & -\Phi^{2i+1} \end{pmatrix}=\begin{pmatrix} \t(\Psi^{2i}) & u  \\ v  & -\Psi^{2i+1} \end{pmatrix}\begin{pmatrix} 0 & B^{2i+1}  \\ C^{2i+1}  & D^{2i+1} \end{pmatrix} \\
& \begin{pmatrix} 0 & B^{2i+1}  \\ C^{2i+1}  & D^{2i+1} \end{pmatrix}\begin{pmatrix} \t(\Phi^{2i+1}) & u  \\ v  & -\Phi^{2i+2} \end{pmatrix}=\begin{pmatrix} \t(\Psi^{2i+1}) & u  \\ v  & -\Psi^{2i+2} \end{pmatrix}\begin{pmatrix} A^{2i+2} & B^{2i+2}  \\ C^{2i+2}  & D^{2i+2} \end{pmatrix}.
\end{align*}
In particular, 
\begin{align}
& B^1v=\t(\Psi^1)A^2+uC^2 \label{eq1}\\ 
& uC^3=A^2\t(\Phi^2)+B^2v \label{eq2}\\
& \t(\Psi^0)B^1+uD^1=A^0u-B^0\Phi^1. \label{eq3}
\end{align}

Since
$fA^2=\t(f)A^2=\t(\Psi^0)\t(\Psi^1)A^2=\t(\Psi^0)(B^1v-uC^2)$
by (\ref{eq1}), it follows that $fA^2|_{u=v=0}=0$ in $S$, so we have $A^2|_{u=v=0}=0$ in $S$.
Thus 
\begin{align}
A^2=uP+vQ \textnormal {\; for some homogeneous matrices \;} P, Q. \label{eqpq}
\end{align}

By (\ref{eq1}) and (\ref{eqpq}),
$B^1v=\t(\Psi^1)(uP+vQ)+uC^2$,  
so 
$v(\t(B^1)-\t^2(\Psi^1)Q)=u(\Psi^3P+C^2)$,  
and hence we see 
\begin{align}
\t(B^1)-\t^2(\Psi^1)Q=uP', \;\; \Psi^3P+C^2=vP' \quad \textnormal {\; for some homogeneous matrix \;} P'. \label{eqp'}
\end{align}

By (\ref{eq2}) and (\ref{eqpq}), $uC^3=A^2\t(\Phi^2)+B^2v=(uP+vQ)\t(\Phi^2)+B^2v$, 
so 
$u(C^3-P\t(\Phi^2))=v(Q\t(\Phi^2)+\t(B^2))$,  
and hence we see 
\begin{align}
C^3-P\t(\Phi^2)=vQ', \;\; Q\t(\Phi^2)+\t(B^2)=uQ' \quad \textnormal {\; for some homogeneous matrix \;} Q'. \label{eqq'}
\end{align}

Since
\begin{align*}
& -\t(\Psi^1)\t^{-1}(Q)-u\t^{-1}(P')+B^1=\t^{-1}(-\t^2(\Psi^1)Q-uP'+\t(B^1))=0  \quad \textnormal{ by (\ref{eqp'}) }, \\
& C^1-\nu^{-1}(P)\t(\Phi^0)-\nu^{-1}\t^{-1}(Q')v=\nu^{-1}(C^3-P\t(\Phi^2)-vQ')=0  \quad \textnormal{ by (\ref{eqq'}) },
\end{align*}
and
\begin{align*}
&u(-v\t^{-1}(Q)+\Psi^2\t^{-1}(P')+D^1-\nu^{-1}(P)u+\t^{-1}\nu^{-1}(Q')\Phi^1) \\
& =-uv\t^{-1}(Q)-u^2\t^{-1}(P)+uD^1+\s^{-1}(\Psi^2)u\t^{-1}(P')+u\t^{-1}\nu^{-1}(Q')\Phi^1 \\
& =-u\t^{-1}(A^2)+uD^1+\t(\Psi^0)\t^{-1}(uP')+\t^{-1}\nu^{-1}(uQ')\Phi^1 \quad \textnormal{ by (\ref{eqpq})} \\
& =-u\s(A^0)+uD^1+\t(\Psi^0)(B^1-\t(\Psi^1)\t^{-1}(Q))+(\t^{-1}\nu^{-1}(Q)\Phi^0+B^0)\Phi^1 \quad \textnormal{ by (\ref{eqp'}), (\ref{eqq'})}\\
& =-A^0u+uD^1+\t(\Psi^0)B^1-f\t^{-1}(Q)+\t^{-1}\nu^{-1}(Q)f+B^0\Phi^1 \\
& = -f\t^{-1}(Q)+\t^{-1}\nu^{-1}(Q)f  \quad \textnormal{ by (\ref{eq3})}\\
&=0
\end{align*}
we obtain
{\small
\begin{align*}
& \begin{pmatrix} 
1 & 0 & 0 & 0 \\ 
0 & 1 & -\nu^{-1}(P) & -\t^{-1}\nu^{-1}(Q') \\
0 & 0 & 1 & 0 \\
0 & 0 & 0 & 1 
\end{pmatrix}\,\
\begin{pmatrix} 
\t(\Psi^1) & u & 0 & B^1 \\ 
v & -\Psi^2 & C^1& D^1 \\
0 & 0 & \t(\Phi^0) & u \\
0 & 0 & v & -\Phi^1 
\end{pmatrix}\,\
\begin{pmatrix} 
1 & 0 & 0 & -\t^{-1}(Q) \\ 
0 & 1 & 0 & -\t^{-1}(P') \\
0 & 0 & 1 & 0 \\
0 & 0 & 0 & 1 
\end{pmatrix} \\
= & \begin{pmatrix} 
\t(\Psi^1) & u & 0 & B^1 \\ 
v & -\Psi^2 & C^1-\nu^{-1}(P)\t(\Phi^0)-\t^{-1}\nu^{-1}(Q')v & D^1-\nu^{-1}(P)u+\t^{-1}\nu^{-1}(Q')\Phi^1 \\
0 & 0 & \t(\Phi^0) & u \\
0 & 0 & v & -\Phi^1 
\end{pmatrix} \,\
\begin{pmatrix} 
1 & 0 & 0 & -\t^{-1}(Q) \\ 
0 & 1 & 0 & -\t^{-1}(P') \\
0 & 0 & 1 & 0 \\
0 & 0 & 0 & 1 
\end{pmatrix} \\
= & \begin{pmatrix} 
\t(\Psi^1) & u & 0 & -\t(\Psi^1)\t^{-1}(Q)-u\t^{-1}(P')+B^1 \\ 
v & -\Psi^2 & C^1-\nu^{-1}(P)\t(\Phi^0)-\t^{-1}\nu^{-1}(Q')v & -v\t^{-1}(Q)+\Psi^2\t^{-1}(P')+D^1-\nu^{-1}(P)u+\t^{-1}\nu^{-1}(Q')\Phi^1 \\
0 & 0 & \t(\Phi^0) & u \\
0 & 0 & v & -\Phi^1 
\end{pmatrix}\\
=& \begin{pmatrix} 
\t(\Psi^1) & u & 0 & 0 \\ 
v & -\Psi^2 & 0 & 0 \\
0 & 0 & \t(\Phi^0) & u \\
0 & 0 & v & -\Phi^1 
\end{pmatrix}.
\end{align*}
}Hence the assertion follows.
\end{proof} 

\begin{lemma} \label{lem.35} 
Let $S$ be a right noetherian graded ring, $f\in S_d$ a regular normal element, and $A=S/(f)$.  
\begin{enumerate} 
\item{} $\Coker:\NMF_S^{\ZZ}(f)\to \TR^{\ZZ}(A)$ induces a fully faithful functor $\underline {\Coker}:\uNMF_S^{\ZZ}(f)\to \uTR^{\ZZ}(A)$.
\item{} For $\mu\in \Hom_{\NMF_S^{\ZZ}(f)}(\phi, \psi)$,
we have $e_S(\mu):=\left\{\begin{pmatrix} \psi^{i+1} & (-1)^{i}\mu^{i+1} \\ 0 & \phi^{i} \end{pmatrix}\right \}_{i\in \ZZ} \in \NMF_S^{\ZZ}(f)$. 
\item{} 
The map $\rho_S :\Hom_{\NMF_S^{\ZZ}(f)}(\phi, \psi)\to \Ext^1_{\GrMod A}(\Coker \phi, \Omega\Coker \psi)$ defined by 
$$\rho_S (\mu):=\{0\to \Omega \Coker \psi\to \Coker e_S(\mu)\to \Coker \phi\to 0\}$$ 
induces a bijection $\underline{\rho_S} :\Hom_{\uNMF_S^{\ZZ}(f)}(\phi, \psi)\to \Ext^1_{\GrMod A}(\Coker \phi, \Omega\Coker \psi)$. 
\item{} For $\mu\in \Hom_{\NMF_S^{\ZZ}(f)}(\phi, \psi)$, $\rho_S(\mu)=0$ if and only if 
$\overline{e_S(\mu)^0}= \overline{\begin{pmatrix} \psi^1& \mu^1 \\ 0 & \phi^0 \end{pmatrix}}
\sim \overline{\begin{pmatrix} \psi^1 & 0 \\ 0 & \phi^0 \end{pmatrix}}$.
\end{enumerate}
\end{lemma}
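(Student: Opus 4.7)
The plan is to prove the four claims in the order (2), (1), (3), (4), since (3) and (4) rest on (1), and (2) is independent.

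Part (2) is a direct computation. The upper-left and lower-right blocks of $e_S(\mu)^i e_S(\mu)^{i+1}$ are $\psi^{i+1}\psi^{i+2}=fE$ and $\phi^i\phi^{i+1}=fE$, while the upper-right block
\[
(-1)^{i+1}\psi^{i+1}\mu^{i+2} + (-1)^i \mu^{i+1}\phi^{i+1}
\]
vanishes by the morphism relation $\mu^{i+1}\phi^{i+1}=\psi^{i+1}\mu^{i+2}$ — the sign choice $(-1)^i$ is exactly what makes this cancellation work. The lower-left block is automatically zero.

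For Part (1), I first check that for $\phi\in \NMF_S^{\ZZ}(f)$ the reduced complex $C(\phi)$ is a doubly infinite exact sequence of graded free $A$-modules; its truncation in non-negative degrees gives a graded free resolution of $\Coker\phi$. Applying $\Hom_A(-,A)$ to $C(\phi)$ produces an analogous doubly periodic complex (a matrix factorization of $f$ over $S^\op$ after twisting by $\nu_f$), which is still exact. This yields $\Ext^i_A(\Coker\phi,A)=0$ and $\Ext^i_{A^\op}(\Hom_A(\Coker\phi,A),A)=0$ for $i\ge 1$, and double dualization returns the original complex, giving biduality. Hence $\Coker\phi\in\TR^{\ZZ}(A)$. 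Since $\Coker\phi_F\cong\bar F$ and $\Coker{_G\phi}=0$ are free over $A$, the functor $\Coker$ descends to $\underline{\Coker}:\uNMF_S^{\ZZ}(f)\to\uTR^{\ZZ}(A)$. For full faithfulness, a standard lifting argument (an Eisenbud-type construction, adapted to the normal regular setting via Theorem \ref{thm.nmf}) shows that every $\bar A$-linear map $\Coker\phi\to\Coker\psi$ lifts to a morphism of matrix factorizations, and the lift is unique modulo morphisms factoring through $\cG$.

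For Part (3), I apply (the graded version of) Lemma \ref{lem.ch} with the projective resolutions of $\Coker\phi,\Coker\psi$ taken to be the non-negative truncations of $C(\phi),C(\psi)$, and with $\mu^i$ itself serving as the lift of $\underline{\Coker}(\underline{\mu})$. Because $e_S(\mu)^0 = \begin{pmatrix} \psi^1 & \mu^1 \\ 0 & \phi^0 \end{pmatrix}$ (the sign at $i=0$ is $+1$), we have $\rho_S(\mu) = \underline{\delta}(\underline{\Coker}(\underline{\mu}))$. Lemma \ref{lem.ch} says $\underline{\delta}$ is a bijection, and Part (1) says $\underline{\Coker}$ induces a bijection on $\uHom$-sets; composing yields the bijection $\underline{\rho_S}$.

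Part (4) is a direct translation. The class $\rho_S(\mu)\in\Ext^1$ vanishes iff the displayed short exact sequence splits, iff
\[
\Coker\overline{e_S(\mu)^0}\ \cong\ \Omega\Coker\psi\oplus\Coker\phi\ =\ \Coker\overline{\begin{pmatrix} \psi^1 & 0 \\ 0 & \phi^0 \end{pmatrix}}
\]
as graded $A$-modules (noting $\Omega\Coker\psi=\Coker\bar\psi^1$ by exactness of $C(\psi)$). By the recollection that two homogeneous matrices between the same graded free modules are $\sim$-equivalent iff their cokernels are isomorphic, this is precisely the stated matrix equivalence.

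The main obstacle is Part (1): proving total reflexivity requires tracking the normalizing automorphism $\nu_f$ through the dualization (so the dual complex is again recognized as a matrix factorization), and proving full faithfulness requires the careful lifting argument along the two-periodic structure. Once (1) is in place, (3) and (4) are essentially formal consequences of Lemma \ref{lem.ch} together with the matrix-cokernel criterion for splitting.
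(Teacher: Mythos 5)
Your parts (2), (3) and (4) follow essentially the same route as the paper: the same sign computation for $e_S(\mu)$, the same identification $\rho_S(\mu)=\underline{\delta}(\underline{\Coker}(\underline{\mu}))$ obtained by feeding the truncated complexes $C(\phi)^{\geq 0}$, $C(\psi)^{\geq 0}$ and the lift $\mu$ into Lemma~\ref{lem.ch} and composing the two bijections, and the same splitting-versus-matrix-equivalence translation in (4) (both you and the paper implicitly use that, for finitely generated graded modules over a noetherian ring, an extension whose middle term is isomorphic to the direct sum of the ends must split, and that $\phi\sim\psi$ iff $\Coker\phi\cong\Coker\psi$). The only real divergence is (1): the paper disposes of it in one line by citing \cite[Theorem 6.6]{MU}, whereas you sketch a direct argument (exactness of $C(\phi)$, dualization via $\nu_f$ to get total reflexivity, then an Eisenbud-type lifting argument for full faithfulness). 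Your sketch is the correct outline of how that cited theorem is proved, and the exactness and descent steps you indicate are fine; but the fully faithful part --- constructing a lift of every graded $A$-homomorphism $\Coker\phi\to\Coker\psi$ to a morphism of noncommutative matrix factorizations and showing uniqueness modulo $\cG$ --- is the actual crux of (1) and is only asserted in your write-up. Since that is precisely what \cite[Theorem 6.6]{MU} supplies, nothing in your argument would fail; just be aware that in a self-contained treatment this lifting step is where the work lies, and in the paper it is outsourced to the reference rather than reproved.
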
 

\begin{proof} 
(1) This follows from \cite [Theorem 6.6]{MU}.

(2) Since $\mu^i\phi^i=\psi^i\mu^{i+1}$ for every $i\in \ZZ$, 
$$\begin{pmatrix} \psi^{i+1} & (-1)^{i}\mu^{i+1} \\ 0 & \phi^{i} \end{pmatrix}\begin{pmatrix} \psi^{i+2} & (-1)^{i+1}\mu^{i+2} \\ 0 & \phi^{i+1} \end{pmatrix}=\begin{pmatrix} \psi^{i+1}\psi^{i+2} & (-1)^{i}(\mu^{i+1}\phi^{i+1}-\psi^{i+1}\mu^{i+2}) \\ 0 & \phi^{i}\phi^{i+1}\end{pmatrix}=\begin{pmatrix} f & 0 \\ 0 & f \end{pmatrix}$$
for every $i\in \ZZ$, so $e_S(\mu):=\left\{\begin{pmatrix} \psi^{i+1} & (-1)^{i}\mu^{i+1} \\ 0 & \phi^{i}\end{pmatrix}\right \}_{i\in \ZZ}\in \NMF_S^{\ZZ}(f)$. 

(3) 
Since $C(\phi)^{\geq 0}:=\{\overline{\phi^i}: \overline{F^{i+1}} \to \overline{F^i}\}_{i\geq 0}$ and $C(\psi)^{\geq 0}:=\{\overline{\psi^i}: \overline{G^{i+1}} \to \overline{G^i}\}_{i\geq 0}$ are free resolutions of $\Coker \phi$ and $\Coker \psi$ in $\GrMod A$, 
$$\delta(\Coker (\mu))=\{0\to \Omega \Coker \psi\to \Coker \overline{\begin{pmatrix} \psi^1 & \mu^1 \\ 0 & \phi^0 \end{pmatrix}}=\Coker e_S(\mu)
\to \Coker \phi\to 0\},$$ 
so $\underline{\rho_S}:\Hom_{\uNMF_S^{\ZZ}(f)}(\phi, \psi)\to \Ext^1_{\GrMod A}(\Coker \phi, \Omega\Coker \psi)$ is 
given by the composition of maps
$$\begin{CD} \Hom_{\uNMF_S^{\ZZ}(f)}(\phi, \psi) @>\underline {\Coker}>> \uHom_{\GrMod A}(\Coker \phi, \Coker \psi)@>\underline{\delta}>> \Ext^1_{\GrMod A}(\Coker \phi, \Omega \Coker \psi).\end{CD}$$
By (1), $\underline{\Coker}$ is bijective.  Since $\Coker \phi, \Coker \psi\in \TR^{\ZZ}(A)$, $\underline{\delta}$ is bijective by Lemma \ref{lem.ch},
so we have the result.  

(4) 
For $\mu\in \Hom_{\NMF_S^{\ZZ}(f)}(\phi, \psi)$, $\rho_S(\mu)=0$ if and only if 
\[
\Coker \overline{\begin{pmatrix} \psi^1& \mu^1 \\ 0 & \phi^0 \end{pmatrix}} = \Coker e_S(\mu) \cong \Omega \Coker \psi \oplus \Coker \phi\cong \Coker \overline{\begin{pmatrix} \psi^1& 0 \\ 0 & \phi^0 \end{pmatrix}}
\]
if and only if $\overline{e_S(\mu)^0} =\overline{\begin{pmatrix} \psi^1& \mu^1 \\ 0 & \phi^0 \end{pmatrix}}
\sim \overline{\begin{pmatrix} \psi^1& 0 \\ 0 & \phi^0 \end{pmatrix}}$. 
\end{proof} 

We are now ready to prove a noncommutative graded version of \cite[Proposition 3.1]{Sol}.

\begin{theorem} \label{thm.ff} 
If $S$ is a right noetherian graded ring, $f\in S_d$ is a regular normal element, and $\s, \t\in \GrAut _0(S; f)$ such that 
$\s\t=\t\s=\nu_f$, 
then  
$\underline{\cH}:\uNMF_S^{\ZZ}(f)\to \uNMF_{S[u; \s][v; \t]}^{\ZZ}(f+uv)$ is a fully faithful functor.
\end{theorem}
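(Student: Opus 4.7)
The plan is to establish faithfulness and fullness separately. Set $T := S[u;\s][v;\t]$, $B := T/(f+uv)$, $A := S/(f)$, and let $p : T \twoheadrightarrow S$ and $q : B \twoheadrightarrow A$ denote the projections killing $u, v$. The key preliminary is that setting $u = v = 0$ in $\cH(\phi)^0 = \left(\begin{smallmatrix}\t(\phi^0) & u\cdot \\ v\cdot & -\phi^1(e)\end{smallmatrix}\right)$ yields a block-diagonal matrix, and combined with the identification $\overline{F^1}/\overline{\phi^1 F^2} = \Omega\Coker\phi$ over $A$ (which comes from the acyclicity of $C(\phi)$), this gives $\Coker_B\cH(\phi) \otimes_B A \cong \t(\Coker\phi) \oplus \Omega\Coker\phi$ in $\grmod A$, with the base change of $\Coker_B\cH(\mu)$ equal to the diagonal morphism $\t(\Coker\mu) \oplus \Omega\Coker\mu$.

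For faithfulness, suppose $\underline{\cH(\mu)} = 0$ in $\uNMF_T^{\ZZ}(f+uv)$. Lemma \ref{lem.35}(1) applied to $T$ gives that $\Coker_B\cH(\mu)$ factors through a graded free $B$-module; tensoring with $A$ over $B$, the resulting diagonal morphism factors through a graded free $A$-module, and projecting onto the second summand, $\Omega\Coker\mu$ factors through a free $A$-module. Since $\Omega : \uHom_A(M,N) \to \uHom_A(\Omega M,\Omega N)$ is a bijection for $M, N \in \TR^{\ZZ}(A)$ (as noted in the proof of Lemma \ref{lem.ch}), it follows that $\Coker\mu = 0$ stably, and Lemma \ref{lem.35}(1) applied to $S$ gives $\mu = 0$ in $\uNMF_S^{\ZZ}(f)$.

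For fullness, given $\eta \in \Hom_{\NMF_T^{\ZZ}(f+uv)}(\cH(\phi), \cH(\psi))$ with components $\eta^i = \left(\begin{smallmatrix}\a^i & \b^i \\ \c^i & \d^i\end{smallmatrix}\right)$, define $\mu^i := \t^{-1}(p(\a^i))$. Reducing the relation $\eta^i\cH(\phi)^i = \cH(\psi)^i\eta^{i+1}$ modulo $(u,v)$ and reading the $(1,1)$-block yields $\mu^i\phi^i = \psi^i\mu^{i+1}$; the periodicity $\mu^{i+2} = \nu_f(\mu^i)(-d)$ required by Theorem \ref{thm.nmf}(2) follows from $p \circ \nu_{f+uv} = \nu_f \circ p$ (Remark \ref{rem.uvf}) together with $\t\nu_f = \nu_f\t$ (a direct consequence of $\s\t = \t\s = \nu_f$), so $\mu = \{\mu^i\} \in \Hom_{\NMF_S^{\ZZ}(f)}(\phi, \psi)$. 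By construction the $(1,1)$-block of $(\eta - \cH(\mu))^1$ is $\a^1 - \t(\mu^1)$, which vanishes modulo $(u,v)$, so Lemma \ref{lem.a1n} applied to $\eta - \cH(\mu)$ produces matrix equivalence of $e_T(\eta - \cH(\mu))^0$ with the block-diagonal form over $T$, hence over $B$, and Lemma \ref{lem.35}(4) then forces $\eta = \cH(\mu)$ in $\uNMF_T^{\ZZ}(f+uv)$.

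The hard part is faithfulness: base changing the matrix equivalence from $B$ down to $A$ naively produces only the direct-sum identity $X \oplus \Omega X \cong Y \oplus \Omega Y$, from which $X \cong Y$ need not follow since $\grmod A$ is not generally Krull--Schmidt. The resolution is to stay at the morphism level rather than the isomorphism-class level, use the diagonal structure of the base-changed map to isolate $\Omega\Coker\mu$, and close with bijectivity of $\Omega$ on stable Hom sets between totally reflexive modules. Fullness is more direct once Lemma \ref{lem.a1n} is available, since that lemma is designed to handle precisely the vanishing condition on the $(1,1)$-block that the candidate $\mu$ enforces.
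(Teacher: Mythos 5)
Your proof is correct, and it splits into two halves of different character. The fullness half is essentially the paper's own argument: the same candidate preimage $\t^{-1}(\a)|_{u=v=0}$, the same verification of the commutation relations by reducing the block identity modulo $(u,v)$ (your periodicity check is superfluous, since Theorem~\ref{thm.nmf}(2) makes it automatic for any morphism of noncommutative matrix factorizations), and the same application of Lemma~\ref{lem.a1n} followed by Lemma~\ref{lem.35}(4); strictly you also need Lemma~\ref{lem.35}(3) at the very end to pass from $\rho_{S[u;\s][v;\t]}(\eta-\cH(\mu))=0$ to $\underline{\eta}=\underline{\cH(\mu)}$, a step you compress but which is immediate. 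The faithfulness half is genuinely different. The paper stays at the level of matrices: from $\underline{\cH(\mu)}=0$ it obtains, via Lemma~\ref{lem.35}(3),(4), a matrix equivalence over $S[u;\s][v;\t]/(f+uv)$, specializes $\overline{u}=\overline{v}=0$ and permutes rows and columns to get a $4\times 4$ equivalence over $S/(f)$, and then cancels a diagonal block to conclude $\overline{e_S(\mu)^0}\sim\overline{\bigl(\begin{smallmatrix}\psi^1&0\\0&\phi^0\end{smallmatrix}\bigr)}$, whence $\rho_S(\mu)=0$. You instead work at the level of cokernels: base change along $S[u;\s][v;\t]/(f+uv)\to S/(f)$ turns the stably trivial morphism $\Coker\cH(\mu)$ into a diagonal morphism whose second component is (a shift of) $\Omega\Coker\mu$, and you finish with the bijectivity of $\Omega$ on stable Hom-groups between totally reflexive modules (as asserted in the proof of Lemma~\ref{lem.ch}) together with Lemma~\ref{lem.35}(1). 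Your route buys exactly what you claim: it avoids the paper's cancellation step (the ``it follows that'' reducing the $4\times 4$ equivalence to the $2\times 2$ one), which as written is a Krull--Schmidt-type cancellation; it can be justified, for instance by reading the $4\times 4$ matrix as $\overline{e_S(\lambda)^0}$ for the direct-sum morphism $\lambda=\t(\mu)\oplus(\text{a shift of }\mu)$ and applying Lemma~\ref{lem.35}(3),(4) componentwise, but the paper does not spell this out. The cost of your route is the use of the exactness of $C(\phi)$ over $S/(f)$ (also used in the proof of Lemma~\ref{lem.35}(3)) and the syzygy equivalence on $\uTR^{\ZZ}(S/(f))$, both of which are available in the paper, so the trade is a fair one.
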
 
 
\begin{proof} 
We first note that if $A$ is a matrix whose entries are in $S[u; \s][v; \t]$, then $A|_{u=v=0}$ is a matrix whose entries are in $S$.
In this proof, if $\a$ is a map given by left multiplication of $A$, then we denote by $\a|_{u=v=0}$ a map given by left multiplication of $A|_{u=v=0}$ by abuse of notation.

For $\mu\in \Hom_{\NMF_S^{\ZZ}(f)}(\phi, \psi)$,
if $\underline{\cH(\mu)}=\underline{\cH}(\underline{\mu})=0$ in $\uNMF_{S[u; \s][v; \t]}^{\ZZ}(f+uv)$,
then we have $\rho_{S[u; \s][v; \t]}(\cH(\mu))=0$ by Lemma \ref{lem.35} (3), so 
\begin{align*}
\overline{e_{S[u; \s][v; \t]}(\cH(\mu))^0}
=\overline{\begin{pmatrix} \cH(\psi)^1 & \cH(\mu)^1 \\ 0 & \cH(\phi)^0 \end{pmatrix}}
\sim \overline{\begin{pmatrix} \cH(\psi)^1 & 0 \\ 0 & \cH(\phi)^0 \end{pmatrix}}
\end{align*}
over $S[u; \s][v; \t]/(f+uv)$ by Lemma \ref{lem.35} (4).
Thus we have
\begin{align*}
\begin{pmatrix}
\overline{\t(\psi^1)} & \overline{u} & \overline{\t(\mu^1)} & 0 \\ \overline{v} & \overline{-\psi^2}(e) & 0 & \overline{\mu^2}(e) \\ 0 & 0 & \overline{\t(\phi^0)} & \overline{u} \\ 0 & 0 & \overline{v} & \overline{-\phi^1}(e) \end{pmatrix}
\sim \begin{pmatrix} \overline{\t(\psi^1)} & \overline{u} & 0 & 0 \\ \overline{v} & \overline{-\psi^2}(e) & 0 & 0 \\ 0 & 0 & \overline{\t(\phi^0)} & \overline{u} \\ 0 & 0 & \overline{v} & \overline{-\phi^1}(e) \end{pmatrix}
\end{align*}
over $S[u; \s][v; \t]/(f+uv)$ where $\deg v=e$. 
By switching the second row and the third row, and the second column and the third column, and substituting $\overline{u}=\overline{v}=0$, 
\[
\begin{pmatrix}
\overline{\t(\psi^1)} & \overline{\t(\mu^1)} & 0 & 0 \\ 0 & \overline{\t(\phi^0)} & 0 & 0 \\ 0 & 0 & \overline{-\psi^2}(e) & \overline{\mu^2}(e)  \\ 0 & 0 & 0 & -\overline{\phi^1}(e)
\end{pmatrix}
\sim
\begin{pmatrix}
\overline{\t(\psi^1)} & 0 & 0 & 0 \\ 0 & \overline{\t(\phi^0)} & 0 & 0 \\ 0 & 0 & \overline{-\psi^2}(e) & 0 \\ 0 & 0 & 0 & \overline{-\phi^1}(e)
\end{pmatrix}
\] 
over $S/(f)$.  It follows that 
$\begin{pmatrix} \overline{\t(\psi^1)} & \overline{\t(\mu^1)} \\ 0 & \overline{\t(\phi^0)} \end{pmatrix} \sim \begin{pmatrix} \overline{\t(\psi^1)} & 0 \\ 0 & \overline{\t(\phi^0)} \end{pmatrix}$,
so 
$$\overline{e_S(\mu)^0}
=\overline{\begin{pmatrix} \psi^1 & \mu^1 \\ 0 & \phi^0 \end{pmatrix}}
=\begin{pmatrix} \overline{\psi^1} & \overline{\mu^1} \\ 0 & \overline{\phi^0} \end{pmatrix}
\sim
\begin{pmatrix} \overline{\psi^1} & 0 \\ 0 & \overline{\phi^0} \end{pmatrix}
=\overline{\begin{pmatrix} \psi^1 & 0 \\ 0 & \phi^0 \end{pmatrix}}$$
over $S/(f)$, hence $\rho_S(\mu)=0$ by Lemma \ref{lem.35} (4) again.  By Lemma \ref{lem.35} (3) again, 
$\underline{\mu}=0$, so $\underline{\cH}$ is faithful.

For $\mu=\begin{pmatrix} \a & \b \\ \c & \d \end{pmatrix} = \left\{ \begin{pmatrix} \a^i & \b^i \\ \c^i & \d^i \end{pmatrix} \right\}_{i \in \ZZ} \in \Hom_{\NMF_{S[u; \s][v; \t]}^{\ZZ}(f+uv)}(\cH(\phi), \cH(\psi))$, we have
$$\begin{pmatrix} \a^i & \b^i  \\ \c^i  & \d^i \end{pmatrix}\begin{pmatrix} \t(\phi^i) & u\cdot  \\ v\cdot  & -\phi^{i+1}(e) \end{pmatrix}=\begin{pmatrix} \t(\psi^i) & u\cdot  \\ v\cdot  & -\psi^{i+1}(e) \end{pmatrix}\begin{pmatrix} \a^{i+1} & \b^{i+1}  \\ \c^{i+1}  & \d^{i+1} \end{pmatrix},$$
so $\t^{-1}(\a^i)|_{u=v=0}\phi^i=\psi^i\t^{-1}(\a^{i+1})|_{u=v=0}$,
hence we have $\t^{-1}(\a)|_{u=v=0} \in \Hom_{\NMF_S^{\ZZ}(f)}(\phi, \psi)$.
Since 
$$\mu-\cH(\t^{-1}(\a)|_{u=v=0})=
\left\{\begin{pmatrix} \a^i-\a^i|_{u=v=0} & \b^i \\ \c^i & \d^i-\t^{-1}(\a^{i+1})|_{u=v=0}(e) \end{pmatrix}\right\}_{i\in \ZZ},$$
where $(\a^1-\a^1|_{u=v=0})|_{u=v=0}=0$ in $S$, it follows that 
\begin{align*}
& e_{S[u; \s][v; \t]}(\mu-\cH(\t^{-1}(\a)|_{u=v=0}))^0= \\
& \begin{pmatrix} 
\t(\psi^1) & u\cdot & \a^1-\alpha^1|_{u=v=0} & \b^1 \\ 
v\cdot & -\psi^2(e) & \c^1& \d^1-\t^{-1}(\a^2)|_{u=v=0}(e) \\
0 & 0 & \t(\phi^0) & u\cdot \\
0 & 0 & v\cdot & -\phi^1(e) 
\end{pmatrix}\sim
\begin{pmatrix} 
\t(\psi^1) & u\cdot & 0 & 0 \\ 
v\cdot  & -\psi^2(e) & 0& 0 \\
0 & 0 & \t(\phi^0) & u\cdot \\
0 & 0 & v\cdot & -\phi^1(e) 
\end{pmatrix}
\end{align*}
by Lemma \ref{lem.a1n}, so
$\rho_{S[u; \s][v; \t]}(\mu-\cH(\t^{-1}(\a)|_{u=v=0}))=0$ by Lemma \ref{lem.35} (4). 
By Lemma \ref{lem.35} (3), $\underline{\mu}=\underline{\cH}(\underline{\t^{-1}(\a)|_{u=v=0}})$, so $\underline{\cH}$ is full.   
\end{proof}

\begin{theorem}[Noncommutative Kn\"orrer's periodicity theorem] \label{thm.nkp} 
Suppose that $k$ is an algebraically closed field of characteristic not 2.
Let $S$ be a noetherian AS-regular algebra and $f\in S$ a regular normal homogeneous element of even degree.  If there exists $\s\in \GrAut _0(S; f)$ such that 
$\s^2=\nu_f$ (eg. if $f$ is central and $\s=\id$), then 
$\underline{\cH}:\uNMF_S^{\ZZ}(f)\to \uNMF_{S[u; \s][v; \s]}^{\ZZ}(f+uv)$ is an equivalence functor where $\deg u=\deg v=\frac{1}{2}\deg f$.   Moreover, we have $\uCM^{\ZZ}(S/(f))\cong \uCM^{\ZZ}(S[u; \s][v; \s]/(f+u^2+v^2))$.  
\end{theorem}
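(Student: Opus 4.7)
The plan is to prove the theorem in three conceptual steps: combine Theorem~\ref{thm.ff} (full faithfulness of $\underline{\cH}$) with an essential-surjectivity argument to obtain the middle isomorphism, bookend it with Theorem~\ref{thm.m4} (which relates $\uNMF$ to $\uCM$), and then perform a linear change of variables to replace the bilinear form $uv$ by the diagonal form $u^2+v^2$. Throughout, set $T := S[u;\s][v;\s]$.

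First, since $\s^2 = \nu_f$ with $\s \in \GrAut_0(S;f)$, taking $\t := \s$ fulfills the hypotheses of Theorem~\ref{thm.ff} (both $\s,\t \in \GrAut_0(S;f)$ and $\s\t = \t\s = \nu_f$). So $\underline{\cH}: \uNMF_S^\ZZ(f) \to \uNMF_T^\ZZ(f+uv)$ is fully faithful for free. The remaining task is essential surjectivity: given $\phi \in \NMF_T^\ZZ(f+uv)$, produce $\psi \in \NMF_S^\ZZ(f)$ with $\cH(\psi) \cong \phi$ in $\uNMF_T^\ZZ(f+uv)$. Observe that $u$ and $v$ commute in $T$: indeed $uv = v\s(u) = vu$, since the extension of $\s$ to $S[u;\s]$ fixes $u$. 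Consequently each matrix over $T$ has a unique normal form $\Phi = \sum_{a,b \geq 0} u^a v^b \Phi_{ab}$ with $\Phi_{ab}$ a matrix over $S$. The identity $\Phi^i \Phi^{i+1} = (f+uv) E_r$ then expands to a system of equations in the blocks $\Phi^i_{ab}$, with twists by powers of $\s$ arising from moving the $u^a v^b$ monomials past matrices over $S$. Following the template in \cite[Chapter 12]{Y}, I would perform elementary row/column operations (in the spirit of Lemma~\ref{lem.a1n}, which already clears off constant terms from off-diagonal blocks) to progressively reduce $\Phi^i$, modulo the trivial objects $\cG$, to the canonical form $\begin{pmatrix} \s(\psi^i) & u \\ v & -\psi^{i+1}(d/2) \end{pmatrix}$ where $d = \deg f$. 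Extracting the $\psi^i$ yields the desired element of $\NMF_S^\ZZ(f)$.

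Second, to pass from $f+uv$ to $f+u^2+v^2$, note that since $k$ is algebraically closed of characteristic $\neq 2$, we have $\iota := \sqrt{-1} \in k$. Set $u' := u + \iota v$ and $v' := u - \iota v$. For each $a \in S$ we compute
\begin{align*}
au' &= u\s(a) + \iota v\s(a) = u'\s(a),\\
av' &= u\s(a) - \iota v\s(a) = v'\s(a),
\end{align*}
while $u'v' = v'u' = u^2 + v^2$ by direct expansion using $uv = vu$. Hence $u' \mapsto u+\iota v$, $v' \mapsto u-\iota v$ defines a graded algebra isomorphism $S[u';\s][v';\s] \xrightarrow{\sim} T$ carrying $f + u'v'$ to $f + u^2 + v^2$. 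Lemma~\ref{lem.eq1} then yields $\uNMF_T^\ZZ(f+uv) \cong \uNMF_T^\ZZ(f+u^2+v^2)$. Chaining with Theorem~\ref{thm.m4},
\[
\uCM^\ZZ(S/(f)) \cong \uNMF_S^\ZZ(f) \cong \uNMF_T^\ZZ(f+uv) \cong \uNMF_T^\ZZ(f+u^2+v^2) \cong \uCM^\ZZ(T/(f+u^2+v^2)),
\]
which is the ``Moreover'' statement.

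The main obstacle is essential surjectivity of $\underline{\cH}$. The commutative template of \cite[Chapter 12]{Y} must be carried through while faithfully tracking the twist $\s$ in every commutation, every change of basis, and the passage between $\Phi$ and $\s(\Phi)$. The hypothesis $\s^2 = \nu_f$ is precisely what makes $\cH(\psi)^i \cdot \cH(\psi)^{i+1}$ collapse to $(f+uv)E_{2r}$, so the same identity is what makes the normal-form reduction produce the right canonical shape. I would expect the cleanest write-up to rely on a lemma symmetric to Lemma~\ref{lem.a1n} that produces (rather than removes) the $\cH$-shape, verified via the twisted matrix identities.
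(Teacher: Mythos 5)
Your steps for full faithfulness (Theorem~\ref{thm.ff} with $\t=\s$) and for the change of variables $u'=u+\sqrt{-1}v$, $v'=u-\sqrt{-1}v$ (using $uv=vu$, Lemma~\ref{lem.eq1}, and Theorem~\ref{thm.m4} to chain the isomorphisms) are correct and coincide with what the paper does. The genuine gap is the middle step: essential surjectivity (density) of $\underline{\cH}$. You do not prove it; you only announce that you ``would perform elementary row/column operations,'' in the spirit of Lemma~\ref{lem.a1n}, to reduce an arbitrary graded matrix factorization of $f+uv$ over $T=S[u;\s][v;\s]$ to the canonical $\cH$-shape modulo trivial objects. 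No such elementary normal-form argument is known even in the commutative case: the proof in \cite[Chapter 12]{Y} (and in \cite{Kn}) does not row-reduce matrices to the $2\times 2$ block form, but instead constructs functors in both directions, computes that the composites are isomorphic to direct sums of (shifts of) the identity, and then invokes the Krull--Schmidt property of the stable category to deduce that every indecomposable object is hit. Lemma~\ref{lem.a1n} is of no help here: it cleans up a \emph{morphism} between two objects already of the form $\cH(\phi)$, $\cH(\psi)$; it says nothing about bringing an arbitrary factorization of $f+uv$ into that shape.

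This is exactly the point where the paper does something different from your sketch: it does not prove density internally at all, but imports it from \cite[Theorem 5.11]{CKMW} (Kn\"orrer periodicity for twisted graded matrix factorizations), translated into the noncommutative matrix factorization language via \cite[Proposition 4.7]{MU}, which applies because $f$ is regular normal. The remark following the theorem in the paper emphasizes that this density argument ``highly depends'' on \cite{CKMW}, and in particular on the graded-local features of the situation (finitely generated graded projectives are free, and the relevant stable category is Krull--Schmidt) -- precisely the ingredients your proposed matrix reduction silently bypasses. So as written, your argument establishes a fully faithful functor and the cosmetic passage from $f+uv$ to $f+u^2+v^2$, but the heart of the periodicity statement -- that $\underline{\cH}$ is dense -- is left unproven, and the route you indicate for it would not go through without an argument of the Krull--Schmidt/functor-composition type (or an explicit appeal to \cite{CKMW} as the paper makes).
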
 

\begin{proof}
By Theorem \ref{thm.ff}, $\underline{\cH}:\uNMF_S^{\ZZ}(f)\to \uNMF_{S[u; \s][v; \s]}^{\ZZ}(f+uv)$ is fully faithful.  By \cite[Theorem 5.11]{CKMW} and \cite[Proposition 4.7]{MU}, $\underline{\cH}:\uNMF_S^{\ZZ}(f)\to \uNMF_{S[u; \s][v; \s]}^{\ZZ}(f+uv)$ is dense, so $\underline{\cH}:\uNMF_S^{\ZZ}(f)\to \uNMF_{S[u; \s][v; \s]}^{\ZZ}(f+uv)$ is an equivalence functor.  

Since $k$ is an algebraically closed field of characteristic not 2, 
there exists $\theta\in \GrAut _0(S[u; \s][v; \s])$ defined by $\theta|_S=\id_S, 
\theta(u)=u+\sqrt{-1}v, \theta(v)=u-\sqrt{-1}v$ such that $\theta(f+uv)=f+u^2+v^2$, so 
\begin{align*}
\uCM^{\ZZ}(S/(f)) & \cong \uNMF_S^{\ZZ}(f) \cong \uNMF_{S[u; \s][v; \s]}^{\ZZ}(f+uv) \\
& \cong \uNMF_{S[u; \s][v; \s]}^{\ZZ}(f+u^2+v^2)\cong 
\uCM^{\ZZ}(S[u; \s][v; \s]/(f+u^2+v^2))
\end{align*}
by Theorem \ref{thm.m4} and Lemma \ref{lem.eq1}.  
\end{proof} 

\begin{remark} Note that, just forgetting the grading, all the results in this subsection except for Theorem \ref{thm.nkp}, which highly depends on the results in \cite {CKMW}, hold in the ungraded setting.
It is curious if there is an ungraded version of Theorem \ref{thm.nkp}.
It seems that the following two conditions are essential for the arguments in \cite {CKMW} to work, namely,
(1) $A$ is ``local'' so that every finitely generated projective module is free, and
(2) $A$ is ``complete'' so that $\uCM(A)$ is Krull-Schmidt.
The authors do not know a nice class of ungraded noncommutative algebras satisfying both conditions.  
\end{remark} 


\section{Noncommutative Quadric Hypersurfaces} 

In this section, we define a noncommutative quadric hypersurface $A$ and show that there is another way to reduce the number of variables in computing $\uCM^{\ZZ}(A)$. 

\subsection{Strongly Graded Algebras} 

We first list some basic properties of a strongly graded algebra which are needed in this paper.  
Most of the results in this subsection are well-known.
We will give some of the proofs for the convenience of the reader.

\begin{definition}
A graded algebra $A$ is called \emph{strongly graded} if $A_iA_j=A_{i+j}$ for all $i, j\in \ZZ$. 
\end{definition} 
 
\begin{lemma}[{\cite[Theorem \Rnum{1}. 3.4]{NV}}] \label{lem.sg} 
If $A$ is a strongly graded algebra, then $\GrMod A\to \Mod A_0; \; M\mapsto M_0$ and $\Mod A_0\to \GrMod A; \; N\mapsto N\otimes _{A_0}A$ are equivalence functors inverses to each other. 
\end{lemma}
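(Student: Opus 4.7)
The plan is to verify that the two functors are quasi-inverse by checking that the natural unit and counit maps are isomorphisms. First I would write down the natural transformations explicitly. For $N \in \Mod A_0$, there is a canonical map $\epsilon_N \colon N \to (N \otimes_{A_0} A)_0 = N \otimes_{A_0} A_0$ sending $n \mapsto n \otimes 1$, and this is manifestly an isomorphism of $A_0$-modules since the right-hand side is just $N \otimes_{A_0} A_0 \cong N$. For $M \in \GrMod A$, the multiplication map $\eta_M \colon M_0 \otimes_{A_0} A \to M$, $m \otimes a \mapsto ma$, is a homomorphism of graded right $A$-modules, and the entire content of the lemma is that this is an isomorphism in every degree.

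The key tool is a ``partition of unity'' provided by the strong-grading hypothesis: for each $i \in \ZZ$, $A_{-i} A_i = A_0$, so there exist finite collections $y_k \in A_{-i}$ and $x_k \in A_i$ with $\sum_k y_k x_k = 1$. Given such data, surjectivity of $(\eta_M)_i \colon M_0 \otimes_{A_0} A_i \to M_i$ is immediate: any $m \in M_i$ satisfies $m = m \cdot 1 = \sum_k (m y_k) x_k$, with $m y_k \in M_0$ and $x_k \in A_i$. For injectivity, suppose $\sum_j n_j \otimes a_j \in M_0 \otimes_{A_0} A_i$ maps to $0$, i.e.\ $\sum_j n_j a_j = 0$ in $M_i$. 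Inserting the partition of unity and using $a_j y_k \in A_0$ to slide the factor across the tensor product gives
\[
\sum_j n_j \otimes a_j \;=\; \sum_{j,k} n_j \otimes (a_j y_k) x_k \;=\; \sum_{j,k} (n_j a_j y_k) \otimes x_k \;=\; \sum_k \Bigl(\sum_j n_j a_j\Bigr) y_k \otimes x_k \;=\; 0.
\]

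There is no real obstacle: the argument is the classical theorem of Dade for strongly graded rings, and everything reduces to bookkeeping with the partition of unity $1 = \sum_k y_k x_k$. I would conclude by noting that $\eta$ and $\epsilon$ are functorial in $M$ and $N$ respectively (a routine check on the definitions), which gives the claimed mutually inverse equivalence.
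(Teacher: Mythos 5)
Your proof is correct. The paper does not argue this at all: its proof of Lemma \ref{lem.sg} is a one-line citation of \cite[Theorem I.3.4]{NV}, i.e.\ Dade's theorem for strongly graded rings, and the argument you give (unit $N \to (N\otimes_{A_0}A)_0$ obviously an isomorphism, counit $M_0\otimes_{A_0}A\to M$ shown bijective in each degree via a partition of unity $1=\sum_k y_kx_k$ with $y_k\in A_{-i}$, $x_k\in A_i$) is precisely the standard proof behind that citation; the slide $n_j\otimes (a_jy_k)x_k=(n_ja_jy_k)\otimes x_k$ is legitimate since $a_jy_k\in A_iA_{-i}=A_0$, and checking the map degreewise is justified because $N\otimes_{A_0}A=\bigoplus_i N\otimes_{A_0}A_i$. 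So you have simply made self-contained what the paper outsources to the literature, at the cost of a short computation and with no gap.
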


\begin{lemma}[{\cite[Lemma \Rnum{2}. 3.6]{NV}}] \label{lem.noe1} 
Let $A$ be a strongly graded ring.  Then $A$ is right noetherian if and only if $A_0$ is right noetherian.   
\end{lemma}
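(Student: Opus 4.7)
The plan is to exploit the category equivalence $\GrMod A \simeq \Mod A_0$ furnished by Lemma~\ref{lem.sg} to transfer the noetherian condition between $A$ and $A_0$. Under this equivalence, the graded right $A$-module $A$ corresponds to the right $A_0$-module $(A)_0 = A_0$, and since an equivalence of categories preserves submodule lattices, the graded right $A$-submodules of $A$ are in inclusion-preserving bijection with the right $A_0$-submodules of $A_0$. In particular, $A$ satisfies the ascending chain condition on graded right ideals (i.e.\ $A$ is graded right noetherian) if and only if $A_0$ satisfies the ascending chain condition on right ideals.

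So the lemma reduces to the statement: for the $\ZZ$-graded ring $A$, being graded right noetherian is equivalent to being right noetherian. One direction is immediate, since every graded right ideal is in particular a right ideal. For the converse, I would invoke the standard result from graded ring theory (Näastasescu--Van Oystaeyen) that a $\ZZ$-graded ring which is graded right noetherian is automatically right noetherian; applied to $A$, this promotes the ACC on graded right ideals obtained above into the full ACC on right ideals.

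To organize the write-up I would first prove the easy implication ($A$ right noetherian $\Rightarrow$ $A_0$ right noetherian): any strictly ascending chain $J_1 \subsetneq J_2 \subsetneq \cdots$ of right ideals of $A_0$ yields, via $N \mapsto N\otimes_{A_0}A$, a strictly ascending chain of graded right $A$-submodules of $A_0 \otimes_{A_0} A \cong A$, contradicting even graded noetherianity of $A$. Then I would prove the harder implication ($A_0$ right noetherian $\Rightarrow$ $A$ right noetherian) by applying the equivalence in the opposite direction to deduce graded noetherianity of $A$, and then quoting the graded-to-ungraded upgrade.

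The main obstacle is the nontrivial direction of ``graded right noetherian iff right noetherian'' for $\ZZ$-graded rings: unlike the $\NN$-graded case, one cannot argue directly via top-degree components, and the cleanest route is to cite the standard reference rather than give the argument from scratch. Everything else is a formal consequence of Lemma~\ref{lem.sg}.
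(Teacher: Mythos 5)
Your proposal is correct, and its core is the same as the paper's: both transfer the noetherian condition through the strongly graded equivalence $\GrMod A \simeq \Mod A_0$ of Lemma~\ref{lem.sg}. The differences are in the details. The paper realizes the correspondence concretely: for a right ideal $J$ of $A_0$ it shows that $J\otimes_{A_0}A\cong JA$ embeds into $A$, using that $A$ is a projective (hence flat) $A_0$-module by \cite[Corollary I. 3.3]{NV}, and then asserts an order-preserving bijection between right ideals of $A$ and right ideals of $A_0$; you instead get the correspondence formally, since an equivalence of abelian categories identifies the subobject lattice of $A$ in $\GrMod A$ (the graded right ideals) with that of $A_0$ in $\Mod A_0$. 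Both routes are fine, but note that the bijection, read literally as in the paper, only concerns \emph{graded} right ideals of $A$ (for $A=k[t,t^{-1}]$, $A_0=k$, the ungraded ideal $(t-1)A$ corresponds to nothing), so the paper's conclusion tacitly uses the passage from graded right noetherianity of $A$ to right noetherianity. You isolate exactly this step and discharge it by citing the standard theorem of Nastasescu--Van Oystaeyen that a $\ZZ$-graded ring is right noetherian if and only if it is graded right noetherian; that theorem is indeed in \cite{NV} and is the nontrivial ingredient here. So: same strategy as the paper, with your version making explicit (and correctly sourcing) a graded-to-ungraded reduction that the paper's proof glosses over, at the cost of invoking a deeper external result than the paper's stated references.
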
 


A typical example of a strongly graded algebra is obtained by the localization by a regular normal element.  If $A$ is a ring and $w\in A$ is a regular normal element, then 
$A[w^{-1}]:=\{aw^{-i}\mid a\in A, i\in \NN\}$ is a ring under the addition $aw^{-i}+bw^{-j}=(aw^j+bw^i)w^{-i-j}$ and the multiplication $(aw^{-i})(bw^{-j})=a\nu_w^i(b)w^{-i-j}$.
If $A$ is a graded ring and $w\in A_d$ is a homogeneous regular normal element, then $A[w^{-1}]$ is a graded ring by $\deg(aw^{-i})=\deg a-id$. 
In this case, $A[w^{-1}]_0=\{aw^{-i}\mid a\in A_{id}, i\in \NN\}$.
The proofs of the next two lemmas are straight-forward and omitted.

\begin{lemma} \label{lem.wsg} 
If $A$ is a graded algebra generated in degree 1 over $k$, and $w\in A_d$ is a homogeneous regular normal element of positive degree, then $A[w^{-1}]$ is a strongly graded algebra. 
\end{lemma}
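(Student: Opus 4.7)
The plan is to reduce the problem to the standard criterion that a $\ZZ$-graded ring $R$ is strongly graded if and only if $1\in R_nR_{-n}$ for every $n\in\ZZ$. Indeed, once this is granted, for any $m,n\in\ZZ$ and any $x\in R_{m+n}$ one writes $1=\sum a_ib_i$ with $a_i\in R_m$, $b_i\in R_{-m}$, so that
\[
x=1\cdot x=\sum_i a_i(b_ix)\in R_m\cdot R_n,
\]
since $b_ix\in R_{-m+(m+n)}=R_n$; the reverse inclusion is automatic from the grading. So the whole task collapses to checking $1\in A[w^{-1}]_n\cdot A[w^{-1}]_{-n}$ for every $n\in\ZZ$.

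First I would dispose of the base cases $n=\pm 1$, where the hypothesis that $A$ is generated in degree $1$ enters. It gives $A_d=A_1\cdot A_{d-1}=A_{d-1}\cdot A_1$, so we can write $w=\sum_i x_iy_i$ with $x_i\in A_1$, $y_i\in A_{d-1}$. Then
\[
1=w\cdot w^{-1}=\sum_i x_i(y_iw^{-1})\in A[w^{-1}]_1\cdot A[w^{-1}]_{-1},
\]
since $\deg(y_iw^{-1})=(d-1)-d=-1$. For the opposite factorization, write $w=\sum_i y_i'x_i'$ with $y_i'\in A_{d-1}$, $x_i'\in A_1$; moving $x_i'$ past $w^{-1}$ via $aw^{-1}=w^{-1}\nu_w^{-1}(a)$ (which uses that $w$ is normal), one gets
\[
1=\sum_i (y_i'w^{-1})\,\nu_w^{-1}(x_i')\in A[w^{-1}]_{-1}\cdot A[w^{-1}]_1.
\]

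Next I would bootstrap to general $n\geq 1$ by induction. Suppose $1=\sum_i\alpha_i\beta_i$ with $\alpha_i\in A[w^{-1}]_1$, $\beta_i\in A[w^{-1}]_{-1}$, and $1=\sum_j\gamma_j\delta_j$ with $\gamma_j\in A[w^{-1}]_{n-1}$, $\delta_j\in A[w^{-1}]_{-(n-1)}$. Inserting the latter identity in the middle of the former,
\[
1=\sum_i \alpha_i\cdot 1\cdot\beta_i=\sum_{i,j}(\alpha_i\gamma_j)(\delta_j\beta_i)\in A[w^{-1}]_n\cdot A[w^{-1}]_{-n}.
\]
The case $n\leq -1$ is handled by the symmetric induction, starting from $1\in A[w^{-1}]_{-1}\cdot A[w^{-1}]_1$ established above.

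The only real subtlety is the base case, where degree-$1$ generation is essential for splitting a single degree-$1$ factor off $w$, and where normality of $w$ is needed to obtain the factorization on both sides. Everything after that is formal: the inductive step and the passage from ``$1\in R_nR_{-n}$ for all $n$'' to strong grading are purely diagrammatic, requiring no further algebraic input.
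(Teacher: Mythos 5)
Your proof is correct: the reduction to the criterion $1\in R_nR_{-n}$ for all $n$, the use of degree-one generation to split a degree-one factor off $w$, and the use of normality (via $\nu_w$) for the factorization $1\in R_{-1}R_1$ are all sound, and the induction closes the argument. The paper states this lemma without proof (treating it as standard, in the spirit of \cite{NV}), and your argument is exactly the expected verification, so there is nothing to contrast.
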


\begin{lemma} \label{lem.noe2} 
Let $A$ be a (graded) ring and $w\in A$ a (homogeneous) regular normal element. If $A$ is a right noetherian ring, then $A[w^{-1}]$ is also a right noetherian ring. 
\end{lemma}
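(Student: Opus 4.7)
The plan is to prove this by the standard argument that Ore-type localization by a normal element preserves the noetherian property, combined with the fact that since $w$ is regular, the localization map $A \to A[w^{-1}]$ is injective. Throughout, we use the explicit description $A[w^{-1}] = \{aw^{-i} \mid a \in A,\ i \in \NN\}$ and the multiplication rule $(aw^{-i})(bw^{-j}) = a\nu_w^i(b) w^{-i-j}$ already given in the excerpt.

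First I would identify $A$ with its image in $A[w^{-1}]$ (the map $a \mapsto aw^0$ is injective since $w$ is regular). Given a right ideal $J \subseteq A[w^{-1}]$, set $I := J \cap A$. The key verification is that $I$ is a right ideal of $A$: additive closure is clear, and for $a \in I$ and $b \in A$ one has $ab = (aw^0)(bw^0) \in J$ because $J$ is a right ideal of $A[w^{-1}]$, while obviously $ab \in A$.

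Next I would show $J = I A[w^{-1}]$. Given any $x \in J$, write $x = aw^{-i}$ with $a \in A$. Since $w^i \in A \subseteq A[w^{-1}]$, we have $a = x w^i \in J$, and of course $a \in A$, so $a \in J \cap A = I$. Hence $x = aw^{-i} \in I A[w^{-1}]$, proving the reverse inclusion (the forward one being obvious). This map $J \mapsto J \cap A$ is therefore injective on right ideals and preserves strict inclusions: if $J \subsetneq J'$ and $J \cap A = J' \cap A$, then $J = (J \cap A) A[w^{-1}] = (J' \cap A) A[w^{-1}] = J'$, a contradiction. So an ascending chain of right ideals in $A[w^{-1}]$ pulls back to an ascending chain in $A$ that must stabilize by hypothesis, which forces the original chain to stabilize.

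For the graded case, exactly the same argument works with ``right ideal'' replaced by ``graded right ideal,'' since $w$ is homogeneous and the grading on $A[w^{-1}]$ is defined so that $A \hookrightarrow A[w^{-1}]$ is a map of graded rings; and in either setting, a ring is (graded) right noetherian iff every (graded) right ideal is finitely generated. I do not expect any real obstacle here — the only subtle points are (a) using normality of $w$ to guarantee that $A[w^{-1}]$ is literally of the form $\{aw^{-i}\}$ (no need to commute past denominators from the left), and (b) using regularity of $w$ to get $A \hookrightarrow A[w^{-1}]$; both are already built into the construction recalled in the excerpt.
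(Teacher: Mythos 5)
Your proof is correct and takes essentially the same route as the paper: both contract a right ideal $J$ of $A[w^{-1}]$ to $J\cap A$ and use the recovery identity $(J\cap A)A[w^{-1}]=J$ to transfer the ascending chain condition from $A$ to $A[w^{-1}]$, with the graded case handled verbatim since $w$ is homogeneous.
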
 

\begin{proposition} \label{prop.sg} Let $A$ be a right noetherian 
graded algebra generated in degree 1 over $k$ and $w\in A_d$ a homogeneous regular normal element of positive degree.  If $\dim_kA/(w)<\infty$, then $\tails A\to \mod A[w^{-1}]_0; \; \pi M\mapsto  M[w^{-1}]_0$ is an equivalence functor. 
\end{proposition}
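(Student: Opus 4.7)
The plan is to factor the given functor as a composition
\[
\tails A \xrightarrow{\,\alpha\,} \grmod A[w^{-1}] \xrightarrow{\,\beta\,} \mod A[w^{-1}]_0,
\]
with $\alpha(\pi M)=M[w^{-1}]$ and $\beta(N)=N_0$, and to show each arrow is an equivalence.

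For $\beta$: since $A$ is generated in degree $1$, Lemma~\ref{lem.wsg} gives that $A[w^{-1}]$ is strongly graded, and Lemmas~\ref{lem.noe2} and~\ref{lem.noe1} give that both $A[w^{-1}]$ and $A[w^{-1}]_0$ are right noetherian. Lemma~\ref{lem.sg} then produces mutually inverse equivalences $N\mapsto N_0$ and $L\mapsto L\otimes_{A[w^{-1}]_0}A[w^{-1}]$ between $\GrMod A[w^{-1}]$ and $\Mod A[w^{-1}]_0$; both preserve finite generation, so they restrict to an equivalence $\grmod A[w^{-1}]\cong\mod A[w^{-1}]_0$.

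For $\alpha$: the key point is to identify the kernel of the exact localization functor $L:\grmod A\to\grmod A[w^{-1}]$, $M\mapsto M[w^{-1}]$, as precisely $\tors A$. If $M\in\tors A$ is finite-dimensional, concentrated in degrees $\leq r$, then for $n>r/\deg w$ the submodule $Mw^n$ lies in degrees $>r$, so $Mw^n=0$ and $M[w^{-1}]=0$. Conversely, if $M\in\grmod A$ satisfies $M[w^{-1}]=0$, then finite generation forces $Mw^n=0$ for some $n$; normality of $w$ upgrades this to $M(w)^n=0$ (using $(w)^n=w^nA$); and the hypothesis $\dim_k A/(w)<\infty$ yields some $N$ with $A_{\geq N}\subseteq(w)$, whence, since $A$ is generated in degree $1$, $A_{\geq nN}\subseteq (A_{\geq N})^n\cdot A\subseteq(w)^n$, so $MA_{\geq nN}=0$ and $M\in\tors A$. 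Essential surjectivity of $L$ is then immediate: for $N\in\grmod A[w^{-1}]$ with homogeneous generators $n_1,\dots,n_r$, the graded $A$-submodule $M:=\sum_i n_iA\subseteq N$ lies in $\grmod A$ and satisfies $M[w^{-1}]=N$. Since $\tors A$ is a Serre subcategory, $L$ factors through the quotient functor $\pi$ to give $\alpha$, which is therefore essentially surjective.

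The main obstacle is the full-faithfulness of $\alpha$, i.e., identifying $\Hom_{\tails A}(\pi M,\pi N)$ with $\Hom_{\grmod A[w^{-1}]}(M[w^{-1}],N[w^{-1}])$. This is a graded instance of the Ore localization theorem at the left-and-right Ore set $\{w^n\}_{n\geq 0}$ of regular elements (Ore and regular by normality of $w$): every morphism $g\colon M[w^{-1}]\to N[w^{-1}]$ lifts, via finite generation of $M$, to a graded $A$-linear map $\tilde g\colon M\to N(kd)$ for some $k\geq 0$ (since the image of $M$ in $N[w^{-1}]=\bigcup_k Nw^{-k}$ lies in $Nw^{-k}\cong N(kd)$ for large $k$), and two such lifts induce the same $g$ after localization iff their difference factors through an object of $\tors A$. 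This matches the standard colimit description of $\Hom_{\tails A}(\pi M,\pi N)$ as a direct limit over cofinal subobjects with torsion quotient, and thereby yields the desired equivalence.
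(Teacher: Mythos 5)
Your factorization through $\grmod A[w^{-1}]$ and two of its three ingredients are sound and essentially parallel the paper: the identification of the kernel of localization with $\tors A$ (your route via $M(w)^n=Mw^nA=0$ and $A_{\geq nN}\subseteq (w)^n$ is a fine substitute for the paper's filtration $0=Mw^r\subseteq\cdots\subseteq M$ with $A/(w)$-module quotients), and the degree-zero step via Lemmas \ref{lem.wsg}, \ref{lem.sg}, \ref{lem.noe2}, \ref{lem.noe1}; essential surjectivity of $\alpha$ is also fine. The genuine gap is in the full-faithfulness of $\alpha$, which is the hard step and which you only assert. Two concrete problems. (i) The identification $Nw^{-k}\cong N(kd)$ is false for a merely normal $w$: since $w^{-k}a=\nu_w^{k}(a)w^{-k}$ in $A[w^{-1}]$, the map $n\mapsto nw^{-k}$ is semilinear with respect to $\nu_w^{k}$, so $Nw^{-k}$ is a degree shift of a \emph{twist} of the image of $N$ by a power of the normalizing automorphism (and note also that $N\to N[w^{-1}]$ need not be injective, so that image is $N$ modulo its $w$-torsion, not $N$ itself). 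The proposition must cover normal non-central $w$, since in the application $w\in A^!_2$ is only normal. (ii) Even granting a lift $M\to Nw^{-k}$, this is not yet an element of $\Hom_{\tails A}(\pi M,\pi N)=\varinjlim_{M'}\Hom_A(M',N/\tau N)$ (colimit over $M'\subseteq M$ with $M/M'\in\tors A$, $\tau N$ the torsion submodule): you still have to pass to $M':=$ the preimage of the image of $N$ inside $Nw^{-k}$, check that $M/M'$ is torsion (this uses $\dim_k A/(w^k)<\infty$ again), and then verify that the two comparison maps are mutually inverse. The sentence ``this matches the standard colimit description'' is exactly where the proof has to happen, and it is not carried out.

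The paper avoids this entirely by arguing at the level of the big categories: $\phi^*=-\otimes_AA[w^{-1}]$ is exact with right adjoint $\phi_*$ satisfying $\phi^*\phi_*\cong\Id$, so $\GrMod A[w^{-1}]$ is the Gabriel quotient of $\GrMod A$ by $\Ker\phi^*$; once $\Ker\phi^*=\Tors A$ is established, the equivalence $\Tails A\to\GrMod A[w^{-1}]$ — full faithfulness included — is automatic, and one then restricts to noetherian objects using the noetherianity lemmas. If you want to keep your hands-on route with finitely generated modules, you must carry out (i)--(ii) above, keeping track of the $\nu_w$-twists; otherwise switch to the adjunction argument, which renders the Ore-localization computation of Hom unnecessary.
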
 

\begin{proof} Let $\phi:A\to A[w^{-1}]$ be the canonical injection. First, we show that  $\phi^*:\GrMod A\to \GrMod A[w^{-1}]$ defined by $\phi^*(M)=M\otimes _AA[w^{-1}]$ induces an equivalence functor $\Tails A\to \GrMod A[w^{-1}]$.  In fact, since $\phi^*\phi_*\cong \Id_{\GrMod A[w^{-1}]}$ where $\phi_*:\GrMod A[w^{-1}]\to \GrMod A$ is the restriction functor, $\phi^*$ is dense, so it is enough to show that $\Ker \phi^*=\Tors A=\Ker \pi$.   Clearly, $\Ker \pi \cap \grmod A=\tors A \subset \Ker \phi^*$.  Suppose that $M\in \Ker \phi^*\cap \grmod A$.  Since $\phi^*(M)=M[w^{-1}]=0$, for every $m\in M$, there exists $i\in \NN$ such that $mw^i=0$.  Since $M\in \grmod A$, there exists $r\in\NN$ such that $Mw^r=0$.  Consider the filtration 
$$0=Mw^r\subset Mw^{r-1}\subset \cdots \subset Mw\subset M.$$
Since $(Mw^{i-1}/Mw^{i})w=0$, it follows that $Mw^{i-1}/Mw^{i}\in \grmod A/(w)$ for $i=1, \dots, r$. Since $\dim _kA/(w)<\infty$, it follows that $\dim _kM<\infty$, so $M\in \Ker \pi$.  We have so far proved that $\Ker \phi^*\cap \grmod A= \Ker \pi\cap \grmod A$.  Since every module $M\in \GrMod A$ is a direct limit of finitely generated modules, and both functors $\phi^*$ and $\pi$ commute with direct limits, we have $\Ker \phi^*= \Ker \pi$, so $\Tails A\to \GrMod A[w^{-1}]$ is an equivalence functor.

Since $A$ is generated in degree 1 over $k$, $A[w^{-1}]$ is a strongly graded algebra by Lemma \ref{lem.wsg}, so the functor $\GrMod A[w^{-1}]\to \Mod A[w^{-1}]_0; \; N\mapsto N_0$ is an equivalence functor by Lemma \ref{lem.sg}, hence $\Tails A\to \Mod A[w^{-1}]_0; \; \pi M\mapsto  M[w^{-1}]_0$ is an equivalence functor.  Since $A$ is right noetherian, $A[w^{-1}]$ is right noetherian by Lemma \ref{lem.noe2}, so $A[w^{-1}]_0$ is right noetherian by  Lemma \ref{lem.noe1}.  Since an equivalence functor preserves noetherian objects, it restricts to an equivalence functor $\tails A\to \mod A[w^{-1}]_0; \; \pi M\mapsto  M[w^{-1}]_0$. 
\end{proof} 

We list a few more properties of the localization which are needed in this paper.  

\begin{lemma} \label{lem.dimc} 
Let $A$ be a locally finite graded algebra generated in degree 1 over $k$,
$w\in A_d$ a homogeneous regular normal element of positive degree $d$, and $S=A/(w)$.
If $\dim _kS<\infty$, then $\dim_kA[w^{-1}]_0=\dim_kS^{(d)}$ where $S^{(d)}$
is the $d$th Veronese of $S$. 
\end{lemma}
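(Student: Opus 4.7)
The plan is to identify $A[w^{-1}]_0$ as a colimit of the degree-$id$ components of $A$ under right multiplication by $w$, exploit the regularity/normality of $w$ to get short exact sequences relating $A_i$ and $S_i$, and then use $\dim_k S < \infty$ to conclude that the colimit stabilizes to a value that can be computed by telescoping.

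First I would observe, using that $w$ is a regular normal element of degree $d$ with $(w) = wA = Aw$, that for each $i \in \ZZ$ there is a short exact sequence of $k$-vector spaces
\[
0 \longrightarrow A_{i-d} \xrightarrow{\,\cdot w\,} A_i \longrightarrow S_i \longrightarrow 0,
\]
so $\dim_k A_i = \dim_k A_{i-d} + \dim_k S_i$. Iterating this in the arithmetic progression $0, d, 2d, \ldots$ and using $A_0 = k = S_0$ (since $A$ is connected graded, being locally finite and generated in degree $1$ over $k$), I obtain the telescoping identity
\[
\dim_k A_{nd} = 1 + \sum_{i=1}^{n}\dim_k S_{id}
\]
for every $n \geq 0$.

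Next I would identify $A[w^{-1}]_0$ with the colimit of the directed system $A_0 \xrightarrow{\,\cdot w\,} A_d \xrightarrow{\,\cdot w\,} A_{2d} \xrightarrow{\,\cdot w\,} \cdots$. Indeed, the formula $A[w^{-1}]_0 = \{aw^{-i} \mid a \in A_{id}, i \in \NN\}$ together with the identification $aw^{-i} = (aw)w^{-(i+1)}$ shows that the natural maps $A_{id} \to A[w^{-1}]_0,\; a \mapsto aw^{-i}$, exhibit $A[w^{-1}]_0$ as $\varinjlim_i A_{id}$ with transition maps given by right multiplication by $w$.

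The hypothesis $\dim_k S < \infty$ now forces this colimit to stabilize: there exists $N$ such that $S_j = 0$ for $j \geq N$, so by the short exact sequence above, $\cdot w : A_{j-d} \to A_j$ is an isomorphism for all $j \geq N$. Consequently $\dim_k A[w^{-1}]_0 = \dim_k A_{nd}$ for all sufficiently large $n$. Combining with the telescoping identity and with the fact that $\sum_{i \geq 1}\dim_k S_{id}$ is a finite sum (again because $\dim_k S < \infty$), I conclude
\[
\dim_k A[w^{-1}]_0 = 1 + \sum_{i \geq 1}\dim_k S_{id} = \sum_{i \geq 0}\dim_k S_{id} = \dim_k S^{(d)}.
\]
The main potential obstacle is a mild bookkeeping one, namely checking that the colimit description really computes dimensions correctly (i.e.\ that the maps $A_{id} \to A[w^{-1}]_0$ are injective, which follows from regularity of $w$), but this is immediate from the definition of the localization.
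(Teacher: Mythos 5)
Your proof is correct, and it is essentially the argument the paper has in mind: the paper's own ``proof'' simply refers to \cite[Lemma 5.1 (3)]{SV}, whose method is exactly this identification of $A[w^{-1}]_0$ with the direct limit of $A_0\xrightarrow{\cdot w}A_d\xrightarrow{\cdot w}A_{2d}\to\cdots$, the exact sequences $0\to A_{i-d}\xrightarrow{\cdot w}A_i\to S_i\to 0$ coming from $w$ being regular normal, and the stabilization forced by $\dim_k S<\infty$. So you have supplied a complete writeup of the cited argument rather than a genuinely different route.
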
 

\begin{proof}
The proof is similar to that of \cite[Lemma 5.1 (3)]{SV}.  
\end{proof}  

\begin{lemma} \label{lem.nqh1} 
Let $A$ be a graded algebra.  For a homogeneous regular normal element $w\in A_d$ and a positive integer $m\in \NN^+$, $A[(w^m)^{-1}]_0=A[w^{-1}]_0$. 
\end{lemma}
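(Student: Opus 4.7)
The strategy is to realize $A[(w^m)^{-1}]$ as a subalgebra of $A[w^{-1}]$ and then compare the degree-zero pieces directly. First I would observe that in $A[w^{-1}]$ the element $w^m$ is already invertible, with inverse $w^{-m}$, and that $w^m \in A_{md}$ is regular and normal with normalizing automorphism $\nu_{w^m} = \nu_w^m$. By the universal property of the Ore-style localization used to define $A[(w^m)^{-1}]$, the inclusion $A \hookrightarrow A[w^{-1}]$ extends uniquely to a graded algebra homomorphism $\varphi:A[(w^m)^{-1}] \to A[w^{-1}]$ sending $(w^m)^{-1}$ to $w^{-m}$. Injectivity is routine: $\varphi(a(w^m)^{-j}) = aw^{-mj}$ vanishes in $A[w^{-1}]$ only if $a=0$ (by regularity of $w$), in which case $a(w^m)^{-j}$ already vanishes in $A[(w^m)^{-1}]$.

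With $A[(w^m)^{-1}] \subseteq A[w^{-1}]$ established in this way, the inclusion $A[(w^m)^{-1}]_0 \subseteq A[w^{-1}]_0$ is immediate, since a typical degree-zero element $a(w^m)^{-j}$ with $a \in A_{jmd}$ maps to $aw^{-mj}$ with $a \in A_{(mj)d}$. For the reverse inclusion, I take an arbitrary $aw^{-i} \in A[w^{-1}]_0$ with $a \in A_{id}$, pick any $j \in \NN$ with $mj \geq i$, and rewrite
\[
aw^{-i} \;=\; (aw^{mj-i})\,w^{-mj} \;=\; (aw^{mj-i})(w^m)^{-j},
\]
where $aw^{mj-i} \in A_{id + (mj-i)d} = A_{(mj)d}$, so this representative lies in $A[(w^m)^{-1}]_0$.

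There is essentially no obstacle here beyond bookkeeping: the only point one has to handle with care is that in the noncommutative, normal (rather than central) setting one must confirm that $w^m$ remains regular and normal so that the localization $A[(w^m)^{-1}]$ is defined, and that the two descriptions of degree-zero elements given above agree inside $A[w^{-1}]$. Both are immediate from the definitions, so the lemma reduces to the degree calculation displayed in the previous paragraph.
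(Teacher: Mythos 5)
Your argument is correct, and the paper itself offers no proof of this lemma (it is treated as immediate), so your write-up simply supplies the routine verification the authors omit: identify $A[(w^m)^{-1}]$ with the subring of $A[w^{-1}]$ generated by $A$ and $w^{-m}$ (using $\nu_{w^m}=\nu_w^m$), and note that every degree-zero element $aw^{-i}$ with $a\in A_{id}$ can be rewritten as $(aw^{mj-i})(w^m)^{-j}$ for $mj\geq i$. This is exactly the intended argument, so nothing further is needed.
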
  

Let $A$ be a ring.  If $u\in A$ is a central element, and $a\in A$, then we write $a/u$ for $au^{-1}=u^{-1}a\in A[u^{-1}]$.  

\begin{lemma}\label{lem.loce}
If $A$ is an $\NN$-graded ring and $u\in A_1$ is a regular central element, then the following hold. 
\begin{enumerate}
\item{} $(A[v])[u^{-1}]_0=A[u^{-1}]_0[v/u]$ where $\deg v=1$.   
\item{} For a central element $w\in A_d$, $(A/(w))[u^{-1}]_0\cong A[u^{-1}]_0/(w/u^d)$. 
\end{enumerate}
\end{lemma}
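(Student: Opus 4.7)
My plan is to interpret both parts as explicit equalities (resp.\ isomorphisms) of subrings of a common localization, and then verify them by a direct degree-wise computation. The key identity to remember is that any degree $n$ homogeneous element divided by $u^n$ gives a degree $0$ element, and conversely every element of the degree $0$ part arises in this way.

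For (1), I would view both $A[u^{-1}]_0[v/u]$ and $(A[v])[u^{-1}]_0$ as subrings of the $\ZZ$-graded ring $(A[v])[u^{-1}]$ in which $\deg u=\deg v=1$. The inclusion $A[u^{-1}]_0[v/u]\subseteq (A[v])[u^{-1}]_0$ is clear since $v/u$ has degree $0$ and commutes with $A[u^{-1}]_0$ (as $u$ is central in $A[v]$). For the reverse inclusion, a general homogeneous element of $A[v]$ of degree $n$ has the form $\sum_{i=0}^n a_i v^i$ with $a_i\in A_{n-i}$, and
\[
\frac{1}{u^n}\sum_{i=0}^n a_i v^i=\sum_{i=0}^n \frac{a_i}{u^{n-i}}\left(\frac{v}{u}\right)^i
\]
lies in $A[u^{-1}]_0[v/u]$, which gives the equality.

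For (2), I would first note that since $w$ is central in $A$, the element $w/u^d$ is central in $A[u^{-1}]_0$, so the two-sided ideal $(w/u^d)$ is well-defined. The composition $A\to A/(w)\hookrightarrow (A/(w))[u^{-1}]$ extends to a graded ring homomorphism $A[u^{-1}]\to (A/(w))[u^{-1}]$, which restricts in degree $0$ to a ring homomorphism $\psi:A[u^{-1}]_0\to (A/(w))[u^{-1}]_0$ sending $a/u^n$ to $\bar{a}/\bar{u}^n$. Surjectivity is immediate since every element of $(A/(w))[u^{-1}]_0$ is of the form $\bar{a}/\bar{u}^n$ with $a\in A_n$. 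For the kernel, clearly $w/u^d\in\ker\psi$. Conversely, if $a/u^n\in\ker\psi$ with $a\in A_n$, then $\bar{a}\,\bar{u}^m=0$ in $A/(w)$ for some $m\geq 0$, so $au^m=bw$ for some $b\in A$; comparing degrees we can take $b\in A_{n+m-d}$, whence
\[
\frac{a}{u^n}=\frac{au^m}{u^{n+m}}=\frac{b}{u^{n+m-d}}\cdot\frac{w}{u^d}\in (w/u^d),
\]
giving $\ker\psi=(w/u^d)$ and hence the desired isomorphism.

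I do not expect a serious obstacle here; the only bookkeeping issues are tracking degrees and invoking centrality of $u$ and $w$ to justify the rearrangements of products and the well-definedness of $(w/u^d)$ as a two-sided ideal.
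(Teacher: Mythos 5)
Your argument is correct and follows essentially the same route as the paper: part (1) is the identical degree-wise expansion, and part (2) rests on the same identification $(aw)/u^{d+j}=(a/u^j)(w/u^d)$ of the kernel with the ideal $(w/u^d)$, the only difference being that the paper invokes exactness of localization applied to $0\to (w)\to A\to A/(w)\to 0$ while you compute the kernel of the induced surjection directly. One cosmetic point: the map $A/(w)\to (A/(w))[u^{-1}]$ need not be injective (so the hooked arrow is unwarranted when $\bar u$ fails to be regular modulo $w$), but your proof never uses injectivity, and your saturation step $\bar a\,\bar u^m=0\Rightarrow au^m=bw$ is precisely what handles that possibility.
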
 

\begin{proof}
(1) We have
\begin{align*}
(A[v])[u^{-1}]_0
&=\{f/u^j\mid f\in A[v]_j, j\in \NN\}\\
&=\{(\sum _{i=0}^ja_iv^i)/u^j=\sum_{i=0}^j(a_i/u^{j-i})(v/u)^i \mid a_i\in A_{j-i}, j\in \NN\}\\
&=A[u^{-1}]_0[v/u].
\end{align*}

(2) An exact sequence $0\to (w)\to A\to A/(w)\to 0$ induces an exact sequence 
$0\to (w)[u^{-1}]_0\to A[u^{-1}]_0\to (A/(w))[u^{-1}]_0\to 0.$
Since 
$(w)[u^{-1}]_0=\{(aw)/u^{d+j}=(a/u^j)(w/u^d)\mid a\in A_j, j\in \NN\}=(w/u^d)$
as an ideal of $A[u^{-1}]_0$,  
the result follows.  
\end{proof} 

\subsection{Noncommutative Quadric Hypersurfaces} 

\begin{definition} 
A graded algebra $A$ is called a \emph{homogeneous coordinate ring of a quadric hypersurface 
in a quantum $\PP^{n-1}$}
if $A=S/(f)$ where 
\begin{itemize}
\item $S$ is a quantum polynomial algebra of dimension $n$, and
\item $f\in S_2$ is a regular normal element. 
\end{itemize}
\end{definition}

In the above definition, we do not assume that $f$ is central as in \cite{SV}.
In this section and the next, we will establish some of the results of \cite {SV} in the case that $f$ is normal, rather than central.

By Lemma \ref{lem.bal}, if $A$ is a homogeneous coordinate ring of a quadric hypersurface in a quantum $\PP^{n-1}$, then $A$ is a noetherian AS-Gorenstein algebra of dimension $n-1$.

\begin{lemma}\label{lem.Kos}
If $A=S/(f)$ is a homogeneous coordinate ring of a quadric hypersurface in a quantum $\PP^{n-1}$, then the following hold. 
\begin{enumerate}
\item $S$ and $A$ are Koszul.
\item There exists a unique regular normal element $w \in A_2^!$ up to scalar such that $A^!/(w) = S^!$.
\end{enumerate}
\end{lemma}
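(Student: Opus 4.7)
For (1), $S$ is noetherian AS-regular with Hilbert series $(1-t)^{-n}$, so by standard structure theory for AS-regular algebras with polynomial Hilbert series, the minimal graded free resolution of $k$ over $S$ is a linear (quantum) Koszul complex, hence $S$ is Koszul. For $A = S/(f)$ with $f \in S_2$ regular normal, $A$ is quadratic; its Koszulness should follow either by invoking the standard fact that a Koszul algebra modulo a regular normal element of degree $2$ remains Koszul, or by directly splicing the short exact sequence $0 \to S(-2) \to S \to A \to 0$ with the linear Koszul resolution of $k$ over $S$ to obtain a linear free resolution of $k$ over $A$.

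For (2), the plan is to write $S = T(V)/(R_S)$ with $V = S_1$, so that $A = T(V)/(R_S \oplus kf)$ since $f \notin R_S$. Then $S^! = T(V^*)/(R_S^\perp)$ and $A^! = T(V^*)/(R_S^\perp \cap f^\perp)$, where $f^\perp \subset V^* \otimes V^*$ is the hyperplane annihilating $f$. Since $R_S^\perp \cap f^\perp$ has codimension $1$ in $R_S^\perp$, the induced surjection $\varphi : A^! \twoheadrightarrow S^!$ has a $1$-dimensional kernel in degree $2$, spanned by a unique element $w \in A^!_2$ up to scalar. The kernel of $\varphi$ is the two-sided ideal generated by $w$ (since it is generated in degree $2$ by construction), giving $A^!/(w) \cong S^!$ as graded algebras.

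To establish the normality of $w$, I would dualize the normality of $f$: the normalizing automorphism $\nu_f \in \GrAut S$ restricts to $V = S_1$ and induces, via quadratic duality, a graded algebra automorphism $\sigma$ of $A^!$; a direct check on degree-$1$ generators $\xi \in V^*$ then shows $w\xi = \sigma(\xi)w$, and the relation extends to all of $A^!$ since $A^!$ is generated in degree $1$. Hence $(w) = wA^! = A^!w$. For regularity, Koszul duality gives $H_{S^!}(t) = 1/H_S(-t) = (1+t)^n$ and $H_{A^!}(t) = 1/H_A(-t) = (1+t)^n/(1-t^2)$, using $H_A(t) = (1-t^2)H_S(t)$ from the regularity of $f \in S_2$. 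Multiplication by $w$ yields an exact sequence $0 \to N(-2) \to A^!(-2) \to A^! \to A^!/(w) \to 0$ with $N$ the right annihilator of $w$, and taking Hilbert series gives $H_{A^!/(w)}(t) = (1-t^2)H_{A^!}(t) + t^2 H_N(t)$. Since $H_{A^!/(w)} = H_{S^!} = (1-t^2)H_{A^!}$ from the identification above, we deduce $H_N = 0$, so $N = 0$ and $w$ is right-regular, hence regular by normality.

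The main obstacle is constructing the automorphism $\sigma$ of $A^!$ compatibly with $\nu_f \in \GrAut S$ and verifying $w\xi = \sigma(\xi)w$ on degree-$1$ generators; the remaining steps are essentially standard quadratic-dual bookkeeping and Hilbert series manipulations.
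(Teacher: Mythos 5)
Your part (1) amounts to the same two citations the paper itself uses: that a quantum polynomial algebra (noetherian AS-regular with Hilbert series $(1-t)^{-n}$) is Koszul is Smith's theorem, and that a Koszul algebra modulo a degree-$2$ regular normal element stays Koszul is Shelton--Tingey; the paper's proof of the lemma is literally these citations (plus Shelton--Tingey's Corollary 1.4 for (2)). One caution: your alternative ``splicing'' of $0 \to S(-2) \to S \to A \to 0$ with the Koszul resolution of $k$ over $S$ does not work as stated --- the terms of that resolution are not $A$-modules, and the genuine argument requires a change-of-rings construction --- so for (1) you must lean on the cited fact, not the splice.

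For (2) you take a genuinely different, direct route where the paper simply cites Shelton--Tingey, and most of your bookkeeping is correct: $A^! = T(V^*)/(R_S^\perp \cap \tilde f^{\perp})$ for a lift $\tilde f \in V\otimes V$ of $f$, the degree-$2$ kernel of $A^! \twoheadrightarrow S^!$ is one-dimensional and generates the kernel ideal, and the Hilbert-series argument does give regularity of $w$ once normality and Koszulness are in hand. However, the one substantive point --- normality of $w$ --- is precisely what you defer as ``the main obstacle,'' so as written the proposal does not prove the lemma. The check can in fact be completed: normalize $\tilde w \in R_S^\perp$ by $\langle \tilde w, \tilde f\rangle = 1$, set $\nu = \nu_f|_{S_1}$ and $\tau = ((\nu)^{*})^{-1}$, and show that $\xi\otimes\tilde w - \tilde w\otimes\tau(\xi)$ annihilates $(R_A\otimes V)\cap(V\otimes R_A)$, which is the perp of the degree-$3$ relation space of $A^!$. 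Writing $u = \rho + \tilde f\otimes a = \rho' + b\otimes \tilde f$ with $\rho \in R_S\otimes V$, $\rho' \in V\otimes R_S$, the pairing equals $\xi(b) - \tau(\xi)(a)$; the degree-$3$ normality relation $b\otimes \tilde f - \tilde f\otimes \nu(b) \in R_S\otimes V + V\otimes R_S$ combined with regularity of $f$ (so that $f\cdot(\nu(b)-a)=0$ in $S_3$ forces $a=\nu(b)$) gives the vanishing, whence $A^!_1 w = wA^!_1$ and normality follows since $A^!$ is generated in degree $1$. Until you supply this verification (or cite Shelton--Tingey's Corollary 1.4 as the paper does), the proof of (2) is incomplete; with it, your argument is a valid self-contained replacement for the paper's citation.
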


\begin{proof}
(1) This follows from \cite[Theorem 5.11]{S} and \cite[Theorem 1.2]{ST}. 

(2) This follows from \cite[Corollary 1.4]{ST}.
\end{proof}

If $A=S/(f)$ is a homogeneous coordinate ring of a quadric hypersurface in a quantum $\PP^{n-1}$, and $w\in A^!_2$ such that $A^!/(w)=S^!$ as above, then we define 
\[ C(A) := A^![w^{-1}]_0. \]

If $S$ is a graded algebra, then the map $S\to S; \; a\mapsto (-1)^{\deg a}a$ is a graded algebra automorphism, which is denoted by $-1\in \GrAut S$ by abuse of notation.  
For example, $k[x, y][u; -1]\cong k\<x, y, u\>/(xy-yx, xu+ux, yu+uy)$.  

Let $S$ be a quantum polynomial algebra.
For a regular central element $f\in S_2$ and $A=S/(f)$, we define $S^{\dagger}=S[u; -1], A^{\dagger}=S^{\dagger}/(f+u^2)$ where $\deg u=1$.  Since $f+u^2\in S^{\dagger}_2$ is a regular central element, we further define $S^{\dagger\dagger}=(S^{\dagger})^{\dagger}=(S[u; -1])[v; -1], A^{\dagger\dagger}=(A^{\dagger})^{\dagger}=S^{\dagger\dagger}/(f+u^2+v^2)$.  
It is easy to see that $(S^{\dagger})^!\cong S^![u]/(u^2)$ and $(S^{\dagger\dagger})^!\cong (S^{\dagger})^![v]/(v^2)\cong S^![u, v]/(u^2, v^2)$ where we write $u, v$ for the duals of $u, v$ by abuse of notation. 

Recall that if $A$ is a Koszul algebra, then $H_A(t)H_{A^!}(-t)=1$.  

\begin{theorem} \label{thm.nqh2} 
If $A=S/(f)$ is a homogeneous coordinate ring of a quadric hypersurface in a quantum $\PP^{n-1}$ where $f\in S_2$ is a regular central element, and $w\in A^!_2$ such that $S^!=A^!/(w)$, then the following hold.
\begin{enumerate}
\item{} $(A^{\dagger})^!\cong A^![u]/(u^2-w)$ and $(A^{\dagger\dagger})^!\cong A^![u, v]/(u^2-w, v^2-w)\cong (A^{\dagger})^![v]/(v^2-u^2)$ after adjusting $w$ by a suitable scalar.
\item{} $w=u^2\in (A^{\dagger})^!_2$ is a regular central element such that $(S^{\dagger})^!=(A^{\dagger})^!/(w)$, and $w=v^2=u^2\in (A^{\dagger\dagger})^!_2$ is a regular central element such that $(S^{\dagger\dagger})^!=(A^{\dagger\dagger})^!/(w)$.
\item{}
$C(A^{\dagger})\cong (A^{\dagger})^![u^{-1}]_0$ and
$C(A^{\dagger\dagger})\cong (A^{\dagger\dagger})^![v^{-1}]_0\cong (A^{\dagger\dagger})^![u^{-1}]_0\cong C(A^{\dagger})^{\times 2}$. 
\end{enumerate}
\end{theorem}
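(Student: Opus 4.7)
The plan is to deduce everything from a direct Koszul-dual computation for $A^\dagger$, then iterate for $A^{\dagger\dagger}$. For (1), I would present $A^\dagger$ as a quadratic algebra with generating space $V = S_1 \oplus ku$ and relations $R'$ spanned by $R_S$, the anti-commutation relations $\{a \otimes u + u \otimes a : a \in S_1\}$, and the element $\tilde f + u \otimes u$, where $\tilde f \in S_1 \otimes S_1$ is a lift of $f$. A straightforward orthogonality calculation in $V^* \otimes V^*$ shows that $(A^\dagger)^!$ is presented by $S_1^* \cup \{u\}$ modulo the relations $\xi - \xi(\tilde f)\, u^2$ for $\xi \in R_S^\perp$, together with $[u, \alpha] = 0$ for $\alpha \in S_1^*$. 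By \lemref{lem.Kos}(2), the element $w \in A^!_2$ lifts to some $\xi_w \in R_S^\perp$ with $\xi_w(\tilde f) \neq 0$, for otherwise $w$ would vanish in $A^!_2$. After rescaling $w$ so that $\xi_w(\tilde f) = 1$, the relation coming from $\xi_w$ reads $u^2 = w$, while all remaining relations $\xi - \xi(\tilde f)\, u^2$ for $\xi \in R_S^\perp$ reduce modulo this to the defining relations of $A^!$. This yields $(A^\dagger)^! \cong A^![u]/(u^2 - w)$ with $u$ central in the quotient. Applying the same procedure to $A^{\dagger\dagger} = (A^\dagger)^\dagger$ and identifying the analogue of $w$ for the second extension with $u^2 \in (A^\dagger)^!$ (since $(A^\dagger)^!/(u^2) \cong (S^\dagger)^!$ by construction) gives the second isomorphism.

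For (2), centrality of $u^2$ in $(A^\dagger)^!$ is immediate from the commutation of $u$ with $A^!$. For regularity, I would exploit the Koszul duality identity $H_{A^\dagger}(t) H_{(A^\dagger)^!}(-t) = 1$: from $H_{A^\dagger}(t) = (1+t)/(1-t)^n$ and $H_{S^\dagger}(t) = (1-t)^{-(n+1)}$ one reads off $H_{(A^\dagger)^!}(t) = (1+t)^n/(1-t)$ and $H_{(S^\dagger)^!}(t) = (1+t)^{n+1}$, and the identity $H_{(A^\dagger)^!}(t)(1-t^2) = H_{(S^\dagger)^!}(t)$ forces multiplication by $u^2$ to be injective on $(A^\dagger)^!$. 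The corresponding statements for $(A^{\dagger\dagger})^!$ follow by the same Hilbert series check.

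For (3), the isomorphism $C(A^\dagger) \cong (A^\dagger)^![u^{-1}]_0$ is immediate from (2) together with \lemref{lem.nqh1}. Since $u^2 = v^2$ in $(A^{\dagger\dagger})^!$, the symmetric treatment yields both $C(A^{\dagger\dagger}) \cong (A^{\dagger\dagger})^![u^{-1}]_0$ and $C(A^{\dagger\dagger}) \cong (A^{\dagger\dagger})^![v^{-1}]_0$. For the splitting $C(A^{\dagger\dagger}) \cong C(A^\dagger)^{\times 2}$, set $\eta := v/u \in C(A^{\dagger\dagger})$: this is a degree-$0$ central element (since both $u$ and $v$ lie in the center of $(A^{\dagger\dagger})^!$) satisfying $\eta^2 = 1$. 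As $\mathrm{char}\, k \neq 2$, the elements $(1 \pm \eta)/2$ are orthogonal central idempotents summing to $1$, so $C(A^{\dagger\dagger}) \cong C(A^{\dagger\dagger})/(\eta - 1) \times C(A^{\dagger\dagger})/(\eta + 1)$. Applying \lemref{lem.loce}(2) to the central degree-$1$ elements $v \mp u$ rewrites each factor as $((A^{\dagger\dagger})^!/(v \mp u))[u^{-1}]_0$, and part (1) identifies $(A^{\dagger\dagger})^!/(v \mp u)$ with $(A^\dagger)^!$, so each factor equals $C(A^\dagger)$. The main obstacle I anticipate is the careful bookkeeping in (1) to pin down the scaling $w = u^2$ exactly rather than up to an indeterminate scalar; once this normalization is in hand, (2) and (3) are essentially formal consequences.
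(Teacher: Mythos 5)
Your proposal is correct, and it reaches the crucial presentation in (1) by a genuinely different route from the paper. The paper argues indirectly: the surjection $A^{\dagger}\to A^{\dagger}/(u)\cong A$ gives an injection $A^!\to (A^{\dagger})^!$, centrality of $u$ in $(A^{\dagger})^!$ and the relation $u^2=w$ (asserted after a scalar adjustment) give a surjection $A^![u]/(u^2-w)\to (A^{\dagger})^!$, regularity of $u^2-w$ is checked by regrading so that it becomes monic in $u$, and the surjection is shown to be an isomorphism by comparing Hilbert series via Koszulity of $A$ and $A^{\dagger}$. You instead dualize the explicit quadratic presentation $R_{S^{\dagger}}+k(\tilde f+u\otimes u)$ of $A^{\dagger}$ by computing orthogonal complements, which produces the presentation of $(A^{\dagger})^!$ directly and, as a byproduct, proves the two facts the paper only asserts (that $u$ is central in $(A^{\dagger})^!$ and that $u^2=w$ once $\xi_w(\tilde f)=1$), with no Hilbert series comparison needed at that stage; Hilbert series enter only in your (2) to get regularity of $u^2$, where one could alternatively note that $A^![u]/(u^2-w)$ is free over $A^!$ on $1,u$ and that $w$ is regular in $A^!$ by \lemref{lem.Kos}(2). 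In (3) both arguments localize at $u$ and split off a central square root of $1$: the paper identifies $C(A^{\dagger\dagger})$ with $C(A^{\dagger})\otimes_k k[t]/(t^2-1)$ via \lemref{lem.loce}, while you use the idempotents $(1\pm\eta)/2$ and identify each factor with $C(A^{\dagger})$ through $(A^{\dagger\dagger})^!/(v\mp u)\cong (A^{\dagger})^!$ --- the same mechanism, organized differently. The normalization issue you flag for the second extension does work out: $u^*\otimes u^*$ lies in $R_{S^{\dagger}}^{\perp}$ and pairs to $1$ with $\tilde f+u\otimes u$, so the canonical degree-two element for $A^{\dagger}$ is exactly $u^2$ and hence $v^2=u^2$ on the nose, which is what your $\eta^2=1$ requires. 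In short, your computation buys explicitness and fills in the paper's terse steps, while the paper's functorial surjection plus Hilbert series argument avoids handling relation spaces directly; both rely on Koszulity of $A^{\dagger}$ (via \lemref{lem.Kos}) to know the Ext-algebra is the quadratic dual.
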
 

\begin{proof} (1) Since $A^{\dagger}/(u)=S[u; -1]/(f+u^2, u)\cong S/(f)=A$, the canonical surjection $A^{\dagger}\to A$ induces an injection $A^!\to (A^{\dagger})^!$.  Since $u$ is central in $(A^{\dagger})^!$, it extends to a map $A^![u]\to (A^{\dagger})^!$.  Since both sides are generated in degree 1 and the map is bijective in degree 1,
the map $A^![u]\to (A^{\dagger})^!$ is in fact a surjection. Since $u^2-w=0$ in $(A^{\dagger})^!$ after adjusting $w$ by a suitable scalar,
it induces a surjection $A^![u]/(u^2-w)\to (A^{\dagger})^!$.  By regrading $\deg A^!=0$ and $\deg u=1$ so that $u^2-w$ is a monic polynomial with respect to $u$,
we can check that $u^2-w\in A^![u]$ is a regular element.  Since $S, S^{\dagger}$ are quantum polynomial algebras, $A, A^{\dagger}$ are Koszul algebras, and $u^2-w\in A^![u]$ is a regular central element, 
it follows that
\begin{align*}
H_{A^![u]/(u^2-w)}(t) 
& =\dfrac{H_{A^!}(t)}{1-t}(1-t^2)=\dfrac{1+t}{H_A(-t)}=\dfrac{1+t}{H_S(-t)(1-t^2)}=\dfrac{(1+t)^n}{1-t} \\
H_{(A^{\dagger})^!}(t) & =\dfrac{1}{H_{A^{\dagger}}(-t)}=\dfrac{1}{H_{S^{\dagger}}(-t)(1-t^2)}=\dfrac{(1+t)^{n+1}}{1-t^2}=\dfrac{(1+t)^n}{1-t},
\end{align*}
so $A^![u]/(u^2-w)\cong (A^{\dagger})^!$.  Hence we obtain 
\begin{align*}
(A^{\dagger\dagger})^! & \cong (A^{\dagger})^![v]/(v^2-w) \cong (A^![u]/(u^2-w))[v]/(v^2-w) \\
& \cong A^![u, v]/(u^2-w, v^2-w) \cong A^![u, v]/(u^2-w, v^2-u^2)  \\
& \cong (A^![u]/(u^2-w))[v]/(v^2-u^2) \cong (A^{\dagger})^![v]/(v^2-u^2).
\end{align*}

(2) Since $(A^{\dagger})^!\cong A^![u]/(u^2-w)$ by (1), $w=u^2\in (A^{\dagger})^!_2$ is a regular central element such that 
$$(A^{\dagger})^!/(w)\cong A^![u]/(u^2, w)\cong S^![u]/(u^2)\cong (S^{\dagger})^!.$$
Similarly, since $(A^{\dagger\dagger})^!\cong A^![u, v]/(u^2-w, v^2-w)$ by (1), $w=u^2=v^2\in (A^{\dagger\dagger})^!_2$ is a regular central element such that 
$$(A^{\dagger\dagger})^!/(w)\cong A^![u, v]/(u^2, v^2, w)\cong S^![u, v]/(u^2, v^2)\cong (S^{\dagger\dagger})^!.$$

(3) By (1) and Lemma \ref{lem.nqh1} and Lemma \ref{lem.loce},  
\begin{align*}
C(A^{\dagger}):=(A^{\dagger})^![w^{-1}]_0 & \cong (A^{\dagger})^![(u^2)^{-1}]_0\cong (A^{\dagger})^![u^{-1}]_0, \\
C(A^{\dagger\dagger}):=(A^{\dagger\dagger})^![w^{-1}]_0 & \cong (A^{\dagger\dagger})^![(v^2)^{-1}]_0\cong (A^{\dagger\dagger})^![v^{-1}]_0 \cong (A^{\dagger\dagger})^![(u^2)^{-1}]_0\cong (A^{\dagger\dagger})^![u^{-1}]_0 \\
& \cong ((A^{\dagger})^![v]/(v^2-u^2))[u^{-1}]_0 \cong (A^{\dagger})^![u^{-1}]_0[v/u]/((v/u)^2-1) \\
& \cong C(A^{\dagger})\otimes _kk[t]/(t^2-1)\cong C(A^{\dagger})^{\times 2}.
\end{align*} 

\end{proof}   

We will now see that $C(A)$ is essential to compute $\uCM^{\ZZ}(A)$.
For a noetherian AS-Gorenstein algebra $A$, 
we define
$$\CM^{0}(A) := \{M \in \CM^{\ZZ}(A) \mid M \; \text{is generated in degree 0}\}.$$
The following lemma is a slight generalization of \cite[Lemma 5.1 (3), Lemma 5.1 (4), Proposition 5.2]{SV}.  

\begin{lemma} \label{lem.C(A)}
Let $A=S/(f)$ be a homogeneous coordinate ring of a quadric hypersurface in a quantum $\PP^{n-1}$.
Then
\begin{enumerate}
\item $\dim_k C(A)=2^{n-1}$. 
\item $F_{A^!}:\tails A^! \to \mod C(A)$ defined by $F_{A^!}(\pi M)=M[w^{-1}]_0$ is an equivalence functor.
\item $G:=\overline F_{(A^!)^o}\circ B:\uCM^{\ZZ}(A) \to \cD^b(\mod C(A)^o)$ is a duality where $\overline F_{(A^!)^o} :\cD^b(\tails (A^!)^o)\to \cD^b(\mod C(A)^o)$ is an equivalence functor induced by $F_{(A^!)^o}$.
\item $H:=D \circ G: \uCM^{\ZZ}(A) \to \cD^b(\mod C(A))$ is an equivalence functor.
\item $G$ restricts to a duality functor $\uCM^{0}(A) \to \mod C(A)^o$. 
\end{enumerate}
\end{lemma}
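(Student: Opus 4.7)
My plan is to take the four parts in order, leveraging Lemma \ref{lem.dimc}, Proposition \ref{prop.sg}, and Lemma \ref{lem.Koz} as the main tools.

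For (1), since $S$ is Koszul with $H_S(t)=(1-t)^{-n}$, one has $H_{S^!}(t)=(1+t)^n$ and in particular $\dim_k S^!=2^n<\infty$. Combined with $A^!/(w)=S^!$ from Lemma \ref{lem.Kos} (2), Lemma \ref{lem.dimc} yields
$$\dim_kC(A)=\dim_k A^![w^{-1}]_0=\dim_k(S^!)^{(2)}=\sum_{i\geq 0}\binom{n}{2i}=2^{n-1}$$
by the standard binomial identity.

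For (2), the approach is to invoke Proposition \ref{prop.sg} applied to the ring $A^!$ with element $w\in A^!_2$; its conclusion is precisely that $\pi M\mapsto M[w^{-1}]_0$ defines an equivalence $\tails A^!\to \mod A^![w^{-1}]_0=\mod C(A)$. The hypotheses ``$A^!$ generated in degree $1$'', ``$w$ homogeneous regular normal of positive degree'', and ``$\dim_k A^!/(w)<\infty$'' all follow from Lemma \ref{lem.Kos} together with (1); right-noetherianness of $A^!$ follows from a standard filtered-ring argument via the $(w)$-adic filtration, whose associated graded ring is an Ore extension of the finite-dimensional algebra $S^!$ by the image of $w$. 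Part (3) is then formal: $F$ extends to an equivalence $\overline F$ of bounded derived categories, and composing with the Koszul duality $B:\uCM^{\ZZ}(A)\to \cD^b(\tails A^!)$ from Lemma \ref{lem.Koz} (2) yields the duality $G=\overline F\circ B$.

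For (4), the key step is the identification $\uCM^0(A)=\uCM^{\ZZ}(A)\cap \lin A$ inside $\uCM^{\ZZ}(A)$. The inclusion $\supseteq$ is immediate since a linear resolution forces generation in degree $0$. For $\subseteq$, I would use Theorem \ref{thm.m4}: any $M\in \uCM^0(A)$ is represented by a reduced noncommutative graded matrix factorization of $f$ with $F^0=S^r$, and the relation $\Phi^0\Phi^1=fE_r$ with $f\in S_2$, combined with reducedness (no trivial $(\id,f)$ or $(f,\id)$ summand), forces $F^1=S(-1)^r$ with entries of both matrices in $S_1$, so the induced $A$-free resolution of $M$ is linear. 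Lemma \ref{lem.Koz} (2) then restricts the Koszul duality to $\uCM^0(A)\to \tails A^!$, and composing with $F$ from (2) gives the desired duality $\uCM^0(A)\to \mod C(A)$. The main obstacles I foresee are the noetherianness check in (2) and the minimality/degree-count argument in (4); both are standard in this Koszul/hypersurface setting but deserve care.
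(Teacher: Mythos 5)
Your parts (1) and (2) follow the paper's route (Lemma \ref{lem.dimc} for the dimension count, Proposition \ref{prop.sg} for the equivalence), and in (4) your matrix-factorization degree count (reducedness plus $\Phi^0\Phi^1=fE_r$ with $f\in S_2$ forcing $F^1=S(-1)^r$) is a legitimate direct re-proof of the identity $\CM^0(A)=\CM^{\ZZ}(A)\cap\lin A$, which the paper simply cites from \cite[Proposition 7.8 (1)]{MU}. However, there is a genuine gap at step (3): you call it ``formal,'' but Lemma \ref{lem.Koz} only applies under the hypothesis that \emph{both} $A$ and $A^!$ are noetherian Koszul AS-Gorenstein algebras, and verifying this is the entire content of the paper's proof of (3). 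Nowhere in your proposal do you establish that $A^!$ (or even $A$) is AS-Gorenstein. The paper gets this from Lemma \ref{lem.bal} used twice: applied to $S$ and $f$ it gives that $A$ is noetherian AS-Gorenstein of dimension $n-1$ (Koszulness comes from Lemma \ref{lem.Kos}), and applied to $A^!$ and $w$, using that $A^!/(w)=S^!$ is graded Frobenius (i.e.\ noetherian AS-Gorenstein of dimension $0$), it gives that $A^!$ is noetherian Koszul AS-Gorenstein of dimension $1$. Because you replaced the paper's use of Lemma \ref{lem.bal} in (2) by a $(w)$-adic filtered-ring argument that only yields noetherianness, the Gorenstein hypothesis is never addressed anywhere, and without it the duality $B:\uCM^{\ZZ}(A)\to\cD^b(\tails A^!)$ of Lemma \ref{lem.Koz} (2) --- which both (3) and (4) rest on --- is not available.

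The fix is immediate and also simplifies (2): invoke Lemma \ref{lem.bal} for $A^!$ and $w$ exactly as above. This yields at once that $A^!$ is noetherian (so your associated-graded detour, which anyway needs a separatedness-type condition to transfer noetherianness from $\gr A^!\cong S^![t;\bar\nu]$, becomes unnecessary) and that $A^!$ is AS-Gorenstein, so Lemma \ref{lem.Koz} applies and your (3) and (4) then go through as stated.
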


\begin{proof}
(1) 
Since $H_{S^!}(t)=(1+t)^n$,
$\dim_k C(A)= \dim_k (S^!)^{(2)} = 2^{n-1}$ by Lemma \ref{lem.dimc}.

(2)
Since $S^!=A^!/(w)$ is a graded Frobenius algebra, that is, a noetherian AS-Gorenstein algebra of dimension 0,
$A^!$ is a noetherian graded algebra generated in degree 1 over $k$ by Lemma \ref{lem.bal},
so the result follows from Proposition \ref{prop.sg}. 

(3) 
By Lemma \ref{lem.bal}, $A$ is a noetherian AS-Gorenstein Koszul algebra.
Since $S^!$ is a graded Frobenius algebra, $A^!$ is a noetherian Koszul AS-Gorenstein algebra by Lemma \ref{lem.bal} again, so the result follows from Lemma \ref{lem.Koz} and (2).  

(4) Since $D$  gives a duality between $\cD^b(\mod C(A)^o)$ and $\cD^b(\mod C(A))$,
this follows from (3).

(5) By \cite[Proposition 7.8 (1)]{MU}, $\CM^{0}(A)=\CM^{\ZZ}(A)\cap \lin A$, so the result follows from Lemma \ref{lem.Koz}.   
\end{proof}

Kn\"orrer's periodicity theorem is a powerful tool to compute $\uCM^{\ZZ}(A)$ since it reduces the number of variables.
The following theorem gives another way to reduce the number of variables in computing $\uCM^{\ZZ}(A)$. 

\begin{theorem} \label{thm.ca2} 
If $A=S/(f)$ is a homogeneous coordinate ring of a quadric hypersurface in a quantum $\PP^{n-1}$ where $f\in S_2$ is a regular central element, 
then $\uCM^{\ZZ}(A^{\dagger\dagger})\cong \uCM^{\ZZ}(A^{\dagger})\times \uCM^{\ZZ}(A^{\dagger})$.  
\end{theorem}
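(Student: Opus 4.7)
The plan is to combine Theorem \ref{thm.nqh2}(3), which identifies $C(A^{\dagger\dagger})$ with $C(A^{\dagger})^{\times 2}$, with the Koszul/localization duality of Lemma \ref{lem.C(A)}(3), which transports $\uCM^{\ZZ}$ of a noncommutative quadric hypersurface into the derived category of finitely generated modules over $C(-)$. The decomposition of $C(A^{\dagger\dagger})$ into a product of algebras translates, through this duality, into the desired product decomposition of $\uCM^{\ZZ}(A^{\dagger\dagger})$.

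First I would check that the hypothesis of Lemma \ref{lem.C(A)} is actually satisfied by both $A^{\dagger}$ and $A^{\dagger\dagger}$. Since $S$ is a quantum polynomial algebra of dimension $n$, the iterated Ore extensions $S^{\dagger}=S[u;-1]$ and $S^{\dagger\dagger}=S[u;-1][v;-1]$ are noetherian AS-regular with Hilbert series $(1-t)^{-(n+1)}$ and $(1-t)^{-(n+2)}$, hence quantum polynomial algebras of dimensions $n+1$ and $n+2$. Because $\deg f=2$ is even and $f$ is central in $S$, the element $f$ continues to be central in $S^{\dagger}$ and $S^{\dagger\dagger}$; similarly $u^2$ and $v^2$ are central. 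Regularity of $f+u^2\in S^{\dagger}_2$ and of $f+u^2+v^2\in S^{\dagger\dagger}_2$ then follows by a top $u$-degree (resp.\ $(u,v)$-degree) argument analogous to the proof of Lemma \ref{lem.rn}. Thus $A^{\dagger}$ and $A^{\dagger\dagger}$ are genuine homogeneous coordinate rings of quadric hypersurfaces in quantum $\PP^{n}$ and $\PP^{n+1}$, respectively.

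Next, Lemma \ref{lem.C(A)}(3) gives dualities
\[
G^{\dagger}:\uCM^{\ZZ}(A^{\dagger})\xrightarrow{\sim}\cD^b(\mod C(A^{\dagger})),\qquad G^{\dagger\dagger}:\uCM^{\ZZ}(A^{\dagger\dagger})\xrightarrow{\sim}\cD^b(\mod C(A^{\dagger\dagger})).
\]
By Theorem \ref{thm.nqh2}(3), there is an isomorphism of finite-dimensional $k$-algebras $C(A^{\dagger\dagger})\cong C(A^{\dagger})^{\times 2}$. For any product of rings $R_1\times R_2$, the central idempotents induce equivalences $\mod(R_1\times R_2)\cong \mod R_1\times \mod R_2$, hence $\cD^b(\mod(R_1\times R_2))\cong \cD^b(\mod R_1)\times \cD^b(\mod R_2)$. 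Applying this to $R_1=R_2=C(A^{\dagger})$ and chaining with $G^{\dagger}$ and $G^{\dagger\dagger}$, one obtains, after taking opposite categories (so that the two dualities compose into an equivalence),
\[
\uCM^{\ZZ}(A^{\dagger\dagger})\;\cong\;\cD^b(\mod C(A^{\dagger\dagger}))^{\op}\;\cong\;\bigl(\cD^b(\mod C(A^{\dagger}))\times \cD^b(\mod C(A^{\dagger}))\bigr)^{\op}\;\cong\;\uCM^{\ZZ}(A^{\dagger})^{\times 2}.
\]

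I expect the argument itself to be essentially a formal concatenation of Theorem \ref{thm.nqh2}(3) and Lemma \ref{lem.C(A)}(3); the only delicate point is the preliminary one, namely verifying that $A^{\dagger}$ and $A^{\dagger\dagger}$ really are homogeneous coordinate rings of quadric hypersurfaces in quantum projective spaces, so that $C(-)$ and the Koszul-duality machinery from Lemma \ref{lem.C(A)} are available. Once this is in place, the product decomposition of $C(A^{\dagger\dagger})$ coming from the identity $v^2=u^2$ in $(A^{\dagger\dagger})^!$ (equivalently, the factorization $(v/u)^2=1$ in the localization) propagates directly to the stable category of graded maximal Cohen--Macaulay modules.
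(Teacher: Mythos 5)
Your proposal is correct and follows essentially the same route as the paper: the paper's proof is precisely the chain $\uCM^{\ZZ}(A^{\dagger\dagger})\cong \cD^b(\mod C(A^{\dagger\dagger}))^{\op}\cong \cD^b(\mod C(A^{\dagger})^{\times 2})^{\op}\cong \uCM^{\ZZ}(A^{\dagger})^{\times 2}$ obtained by combining Lemma \ref{lem.C(A)}(3) with Theorem \ref{thm.nqh2}(3). Your preliminary verification that $A^{\dagger}$ and $A^{\dagger\dagger}$ are again homogeneous coordinate rings of quadric hypersurfaces is a point the paper leaves implicit (it is asserted when $A^{\dagger}$ and $A^{\dagger\dagger}$ are defined), so including it is a harmless and welcome addition rather than a deviation.
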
 

\begin{proof} By Lemma \ref{lem.C(A)} (4) and Theorem \ref{thm.nqh2} (3), 
\begin{align*}
\uCM^{\ZZ}(A^{\dagger\dagger}) & \cong \cD^b(\mod C(A^{\dagger\dagger}))\cong \cD^b(\mod (C(A^{\dagger})\times C(A^{\dagger}))) \\
& \cong \cD^b(\mod C(A^{\dagger}))\times \cD^b(\mod C(A^{\dagger})) 
\cong \uCM^{\ZZ}(A^{\dagger})\times \uCM^{\ZZ}(A^{\dagger}).
 \end{align*}
 \end{proof}

\begin{example}  \label{ex.smrd}
If $S=k_{-1}[x_1, \dots, x_n]:=k\<x_1, \dots, x_n\>/(x_ix_j+x_jx_i)_{1\leq i, j\leq n, i\neq j}$, then $S$ is a quantum polynomial algebra of dimension $n$ and $f=x_1^2+\cdots +x_n^2\in S_2$ is a regular central element.
Since $S\cong k[x_1][x_2; -1]\cdots [x_n;-1]$,
we have $C(S/(f))\cong C(k[x_1]/(x_1^2))^{\times 2^{n-1}}\cong k^{2^{n-1}}$ by repeatedly applying Theorem \ref{thm.nqh2},
so $\uCM^{\ZZ}(S/(f))\cong \cD^b(\mod k^{2^{n-1}})$ (cf. \cite[Proposition 3.2, Theorem 3.3]{U}).
Since $f=(x_1+\e_2x_2+\cdots +\e_nx_n)^2$ for $\e_i=\pm 1$,  $f$ can be factorized in $2^{n-1}$ different ways, 
so all isomorphism classes of indecomposable non-free graded maximal Cohen-Macaulay modules over $A=S/(f)$ up to shifts are listed as $\Coker (x_1+\e_2x_2+\cdots +\e_nx_n)\cdot$ for $\e_i=\pm 1, i=2, \dots, n$.
(Note that, for every indecomposable non-free graded maximal Cohen-Macaulay modules $M$, there exists a noncommutative graded matrix factorization $\phi$ such that $\Coker \phi^0\cong M$ by \cite [Proposition 6.2]{MU}.)
\end{example}


\section{Noncommutative Smooth Quadric Hypersurfaces} 

From now on, we assume that $k$ is an algebraically closed field of characteristic not 2.
Recall that $A$ is the homogeneous coordinate ring of a smooth quadric hypersurface in $\PP^{n-1}$ if and only if $A\cong k[x_1, \dots, x_n]/(x_1^2+\cdots +x_n^2)$.
Applying the graded Kn\"orrer's periodicity theorem (Theorem \ref{thm.nkp}), we have 
$$\uCM^{\ZZ}(A)\cong \begin{cases} \uCM^{\ZZ}(k[x_1]/(x_1^2)) \cong  \cD^b(\mod k) & \text { if $n$ is odd,} \\
\uCM^{\ZZ}(k[x_1, x_2]/(x_1^2+x_2^2)) \cong  \cD^b(\mod (k \times k)) & \text { if $n$ is even.}\end{cases}$$
In this section, we prove a noncommutative analogue of this result up to $n=6$ under the high rank property.
In the next section, we will see that the high rank property is needed.    

\begin{definition}[Smoothness]
A graded algebra $A$ is called a homogeneous coordinate ring of a \emph{smooth} (resp. \emph{singular}) quadric hypersurface 
in a quantum $\PP^{n-1}$ 
if
\begin{itemize}
\item $A=S/(f)$ is a homogeneous coordinate ring of a quadric hypersurface in a quantum $\PP^{n-1}$, and
\item $\gldim(\tails A) < \infty$ (resp. $\gldim(\tails A) = \infty$)
\end{itemize}
\end{definition} 

We will give a characterization of the smoothness below.
We prepare the following two lemmas.  For a Krull-Schmidt additive category $\cC$, we denote by $\ind \cC$ a complete set of representatives of isomorphism classes of indecomposable objects of $\cC$. 

\begin{lemma} \label{lem.ind} 
Let $A=S/(f)$ be a homogeneous coordinate ring of a quadric hypersurface in a quantum $\PP^{n-1}$.
If $C(A)$ is semisimple, then, for every $M\in \CM^{\ZZ}(A)$, there exists $M_i\in \ind \CM^0(A)$, $\ell_i\in \ZZ$ and $r\in \NN$ such that $M\cong \bigoplus _{i=1}^rM_i(\ell_i)$.  
\end{lemma}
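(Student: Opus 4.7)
The plan is to use Krull--Schmidt together with the duality $G\colon\uCM^{\ZZ}(A)\to \cD^b(\mod C(A))$ from Lemma~\ref{lem.C(A)}(3) to transport the decomposition problem into the derived category of the semisimple algebra $C(A)$, where every object is automatically a direct sum of shifted modules. Since $A$ is a noetherian connected graded algebra, $\grmod A$ is Krull--Schmidt, so I may assume $M$ is indecomposable. If $M$ is free, then $M\cong A(\ell)$ with $A\in \ind\CM^0(A)$ and there is nothing more to prove; otherwise $M$ has no free summand, so $\underline M$ is a nonzero indecomposable object of $\uCM^{\ZZ}(A)$ and hence $G(\underline M)$ is indecomposable in $\cD^b(\mod C(A))$. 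Because $C(A)$ is semisimple, the indecomposables of $\cD^b(\mod C(A))$ are exactly the objects of the form $N[n]$ with $N\in\ind(\mod C(A))$ and $n\in \ZZ$, so one can write $G(\underline M)\cong N[n]$.

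The key step will be to translate $N[n]$ back across $G^{-1}$ into a graded shift of an object of $\uCM^0(A)$. Writing $G=\overline F\circ B$ and using the formula $\overline E(M[p](q))=\overline E(M)[-p-q](q)$ from Lemma~\ref{lem.Koz}(1), a short calculation should yield
\[
G(M(q))\cong \Sigma^{q}\,G(M)[-q]
\]
in $\cD^b(\mod C(A))$, where $\Sigma$ denotes the autoequivalence of $\mod C(A)$ induced by the Serre twist $(1)$ on $\tails A^!$ via $F$; concretely, $\Sigma(M[w^{-1}]_0)=M[w^{-1}]_1$, i.e.\ tensoring with the invertible $C(A)$-bimodule $(A^![w^{-1}])_1$. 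Since $w\in A^!_2$ has degree $2$, multiplication by $w$ gives isomorphisms $\pi N\cong \pi N(2)$ in $\tails A^!$, so the twist $(2)$ is the identity on $\tails A^!$ and hence $\Sigma^2\cong \id$. Taking $q=-n$ in the displayed formula then gives $G^{-1}(N[n])\cong M'(-n)$ in $\uCM^{\ZZ}(A)$, where $M':=G^{-1}(\Sigma^{n}N)$; this $M'$ is non-free because $\Sigma^{n}N\neq 0$ and indecomposable because $\Sigma^{n}N$ is, so $M'\in \ind\CM^0(A)$.

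It remains to upgrade the resulting stable isomorphism $\underline M\cong \underline{M'(-n)}$ to a genuine isomorphism in $\grmod A$, which is immediate from Krull--Schmidt since $M$ and $M'(-n)$ are both indecomposable in $\grmod A$ with no free summand. The main obstacle is the bookkeeping in the displayed compatibility between the graded shift on $\uCM^{\ZZ}(A)$ and the pair (Serre twist, cohomological shift) on $\cD^b(\mod C(A))$ under $G$, and in particular the verification that $\Sigma^2\cong\id$ via the natural identification $(A^![w^{-1}])_1\otimes_{C(A)}(A^![w^{-1}])_1\cong (A^![w^{-1}])_2\cong C(A)$, which is exactly where the quadric hypothesis $\deg w=2$ is used.
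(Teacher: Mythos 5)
Your proposal is correct and follows the same overall strategy as the paper's proof: reduce to an indecomposable $M$ via Krull--Schmidt, apply the duality $G$ of Lemma~\ref{lem.C(A)}(3), use semisimplicity of $C(A)$ to write $G(\underline{M})\cong N[n]$ with $N$ simple, and invoke Lemma~\ref{lem.C(A)}(4) to land in $\uCM^{0}(A)$. The one genuine difference is how the shift is handled: the paper stays with the triangulated shift, noting $G(\Omega^{-n}M)\cong G(M)[-n]\cong N$, so $\Omega^{-n}M\in\uCM^{0}(A)$, and then recovers $M(n)\cong \Omega^{n}(\Omega^{-n}M)(n)\in\ind\uCM^{0}(A)$ (using that syzygies of linear maximal Cohen--Macaulay modules are again linear after an internal shift), whereas you keep the cohomological degree fixed and push the internal twist through $G$ via $G(M(q))\cong \Sigma^{q}G(M)[-q]$, with $\Sigma$ the autoequivalence of $\mod C(A)$ corresponding to $(1)$ on $\tails A^{!}$; this is a legitimate alternative that trades the syzygy--linearity argument for the twist-compatibility of $B$ and $F$, which does follow from Lemma~\ref{lem.Koz}(1) and the definition of $F$. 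One caveat: your side claim $\Sigma^{2}\cong\id$ is nowhere needed in the argument, and as stated it presupposes that $w$ is central in $A^{!}$ (so that right multiplication by $w^{\pm 1}$ is a module map); in this section $f$, hence $w$, is only assumed normal, so in general $\Sigma^{2}$ is only the twist of $\mod C(A)$ by the automorphism induced by conjugation by $w$. Since your computation $G(M'(-n))\cong \Sigma^{-n}(\Sigma^{n}N)[n]\cong N[n]$ uses only that $\Sigma$ is an autoequivalence, this does not affect the proof; the remaining points (absorbing free summands as $A(\ell)$ with $A\in\ind\CM^{0}(A)$, and upgrading the stable isomorphism to an isomorphism in $\grmod A$ via local stable endomorphism rings) are handled correctly and are also implicit in the paper's own argument.
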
 

\begin{proof} For $M\in \ind \uCM^{\ZZ}(A)$, we have
$G(M)\in \ind \cD^b(\mod C(A)^o)$. 
Since $C(A)$ is semisimple, there exists a simple module $N \in \mod C(A)^o$ such that $G(M) = N[\ell]$.
Since $G(\Omega^{-\ell}M) = G(M[\ell]) = G(M)[-\ell] = N\in \mod C(A)^o$, it follows that  $\Omega^{-\ell}M \in \uCM^{0}(A)$ by Lemma \ref{lem.C(A)} (5), so we have $M(\ell) \cong  \Omega ^{\ell}(\Omega^{-\ell}M)(\ell) \in \ind \uCM^{0}(A)$, hence the result.
\end{proof} 

A Serre functor $S$ for a $k$-linear Hom-finite additive category $\cT$ is a functor $S:\cT\to \cT$ such that $\Hom_{\cT}(Y, X)\cong \Hom_{\cT}(X, S(Y))^*$ for every $X, Y\in \cT$ (which are natural in both $X$ and $Y$) where $V^*$ denotes the dual vector space of $V$.
  
\begin{lemma} \label{lem.Serre} 
Let $\cT$ be a $k$-linear Hom-finite additive category.  If $\cT$ has a Serre functor, then so does $\cT^o$. 
\end{lemma}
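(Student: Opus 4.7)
The plan is to exhibit a quasi-inverse of $S$ as the Serre functor for $\cT^o$. My approach has two steps.

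First, I would establish that any Serre functor $S:\cT\to\cT$ is automatically an auto-equivalence, so that a quasi-inverse $S^{-1}:\cT\to\cT$ exists. This is the standard Bondal--Kapranov observation: the bifunctorial isomorphism $\Hom_\cT(Y,X)\cong \Hom_\cT(X,SY)^*$ identifies $SY$ (up to canonical isomorphism) as the object representing the functor $\Hom_\cT(Y,-)^*\colon \cT \to \cVec$. From this Yoneda-style description one shows that $S$ is fully faithful (by computing $\Hom_\cT(SX,SY)$ via the defining isomorphism in both variables and matching it to $\Hom_\cT(X,Y)$ through double-dualization of finite-dimensional spaces), and then essentially surjective (any $Z\in\cT$ gives rise to the functor $\Hom_\cT(-,Z)^*$, whose representing object serves as $S^{-1}Z$).

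Next, I would define $S':\cT^o\to\cT^o$ to be the opposite functor $(S^{-1})^o$ (that is, the same assignment on objects as $S^{-1}$ and acting on morphisms via the reversal identification $\Hom_{\cT^o}(A,B)=\Hom_\cT(B,A)$). The verification of the Serre property in $\cT^o$ is then a one-line computation: for $X,Y\in\cT^o=\cT$,
\[
\Hom_{\cT^o}(X,S'(Y))^* \;=\; \Hom_\cT(S^{-1}(Y),X)^* \;\cong\; \Hom_\cT(X,S(S^{-1}(Y))) \;=\; \Hom_\cT(X,Y) \;=\; \Hom_{\cT^o}(Y,X),
\]
where the middle isomorphism is the Serre duality of $\cT$ applied with $S^{-1}(Y)$ in place of $Y$. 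Bifunctoriality in $X$ and $Y$ is inherited from bifunctoriality of the Serre duality on $\cT$ together with the functoriality of $S^{-1}$.

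The only real content is the auto-equivalence step; everything else is a formal manipulation of the defining isomorphism. Consequently I expect no serious obstacle: once $S^{-1}$ is in hand, the opposite-category Serre duality falls out immediately by substitution.
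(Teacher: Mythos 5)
The paper offers no proof of this lemma (it is invoked as a known fact), so the only comparison available is with the evident intended argument, which is your second step: if $S$ is an equivalence, then $(S^{-1})^o$ is a Serre functor for $\cT^o$, and your one-line verification is correct provided all Hom spaces are finite dimensional -- your middle isomorphism is the dual of the defining isomorphism $\Hom_{\cT}(S^{-1}Y,X)\cong \Hom_{\cT}(X,SS^{-1}Y)^*$, so it silently uses $V\cong V^{**}$. Hom-finiteness is not among the hypotheses of the lemma as stated, although it does hold where the lemma is applied (to $\uCM^{\ZZ}(A)$ and $\cD^b(\tails A^!)$), so this is a point to flag rather than a fatal flaw.

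The genuine gap is in your first step. Full faithfulness of $S$ again needs Hom-finiteness, and, more seriously, essential surjectivity is not a formal consequence of the definition used here: you assert that the functor $\Hom_{\cT}(-,Z)^*$ has a representing object which "serves as $S^{-1}Z$", but representability is precisely what would have to be proved, and a bare $k$-linear additive category has no limits or colimits with which to produce such an object. This is the classical distinction between a right Serre functor and a (two-sided) Serre functor in the sense of Reiten and Van den Bergh: a functor satisfying only $\Hom_{\cT}(Y,X)\cong\Hom_{\cT}(X,S(Y))^*$ is automatically fully faithful (in the Hom-finite case) but need not be dense. For instance, let $\cT$ be the additive closure of the Hom-finite category with objects $X_0,X_1,X_2,\dots$, $\Hom(X_n,X_n)=k$, $\Hom(X_n,X_{n+1})=k$, all other Hom spaces zero and all composites $X_n\to X_{n+1}\to X_{n+2}$ equal to zero; then $S(X_n)=X_{n+1}$ carries a bifunctorial Serre duality but is not essentially surjective, and $\cT^o$ has no Serre functor at all, since $\Hom_{\cT^o}(X_0,-)$ is nonzero only at $X_0$ while $\Hom_{\cT^o}(-,Z)^*$ is nonzero at two consecutive indices for every nonzero $Z$, so no object can play the role of $S'(X_0)$. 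Thus "a Serre functor is automatically an auto-equivalence" is false at the level of generality in which you (and the lemma) state it; the argument goes through only once one assumes, as Bondal--Kapranov build into their definition and as holds for the Serre functor of $\uCM^{\ZZ}(A)$ used in the proof of Theorem \ref{thm.smo}, that $S$ is an equivalence of Hom-finite categories -- and under that reading your step 2 already constitutes the entire proof.
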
 

\begin{proof} Suppose that $\cT$ has a Serre functor $S$.  We write $X^o$ for an object $X\in \cT$ viewed as an object in $\cT^o$.  For $X^o, Y^o\in \cT^o$, 
\begin{align*}
\Hom_{\cT^o}(Y^o, X^o) & = \Hom_{\cT}(X, Y)\cong \Hom_{\cT}(Y, S(X))^*  \cong \Hom_{\cT}(S^{-1}(Y), X)^*\\
&=\Hom_{\cT^o}(X^o, S^{-1}(Y)^o)^*  \cong \Hom_{\cT^o}(X^o, S^{-1}(Y^o))^*,
\end{align*}
so $S^{-1}$ is a Serre functor for $\cT^o$.  
\end{proof} 


\begin{lemma} \label{lem.asSe} 
Let $A$ be a noetherian AS-Gorenstein algebra of dimension $d\geq 1$.
$\cD^b(\tails A)$ has a Serre functor if and only if $\gldim(\tails A)<\infty$, and in this case, $\gldim(\tails A)=d-1$.
\end{lemma}

\begin{proof}
This follows from \cite[Theorem A.4]{dNV}.
\end{proof}

\begin{theorem} \label{thm.smo}
Let $A=S/(f)$ be a homogeneous coordinate ring of a quadric hypersurface in a quantum $\PP^{n-1}$.
Then the following are equivalent:
\begin{enumerate}
\item $A$ is a homogeneous coordinate ring of a smooth quadric hypersurface in a quantum $\PP^{n-1}$.
\item $A$ has finite Cohen-Macaulay representation type (i.e., there exist only finitely many indecomposable graded maximal Cohen-Macaulay modules up to isomorphism and degree shifts).
\item $C(A)$ is semisimple.
\end{enumerate}
\end{theorem}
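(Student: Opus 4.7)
The plan is to prove the cyclic chain of implications $(3)\Rightarrow(2)\Rightarrow(1)\Rightarrow(3)$, with the duality $G\colon \uCM^{\ZZ}(A)\to \cD^b(\mod C(A))$ of Lemma \ref{lem.C(A)} (3) and its restriction $\uCM^0(A)\to \mod C(A)$ of Lemma \ref{lem.C(A)} (4) as the main tools.

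For $(3)\Rightarrow(2)$, the argument is essentially formal. If $C(A)$ is semisimple, then by Wedderburn the finite-dimensional algebra $C(A)$ has only finitely many isomorphism classes of simple modules, so $\mod C(A)$ has only finitely many indecomposables. Transferring through the restricted duality, $\uCM^0(A)$ has only finitely many indecomposables; by Lemma \ref{lem.ind}, every indecomposable object of $\uCM^{\ZZ}(A)$ is a degree shift of one of these, so $A$ has finite Cohen-Macaulay representation type.

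For $(2)\Rightarrow(1)$, I would invoke a graded/noncommutative analogue of Auslander's classical theorem: over a noetherian AS-Gorenstein algebra, finite Cohen-Macaulay representation type forces the non-smooth locus to be concentrated at the irrelevant ideal, and in the graded setting this is equivalent to $\gldim\tails A<\infty$. The underlying idea is that a non-isolated singular point would produce an infinite family of pairwise non-isomorphic indecomposable maximal Cohen-Macaulay modules, contradicting finiteness.

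For $(1)\Rightarrow(3)$, I plan to use Serre-functor formalism. Smoothness of $\tails A$ together with AS-Gorensteinness yields a Serre functor on $\uCM^{\ZZ}(A)$ (the isolated-singularity picture), and by Lemma \ref{lem.Serre} combined with the duality $G$, this transports to a Serre functor on $\cD^b(\mod C(A))$. Since $C(A)$ is a finite-dimensional $k$-algebra, existence of a Serre functor on $\cD^b(\mod C(A))$ forces $\gldim C(A)<\infty$. Finally, $C(A)$ is a Frobenius algebra (derived from the AS-Gorenstein property of $A^!$ together with $S^!$ being Frobenius and the strongly-graded localization of Lemma \ref{lem.wsg}), and a self-injective algebra of finite global dimension must be semisimple, which gives $(3)$.

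The main obstacle is the last implication $(1)\Rightarrow(3)$: the delicate point is verifying that $C(A)$ is Frobenius when $f$ is only normal (not central), which requires adapting the arguments of \cite{SV} by tracking the normalizing automorphism $\nu_f$ carefully through the Koszul dual side. A secondary point that needs spelling out is that AS-Gorensteinness plus smoothness of $\tails A$ really does produce a Serre functor on $\uCM^{\ZZ}(A)$; this is standard in the commutative hypersurface case but must be justified in the graded noncommutative setting before the Serre-functor transfer can be carried out.
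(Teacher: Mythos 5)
Your proposal is correct and follows essentially the same route as the paper: (3)$\Rightarrow$(2) via the restricted duality of Lemma~\ref{lem.C(A)}(4) together with Lemma~\ref{lem.ind}, (2)$\Rightarrow$(1) by the noncommutative graded Auslander-type theorem (the paper cites \cite[Theorem 3.4]{Ucm}), and (1)$\Rightarrow$(3) by transferring the Serre functor on $\uCM^{\ZZ}(A)$ through the Koszul-duality equivalence (Lemma~\ref{lem.Koz}, Lemma~\ref{lem.Serre}) to force finite global dimension on the $C(A)$ side and then using self-injectivity to get semisimplicity. The two obstacles you flag are already covered by available results: the Serre functor on $\uCM^{\ZZ}(A)$ is exactly \cite[Corollary 4.5]{Uis} (given $\gldim(\tails A)<\infty$), and no delicate Frobenius analysis of $C(A)$ for merely normal $f$ is needed, since Lemma~\ref{lem.bal} (valid for regular normal elements) makes $A^!$ AS-Gorenstein of dimension $1$, which under the equivalence $\tails A^!\cong \mod C(A)$ of Lemma~\ref{lem.C(A)}(2) yields the required self-injectivity, i.e.\ the paper's conclusion $\gldim(\tails A^!)=0$.
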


\begin{proof}
(1) $\Rightarrow$ (3): Since $\gldim(\tails A)<\infty$, $\uCM^{\ZZ}(A)$ has a Serre functor by \cite[Corollary 4.5]{Uis},
so $\cD^b(\tails (A^!)^o)$ has a Serre functor by Lemma \ref{lem.Koz} and Lemma \ref{lem.Serre}.
Since $(S^!)^o$ is noetherian AS-Gorenstein of dimension 0, $(A^!)^o$ is noetherian AS-Gorenstein of dimension 1 by Lemma \ref{lem.bal}, so $\gldim (\tails (A^!)^o) =0$ by Lemma \ref{lem.asSe}.
Since $\tails (A^!)^o \cong \mod C(A)^o$ by Lemma \ref{lem.C(A)} (2), we have $\gldim C(A)=\gldim (\mod C(A)^o)=0$.

(3) $\Rightarrow$ (2): 
Since $C(A)$ is semisimple, $|\ind\uCM^{0}(A)| = |\ind \mod C(A)^o|<\infty$ by Lemma \ref{lem.C(A)} (4), so the result follows from Lemma \ref{lem.ind}.

(2) $\Rightarrow$ (1): This follows from \cite[Theorem 3.4]{Ucm}.
\end{proof}

Let $S=k[x_1, \dots, x_n]$ and $0\neq f\in S_2$.  If $\Proj S/(f)$ is smooth, then $f$ is irreducible.  
However, this implication is not true in the noncommutative case.
In fact, for every $n$, there exists a homogeneous coordinate ring $S/(f)$ of a smooth quadric hypersurface in a quantum $\PP^{n-1}$ such that $f$ is reducible (see the next section),
so we will introduce the high rank property below.

\begin{definition}
Let $S$ be a graded algebra and let $0\neq f \in S_2$.  The \emph{rank} of $f$ is defined by
\[ \rank f := \min\{ r \in \NN^+ \mid f= \sum^{r}_{i=1}u_iv_i \;\textrm{where}\; u_i, v_i \in S_1 \}. \]
\end{definition} 
 
For $0 \neq f \in S_2$, we see that $f$ is irreducible if and only if $\rank f \geq 2$, so in this sense, $\rank f$ can be regarded as a generalization of irreducibility.
The following lemma is immediate. 

\begin{lemma} \label{lem.thra} 
Let $S$ be a graded algebra and $0\neq f\in S_2$.  
\begin{enumerate}
\item{} If $\varphi:S\to S'$ is an isomorphism of graded algebras, then $\rank f=\rank \varphi(f)$. 
\item{} If $\theta\in \Aut(S; f)$, then $\rank f=\rank f^{\theta}$. 
\end{enumerate}
\end{lemma}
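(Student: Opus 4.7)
The proof plan is to unpack the definitions in both cases; neither requires a substantive obstruction, and the condition $\theta\in\Aut(S;f)$ in (2) is in fact superfluous for the rank equality (any graded automorphism would do), but will be kept as stated to match the paper's conventions.

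For part (1), I will argue symmetrically. Given a decomposition $f=\sum_{i=1}^r u_iv_i$ in $S$ with $u_i,v_i\in S_1$, applying the graded algebra isomorphism $\varphi$ yields $\varphi(f)=\sum_{i=1}^r\varphi(u_i)\varphi(v_i)$ with $\varphi(u_i),\varphi(v_i)\in S'_1$, so $\rank\varphi(f)\leq\rank f$. Running the same argument with $\varphi^{-1}$ applied to $\varphi(f)$ gives the reverse inequality, so $\rank f=\rank\varphi(f)$.

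For part (2), the key point is the multiplication formula defining the twist: since $\{\theta^i\}_{i\in\ZZ}$ is the twisting system on $S$ and $S^\theta=S$ as graded $k$-vector spaces (in particular $S^\theta_1=S_1$), one has $u^\theta v^\theta=(u\theta(v))^\theta$ for all $u,v\in S_1$. Given any decomposition $f=\sum_{i=1}^r u_iv_i$ in $S$ with $u_i,v_i\in S_1$, I rewrite $u_iv_i=u_i\theta(\theta^{-1}(v_i))$ and lift it to $S^\theta$ as $u_i^\theta(\theta^{-1}(v_i))^\theta=(u_iv_i)^\theta$. Summing gives $f^\theta=\sum_{i=1}^r u_i^\theta(\theta^{-1}(v_i))^\theta$, so $\rank f^\theta\leq\rank f$. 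Conversely, any decomposition $f^\theta=\sum_{i=1}^r u_i^\theta v_i^\theta$ in $S^\theta$ (with $u_i,v_i\in S_1$) translates via the same formula to $f^\theta=\bigl(\sum_{i=1}^r u_i\theta(v_i)\bigr)^\theta$ in $S^\theta$; comparing underlying vectors gives $f=\sum_{i=1}^r u_i\theta(v_i)$ in $S$, and since $\theta(v_i)\in S_1$, this yields $\rank f\leq\rank f^\theta$.

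The only mild subtlety is keeping the two multiplications straight on the common underlying vector space $S=S^\theta$; once this is set up, both inequalities in (2) follow from a single elementary rewriting using $\theta$ and $\theta^{-1}$. No step presents a real obstacle.
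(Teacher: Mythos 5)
Your proof is correct: the paper offers no argument for this lemma (it is declared ``immediate''), and your unwinding of the definitions---transporting a length-$r$ decomposition along $\varphi$ and $\varphi^{-1}$ for (1), and the two-way rewriting via $u^\theta v^\theta=(u\theta(v))^\theta$ on $S_1=S^\theta_1$ for (2)---is exactly the routine verification being omitted, and it is the argument needed later when the lemma is invoked (e.g.\ in Lemma \ref{lem.mg}(6)). Your side remark is also accurate: the equality in (2) only uses that $\theta$ is a \emph{graded} automorphism (which is what makes $S^\theta$ and the identification of degree-one parts meaningful), and not the normalization condition $\theta(f)=\lambda f$ built into $\Aut(S;f)$.
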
 

Let $S$ be a connected graded algebra and $f\in S$ a homogeneous regular normal element.  We say that 
$\phi\in \NMF_S^{\ZZ}(f)$ is \emph{reduced} if every entry in $\Phi^i$ is in $S_{\geq 1}$.  
Note that $\phi\in \NMF_S^{\ZZ}(f)$ is reduced if and only if $C(\phi)^{\geq 0}$ is the minimal free resolution of $\Coker \phi\in \grmod S/(f)$.

\begin{lemma} \label{lem.pfrank} 
Let $S$ be a connected graded algebra and $f\in S_2$ a homogeneous element.  For every reduced 
$\phi\in \NMF_S^{\ZZ}(f)$, we have $\rank \phi\geq \rank f$. 
\end{lemma}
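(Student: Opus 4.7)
The plan is to extract the desired inequality directly from the defining equation $\Phi^0\Phi^1 = fE_r$ by reading off a single diagonal entry and then using the grading to force both factors of every nonzero summand into $S_1$. Concretely, I would set $r := \rank \phi$ and write $F^i = \bigoplus_{s=1}^{r} S(-m_{is})$, so that, as recorded in Remark \ref{rem.lambda}(2), the entry $\Phi^0_{st}$ is homogeneous of degree $m_{1,t} - m_{0,s}$ and $\Phi^1_{ts}$ is homogeneous of degree $m_{2,s} - m_{1,t}$. From the definition of a graded matrix factorization (the commutative diagram exhibiting $F^{i+2} \cong F^i(-d)$), we have $m_{2,s} = m_{0,s} + \deg f = m_{0,s} + 2$.

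Now I would compute the $(1,1)$-entry of $\Phi^0\Phi^1 = fE_r$ to obtain
$$f \;=\; \sum_{t=1}^{r} \Phi^0_{1t}\,\Phi^1_{t1},$$
where, for each $t$, the homogeneous degrees of the two factors sum to $m_{2,1} - m_{0,1} = 2$. Because $\phi$ is reduced, every entry of $\Phi^0$ and $\Phi^1$ lies in $S_{\geq 1}$, so each nonzero entry has degree at least $1$. Consequently, in every summand $\Phi^0_{1t}\Phi^1_{t1}$ that does not vanish, both factors must have degree exactly $1$, i.e.\ lie in $S_1$. This exhibits $f$ as a sum of at most $r$ products of pairs of elements of $S_1$, so $\rank f \leq r = \rank \phi$, as required. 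The only thing one has to be careful about is the degree bookkeeping on the multi-indices $m_{is}$; beyond this, the argument is essentially a one-line degree count, and I do not expect any genuine obstacle.
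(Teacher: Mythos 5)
Your proposal is correct and follows essentially the same route as the paper: the paper's proof also notes that reducedness together with $\deg f=2$ forces the relevant entries of $\Phi^0,\Phi^1$ to be linear, and then reads the inequality off from $\Phi^0\Phi^1=fE_r$. Your version merely spells out the degree bookkeeping (and the harmless possibility of a degree-$2$ entry paired with a zero entry) that the paper leaves implicit.
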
 

\begin{proof} Let $\rank \phi=r$.  Since $f\in S_2$ and $\phi\in \NMF_S^{\ZZ}(f)$ is reduced, every entry of $\Phi^i$ is in $S_1$.  Since $\Phi^i\Phi^{i+1}=fI_r$, the result follows. 
\end{proof} 

\begin{definition} [High rank property]
A graded algebra $A$ is called a homogeneous coordinate ring of a \emph{high rank} quadric hypersurface in a quantum $\PP^{n-1}$ 
if
\begin{itemize}
\item $A=S/(f)$ is a homogeneous coordinate ring of a quadric hypersurface in a quantum $\PP^{n-1}$, and
\item $f\in S_2$ is a regular normal element of
$
\rank f \geq
\begin{cases}
\frac{n+1}{2} &\textrm{if} \; n \; \text{is odd},\\
\frac{n}{2} &\textrm{if} \; n \; \text{is even}.
\end{cases}
$
\end{itemize}
\end{definition}

\begin{example}\label{ex.rank} 
Let $X$ be a quadric hypersurface in $\PP^{n-1}$.
If $X$ is smooth, then $X \cong \cV(x_1^2+\cdots +x_n^2)$ and
\begin{align*}
\rank (x_1^2+\cdots +x_n^2) =
\begin{cases}
\frac{n+1}{2} &\textrm{if} \; n \; \text{is odd},\\
\frac{n}{2} &\textrm{if} \; n \; \text{is even},
\end{cases}
\end{align*}
so $X$ is of high rank.  The converse holds for $n$ odd, but not for $n$ even.
For example, if $X=\cV(x_1^2+x_2^2+x_3^2)$ in $\PP^3$, then $\rank (x_1^2+x_2^2+x_3^2)= 2$, so $X$ is of high rank, but it is singular.
\end{example}

The following lemma is a slight generalization of \cite[Proposition 5.3 (2)]{SV}.

\begin{lemma} \label{lem.rank}
Let $A=S/(f)$ be a homogeneous coordinate ring of a quadric hypersurface in a quantum $\PP^{n-1}$.
Then $C(A)$ has no left modules of dimension less than $\rank f$.
\end{lemma}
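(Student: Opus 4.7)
The strategy is to transfer the problem along the contravariant equivalence $G: \uCM^0(A) \to \mod C(A)$ of Lemma \ref{lem.C(A)}(4), compute the $k$-dimension of $G(M)$ in terms of the rank of a matrix factorization representing $M$, and then apply Lemma \ref{lem.pfrank}. Since $C(A)$ is finite dimensional, $\mod C(A)$ is Krull-Schmidt, so it is enough to handle a nonzero indecomposable $N \in \mod C(A)$. By the duality $G$, one has $N \cong G(M)$ for some indecomposable $M \in \uCM^0(A)$; I take a representative $M \in \CM^0(A)$ with no nonzero free summand, so by Theorem \ref{thm.m4}, $M \cong \Coker \phi$ for a reduced $\phi \in \NMF^{\ZZ}_S(f)$, and Lemma \ref{lem.pfrank} gives $r := \rank \phi \geq \rank f$. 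It therefore suffices to verify that $\dim_k G(M) = r$.

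Unfolding the construction, $G(M) = \overline F(B(M)) = E(M)[w^{-1}]_0$, where $E(M) = \bigoplus_{i \geq 0}\Ext_A^i(M,k)$. Because $\phi$ is reduced, the complex $C(\phi)^{\geq 0}$ is the minimal linear free resolution of $M$ over $A$ with every term free of rank $r$, and consequently $\dim_k E(M)_i = r$ for every $i \geq 0$. It remains to show that localization at $w \in A^!_2$ preserves this constant dimension. Let $T \subseteq E(M)$ be the $w$-torsion submodule; since $E(M)$ is noetherian over $A^!$, $T$ is annihilated by some $w^N$ and hence is a module over $A^!/(w^N)$. The latter is finite dimensional, as its filtration by powers of $w$ has subquotients isomorphic to shifts of $A^!/(w) = S^!$, so $T$ itself is finite dimensional and therefore vanishes in large degrees.

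Thus $w$ acts injectively on $E(M)/T$ and the maps $(E(M)/T)_{2i} \xrightarrow{\cdot w} (E(M)/T)_{2i+2}$ are injections between $r$-dimensional spaces for $i \gg 0$, hence isomorphisms. Passing to the direct limit defining the localization yields
\[
\dim_k E(M)[w^{-1}]_0 = \dim_k (E(M)/T)[w^{-1}]_0 = r,
\]
as needed. The main obstacle is this torsion-and-stabilization step; once it is in hand, the rest is a direct application of the Koszul duality framework and of the matrix factorization input already assembled in Lemma \ref{lem.C(A)}, Theorem \ref{thm.m4}, and Lemma \ref{lem.pfrank}.
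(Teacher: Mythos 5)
Your proof is correct and takes essentially the same route as the paper: both identify $N$ with $G(M)=E(M)[w^{-1}]_0$ for an indecomposable $M\in\uCM^{0}(A)$, equate $\dim_k N$ with the rank of a reduced noncommutative matrix factorization of $M$ via the eventual stabilization of the multiplication-by-$w$ maps, and finish with Lemma \ref{lem.pfrank}. The only differences are cosmetic: you run the computation from the reduced factorization toward $\dim_k G(M)=r$ (and spell out the torsion/stabilization step the paper leaves as ``we see that''), whereas the paper starts from $r=\dim_k N$, reads off the eventual Betti numbers, and invokes [MU, Proposition 6.2] to produce the reduced factorization.
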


\begin{proof} 
For $N\in \ind \mod C(A)^o$ such that $\dim _kN=r$, 
there exists an indecomposable non-projective graded maximal Cohen-Macaulay module 
$M$ generated in degree 0 such that $G(M)=N$ by Lemma \ref{lem.C(A)} (4).
Since $M\in \lin A$ by \cite[Theorem 3.2]{SV}, we have $E(M)[w^{-1}]_0 = N$.
We see that $\dim_k E(M)_{2i}=r$ for $i\gg 0$,
so the $2i$-th term of the minimal free resolution of $M$ is $A(-2i)^r$ for $i\gg 0$.
By \cite[Proposition 6.2]{MU}, this resolution is obtained from a reduced matrix factorization $\phi \in \NMF^{\ZZ}_S(f)$ of rank $r$.
By Lemma \ref{lem.pfrank}, $\dim_kN=r= \rank \phi \geq \rank f$.
\end{proof}

Since we work over an algebraically closed field $k$, the following lemma is immediate. 

\begin{lemma} \label{lem.lambda} 
Let $\Lambda$ be a semisimple algebra such that $\dim _k\Lambda =2^{n-1}$.
If 
$$\dim_kM> \begin{cases} 2^{(n-3)/2} & \textnormal { if $n$ is odd,} \\
 2^{(n-4)/2} & \textnormal { if $n$ is even,}\end{cases}$$ 
 for every $0\neq M\in \mod \Lambda^o$, then 
 $$\Lambda \cong \begin{cases} M_{2^{(n-1)/2}}(k) & \textnormal { if $n$ is odd,} \\
 M_{2^{(n-2)/2}}(k)\times M_{2^{(n-2)/2}}(k) & \textnormal { if $n$ is even.}\end{cases}$$
\end{lemma}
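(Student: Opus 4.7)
My plan is to apply Wedderburn's structure theorem: since $k$ is algebraically closed and $\Lambda$ is semisimple, $\Lambda \cong \prod_{i=1}^{r} M_{d_i}(k)$ for positive integers $d_1, \ldots, d_r$, whose simple modules $k^{d_1}, \ldots, k^{d_r}$ exhaust the isomorphism classes of simples. Any nonzero $M \in \mod \Lambda$ decomposes as a direct sum of simples, so $\dim_k M \geq \min_i d_i$; thus the hypothesis is equivalent to the numerical condition $d_i > c_n$ for every $i$, where $c_n := 2^{(n-3)/2}$ if $n$ is odd and $c_n := 2^{(n-4)/2}$ if $n$ is even. Combined with $\sum_{i=1}^{r} d_i^2 = \dim_k \Lambda = 2^{n-1}$, the lemma becomes the classification of integer tuples $(d_1, \ldots, d_r)$ satisfying these two constraints.

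I would argue by induction on $n$. The base cases $n=1,2$ are immediate: $\sum d_i^2 = 1$ forces $r=1,\, d_1=1$, and $\sum d_i^2 = 2$ with $d_i \geq 1$ forces $r=2,\, d_1=d_2=1$, matching $M_1(k)$ and $M_1(k)\times M_1(k)$ respectively. For the inductive step at $n \geq 3$, I would first bound $r$ using $d_i^2 > c_n^2$, which yields $r < 2^{n-1}/c_n^2$; explicitly, $r \leq 3$ if $n$ is odd and $r \leq 7$ if $n$ is even. Next, reducing modulo $4$: since $\sum d_i^2 \equiv 0 \pmod 4$ and each $d_i^2$ is $0$ or $1$ modulo $4$ according to the parity of $d_i$, the number of odd $d_i$'s must be divisible by $4$. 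When $n$ is odd this forces all $d_i$ to be even (as $r \leq 3 < 4$), so setting $d_i = 2e_i$ yields $\sum e_i^2 = 2^{n-3}$ with $e_i > c_{n-2}$, and the induction hypothesis applied to $(e_1,\ldots,e_r)$ gives the desired structure, closing the odd case.

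The main obstacle is the even $n$ case with $r \in \{4,5,6,7\}$ admitting exactly four odd $d_i$, since the mod~$4$ argument alone does not then force all $d_i$ to be even. To eliminate this, I would strengthen with a mod~$8$ refinement (each odd square is $\equiv 1 \pmod 8$, so four odd $d_i^2$ contribute $4 \pmod 8$, while $2^{n-1} \equiv 0 \pmod 8$ for $n \geq 4$) combined with the sharp lower bounds $d_i \geq c_n+1$ for odd entries and $d_i \geq c_n+2$ for even entries, to derive a numerical contradiction on $\sum d_i^2$; once all $d_i$ are even, the halving reduction proceeds as before. Since Theorem~\ref{thm.isKn1} uses this lemma only for $n \leq 6$, a cleaner alternative is a direct enumeration in each of those cases, which shows at once that the only positive integer tuples with $d_i > c_n$ and $\sum d_i^2 = 2^{n-1}$ are the asserted $(2^{(n-1)/2})$ when $n$ is odd and $(2^{(n-2)/2},\,2^{(n-2)/2})$ when $n$ is even, completing the proof.
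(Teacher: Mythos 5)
Your Wedderburn reduction, the base cases, the bound $r\le 3$ (odd $n$), and the odd-$n$ descent (mod $4$ forces all $d_i$ even when $r\le 3$, then halve and induct) are all correct, and your fallback enumeration for $n\le 6$ is exactly the check the paper has in mind: the paper offers no argument at all, declaring the lemma immediate from the structure theorem over an algebraically closed field, and it only invokes the lemma through Lemma \ref{lem.scC(A)} for $n\le 6$, where your enumeration settles every case.

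The genuine gap is your primary route for even $n$: the claim that the configurations with exactly four odd $d_i$ can be eliminated by a mod-$8$ refinement together with the bounds $d_i\ge c_n+1$ (odd entries) and $d_i\ge c_n+2$ (even entries). No such contradiction can be derived, because the statement being proved is actually false for even $n\ge 10$. Take $n=10$, so $2^{n-1}=512$ and $c_n=2^3=8$, and let $\Lambda=M_9(k)^{\times 3}\times M_{10}(k)\times M_{13}(k)$: then $\dim_k\Lambda=3\cdot 81+100+169=512$, every nonzero $\Lambda$-module has dimension at least $9>8$, yet $\Lambda\not\cong M_{16}(k)\times M_{16}(k)$. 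This tuple $(9,9,9,13,10)$ has four odd entries and one entry $\equiv 2 \pmod 4$, so $\sum d_i^2\equiv 4+4\equiv 0\pmod 8$, and it satisfies your sharpened lower bounds; hence the ``numerical contradiction'' you invoke does not exist, and the even-$n$ inductive step cannot be repaired as stated. (Incidentally, this also shows the lemma with no restriction on $n$ is false in general --- harmless for the paper, since only $n\le 6$ is used, and your descent does prove it for all odd $n$, while direct checks confirm it for even $n\le 8$.) The fix is to do what your alternative already does: prove the lemma by the finite enumeration in the range where it is applied, rather than asserting the general even case.
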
 

\begin{lemma} \label{lem.scC(A)}
Let $A=S/(f)$ be a homogeneous coordinate ring of a smooth high rank quadric hypersurface in a quantum $\PP^{n-1}$.
\begin{enumerate}
\item If $n=1$, then $C(A)\cong k$.
\item If $n=2$, then $C(A)\cong k \times k$.
\item If $n=3$, then $C(A)\cong M_2(k)$.
\item If $n=4$, then $C(A)\cong M_2(k) \times M_2(k)$.
\item If $n=5$, then $C(A)\cong M_4(k)$.
\item If $n=6$, then $C(A)\cong M_4(k) \times M_4(k)$.
\end{enumerate}
\end{lemma}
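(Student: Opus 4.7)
My plan is to combine the three pieces of information already assembled: semisimplicity of $C(A)$, its total dimension, and a lower bound on the dimension of every nonzero $C(A)$-module. The conclusion then falls out of the finite list of possibilities for a semisimple $k$-algebra of a given dimension.

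First, since $A$ is smooth, Theorem~\ref{thm.smo} gives that $C(A)$ is semisimple. Since $k$ is algebraically closed, the Wedderburn decomposition takes the form $C(A)\cong \prod_{i=1}^{s} M_{n_i}(k)$ for some integers $n_i\geq 1$. By Lemma~\ref{lem.C(A)} (1),
\[
\sum_{i=1}^{s} n_i^2 = \dim_k C(A) = 2^{n-1}.
\]
Next, by Lemma~\ref{lem.rank}, every nonzero module over $C(A)$ has dimension at least $\rank f$; since the simple $C(A)$-modules are exactly the $k^{n_i}$'s, this means $n_i \geq \rank f$ for every $i$. The high rank hypothesis then yields $n_i\geq \tfrac{n+1}{2}$ when $n$ is odd and $n_i\geq \tfrac{n}{2}$ when $n$ is even.

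I would now verify that, for $1\leq n\leq 6$, these numerical constraints force the advertised form. For $n$ odd this is the statement $\rank f > 2^{(n-3)/2}$, and for $n$ even it is $\rank f > 2^{(n-4)/2}$; one checks directly that $1>\tfrac12,\ 2>1,\ 3>2$ cover the cases $n=1,3,5$ and $n=2,4,6$ respectively. Hence Lemma~\ref{lem.lambda} applies and delivers the six explicit isomorphism types as listed.

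There is no real obstacle here: the proof is essentially a bookkeeping step to reduce the lemma to Lemma~\ref{lem.lambda}. The only care needed is that the high rank bound genuinely exceeds the threshold $2^{(n-3)/2}$ (or $2^{(n-4)/2}$) required by Lemma~\ref{lem.lambda}, which breaks down starting at $n=7$ and $n=8$; this numerical coincidence is precisely what restricts the statement to $n\leq 6$.
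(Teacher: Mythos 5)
Your proposal is correct and follows essentially the same route as the paper: the paper's proof is exactly the numerical verification that $\rank f$ exceeds $2^{(n-3)/2}$ (resp.\ $2^{(n-4)/2}$) for $n\leq 6$, followed by an appeal to Lemma~\ref{lem.C(A)} (1) and Lemma~\ref{lem.lambda}. Your version merely makes explicit the ingredients the paper leaves implicit (semisimplicity via Theorem~\ref{thm.smo} and the module dimension bound via Lemma~\ref{lem.rank}), which is fine.
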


\begin{proof} Since 
$$\rank f\geq \begin{cases} 
\frac{n+1}{2}=1>1/2=2^{(n-3)/2} & \textnormal { if $n=1$}, \\
\frac{n}{2}=1>1/2=2^{(n-4)/2}  & \textnormal { if $n=2$}, \\
\frac{n+1}{2}=2>1=2^{(n-3)/2} & \textnormal { if $n=3$}, \\
\frac{n}{2}=2>1=2^{(n-4)/2}  & \textnormal { if $n=4$}, \\
\frac{n+1}{2}=3>2=2^{(n-3)/2} & \textnormal { if $n=5$}, \\
\frac{n}{2}=3>2=2^{(n-4)/2}  & \textnormal { if $n=6$}, \\
\end{cases}$$
the result follows from Lemma \ref{lem.C(A)} (1), Lemma \ref{lem.rank}, and Lemma \ref{lem.lambda}. 
\end{proof} 

The following result can be considered as Kn\"orrer's periodicity theorem for homogeneous coordinate rings of a smooth high rank quadric hypersurface in a quantum $\PP^{n-1}$ with $n\leq 6$.
 
\begin{theorem} 
\label{thm.sKn1}
Let $A=S/(f)$ be a homogeneous coordinate ring of a smooth high rank quadric hypersurface in a quantum $\PP^{n-1}$, where $n \leq 6$. Then
$$\uCM^{\ZZ}(A)\cong
\begin{cases}
\cD^b(\mod k) \cong \uCM^{\ZZ}(k[x_1]/(x_1^2)) & \text { if $n$ is odd,} \\
\cD^b(\mod (k \times k)) \cong \uCM^{\ZZ}(k[x_1,x_2]/(x_1^2+x_2^2)) & \text { if $n$ is even.}
\end{cases}$$
\end{theorem}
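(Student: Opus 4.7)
The plan is to combine the Koszul duality machinery from Lemma \ref{lem.C(A)} with the explicit identification of $C(A)$ provided by Lemma \ref{lem.scC(A)}; the heavy lifting has essentially already been done in the latter.

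First, by Lemma \ref{lem.C(A)}(3), there is a duality $G : \uCM^{\ZZ}(A) \to \cD^b(\mod C(A))$, and hence a (covariant) equivalence $\uCM^{\ZZ}(A) \cong \cD^b(\mod C(A))^{\op}$. Lemma \ref{lem.scC(A)} identifies the algebra $C(A)$ in each of the six cases: it is either a matrix algebra $M_r(k)$ (when $n$ is odd) or a product $M_r(k)\times M_r(k)$ (when $n$ is even). Morita equivalence then yields
\[
\cD^b(\mod C(A)) \cong
\begin{cases}
\cD^b(\mod k) & \text{if } n = 1,3,5,\\
\cD^b(\mod(k\times k)) & \text{if } n = 2,4,6.
\end{cases}
\]

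Second, I need to dispose of the opposite. The $k$-linear duality $\Hom_k(-,k)$ induces a contravariant self-equivalence on $\mod k$, and on $\mod(k\times k)$ componentwise, so
\[
\cD^b(\mod k)^{\op} \cong \cD^b(\mod k), \qquad \cD^b(\mod(k\times k))^{\op} \cong \cD^b(\mod(k\times k)).
\]
Combining these two steps gives the first isomorphism in each of (1) and (2).

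Third, for the last isomorphism, I will apply the same chain of reasoning to the commutative algebras $A_0 := k[x_1]/(x_1^2)$ and $A_0' := k[x_1,x_2]/(x_1^2+x_2^2)$. These are themselves homogeneous coordinate rings of smooth high rank quadric hypersurfaces in $\PP^0$ and $\PP^1$ respectively: in each case $f$ is a regular (central) element of $S_2$ of rank $1$, which meets the high rank bound for $n=1$ and $n=2$, and smoothness follows either from Theorem \ref{thm.smo} together with the computation $C(A_0) \cong k$, $C(A_0') \cong k\times k$ from Lemma \ref{lem.scC(A)}, or classically. Applying the first two steps above to $A_0$ and $A_0'$ yields $\uCM^{\ZZ}(A_0) \cong \cD^b(\mod k)$ and $\uCM^{\ZZ}(A_0') \cong \cD^b(\mod(k\times k))$, completing the theorem.

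The substantive part of the argument is concentrated in Lemma \ref{lem.scC(A)}, which is what forces the restriction $n\leq 6$: the rank bound controls how small a $C(A)$-module can be (Lemma \ref{lem.rank}), and combined with the dimension count $\dim_k C(A) = 2^{n-1}$ and the semisimplicity from Theorem \ref{thm.smo}, this pins down $C(A)$ exactly. Once $C(A)$ is identified, the remainder of the proof is a formal exercise in the duality $G$, Morita theory, and self-duality of $\mod k$ and $\mod(k\times k)$.
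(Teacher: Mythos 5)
Your proposal is correct and follows essentially the same route as the paper: the duality $G$ of Lemma \ref{lem.C(A)}(3) combined with the identification of $C(A)$ in Lemma \ref{lem.scC(A)}, with Morita equivalence and the self-duality of $\cD^b(\mod k)$ and $\cD^b(\mod(k\times k))$ handling details the paper leaves implicit, and the base cases $k[x_1]/(x_1^2)$, $k[x_1,x_2]/(x_1^2+x_2^2)$ treated by the same machinery (the paper simply quotes these classical computations). One small caveat: invoking Lemma \ref{lem.scC(A)} to establish smoothness of the base cases is circular since that lemma assumes smoothness, but your alternative justification (classical smoothness of two reduced points in $\PP^1$, and $\dim_k C=1$ forcing semisimplicity when $n=1$) closes this, so there is no gap.
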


\begin{proof} By Lemma \ref{lem.C(A)} (4) and Lemma \ref{lem.scC(A)}, 
if $n=1,3,5$, then 
\[ \uCM^{\ZZ}(A) \cong \cD^b(\mod C(A))  \cong  \cD^b(\mod k) \cong \uCM^{\ZZ}(k[x_1]/(x_1^2)), \]
and if $n=2,4,6$, then
\[ \uCM^{\ZZ}(A) \cong \cD^b(\mod C(A)) \cong \cD^b(\mod (k \times k)) \cong \uCM^{\ZZ}(k[x_1,x_2]/(x_1^2+x_2^2)).  \]
\end{proof}

\begin{remark} 
In the case that $A=S/(f)$ is a homogeneous coordinate ring of a smooth high rank quadric hypersurface in a quantum $\PP^{6}$ (i.e., $n=7$), $\rank f\geq \frac{n+1}{2}=4=2^{(n-3)/2}$, so we can only conclude that $C(A)\cong M_8(k)$ or $C(A)\cong M_4(k)\times M_4(k)\times M_4(k)\times M_4(k)$ by the above method.   
\end{remark} 

\section{$(\pm 1)$-Skew Polynomial algebras} 

In this section, we continue to assume that $k$ is an algebraically closed field of characteristic not 2.
Recall that $A$ is the homogeneous coordinate ring of a smooth quadric hypersurface in $\PP^{n-1}$ if and only if $A\cong k[x_1, \dots, x_n]/(x_1^2+\cdots +x_n^2)$.
In this section, we study a noncommutative analogue $A=S/(x_1^2+\cdots +x_n^2)$ where $S=k\<x_1, \dots, x_n\>/(x_ix_j-\e_{ij}x_jx_i)$, $\e_{ij}\in k, i\neq j$
such that $\e_{ij}\e_{ji}=1$, 
is a skew polynomial algebra, a typical example of a quantum polynomial algebra of dimension $n$.
Since $x_1^2+\cdots +x_n^2$ is a regular normal element in $S$ if and only if $\e_{ij}=\e_{ji}=\pm 1$, we assume that $\e_{ij}=\e_{ji}=\pm 1$ in this section.
In this case, $x_1^2+\cdots +x_n^2$ is a regular central element in $S$ and $A$ is a homogeneous coordinate ring of a quadric hypersurface in a quantum $\PP^{n-1}$.
We will classify $\uCM^{\ZZ}(A)$ for such $A$ up to $n=6$.
By the classification and Theorem \ref{thm.smo}, we will see that $A$ is a homogeneous coordinate ring of a smooth quadric hypersurface in a quantum $\PP^{n-1}$ if $n\leq 6$.     
The main method of computing $\uCM^{\ZZ}(A)$ is to use the graph associated to $S$.

A \emph{graph} $G$ consists of a finite set $V(G)$ of vertices and a set $E(G)$ of edges between two vertices.
In this section, we assume that every graph has no loop and there is at most one edge between two distinct vertices.
An edge between two vertices $v, w\in V(E)$ is written by $(v, w)\in E(G)$.  

\begin{definition} 
For $\e:=\{\e_{ij}\}_{1\leq i, j\leq n, i\neq j}$ where $\e_{ij}=\e_{ji}=\pm 1$, we fix the following notations: 
\begin{enumerate}
\item{} the graded algebra $S_{\e}:=k\<x_1, \dots, x_n\>/(x_ix_j-\e_{ij}x_jx_i)$, called a \emph{$(\pm 1)$-skew polynomial algebra} in $n$ variables, 
\item{} the point scheme $E_{\e}$ of $S_{\e}$, 
\item{} the central element $f_{\e}=x_1^2+\cdots +x_n^2\in S_{\e}$, 
\item{} the homogeneous coordinate ring $A_{\e}=S_{\e}/(f_{\e})$ of a quadric hypersurface in a quantum $\PP^{n-1}$, and 
\item{} the graph $G_{\e}$ where 
$V(G_{\e})=\{1, \dots , n\}$ and $E(G_{\e})=\{(i, j)\mid \e_{ij}=\e_{ji}=1\}$.  
\end{enumerate} 
\end{definition} 

If $\e_{ij}=1$ for all $i, j$, then $S_{\e}=k[x_1, \dots, x_n]$.  If $\e_{ij}=-1$ for all $i, j$, then $S_{\e}=k_{-1}[x_1, \dots, x_n]$.

We use the following lemma. 

\begin{lemma} \label{lem.ec} 
Let the notation be as above.
\begin{enumerate}
\item{} \textnormal{(cf. \cite[Theorem 2.3]{U})} $E_{\e}=\bigcap _{\e_{ij}\e_{jk}\e_{ki}=-1}\cV(x_ix_jx_k)\subset \PP^{n-1}$ . 
\item{} \textnormal{(\cite [Lemma 3.1]{U})} $C(A_{\e})\cong k\<t_1, \dots, t_{n-1}\>/(t_it_j+\e_{ni}\e_{ij}\e_{jn}t_jt_i, t_i^2-1)_{1\leq i, j\leq n-1, i\neq j}$. 
\end{enumerate}
\end{lemma}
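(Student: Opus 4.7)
Both parts are direct computations from first principles that closely follow the arguments given in the cited reference \cite{U}; my plan is to reproduce those arguments in the present notation.

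For (1), the plan is to recall the standard recipe: for a quadratic algebra $S = T(V)/(R)$ generated in degree $1$, the point scheme $E$ sits inside $\PP(V^*) \times \PP(V^*)$ as the vanishing locus of the bilinearized relations, and one recovers $E \subset \PP(V^*)$ via the first coordinate projection. In our case each relation $x_ix_j - \e_{ij}x_jx_i$ gives the bilinear equation $p_iq_j = \e_{ij}p_jq_i$, and composing such equations cyclically over a triple $(i,j,k)$ yields
\[(1 - \e_{ij}\e_{jk}\e_{ki})\,p_ip_jp_k\, q_iq_jq_k = 0,\]
which forces $p_ip_jp_k = 0$ exactly when $\e_{ij}\e_{jk}\e_{ki} = -1$. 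The remaining bookkeeping -- checking that no further independent conditions appear and that the first projection indeed cuts out the stated intersection -- is carried out in \cite[Theorem 2.3]{U}, and I would simply invoke that result.

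For (2), my plan is to compute $C(A_\e) = A_\e^![w^{-1}]_0$ in three steps. First, using the Koszul-duality pairing $\<y_i \otimes y_j,\, x_k \otimes x_l\> = \delta_{ik}\delta_{jl}$, I would read off
\[S_\e^! \cong k\<y_1,\ldots,y_n\>/(y_iy_j + \e_{ij}y_jy_i \; (i \neq j),\; y_i^2),\]
and, by further restricting to the hyperplane of $V^* \otimes V^*$ orthogonal to $f_\e = \sum x_i^2$,
\[A_\e^! \cong k\<y_1,\ldots,y_n\>/(y_iy_j + \e_{ij}y_jy_i \; (i \neq j),\; y_i^2 - y_j^2).\]
In this presentation the element $w := y_n^2$ is central and regular and satisfies $A_\e^!/(w) \cong S_\e^!$, hence is the element of Lemma \ref{lem.Kos}(2).

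Next, I would set $t_i := \sqrt{-\e_{in}}\, y_iy_n^{-1} \in C(A_\e)$ for $i = 1, \ldots, n-1$ (the square root existing because $k$ is algebraically closed of characteristic not $2$), and use the identity $y_n^{-1}y_j = -\e_{jn}y_jy_n^{-1}$ (derived from $y_jy_n + \e_{jn}y_ny_j = 0$) together with $y_i^2 = w$ to verify $t_i^2 = 1$ and $t_it_j = -\e_{ni}\e_{ij}\e_{jn}t_jt_i$ by a short direct calculation. This produces a well-defined surjective algebra map from the claimed presentation onto $C(A_\e)$. A dimension count then closes the argument: by Lemma \ref{lem.C(A)}(1), $\dim_k C(A_\e) = 2^{n-1}$, while the presentation is spanned by squarefree monomials in $t_1, \ldots, t_{n-1}$, of dimension exactly $2^{n-1}$. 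The only subtlety I anticipate is the scalar rescaling by $\sqrt{-\e_{in}}$ needed so that $t_i^2 = 1$; this is the one step where the algebraically closed hypothesis on $k$ is essential, and all other identifications are forced by degree considerations and the Koszul-duality formalism.
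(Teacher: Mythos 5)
The paper itself gives no proof of this lemma: both items are quoted directly from \cite[Theorem 2.3]{U} and \cite[Lemma 3.1]{U}, so your proposal is being measured against the cited computation rather than an in-text argument. Your reconstruction is correct and is essentially that computation. For (1) you, like the paper, ultimately defer to \cite[Theorem 2.3]{U}; note only that the cyclic identity $(1-\e_{ij}\e_{jk}\e_{ki})p_ip_jp_k\,q_iq_jq_k=0$ by itself only kills the product of six coordinates, and one needs the structure of the point scheme (the second point is $\s(p)$ for an automorphism preserving the vanishing pattern of coordinates) to conclude $p_ip_jp_k=0$; this is exactly the bookkeeping you delegate to the citation, so no harm done. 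For (2) your computation goes through: the quadratic duals of $S_\e$ and $A_\e$ are as you state, $y_n^2$ is central with $A_\e^!/(y_n^2)\cong S_\e^!$, and the relations $t_i^2=1$ and $t_it_j=-\e_{ni}\e_{ij}\e_{jn}t_jt_i$ for $t_i=\sqrt{-\e_{in}}\,y_iy_n^{-1}$ follow from $y_n^{-1}y_i=-\e_{in}y_iy_n^{-1}$ and $y_i^2=y_n^2=w$, as a direct check confirms. Two small glosses are worth making explicit. First, regularity of $y_n^2$ is asserted rather than proved; it is what identifies $y_n^2$ (up to scalar) with the element $w$ of Lemma \ref{lem.Kos}(2), and it follows, for instance, from the uniqueness statement there together with the Hilbert series count $\dim_k (A_\e^!)_2-\dim_k (S_\e^!)_2=1$, which shows that any element with $A_\e^!/(w)=S_\e^!$ must be a scalar multiple of $y_n^2$. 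Second, in the closing dimension count you only need that the squarefree monomials \emph{span} the presented algebra, so its dimension is at most $2^{n-1}$; combined with the surjection onto $C(A_\e)$ (which deserves the one-line remark that every degree-$2m$ monomial times $w^{-m}$ rewrites, up to sign, as a product of the $y_iy_n^{-1}$) and $\dim_k C(A_\e)=2^{n-1}$ from Lemma \ref{lem.C(A)}(1), the map is forced to be an isomorphism, and linear independence of the monomials comes out as a consequence rather than an input. With these points spelled out, your argument is a complete and correct substitute for the citation.
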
  

\subsection{Mutations} 

We introduce the notion of mutations, which preserve the stable category of graded maximal Cohen-Macaulay modules.  

\begin{definition} [Mutation]
Let $G$ be a graph and $v\in V(G)$. 
The \emph{mutation} $\mu_v(G)$ of $G$ at $v$ is a graph $\mu_v(G)$ where $V(\mu_v(G))=V(G)$ and 
$$E(\mu_v(G))=\{(v, u)\mid (v, u)\not \in E(G), u\neq v\}\cup \{(u, u')\mid (u, u')\in E(G), u, u'\neq v\}.$$
We say that graphs $G$ and $G'$ are \emph{mutation equivalent}
if $G'$ is isomorphic to a graph obtained from $G$ by applying mutations finite number of times. 
\end{definition} 

It is easy to see that the property of being mutation equivalent is an equivalence relation.

\begin{lemma} \label{lem.I}  
For a subset $I\subset \{1, \dots, n\}$, there exists $\theta_I\in \GrAut (S_{\e}; f_{\e})$ defined by $\theta_I(x_i)=\begin{cases} -x_i & \textnormal { if } i\in I, \\ x_i & \textnormal { if } i\not \in I. \end{cases}$  Moreover, $(S_{\e})^{\theta_I}=S_{\e'}$ for some $\e'$, and $\NMF^{\ZZ}_{S_{\e}}(f_{\e})\cong \NMF^{\ZZ}_{S_{\e'}}(f_{\e'})$.  
\end{lemma}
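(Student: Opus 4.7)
The plan is to verify three things in sequence: that $\theta_I$ is a well-defined graded algebra automorphism fixing $f_\e$, that the twist $(S_\e)^{\theta_I}$ is itself a skew polynomial algebra $S_{\e'}$, and finally that the matrix factorization categories of $f_\e$ and $f_{\e'}$ are equivalent.

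First, I would set $\sigma_I(i) = -1$ if $i \in I$ and $\sigma_I(i) = +1$ otherwise, and check that the assignment $x_i \mapsto \sigma_I(i) x_i$ respects the defining relations: both $x_i x_j$ and $\e_{ij} x_j x_i$ acquire the common factor $\sigma_I(i)\sigma_I(j)$, so $\theta_I$ extends to a graded $k$-algebra automorphism of $S_\e$. Since $\theta_I(x_i^2) = x_i^2$, we have $\theta_I(f_\e) = f_\e$, whence $\theta_I \in \GrAut_0(S_\e;\,f_\e) \subseteq \GrAut(S_\e;\,f_\e)$.

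Second, I would use the definition of the twisted multiplication with twisting system $\{\theta_I^i\}_{i\in\ZZ}$: the degree-one generators of $(S_\e)^{\theta_I}$ satisfy
\[
x_i^{\theta_I} x_j^{\theta_I} = (x_i\,\theta_I(x_j))^{\theta_I} = \sigma_I(j)(x_i x_j)^{\theta_I} = \sigma_I(i)\sigma_I(j)\,\e_{ij}\, x_j^{\theta_I} x_i^{\theta_I}.
\]
Setting $\e'_{ij} := \sigma_I(i)\sigma_I(j)\e_{ij}$, which again satisfies $\e'_{ij} = \e'_{ji} = \pm 1$, the identification $x_i \leftrightarrow x_i^{\theta_I}$ realizes $(S_\e)^{\theta_I}$ as $S_{\e'}$ since both sides are presented by the same generators and relations. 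Under this identification, the analogous computation gives $(x_i^{\theta_I})^2 = \sigma_I(i)(x_i^2)^{\theta_I}$, hence $f_\e^{\theta_I} = \sum_i \sigma_I(i)\, x_i^2$ in $S_{\e'}$, which generally differs from $f_{\e'} = \sum_i x_i^2$.

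Finally, to establish $\NMF^{\ZZ}_{S_\e}(f_\e) \cong \NMF^{\ZZ}_{S_{\e'}}(f_{\e'})$, Lemma~\ref{lem.eq2} directly gives $\NMF^{\ZZ}_{S_\e}(f_\e) \cong \NMF^{\ZZ}_{S_{\e'}}(f_\e^{\theta_I})$, and it remains to replace $f_\e^{\theta_I}$ by $f_{\e'}$. For this I would invoke the standing assumption that $k$ is algebraically closed of characteristic not $2$ and introduce the graded algebra automorphism $\varphi$ of $S_{\e'}$ defined by $\varphi(x_i) = \sqrt{-1}\, x_i$ for $i \in I$ and $\varphi(x_i) = x_i$ otherwise; the scalars cancel in pairs in the relations $x_i x_j = \e'_{ij} x_j x_i$, so $\varphi$ is well-defined, and by construction $\varphi(f_{\e'}) = f_\e^{\theta_I}$. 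Lemma~\ref{lem.eq1} then yields $\NMF^{\ZZ}_{S_{\e'}}(f_{\e'}) \cong \NMF^{\ZZ}_{S_{\e'}}(f_\e^{\theta_I})$, closing the chain. The only real obstacle is sign bookkeeping in the twist computation — tracking how $x_i^2$ transforms and confirming that the resulting quadratic form is exactly the one produced by the rescaling $\varphi$ — after which the two preparatory lemmas do the remaining work.
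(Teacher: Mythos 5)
Your proposal is correct and follows essentially the same route as the paper: twist by $\theta_I\in\GrAut_0(S_\e;f_\e)$ and apply Lemma~\ref{lem.eq2}, then correct the sign discrepancy between $(f_\e)^{\theta_I}$ and $f_{\e'}$ via the rescaling automorphism $x_i\mapsto\sqrt{-1}\,x_i$ for $i\in I$ (the paper's $\sqrt{\theta_I}$) together with Lemma~\ref{lem.eq1}. Your explicit sign bookkeeping for $\e'_{ij}=\sigma_I(i)\sigma_I(j)\e_{ij}$ and for $(f_\e)^{\theta_I}=\sum_i\sigma_I(i)x_i^2$ just spells out what the paper declares ``clear.''
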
 

\begin{proof} It is clear that $\theta_I\in \GrAut (S_{\e}; f_{\e})$ and $(S_{\e})^{\theta_I}=S_{\e'}$ for some $\e'$.
Since there exists $\sqrt{ \theta_I}\in \GrAut S_{\e'}$ defined by $\sqrt {\theta_I}(x_i)=\begin{cases} \sqrt{-1}x_i & \textnormal { if } i\in I, \\ x_i & \textnormal { if } i\not \in I \end{cases}$ such that $\sqrt{\theta_I}((f_{\e})^{\theta_I})=f_{\e'}$, 
$$\NMF^{\ZZ}_{S_{\e}}(f_{\e})\cong \NMF^{\ZZ}_{(S_{\e})^{\theta_I}}((f_{\e})^{\theta_I})\cong \NMF^{\ZZ}_{S_{\e'}}(f_{\e'})$$
by Lemma \ref{lem.eq2} and Lemma \ref{lem.eq1}.   
\end{proof} 

\begin{lemma} [Mutation Lemma] \label{lem.mg}  \label{lem.ramu} 
If $G_{\e}$ and $G_{\e'}$ are mutation equivalent, then the following hold: 
\begin{enumerate}
\item{} $\GrMod S_{\e}\cong \GrMod S_{\e'}$. 
\item{} $\NMF_{S_{\e}}^{\ZZ}(f_{\e})\cong \NMF_{S_{\e'}}^{\ZZ}(f_{\e'})$.
\item{} $E_{\e}\cong E_{\e'}$.  
\item{} $C(A_{\e})\cong C(A_{\e'})$.
\item{} $\uCM^{\ZZ}(A_{\e})\cong \uCM^{\ZZ}(A_{\e'})$.  
\item{} $\rank f_{\e}=\rank f_{\e'}$. 
\end{enumerate}  
\end{lemma}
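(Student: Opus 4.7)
The strategy is to reduce to the case of a single mutation $\mu_v$ and show that this operation is realized algebraically by the twist $\theta_{\{v\}}\in \GrAut(S_\e;f_\e)$ of Lemma \ref{lem.I}, so that all six claims follow from the general machinery already established in the paper.

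First, I would compute directly the commutation relations in the twisted algebra $(S_\e)^{\theta_{\{v\}}}$: using $a^\theta b^\theta=(a\theta_{\{v\}}(b))^\theta$ for $a\in S_\e$ of degree one, one checks that
\[
 x_i^\theta x_j^\theta=\e'_{ij}\,x_j^\theta x_i^\theta,\qquad \e'_{ij}=
 \begin{cases}-\e_{ij}&\text{if exactly one of }i,j\text{ equals }v,\\ \e_{ij}&\text{otherwise.}\end{cases}
\]
Hence $(S_\e)^{\theta_{\{v\}}}\cong S_{\e'}$, and $G_{\e'}$ is exactly $\mu_v(G_\e)$. An obvious induction on the number of mutations reduces every statement to this case.

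Now the individual items. (1) follows from Zhang's theorem (Lemma \ref{lem.ztw}). (2) is the content of Lemma \ref{lem.I}. For (6), note that the $\sqrt{\theta_{\{v\}}}$ of Lemma \ref{lem.I} sends $f_\e^{\theta_{\{v\}}}$ to $f_{\e'}$, so Lemma \ref{lem.thra} gives $\rank f_\e=\rank f_\e^{\theta_{\{v\}}}=\rank f_{\e'}$. For (3), using the description $E_\e=\bigcap_{\e_{ij}\e_{jk}\e_{ki}=-1}\cV(x_ix_jx_k)$ from Lemma \ref{lem.ec}(1), I would observe that the product $\e_{ij}\e_{jk}\e_{ki}$ flips an even number of signs under $\mu_v$ (none if $v\notin\{i,j,k\}$, two if $v\in\{i,j,k\}$), so the set of ``bad triples'' is preserved and the point scheme is unchanged. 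For (4), the same parity argument applied to $\e_{ni}\e_{ij}\e_{jn}$ in the presentation of $C(A_\e)$ from Lemma \ref{lem.ec}(2) shows that the defining relations of $C(A_\e)$ and $C(A_{\e'})$ are identical (the only non-trivial case is $v=n$, where $\e_{ni}$ and $\e_{jn}$ both flip).

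For (5), one route is to combine (2) with Theorem \ref{thm.m4}: the equivalence in Lemma \ref{lem.eq2} sends $\phi_F$ and $_F\phi$ to modules of the same form, so it descends to an equivalence of the stable categories $\uNMF^\ZZ_{S_\e}(f_\e)\cong \uNMF^\ZZ_{S_{\e'}}(f_{\e'})$, and Theorem \ref{thm.m4} identifies both with $\uCM^\ZZ(A_\e)$ and $\uCM^\ZZ(A_{\e'})$ respectively. Alternatively, (5) follows directly from (4) through the duality $\uCM^\ZZ(A_\e)\cong \cD^b(\mod C(A_\e))^{\op}$ of Lemma \ref{lem.C(A)}(3). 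The only step requiring genuine verification is the commutation-relation computation in the first paragraph; all else is bookkeeping built on results already in the paper.
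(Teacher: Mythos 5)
Your proposal is correct and follows essentially the same route as the paper: reduce to a single mutation, realize it as the Zhang twist $(S_\e)^{\theta_{\{v\}}}\cong S_{\e'}$ of Lemma \ref{lem.I}, deduce (1), (2), (6) from Lemmas \ref{lem.ztw}, \ref{lem.I}, \ref{lem.thra}, deduce (3), (4) from the invariance of the triple products $\e_{ij}\e_{jk}\e_{ki}$ via Lemma \ref{lem.ec}, and obtain (5) from (4) and Lemma \ref{lem.C(A)}(3). The only differences are cosmetic: you spell out the twisted commutation relations that the paper declares ``clear,'' and you offer an optional alternative for (5) via $\uNMF^{\ZZ}$ and Theorem \ref{thm.m4}, while the paper uses the $C(A)$ route you also give.
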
 

\begin{proof} By reordering the vertices and the iteration, it is enough to show the case $\mu_n(G_{\e})=G_{\e'}$. 

(1) and (2) If $I=\{n\}$ and $\theta=\theta_I\in \Aut (S; f)$ as defined in Lemma \ref{lem.I},
then $(S_{\e})^{\theta}\cong S_{\e'}$, so $\GrMod S_{\e}\cong \GrMod S_{\e'}$ by Lemma \ref{lem.ztw}, and $\NMF_{S_{\e}}^{\ZZ}(f_{\e})\cong \NMF_{S_{\e'}}^{\ZZ}(f_{\e'})$ by Lemma \ref{lem.I}.   

(3) and (4) Since $\e'_{ij}\e'_{jk}\e'_{ki}=\e_{ij}\e_{jk}\e_{ki}$ for every $i<j<k$, they follow from Lemma \ref{lem.ec}  (1) and (2), respectively.   

(5) This follows from (4) and Lemma \ref{lem.C(A)} (4).  

(6) This follows from the proof of Lemma \ref{lem.I} and Lemma \ref{lem.thra}. 
\end{proof}

\begin{definition}[Relative Mutation]
Let $v, w \in V(G)$ be distinct vertices.
Then the \emph{relative mutation} $\mu_{v \leftarrow w}(G)$ of $G$ at $v$ by $w$ is a graph $\mu_{v \leftarrow w}(G)$
where $V(\mu_{v \leftarrow w}(G))=V(G)$ and 
$E(\mu_{v \leftarrow w}(G))$ is given by the following rules:
\begin{enumerate}
\item
For distinct vertices $u, u' \neq v$, we define that $(u, u') \in E(\mu_{v \leftarrow w}(G))$ $:\Leftrightarrow$ $(u,u') \in E(G)$.
\item
For a vertex $u \neq v, w$, we define that 
$$(v,u) \in E(\mu_{v \leftarrow w}(G)) :\Leftrightarrow
\begin{cases}
(v,u) \in E(G) \;\textrm{and}\; (w,u) \not\in E(G), \textrm{or}\\
(v,u) \not\in E(G) \;\textrm{and}\; (w,u) \in E(G).
\end{cases}$$
\item
We define that $(v,w) \in E(\mu_{v \leftarrow w}(G))$ $:\Leftrightarrow$ $(v,w) \in E(G)$.
\end{enumerate}
\end{definition}

\begin{lemma}[Relative Mutation Lemma]\label{lem.M2}
Let $u, v, w\in V(G_{\e})$ be distinct vertices.  
If $u$ is an isolated vertex, and $G_{\e'}=\mu_{v \leftarrow w}(G_\e)$, then $C(A_{\e})\cong C(A_{\e'})$ and hence $\uCM^{\ZZ}(A_{\e})\cong \uCM^{\ZZ}(A_{\e'})$.
\end{lemma}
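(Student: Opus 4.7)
The plan is to establish an explicit algebra isomorphism $C(A_\e)\cong C(A_{\e'})$ and then conclude the stable category equivalence from the duality $G$ of Lemma~\ref{lem.C(A)}(3). After relabelling vertices (which corresponds to a permutation of the generators of $S_\e$, hence to a graded algebra isomorphism), I will assume without loss of generality that the isolated vertex $i$ equals $n$, so that $\e_{np}=-1$ for every $p\in\{1,\ldots,n-1\}$. Under this normalisation, the coefficient $\e_{np}\e_{pq}\e_{qn}$ appearing in Lemma~\ref{lem.ec}(2) collapses to $\e_{pq}$, and the presentation simplifies to $C(A_\e)\cong k\<t_1,\ldots,t_{n-1}\>/(t_pt_q+\e_{pq}t_qt_p,\; t_p^2-1)$. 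The same simplification applies to $C(A_{\e'})$, provided I first check that vertex $n$ remains isolated under $\mu_{j\leftarrow k}$: this is immediate from the definition, since the edges $(n,p)$ for $p\neq j$ are preserved, and the middle clause applied to $(j,n)$ yields no edge because both $(j,n)$ and $(k,n)$ are absent from $G_\e$.

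Next I would translate the rule $\mu_{j\leftarrow k}$ into sign notation. A four-case check on $(\e_{jq},\e_{kq})\in\{\pm 1\}^2$ for $q\neq j,k$ shows that $\e'_{jq}=-\e_{jq}\e_{kq}$, while $\e'_{jk}=\e_{jk}$ and $\e'_{pq}=\e_{pq}$ for $p,q\neq j$. With these formulas in hand, I define a homomorphism $\phi\colon C(A_{\e'})\to C(A_\e)$ on generators by $\phi(t_p)=t_p$ for $p\neq j$ and $\phi(t_j)=\alpha\, t_j t_k$, where $\alpha\in k$ satisfies $\alpha^2=-\e_{jk}$. Such an $\alpha$ exists because $k$ is algebraically closed with $\fchar k\neq 2$.

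The bulk of the verification consists of checking that $\phi$ respects the defining relations. Relations involving only generators $t_p$ with $p\neq j$ hold tautologically. For $\phi(t_j)^2$, one uses $t_jt_kt_jt_k=-\e_{jk}$ to obtain $\alpha^2(-\e_{jk})=1$. The edge $(j,k)$ case reduces to $\alpha t_j+\e_{jk}\cdot\alpha(-\e_{jk})t_j=0$. The case $q\neq j,k$ is the most delicate: pushing $t_q$ through $t_jt_k$ accumulates exactly the sign $\e_{jq}\e_{kq}$, which matches the required relation $t_jt_q+\e'_{jq}t_qt_j=0$ because $\e'_{jq}=-\e_{jq}\e_{kq}$. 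Once $\phi$ is well defined, surjectivity follows from $t_j=\alpha^{-1}\phi(t_jt_k)$, and a dimension count using $\dim_k C(A_\e)=\dim_k C(A_{\e'})=2^{n-1}$ from Lemma~\ref{lem.C(A)}(1) upgrades $\phi$ to an isomorphism. The stable category statement then follows from Lemma~\ref{lem.C(A)}(3). The main obstacle I anticipate is the sign bookkeeping in the case $q\neq j,k$, where the two transpositions must be performed in the correct order so that their signs multiply to $\e_{jq}\e_{kq}$ rather than its negative.
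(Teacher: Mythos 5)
Your proposal is correct and follows essentially the same route as the paper: both establish $C(A_{\e})\cong C(A_{\e'})$ by sending the generator attached to the mutated vertex $j$ to (a scalar multiple of) the product $t_jt_k$, checking the skew-commutation relations using that the isolated vertex forces $\e_{pn}=-1$, and then passing to $\uCM^{\ZZ}$ via Lemma~\ref{lem.C(A)}(3), with surjectivity plus the dimension count $2^{n-1}$ finishing the isomorphism. Your explicit scalar $\alpha$ with $\alpha^2=-\e_{jk}$ is in fact a slight refinement: the paper's auxiliary algebra $\Lambda$ has $s_{n-1}^2=-\e_{n,n-1}\e_{n-1,n-2}\e_{n-2,n}$, which equals $1$ only when $(j,k)\notin E(G_{\e})$, so its identification with $C(A_{\e'})$ tacitly uses exactly the rescaling you make explicit.
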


\begin{proof} By reordering the vertices, we may assume that $u=n, v=n-1, w=n-2$.  
By Lemma \ref{lem.ec} (2), 
$C(A_{\e}) \cong k\<t_1, \dots, t_{n-1}\>/(t_it_j +\e_{ni} \e_{ij} \e_{jn} t_jt_i,\; t_i^2-1)$.
Let $\Lambda$ be the algebra generated by $s_1, \dots, s_{n-1}$ with defining relations
\begin{align*}
&s_is_j+\e_{ni}\e_{ij}\e_{jn}s_js_i \quad (1\leq i,j\leq n-2, i \neq j) ,\\
&s_{n-1}s_j -\e_{n,n-1}\e_{n-1,j}\e_{jn}\e_{n,n-2}\e_{n-2,j}\e_{jn}s_js_{n-1} \quad (1\leq j\leq n-3), \\
&s_{n-1}s_{n-2} +\e_{n,n-1}\e_{n-1,n-2}\e_{n-2,n} s_{n-2}s_{n-1},\\
&s_i^2-1 \quad (1\leq i\leq n-2), \\
&s_{n-1}^2+\e_{n,n-1}\e_{n-1,n-2}\e_{n-2,n}.
\end{align*}
Define a map
\begin{align*}
\widetilde \phi :\ & k\<s_1, \dots, s_{n-1}\> \to k\<t_1, \dots, t_{n-1}\>/(t_it_j +\e_{ni}\e_{ij} \e_{jn} t_jt_i,\; t_i^2-1);\\ 
& s_i \mapsto t_i \;\; (1\leq i \leq n-2),\quad  s_{n-1} \mapsto  t_{n-1}t_{n-2}.
\end{align*} 
Then one can check that $\widetilde \phi$ sends every defining relation of $\Lambda$ to zero,
so we have an induced map $\phi : \Lambda \to k\<t_1, \dots, t_{n-1}\>/(t_it_j +\e_{ni}\e_{ij} \e_{jn} t_jt_i,\; t_i^2-1)$.
It is easy to see that $\phi$ is an isomorphism.

By definition of $G_{\e'}=\mu_{n-1 \leftarrow n-2}(G_\e)$, we have $\e_{ni}\e_{ij}\e_{jn}=\e_{ni}'\e_{ij}'\e_{jn}'$ for $1\leq i,j\leq n-2,i \neq j$.
Moreover, since $\e_{in}=-1$ for any $1\leq i\leq n-1$, we have
\begin{align*}
-\e_{n,n-1}\e_{n-1,j}\e_{jn}\e_{n,n-2}\e_{n-2,j}\e_{jn}
&=(\e_{jn}\e_{n,n-1}\e_{n,n-2})(-\e_{n-1,j}\e_{n-2,j})\e_{jn}\\
&=(-\e_{n,n-1}\e_{n,n-2})(-\e_{n-1,j}\e_{n-2,j})\e_{jn}\\
&=\e_{n,n-1}'\e_{n-1,j}'\e_{jn}'
\end{align*}
for $1\leq j\leq n-3$, and 
\begin{align*}
\e_{n,n-1}\e_{n-1,n-2}\e_{n-2,n}&=(-\e_{n,n-1}\e_{n,n-2})\e_{n-1,n-2}\e_{n-2,n}=\e_{n,n-1}'\e_{n-1,n-2}'\e_{n-2, n}'.
\end{align*}
Hence it follows that $\Lambda$ is isomorphic to $C(A_{\e'})$. 
\end{proof}

\begin{example} \label{ex.M2}
By Lemma \ref{lem.M2} and Lemma \ref{lem.ramu}, $\uCM^{\ZZ}(A)$ is preserved by the following operations on the graph: 
\begin{enumerate}
\item 
\[ G= \xy /r2pc/: 
{\xypolygon5{~={90}~*{\xypolynode}~>{}}},
"1";"2"**@{-},
"2";"3"**@{-},
"3";"4"**@{-},
\endxy
\quad \Longrightarrow \quad 
\mu_{1 \leftarrow 2}(G) = \xy /r2pc/: 
{\xypolygon5{~={90}~*{\xypolynode}~>{}}},
"1";"2"**@{-},
"1";"3"**@{-},
"2";"3"**@{-},
"3";"4"**@{-},
\endxy
\quad 
\Longrightarrow \quad
\mu_3(\mu_{1 \leftarrow 2}(G)) = \xy /r2pc/: 
{\xypolygon5{~={90}~*{\xypolynode}~>{}}},
"1";"2"**@{-},
"3";"5"**@{-},
\endxy.
\]
\item
\[ G= \xy /r2pc/: 
{\xypolygon6{~={90}~*{\xypolynode}~>{}}},
"1";"2"**@{-},
"2";"3"**@{-},
"3";"4"**@{-},
"4";"5"**@{-},
\endxy
\quad \Longrightarrow \quad 
\mu_{1 \leftarrow 2}(G) = \xy /r2pc/: 
{\xypolygon6{~={90}~*{\xypolynode}~>{}}},
"1";"2"**@{-},
"1";"3"**@{-},
"2";"3"**@{-},
"3";"4"**@{-},
"4";"5"**@{-},
\endxy
\quad 
\Longrightarrow \quad
\mu_3(\mu_{1 \leftarrow 2}(G)) = \xy /r2pc/: 
{\xypolygon6{~={90}~*{\xypolynode}~>{}}},
"1";"2"**@{-},
"3";"5"**@{-},
"3";"6"**@{-},
"4";"5"**@{-},
\endxy.
\]
\item
\[ G= \xy /r2pc/: 
{\xypolygon6{~={90}~*{\xypolynode}~>{}}},
"1";"2"**@{-},
"1";"5"**@{-},
"2";"3"**@{-},
"3";"4"**@{-},
"4";"5"**@{-},
\endxy
\quad \Longrightarrow \quad 
\mu_{1 \leftarrow 4}(G) = \xy /r2pc/: 
{\xypolygon6{~={90}~*{\xypolynode}~>{}}},
"1";"2"**@{-},
"1";"3"**@{-},
"2";"3"**@{-},
"3";"4"**@{-},
"4";"5"**@{-},
\endxy
\quad 
\Longrightarrow\quad
\mu_3(\mu_{1 \leftarrow 4}(G)) = \xy /r2pc/: 
{\xypolygon6{~={90}~*{\xypolynode}~>{}}},
"1";"2"**@{-},
"3";"5"**@{-},
"3";"6"**@{-},
"4";"5"**@{-},
\endxy.
\]
\end{enumerate}
\end{example}

\subsection{Rank} 

\begin{lemma} \label{lem.ra1}
For $\e:=\{\e_{ij}\}_{1\leq i, j\leq n, i\neq j}$ where $\e_{ij}=\e_{ji}=\pm 1$, the following are equivalent: 
\begin{enumerate}
\item{} $\e_{ij}\e_{jk}\e_{ki}=-1$ for every $1\leq i<j<k\leq n$. 
\item{} $\dim E_{\e}=1$ (i.e., $E_{\e}$ has the lowest dimension). 
\item{} $C(A_{\e})\cong k^{2^{n-1}}$.  
\item{} $\uNMF^{\ZZ}_{S_{\e}}(f_{\e})\cong \uCM^{\ZZ}(A_{\e})\cong \cD^b(\mod k^{2^{n-1}})$.  
\item{} $\rank f_{\e}=1$ (i.e., $f_{\e}$ has the lowest rank). 
\end{enumerate}
\end{lemma}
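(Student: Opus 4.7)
The plan is to establish the five equivalences by linking each condition via the algebra $C(A_\e)$, whose explicit presentation is given in Lemma~\ref{lem.ec}~(2). Specifically, I would prove $(1)\Leftrightarrow(2)$ and $(1)\Leftrightarrow(3)$ by direct reading of Lemma~\ref{lem.ec}, then $(3)\Leftrightarrow(4)$ from Lemma~\ref{lem.C(A)}~(3), and finally $(1)\Leftrightarrow(5)$ by matching coefficients.

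For $(1)\Leftrightarrow(2)$, Lemma~\ref{lem.ec}~(1) expresses $E_\e$ as the intersection of the cubics $\cV(x_ix_jx_k)$ over triples with $\e_{ij}\e_{jk}\e_{ki}=-1$. Under (1) this intersection runs over all triples, whose support is the union of the $\binom{n}{2}$ coordinate lines of $\PP^{n-1}$ and has dimension one. If (1) fails for some triple $(i_0,j_0,k_0)$, the coordinate $\PP^2$ supported on $\{i_0,j_0,k_0\}$ satisfies every remaining cubic constraint and hence sits inside $E_\e$, forcing $\dim E_\e\geq 2$.

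For $(1)\Leftrightarrow(3)$, by Lemma~\ref{lem.ec}~(2) we have
\[
C(A_\e)\cong k\langle t_1,\dots,t_{n-1}\rangle/\bigl(t_it_j+\e_{ni}\e_{ij}\e_{jn}t_jt_i,\; t_i^2-1\bigr).
\]
Condition (1) applied to the triples $(i,j,n)$ yields $\e_{ni}\e_{ij}\e_{jn}=-1$ for all $i\neq j$ in $\{1,\dots,n-1\}$, so the $t_i$ pairwise commute and $C(A_\e)\cong k[t_1,\dots,t_{n-1}]/(t_i^2-1)\cong k^{2^{n-1}}$ since $\fchar k\neq 2$. For the converse, commutativity of $C(A_\e)$ forces $\e_{ni}\e_{ij}\e_{jn}=-1$; multiplying the three resulting identities for the pairs $(i,j),(j,k),(k,i)$ and cancelling factors $\e_{?n}\e_{n?}=1$ recovers $\e_{ij}\e_{jk}\e_{ki}=-1$ for every triple $i<j<k<n$, while the triples containing $n$ are immediate.

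For $(3)\Leftrightarrow(4)$, the duality $\uCM^{\ZZ}(A_\e)\cong \cD^b(\mod C(A_\e))^{\op}$ from Lemma~\ref{lem.C(A)}~(3) and the equivalence $\uCM^{\ZZ}(A_\e)\cong \uNMF^{\ZZ}_{S_\e}(f_\e)$ from Theorem~\ref{thm.m4} reduce the claim to the observation that $k^{2^{n-1}}$ is semisimple and self-opposite. Finally, for $(1)\Leftrightarrow(5)$: if $f_\e=uv$ with $u=\sum a_ix_i$ and $v=\sum b_ix_i$, collecting coefficients yields $a_ib_i=1$ and $a_ib_j+\e_{ji}a_jb_i=0$ for $i<j$; putting $b_i=a_i^{-1}$ gives $(a_i/a_j)^2=-\e_{ij}$, and multiplicativity across a triple $i<j<k$ forces $\e_{ij}\e_{jk}\e_{ki}=-1$. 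Conversely, assuming (1), I would inductively set $a_1=1$ and choose each $a_i$ to be a square root of $-\e_{1i}$ (available because $k$ is algebraically closed); condition (1) guarantees that all the constraints $(a_i/a_j)^2=-\e_{ij}$ are consistent. The main obstacle I anticipate is the $(3)\Rightarrow(1)$ step: commutativity of $C(A_\e)$ only directly encodes the cocycle condition for triples containing the distinguished index $n$, and extending to arbitrary triples requires spotting the right three-fold product; everything else amounts to bookkeeping in $\{\pm1\}$ or invocations of the earlier results.
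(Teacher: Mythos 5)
Your route is genuinely different from the paper's, and most of it is sound. The paper disposes of $(1)\Leftrightarrow(2)\Leftrightarrow(3)\Leftrightarrow(4)$ by citing \cite[Proposition 3.2, Theorem 3.3]{U}, and it reaches (5) from (3) rather than from (1): a one-dimensional $C(A_\e)$-module corresponds under the duality of Lemma \ref{lem.C(A)} to a graded maximal Cohen--Macaulay module whose minimal free resolution comes from a \emph{reduced} rank-one matrix factorization (as in the proof of Lemma \ref{lem.rank}), whence $\rank f_\e\leq 1$ by Lemma \ref{lem.pfrank}; the step $(5)\Rightarrow(1)$ is the same coefficient computation you give. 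Your direct arguments replace the citation: the point-scheme argument for $(1)\Leftrightarrow(2)$ via Lemma \ref{lem.ec}(1) is correct (if a triple has product $+1$, its coordinate $\PP^2$ kills no remaining cubic, so $\dim E_\e\geq 2$; otherwise $E_\e$ is the union of coordinate lines), the presentation argument for $(1)\Leftrightarrow(3)$ via Lemma \ref{lem.ec}(2) is correct, and your explicit linear factorization $f_\e=uv$ with $a_i^2=-\e_{1i}$ gives $(1)\Rightarrow(5)$ more elementarily than the paper's categorical detour. One small point to make explicit in $(3)\Rightarrow(1)$: from $t_it_j=-t_jt_i$ and commutativity you get $2t_it_j=0$, and to turn this into a contradiction you need $t_it_j\neq 0$, i.e.\ that the $t_i$ are units in a nonzero algebra; this follows from $t_i^2=1$ together with $\dim_kC(A_\e)=2^{n-1}$ (Lemma \ref{lem.C(A)}(1)).

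The one genuine gap is the direction $(4)\Rightarrow(3)$. Lemma \ref{lem.C(A)}(3) and the observation that $k^{2^{n-1}}$ is semisimple and self-opposite give $(3)\Rightarrow(4)$, but they do not by themselves let you go back: from (4) and the duality you only obtain a triangulated equivalence $\cD^b(\mod C(A_\e))\simeq \cD^b(\mod k^{2^{n-1}})$, and you must still recover the algebra. The standard patch is: an algebra derived equivalent to a semisimple algebra is itself semisimple and in fact Morita equivalent to it (over a semisimple algebra every tilting complex is a direct sum of shifted projectives, so its endomorphism algebra is a product of matrix algebras with $2^{n-1}$ blocks); then $\dim_kC(A_\e)=\sum n_i^2=2^{n-1}$ with $2^{n-1}$ blocks forces all $n_i=1$, so $C(A_\e)\cong k^{2^{n-1}}$. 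Without this (or some other argument giving that (4) implies one of (1), (2), (3), (5)), your proof shows (4) is implied by the other conditions but not that it implies them, so the five-fold equivalence is not yet complete. Everything else in the proposal is correct.
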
 

\begin{proof} By \cite [Proposition 3.2, Theorem 3.3]{U}, we see (1) $\Leftrightarrow$ (2) $\Leftrightarrow$ (3) $\Leftrightarrow$ (4).  

(3) $\Rightarrow $ (5):
If $C(A_{\e})\cong k^{2^{n-1}}$, then there exists $N\in \mod C(A)$ such that $\dim _kN=1$.  As in the proof of Lemma \ref{lem.rank}, there exists a reduced matrix factorization $\phi\in \NMF^{\ZZ}_{S_{\e}}(f_{\e})$ such that $\rank \phi=1$.  Since $\rank f_{\e}\leq \rank \phi$ by Lemma \ref{lem.pfrank}, we have $\rank f_{\e}=1$.  

(5) $\Rightarrow $ (1): If $\rank f_{\e}=1$, then 
\begin{align*}
f_{\e} 
& =x_1^2+\cdots +x_n^2 \\
& =(\a_1x_1+\cdots +\a_nx_n)(\b_1x_1+\cdots +\b_nx_n) \\
& =\sum_{i=1}^n\a_i\b_ix_i^2+\sum_{1\leq i<j\leq n}(\a_i\b_jx_ix_j+\a_j\b_ix_jx_i).  
\end{align*}
It follows that $\a_i\b_i=1$ for every $1\leq i\leq n$ and $\e_{ij}=-\a_j\b_i/\a_i\b_j$ for every $1\leq i, j\leq n$, so
$$\e_{ij}\e_{jk}\e_{ki}=(-\a_j\b_i/\a_i\b_j)(-\a_k\b_j/\a_j\b_k)(-\a_i\b_k/\a_k\b_i)=-1$$
for every $1\leq i<j<k\leq n$.  
\end{proof}

\begin{definition}  
Let $G$ be a graph.
A graph $G'$ is a \emph{full subgraph} of $G$ if $V(G')\subset V(G)$ and $E(G')=\{(v, w)\in E(G)\mid v, w\in V(G')\}$.
For a subset $I\subset V(G)$, we denote by $G\setminus I$ the full subgraph of $G$ such that $V(G\setminus I)=V(G)\setminus I$.  For a full subgraph $G'$ of $G$, we define the \emph{complement graph} of $G'$ in $G$ by $G\setminus G':=G\setminus V(G')$. 
\end{definition} 

\begin{lemma} \label{lem.reo}
If $n$ is even, then $\rank f_{\e}\leq \frac {n}{2}$.   If $n$ is odd, then $\rank f_{\e}\leq \frac {n+1}{2}$.  
\end{lemma}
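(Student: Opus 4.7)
The plan is to exhibit $f_{\e}$ as a sum of at most $\lceil n/2\rceil$ products of linear forms by pairing up the variables. The key observation I would establish first is that for any two distinct indices $i, j$, the element $x_i^2 + x_j^2 \in S_{\e}$ has rank at most $1$, regardless of whether $\e_{ij} = 1$ or $\e_{ij} = -1$. In the commuting case $\e_{ij} = 1$, since $k$ is algebraically closed of characteristic not $2$ and $x_ix_j = x_jx_i$, the usual complex factorization
$$x_i^2 + x_j^2 = (x_i + \sqrt{-1}\,x_j)(x_i - \sqrt{-1}\,x_j)$$
works. In the anticommuting case $\e_{ij} = -1$, the cross terms in $(x_i + x_j)^2 = x_i^2 + x_ix_j + x_jx_i + x_j^2$ cancel because $x_ix_j + x_jx_i = 0$, giving
$$x_i^2 + x_j^2 = (x_i + x_j)(x_i + x_j).$$

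With this observation in hand the conclusion is immediate. When $n$ is even, I would partition the variables into pairs $(x_1, x_2), (x_3, x_4), \dots, (x_{n-1}, x_n)$ and apply the observation to each pair, writing $f_{\e}$ as a sum of $n/2$ products of linear forms; hence $\rank f_{\e} \leq n/2$. When $n$ is odd, the same pairing on $x_1, \dots, x_{n-1}$ together with the trivial expression $x_n^2 = x_n \cdot x_n$ yields $\rank f_{\e} \leq (n-1)/2 + 1 = (n+1)/2$.

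There is no real obstacle here, since the whole argument reduces to the two explicit identities above. The point of the lemma is to record that the threshold appearing in the definition of the high rank property is exactly the generic value of $\rank f_{\e}$: this construction shows that the upper bound $\lceil n/2\rceil$ is always attained, and is therefore tight against the characterizations of minimal rank (Lemma \ref{lem.ra1}) and of smoothness (Theorem \ref{thm.smo}).
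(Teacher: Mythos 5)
Your argument is correct and is essentially the paper's proof: both pair up the variables and use that each two-variable piece $x_i^2+x_j^2$ (and, for odd $n$, the leftover $x_n^2$) has rank one, the only differences being the choice of pairing (the paper pairs $i$ with $i+\frac n2$ via full subgraphs of $G_{\e}$, you pair consecutive indices) and that you write the rank-one factorizations out explicitly. One caveat on your closing commentary only: the construction shows that $\lceil n/2\rceil$ is an upper bound, not that it is always attained---indeed $\rank f_{\e}=1$ when $S_{\e}=k_{-1}[x_1,\dots,x_n]$ by Lemma \ref{lem.ra1}---but this does not affect the proof of the lemma itself.
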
 

\begin{proof} First, note that if $n=2$, that is, if $f_{\e}=x_1^2+x_2^2\in S_{\e}=k_{\pm 1}[x_1, x_2]$, then $\rank f_{\e}=1$ for every $\e$.   

Suppose $n$ is even. If $G_{\e_i}$ is the full subgraph of $G_{\e}$ such that $V(G_{\e_i})=\{i, i+\frac{n}{2}\}$ for $i=1, \dots, \frac{n}{2}$, then $\rank f_{\e}\leq \rank f_{\e_1}+\cdots +\rank f_{\e_{n/2}}=\frac {n}{2}$.  

Suppose $n$ is odd.  If $G_{\e_i}$ is the full subgraph of $G_{\e}$ such that $V(G_{\e_i})=\{i, i+\frac{n-1}{2}\}$ for $i=1, \dots, \frac{n-1}{2}$, and $G_{\e_{(n+1)/2}}$ is the full subgraph of $G_{\e}$ such that $V(G_{\e_{(n+1)/2}})=\{n\}$, then $\rank f_{\e}\leq \rank f_{\e_1}+\cdots +\rank f_{\e_{(n+1)/2}}=\frac {n+1}{2}$. 
\end{proof} 

\begin{proposition} If $\dim E_{\e}=n-1$ (i.e., $E_{\e}=\PP^{n-1}$ has the highest dimension), 
then $A_{\e}$ is a homogeneous coordinate ring of a high rank quadric hypersurface in a quantum $\PP^{n-1}$ (i.e., $f_{\e}$ has the highest rank).  If $n$ is odd, then the converse also holds.    
\end{proposition}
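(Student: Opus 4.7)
I will prove the two directions separately. For the forward direction, assume $\dim E_\e = n-1$. By Lemma~\ref{lem.ec}(1) every triple product equals $+1$; viewing $\e$ additively as an $\FF_2$-valued $1$-cochain on the contractible simplex $\Delta^{n-1}$, this says $d\e = 0$. Since $H^1(\Delta^{n-1};\FF_2) = 0$, $\e$ is a coboundary: $\e_{ij} = \eta_i + \eta_j$ for some $\eta \in \FF_2^n$. Mutating at each vertex $v$ with $\eta_v = 1$ converts $G_\e$ into the complete graph, so $S_\e$ is mutation equivalent to $k[x_1,\dots,x_n]$. By Lemma~\ref{lem.ramu}(6), $\rank f_\e = \rank(x_1^2+\cdots+x_n^2)$ in the commutative polynomial ring, which equals $\lceil n/2\rceil$ via the standard pairing $x_{2i-1}^2 + x_{2i}^2 = (x_{2i-1}+\sqrt{-1}\,x_{2i})(x_{2i-1}-\sqrt{-1}\,x_{2i})$ together with the upper bound of Lemma~\ref{lem.reo}, so $A_\e$ is of high rank.

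For the converse with $n$ odd, I argue contrapositively: suppose some triple satisfies $\e_{ij}\e_{jk}\e_{ki} = -1$. Since the product is $-1$, at least one of $\e_{ij}, \e_{jk}, \e_{ki}$ equals $-1$, so by relabeling within the triple I may assume $\e_{ij} = -1$, and then the triple relation forces $\e_{ik} = \e_{jk}$. Set $y := x_i + x_j$. Because $\e_{ij} = -1$, the cross terms $x_i x_j + x_j x_i$ vanish and $y^2 = x_i^2 + x_j^2$. Since $y$ skew-commutes with $x_k$ by the common sign $\e_{ik} = \e_{jk}$, the expression $x_i^2 + x_j^2 + x_k^2 = y^2 + x_k^2$ factors as a single product: $(y + \sqrt{-1}\,x_k)(y - \sqrt{-1}\,x_k) = y^2 + x_k^2$ when $\e_{ik} = 1$, and $(y + x_k)^2 = y^2 + x_k^2$ when $\e_{ik} = -1$. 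The remaining $n-3$ variables form an even set (since $n$ is odd), and any pairing $\{x_l, x_m\}$ yields $x_l^2 + x_m^2$ as one product via the same two identities, chosen according to the sign of $\e_{lm}$. Summing, $f_\e$ becomes a sum of $1 + (n-3)/2 = (n-1)/2$ products, so $\rank f_\e \le (n-1)/2 < (n+1)/2$, contradicting high rank.

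The hardest part will be bookkeeping the sign conventions in the merging argument and verifying that the two identities $(y + \sqrt{-1}\,x_k)(y - \sqrt{-1}\,x_k) = y^2 + x_k^2$ (under $x_k y = y x_k$) and $(y + x_k)^2 = y^2 + x_k^2$ (under $x_k y = -y x_k$) hold exactly as written; the required cross-term cancellations are $\sqrt{-1}\,(x_k y - y x_k) = 0$ in the first case and $y x_k + x_k y = 0$ in the second. The parity of $n$ is essential: for even $n$ the same construction would produce $n/2$ products, exactly matching the high-rank threshold rather than strictly below it, which is why the converse genuinely fails in the even case (cf.\ Example~\ref{ex.rank}).
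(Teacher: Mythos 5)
Your converse direction ($n$ odd) is correct and is a pleasant variant of the paper's argument: the paper first mutates (Lemma \ref{lem.ramu}) so that all three signs inside the bad triple become $-1$ and then writes $f_{\e}=f_{\e'}+(x_{n-2}+x_{n-1}+x_n)^2$, invoking Lemma \ref{lem.reo} for the complementary $n-3$ variables, whereas you avoid mutation altogether by normalizing only one sign and factoring $x_i^2+x_j^2+x_k^2$ by hand in the two cases $\e_{ik}=\pm 1$; both routes give $\rank f_{\e}\le \frac{n-1}{2}$, contradicting high rank. Your forward direction also follows the paper's overall strategy (reduce by mutations to the commutative polynomial algebra, then quote the rank of the commutative sum of squares); your $\FF_2$-coboundary bookkeeping is a clean substitute for the paper's connectivity-and-transitivity argument and is correct: writing the sign exponents as $\eta_i+\eta_j$ and mutating at all vertices with $\eta_v=1$ does turn $G_{\e}$ into the complete graph, and rank is mutation-invariant by Lemma \ref{lem.ramu}(6).

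The one genuine flaw is the last step of the forward direction. High rank is a \emph{lower} bound on $\rank f_{\e}$, but the two facts you cite --- the pairing $x_{2i-1}^2+x_{2i}^2=(x_{2i-1}+\sqrt{-1}\,x_{2i})(x_{2i-1}-\sqrt{-1}\,x_{2i})$ and Lemma \ref{lem.reo} --- are both \emph{upper} bounds, so they cannot yield the asserted equality $\rank(x_1^2+\cdots+x_n^2)=\lceil n/2\rceil$, let alone the inequality you actually need. What is missing is the classical lower bound for the commutative sum of squares, which is exactly what Example \ref{ex.rank} records and what the paper's proof cites at this point: if $x_1^2+\cdots+x_n^2=\sum_{i=1}^{r}u_iv_i$ in $k[x_1,\dots,x_n]$, then $\cV(u_1,\dots,u_r)$ is a linear subspace of the smooth quadric $\cV(f)\subset\PP^{n-1}$ of projective dimension at least $n-1-r$, while a smooth quadric contains no linear subspace of dimension exceeding $\lfloor (n-2)/2\rfloor$, forcing $r\ge \frac{n+1}{2}$ for $n$ odd and $r\ge \frac{n}{2}$ for $n$ even. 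With that reference (or argument) inserted in place of the appeal to two upper bounds, your proof is complete.
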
 

\begin{proof} By Lemma \ref{lem.ec} (1), $E_{\e}=\PP^{n-1}$ if and only if $\e_{ij}\e_{jk}\e_{ki}=1$ for every $i, j, k\in V(G_{\e})$.  

Suppose that $E_{\e}=\PP^{n-1}$.
By mutating at all the vertices of a proper connected component of $G_{\e}$ if exists, we may assume that $G_{\e}$ is a connected graph by Lemma \ref{lem.ramu} (see Example \ref{ex.cg}).
The condition $\e_{ij}\e_{jk}\e_{ki}=1$ for every $i, j, k\in V(G_{\e})$ means if $(i, j), (j, k)\in E(G_{\e})$, then $(i, k)\in E(G_{\e})$.  By induction, if $(i_1, i_2), (i_2, i_3), \dots, (i_{m-1}, i_m)\in E(G_{\e})$, then $(i_1, i_m)\in E(G_{\e})$.  Since $G_{\e}$ is connected, $(i, j)\in E(G_{\e})$ for every $i, j\in V(G_{\e})$, so $\e_{ij}=1$ for every $1\leq i<j\leq n$.  It follows that $S_{\e}=k[x_1, \dots, x_n]$, hence $A_{\e}$ is a homogeneous coordinate ring of a high rank quadric hypersurface in a quantum $\PP^{n-1}$ (see Example \ref{ex.rank}). 

Conversely, suppose that $n$ is odd.  If $E_{\e}\neq \PP^{n-1}$, then there exist $1\leq i<j<k\leq n$ such that $\e_{ij}\e_{jk}\e_{ki}=-1$.  By Lemma \ref{lem.ramu}, we may assume that $\e_{n-2, n-1}=\e_{n-1, n}=\e_{n, n-2}=-1$ by mutation and reordering so that there exists a full subgraph $G'$ of $G$ consisting of three isolated points $n-2, n-1, n$.  If $G_{\e'}=G\setminus G'$, then $f_{\e}=f_{\e'}+(x_{n-2}+x_{n-1}+x_n)^2$, so $\rank f_{\e}\leq \rank f_{\e'}+1\leq \frac{n-3}{2}+1=\frac{n-1}{2}$ by Lemma \ref{lem.reo}, hence $A_{\e}$ does not satisfy the high rank property.  
\end{proof} 

\begin{example} \label{ex.cg}
We can always transform $G_{\e}$ to be a connected graph by mutations.
For example, if $G_\e$ is as follows, then the vertices $1,2$ determine a proper connected component of $G_\e$, and 
$\mu_2\mu_1 (G_\e)$ is a connected graph:
\[ G_\e = \xy /r2pc/: 
{\xypolygon8{~={90}~*{\xypolynode}~>{}}},
"1";"2"**@{-},
"3";"4"**@{-},
"4";"5"**@{-},
"5";"3"**@{-},
"6";"7"**@{-},
"7";"8"**@{-},
\endxy,
\qquad \qquad 
\mu_2\mu_1 (G_\e)= \xy /r2pc/: 
{\xypolygon8{~={90}~*{\xypolynode}~>{}}},
"1";"2"**@{-},
"1";"3"**@{-},
"1";"4"**@{-},
"1";"5"**@{-},
"1";"6"**@{-},
"1";"7"**@{-},
"1";"8"**@{-},
"2";"3"**@{-},
"2";"4"**@{-},
"2";"5"**@{-},
"2";"6"**@{-},
"2";"7"**@{-},
"2";"8"**@{-},
"3";"4"**@{-},
"4";"5"**@{-},
"5";"3"**@{-},
"6";"7"**@{-},
"7";"8"**@{-},
\endxy.
\]

\end{example} 

\begin{example} \label{ex.n3} 
If $n=3$, then we have the following equivalences: 
\begin{enumerate}
\item{} $\rank f_{\e}=2$ \; $\Leftrightarrow$ \; $E_{\e}=\PP^2$ \; $\Leftrightarrow$ \; $\uCM^{\ZZ}(A_{\e})\cong \cD^b(\mod k)$. 
\item{} $\rank f_{\e}=1$ \; $\Leftrightarrow$ \; $E_{\e}\cong \cV(xyz)\subset \PP^2$ \; $\Leftrightarrow$ \; $\uCM^{\ZZ}(A_{\e})\cong \cD^b(\mod k^4)$. 
\end{enumerate}

In fact, let $S=k\<x, y, z\>/(yz-\a zy, zx-\b xz, xy-\c yx)$ be a $(\pm 1)$-skew polynomial algebra and $f=x^2+y^2+z^2\in S$.
Note that there are essentially four options for $S$, corresponding to how many of $\a, \b,\c$ are $-1$.
Now we have $\theta\in \Aut _0(S; f)$ defined by $\theta(x)=x, \theta(y)=-y, \theta(z)=-z$. 
Since 
\begin{align*}
& k[x, y, z]^{\theta}\cong k\<x, y, z\>/(yz-zy, zx+xz, xy+yx)\cong k[y, z][x; -1], \\
& k_{-1}[x, y, z]^{\theta}\cong k\<x, y, z\>/(yz+zy, zx-xz, xy-yx)\cong (k_{-1}[y, z])[x], 
\end{align*}
there are two possibilities for the stable categories, namely,
either 
\begin{enumerate}
\item{} $\GrMod S/(f)$ is equivalent to $\GrMod k[x, y, z]/(x^2+y^2+z^2)$ so that $\rank f=2$, the point scheme of $S$ is $E=\PP^2$, and 
$\uCM^{\ZZ}(S/(f))\cong \uCM^{\ZZ}(k[x, y, z]/(x^2+y^2+z^2))\cong \cD^b(\mod k)$, or 
\item{} $\GrMod S/(f)$ is equivalent to $\GrMod k_{-1}[x, y, z]/(x^2+y^2+z^2)$ so that $\rank f=1$, the point scheme of $S$ is $E=\cV(xyz)\subset \PP^2$, and 
$\uCM^{\ZZ}(S/(f))\cong \uCM^{\ZZ}(k[x, y, z]/(x^2+y^2+z^2))\cong \cD^b(\mod k^4)$.  
\end{enumerate}

If $S=k[x, y, z]$, then 
$$\begin{pmatrix} x & y+\sqrt{-1}z \\ y-\sqrt{-1}z & -x \end{pmatrix}^2=\begin{pmatrix} f & 0 \\ 0 & f \end{pmatrix},$$
so all isomorphism classes of indecomposable non-free maximal graded Cohen-Macaulay modules over $A=S/(f)$ up to shifts are listed as 
$$\Coker \begin{pmatrix} x & y+\sqrt{-1}z \\ y-\sqrt{-1}z & -x \end{pmatrix}\cdot ,$$
whose corresponding noncommutative graded right matrix factorizations are of rank $2$.

If $S=k_{-1}[x, y, z]$, then 
$$(x+y+z)^2=(x-y+z)^2=(x+y-z)^2=(x-y-z)^2=f,$$
so all isomorphism classes of indecomposable non-free graded maximal Cohen-Macaulay modules over $A=S/(f)$ up to shifts are listed as 
$$\Coker (x+y+z)\cdot,\; \Coker (x-y+z)\cdot,\; \Coker (x+y-z)\cdot,\; \Coker(x-y-z)\cdot,$$
whose corresponding noncommutative graded right matrix factorizations are of rank $1$ (see Example \ref{ex.smrd}).
\end{example} 

\subsection{Reductions} 

We will give two ways to reduce the number of variables in computing $\uCM^{\ZZ}(A)$.  One is coming from the noncommutative Kn\"orrer's periodicity theorem (Theorem \ref{thm.nkp}), and the other is coming from Theorem \ref{thm.ca2}. 

\begin{theorem} \label{thm.nkp2} 
If $S$
is a ($\pm 1$)-skew polynomial algebra in $n$ variables and $f=x_1^2+\cdots +x_n^2\in S$, then 
$\uCM^{\ZZ}(S[u, v]/(f+u^2+v^2))\cong \uCM^{\ZZ}(S/(f))$.  
\end{theorem}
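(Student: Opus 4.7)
The plan is to deduce this result as a direct corollary of the noncommutative Knörrer's periodicity theorem (Theorem \ref{thm.nkp}), with the choice $\sigma = \id$.

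First I would verify that the hypotheses of Theorem \ref{thm.nkp} are met. Since a $(\pm 1)$-skew polynomial algebra $S$ is a quantum polynomial algebra of dimension $n$, it is noetherian and AS-regular. The element $f = x_1^2 + \cdots + x_n^2$ is homogeneous of degree $2$, which is even. For any $i, j$ we have $x_j x_i^2 = \e_{ij}^2 x_i^2 x_j = x_i^2 x_j$, so each $x_i^2$ is central in $S$, and hence $f$ is a central element. Moreover, iterated Ore extensions of $k$ by graded automorphisms are domains, so $S$ is a domain and $f$ is therefore regular.

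Since $f$ is central, the normalizing automorphism $\nu_f$ is the identity, so we may take $\sigma = \id \in \GrAut_0(S; f)$; then $\sigma^2 = \id = \nu_f$ as required. Applying Theorem \ref{thm.nkp} directly yields
\[
\uCM^{\ZZ}(S/(f)) \;\cong\; \uCM^{\ZZ}\bigl(S[u;\id][v;\id]/(f + u^2 + v^2)\bigr),
\]
with $\deg u = \deg v = \tfrac{1}{2}\deg f = 1$. Finally, by the definition of the Ore extension, $S[u; \id][v; \id]$ coincides with the ordinary polynomial extension $S[u,v]$ in which $u$ and $v$ are central variables of degree $1$, giving the desired isomorphism.

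There is no real obstacle: the whole content is checking the hypotheses of Theorem \ref{thm.nkp} for the specific algebra and element at hand. The only point to be careful about is the convention on the Ore extension, namely that $S[u;\id]$ is just the polynomial ring $S[u]$, so that the target algebra in the theorem is indeed the (untwisted) $S[u,v]$ appearing in the statement.
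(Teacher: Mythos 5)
Your proposal is correct and matches the paper's own argument: the paper proves this theorem simply by noting it is a special case of Theorem \ref{thm.nkp} with $f$ central and $\s=\id$, which is exactly what you do (with the hypothesis-checking spelled out in more detail than the paper bothers to record).
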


\begin{proof} 
This is a special case of Theorem \ref{thm.nkp}. 
\end{proof}

\begin{definition}
An \emph{isolated segment} $[v, w]$ of a graph $G$ consists of distinct vertices $v, w\in V(G)$ with an edge $(v, w)\in E(G)$ between them such that neither $v$ nor $w$ are connected by an edge to any other vertex.  
\end{definition}

\begin{lemma} [Kn\"orrer's Reduction] \label{lem.R2} 
Suppose that $[i, j]$ is an isolated segment in $G_{\e}$. If $G_{\e'}=G_{\e}\setminus \{i, j\}$, 
then 
$\uCM^{\ZZ}(A_{\e})\cong \uCM^{\ZZ}(A_{\e'})$. 
\end{lemma}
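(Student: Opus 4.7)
The plan is to identify $S_{\e}$ with an iterated Ore extension and then invoke the noncommutative Kn\"orrer's periodicity theorem (Theorem~\ref{thm.nkp}). First, I would observe that the hypothesis ``$[i,j]$ is an isolated segment'' translates into the relations $\e_{ij}=\e_{ji}=1$ while $\e_{ik}=\e_{jk}=-1$ for every $k\neq i,j$, meaning $x_i$ and $x_j$ commute with each other but anticommute with every other generator. After reordering variables (which yields a graded algebra isomorphism of $S_{\e}$ and hence preserves all invariants of interest), I may assume $i=n-1$ and $j=n$. Setting $u:=x_{n-1}$, $v:=x_n$, and denoting by $\s$ the graded algebra automorphism of $S_{\e'}$ that sends each generator to its negative, there is a graded algebra isomorphism
$$S_{\e}\;\cong\; S_{\e'}[u;\s][v;\s],$$
where the second occurrence of $\s$ denotes the extension of $\s$ to $S_{\e'}[u;\s]$ with $\s(u)=u$ (well-defined since $\s$ commutes with itself). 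Under this identification, $f_{\e}=f_{\e'}+u^2+v^2$.

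Next, I would verify the hypotheses of Theorem~\ref{thm.nkp}. Since any two generators of $S_{\e'}$ either commute or anticommute, each square $x_k^2$ is central in $S_{\e'}$, so $f_{\e'}$ is central and thus $\nu_{f_{\e'}}=\id$. Clearly $\s(f_{\e'})=f_{\e'}$ (each $x_k^2$ is $\s$-fixed), so $\s\in\GrAut_0(S_{\e'};f_{\e'})$, and $\s^2=\id=\nu_{f_{\e'}}$. The degree $\deg f_{\e'}=2$ is even, and $\deg u=\deg v=1=\tfrac{1}{2}\deg f_{\e'}$. Moreover $S_{\e'}$, being a $(\pm 1)$-skew polynomial algebra, is a quantum polynomial algebra, hence a noetherian AS-regular algebra. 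Applying Theorem~\ref{thm.nkp} therefore gives
$$\uCM^{\ZZ}(A_{\e'})\;=\;\uCM^{\ZZ}(S_{\e'}/(f_{\e'}))\;\cong\;\uCM^{\ZZ}(S_{\e'}[u;\s][v;\s]/(f_{\e'}+u^2+v^2))\;\cong\;\uCM^{\ZZ}(A_{\e}),$$
which is the desired conclusion. I do not anticipate any real obstacle; the whole argument is essentially the observation that the ``isolated segment'' condition is precisely what is needed to package the two generators $x_i,x_j$ into an Ore extension of the form appearing in the periodicity theorem, with the central element $f_{\e'}$ and the canonical automorphism $\s=-\id$.
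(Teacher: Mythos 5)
Your proof is correct, and it takes a slightly different route than the paper. You identify the isolated-segment condition directly with the Ore-extension structure $S_{\e}\cong S_{\e'}[u;\s][v;\s]$, $\s=-\id$, $f_{\e}=f_{\e'}+u^2+v^2$, and then apply Theorem~\ref{thm.nkp} in its general form with the nontrivial automorphism $\s$ (all hypotheses check out: $S_{\e'}$ is noetherian AS-regular, $f_{\e'}$ is regular central of even degree, $\s\in\GrAut_0(S_{\e'};f_{\e'})$ with $\s^2=\id=\nu_{f_{\e'}}$, and the extension of $\s$ to $S_{\e'}[u;\s]$ fixing $u$ exists since $\s$ commutes with itself). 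The paper instead first applies the Mutation Lemma (Lemma~\ref{lem.ramu}), i.e.\ a Zhang twist, at the two vertices of the segment so that $x_{n-1},x_n$ become central in the twisted algebra $S_{\e''}$, and then invokes only the central special case $\s=\id$ of the periodicity theorem (Theorem~\ref{thm.nkp2}). So the paper's argument is ``twist, then commutative-type Kn\"orrer,'' while yours is ``nontrivial-$\s$ Kn\"orrer directly''; yours is one step shorter and does not need the mutation machinery, whereas the paper's version keeps the Kn\"orrer input in its simplest ($u,v$ central) form and reuses the mutation lemma it has already established for the classification. Both arguments implicitly assume the segment is not the whole graph (so that $f_{\e'}\neq 0$ is a regular element), which is the only situation in which the lemma is used.
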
 

\begin{proof}  By reordering the vertices, we may assume that $i=n, j=n-1$.
If $G_{\e''}=\mu_{n-1}\mu_n(G_{\e})$, then $x_{n-1}, x_n$ are central elements in $S_{\e''}$, so we can apply Theorem \ref{thm.nkp2} to delete $x_{n-1}, x_n$ from $S_{\e''}$.
By Lemma \ref{lem.mg},
$$\uCM^{\ZZ}(A_{\e})\cong \uCM^{\ZZ}(A_{\e''})\cong \uCM^{\ZZ}(A_{\e'}).$$ 
\end{proof} 

The following lemma also reduces the number of variables in computing $\uCM^{\ZZ}(A_{\e})$.  

\begin{lemma} [Two Point Reduction] \label{lem.R1} 
Suppose that $i, j\in V(G_{\e})$ are two distinct isolated vertices.  If $G_{\e'}=G_{\e}\setminus \{i\}$, then $C(A_{\e})\cong C(A_{\e'}) \times C(A_{\e'})$ and $\uCM^{\ZZ}(A_{\e})\cong \uCM^{\ZZ}(A_{\e'}) \times \uCM^{\ZZ}(A_{\e'})$.
\end{lemma}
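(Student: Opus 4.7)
The plan is to recognise $A_{\e}$ and $A_{\e'}$ as the doubled and singly-doubled algebras $(A')^{\dagger\dagger}$, $(A')^{\dagger}$ associated to a smaller quadric $A'$, and then invoke Theorem \ref{thm.ca2} and Theorem \ref{thm.nqh2}(3) directly.

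First, by Lemma \ref{lem.ramu} I may reorder the vertices so that $i = n$ and $j = n-1$. The isolated-vertex hypothesis then reads $\e_{nk} = -1$ for all $k \in \{1, \dots, n-1\}$ and $\e_{n-1,k} = -1$ for all $k \in \{1, \dots, n-2, n\}$, so $x_{n-1}$ and $x_n$ anti-commute with every other generator of $S_{\e}$ as well as with each other. Set $S' := k\<x_1, \dots, x_{n-2}\>/(x_a x_b - \e_{ab} x_b x_a)$, $f' := x_1^2 + \cdots + x_{n-2}^2 \in S'$, and $A' := S'/(f')$. Then $S'$ is an $(n-2)$-variable $(\pm 1)$-skew polynomial algebra, which is a noetherian AS-regular domain (being an iterated Ore extension of $k$ by graded automorphisms), so $f'$ is a regular central element of $S'_2$ and $A'$ is a homogeneous coordinate ring of a quadric hypersurface in a quantum $\PP^{n-3}$.

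The anti-commutation relations now produce the iterated Ore-extension identifications
$$S_\e \cong S'[x_{n-1}; -1][x_n; -1] = (S')^{\dagger\dagger}, \qquad S_{\e'} \cong S'[x_{n-1}; -1] = (S')^{\dagger},$$
together with $f_\e = f' + x_{n-1}^2 + x_n^2$ and $f_{\e'} = f' + x_{n-1}^2$, and hence $A_\e \cong (A')^{\dagger\dagger}$ and $A_{\e'} \cong (A')^{\dagger}$. Applying Theorem \ref{thm.ca2} to $A'$ then yields
$$\uCM^{\ZZ}(A_\e) \cong \uCM^{\ZZ}((A')^{\dagger\dagger}) \cong \uCM^{\ZZ}((A')^{\dagger})^{\times 2} \cong \uCM^{\ZZ}(A_{\e'})^{\times 2},$$
while Theorem \ref{thm.nqh2}(3) applied to $A'$ gives the parallel algebra isomorphism $C(A_\e) \cong C(A_{\e'})^{\times 2}$.

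Essentially no real obstacle arises here: the whole content of the argument is translating the two isolated-vertex hypotheses into the Ore-extension factorisation $S_\e \cong S'[x_{n-1};-1][x_n;-1]$, after which the two doubling theorems from Section~4 finish everything. The only bookkeeping point worth a remark is to confirm that $j = n-1$ remains isolated in $G_{\e'}$ (so that $S_{\e'}$ is genuinely $S'[x_{n-1};-1]$ rather than a more twisted Ore extension), which is automatic since removing the vertex $i = n$ leaves every $\e_{ab}$ with $a, b \neq n$ unchanged.
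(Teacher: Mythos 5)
Your proposal is correct and follows essentially the same route as the paper: the paper's proof also reorders so that $i=n$, $j=n-1$, sets $A:=A_{\e}/(x_{n-1},x_n)$ (which is your $A'$), identifies $A_{\e'}\cong A^{\dagger}$ and $A_{\e}\cong A^{\dagger\dagger}$, and then cites Theorem \ref{thm.nqh2} and Theorem \ref{thm.ca2}. Your write-up merely makes explicit the Ore-extension identifications and the centrality/regularity checks that the paper leaves implicit.
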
 

\begin{proof}
By reordering the vertices, we may assume that $i=n, j=n-1$.  If $A=A_{\e}/(x_{n-1}, x_n)$, then $A$ is a homogeneous coordinate ring of a quadric hypersurface in a quantum $\PP^{n-3}$ such that $A_{\e'}\cong A^{\dagger}$ and $A_{\e}\cong A^{\dagger\dagger}$, so the result follows from Theorem \ref{thm.nqh2} and Theorem \ref{thm.ca2}.
\end{proof}

\subsection{The Classification up to $n=6$} \label{ss.class}

We now classify $G_{\e}$, $E_{\e}$, $\uCM^{\ZZ}(A_{\e})$ for $n=4, 5, 6$ by the following steps (see Example \ref{ex.n3} for the case $n=3$): 

\begin{enumerate}
\item[(\Rnum 1)] Classify $G_{\e}$ up to mutation equivalence. 
\item[(\Rnum 2)] Compute $E_{\e}$ from each graph $G_{\e}$.
\item[(\Rnum 3)] Compute $\uCM^{\ZZ}(A_{\e})$ from each graph $G_{\e}$
by using Two Point Reduction (Lemma \ref{lem.R1}), Kn\"orrer's Reduction (Lemma \ref{lem.R2}) and Relative Mutation Lemma (Lemma \ref{lem.M2}).
\end{enumerate} 

In step (\Rnum 1), we may first assume that every vertex of $G_{\e}$ is of degree 0 or 1 if $n=4$, and is of degree 0, 1, 2 if $n=5, 6$, so we can start with a reasonably short list of graphs.

\begin{remark} The classifications of  $E_{\e}$ and $\uCM^{\ZZ}(A_{\e})$ for $n=4, 5$ were already completed in \cite[Theorem 3.9]{U}.  We repeat the classifications since we claim that our new reduction techniques  (Lemma \ref{lem.R2}, Lemma \ref{lem.R1}) developed in this paper are more effective.  In fact, we can complete the classification even in the case $n=6$ by reducing the number of variables.   
\end{remark} 

\subsubsection{The case $n=4$}

(\Rnum 1) There are exactly three graphs
\[(1)\; \xy /r2pc/: 
{\xypolygon4{~={90}~*{\xypolynode}~>{}}},
\endxy
\qquad
(2)\; \xy /r2pc/: 
{\xypolygon4{~={90}~*{\xypolynode}~>{}}},
"1";"2"**@{-},
\endxy
\qquad
(3)\; \xy /r2pc/: 
{\xypolygon4{~={90}~*{\xypolynode}~>{}}},
"1";"2"**@{-},
"3";"4"**@{-},
\endxy
\]
up to mutation equivalence.

(\Rnum 2) There are exactly three point schemes
\begin{enumerate}
\item $\bigcup_{1\leq i<j\leq 4} \cV(x_i, x_j)$ \quad [$\ell=6$]
\item $\cV(x_1, x_2)\cup \cV(x_3)\cup \cV(x_4)$ \quad [$\ell=1$]
\item $\PP^3$ \quad [$\ell=0$]
\end{enumerate}
where $\ell$ is the number of irreducible components that are isomorphic to $\PP^1$.

(\Rnum 3) In the case (1), we can apply Lemma \ref{lem.R1} twice to obtain 
$$C(A_{\e})\cong C(k_{-1}[x_1, x_2]/(x_1^2+x_2^2))^{\times 4}\cong k^8,$$ 
so we have $\uCM^{\ZZ}(A_{\e})\cong \cD^b(\mod k^8)$.
In the case (2), we can apply Lemma \ref{lem.R2} to obtain $\uCM^{\ZZ}(A_{\e})\cong \uCM^{\ZZ}(k_{-1}[x_1, x_2]/(x_1^2+x_2^2))\cong \cD^b(\mod k^2)$.  In the case (3), we can apply Lemma \ref{lem.R2} to obtain $\uCM^{\ZZ}(A_{\e})\cong \uCM^{\ZZ}(k[x_1, x_2]/(x_1^2+x_2^2))\cong \cD^b(\mod k^2)$.

\subsubsection{The case $n=5$}
(\Rnum 1) There are exactly seven graphs
\begin{center}
(1) $\xy /r2pc/: 
{\xypolygon5{~={90}~*{\xypolynode}~>{}}},
\endxy $
\qquad 
(2) $\xy /r2pc/: 
{\xypolygon5{~={90}~*{\xypolynode}~>{}}},
"1";"2"**@{-},
"2";"3"**@{-},
\endxy $
\qquad 
(3) $\xy /r2pc/: 
{\xypolygon5{~={90}~*{\xypolynode}~>{}}},
"1";"2"**@{-},
"2";"3"**@{-},
"3";"4"**@{-},
\endxy $
\qquad 
(4) $\xy /r2pc/: 
{\xypolygon5{~={90}~*{\xypolynode}~>{}}},
"1";"2"**@{-},
\endxy$

(5) $\xy /r2pc/: 
{\xypolygon5{~={90}~*{\xypolynode}~>{}}},
"1";"2"**@{-},
"3";"4"**@{-},
\endxy $
\qquad 
(6) $\xy /r2pc/: 
{\xypolygon5{~={90}~*{\xypolynode}~>{}}},
"1";"2"**@{-},
"2";"3"**@{-},
"4";"5"**@{-},
\endxy $
\qquad
(7) $\xy /r2pc/: 
{\xypolygon5{~={90}~*{\xypolynode}~>{}}},
"1";"2"**@{-},
"2";"3"**@{-},
"3";"1"**@{-},
"4";"5"**@{-},
\endxy $
\end{center}
up to mutation equivalence.

(\Rnum 2) There are exactly seven point schemes
\begin{enumerate}
\item $\bigcup_{1\leq i<j<k\leq 5} \cV(x_i, x_j, x_k)$ \quad [$\ell=10$]
\item $\cV(x_1, x_4) \cup \cV(x_1, x_5) \cup \cV(x_3, x_4)\cup \cV(x_3, x_5) \cup \cV(x_1, x_2, x_3) \cup \cV(x_2, x_4, x_5)$ \quad [$\ell=2$]
\item $\cV(x_1, x_2) \cup \cV(x_1, x_4) \cup \cV(x_2, x_5) \cup \cV(x_3, x_4) \cup \cV(x_3, x_5)$  \quad [$\ell=0$]
\item $\cV(x_3, x_4) \cup \cV(x_3, x_5) \cup \cV(x_4, x_5)\cup \cV(x_1, x_2, x_3) \cup \cV(x_1, x_2, x_4) \cup \cV(x_1, x_2, x_5)$ \quad [$\ell=3$]
\item $\cV(x_5) \cup \cV(x_1, x_2) \cup \cV(x_3, x_4)$ \quad [$\ell=0$]
\item $\cV(x_1) \cup \cV(x_3) \cup \cV(x_2, x_4, x_5)$ \quad [$\ell=1$]
\item $\PP^4$ \quad [$\ell=0$]
\end{enumerate}
where $\ell$ is the number of irreducible components that are isomorphic to $\PP^1$.

(\Rnum 3) To compute $\uCM^{\ZZ}(A_{\e})$ in the case $n=5$, the graphs (1), (2) contain two distinct isolated vertices, so we can apply Lemma \ref{lem.R1}. On the other hand, the graphs (4), (5), (6), (7) contain isolated segments, so we can apply Lemma \ref{lem.R2}.
For the graph (3), we can apply Lemma \ref{lem.M2} as Example \ref{ex.M2} (1).
Consequently $\uCM^{\ZZ}(A_\e)$ is classified as follows:

$$\begin{array}{|c|c|}
\hline
(1) &  \cD^b(\mod k^{16})  \\
\hline
(2), (4), (6)  & \cD^b(\mod k^4)  \\
\hline
(3), (5), (7)  & \cD^b(\mod k)  \\
\hline
\end{array}$$

\subsubsection{The case $n=6$}
(\Rnum 1) There are exactly sixteen graphs
\begin{center}
(1) $\xy /r2pc/: 
{\xypolygon6{~={90}~*{\xypolynode}~>{}}},
\endxy $
\qquad 
(2) $\xy /r2pc/: 
{\xypolygon6{~={90}~*{\xypolynode}~>{}}},
"1";"2"**@{-},
"2";"3"**@{-},
\endxy $
\qquad
(3) $\xy /r2pc/: 
{\xypolygon6{~={90}~*{\xypolynode}~>{}}},
"1";"2"**@{-},
"2";"3"**@{-},
"3";"1"**@{-},
\endxy $
\qquad
(4) $\xy /r2pc/: 
{\xypolygon6{~={90}~*{\xypolynode}~>{}}},
"1";"2"**@{-},
"2";"3"**@{-},
"3";"4"**@{-},
\endxy $

(5) $\xy /r2pc/: 
{\xypolygon6{~={90}~*{\xypolynode}~>{}}},
"1";"2"**@{-},
"2";"3"**@{-},
"3";"4"**@{-},
"4";"1"**@{-},
\endxy $
\qquad 
(6) $\xy /r2pc/: 
{\xypolygon6{~={90}~*{\xypolynode}~>{}}},
"1";"2"**@{-},
"2";"3"**@{-},
"3";"4"**@{-},
"4";"5"**@{-},
\endxy $
\qquad
(7) $\xy /r2pc/: 
{\xypolygon6{~={90}~*{\xypolynode}~>{}}},
"1";"2"**@{-},
"2";"3"**@{-},
"3";"4"**@{-},
"4";"5"**@{-},
"5";"1"**@{-},
\endxy $
\qquad 
(8) $\xy /r2pc/: 
{\xypolygon6{~={90}~*{\xypolynode}~>{}}},
"1";"2"**@{-},
"2";"3"**@{-},
"3";"1"**@{-},
"4";"5"**@{-},
"5";"6"**@{-},
\endxy $

(9) $\xy /r2pc/: 
{\xypolygon6{~={90}~*{\xypolynode}~>{}}},
"1";"2"**@{-},
"2";"3"**@{-},
"3";"1"**@{-},
"4";"5"**@{-},
"5";"6"**@{-},
"6";"4"**@{-},
\endxy $
\qquad
(10) $\xy /r2pc/: 
{\xypolygon6{~={90}~*{\xypolynode}~>{}}},
"1";"2"**@{-},
\endxy $
\qquad 
(11) $\xy /r2pc/: 
{\xypolygon6{~={90}~*{\xypolynode}~>{}}},
"1";"2"**@{-},
"3";"4"**@{-},
\endxy $
\qquad 
(12) $\xy /r2pc/: 
{\xypolygon6{~={90}~*{\xypolynode}~>{}}},
"1";"2"**@{-},
"2";"3"**@{-},
"4";"5"**@{-},
\endxy $

(13) $\xy /r2pc/: 
{\xypolygon6{~={90}~*{\xypolynode}~>{}}},
"1";"2"**@{-},
"3";"4"**@{-},
"5";"6"**@{-},
\endxy $
\qquad 
(14) $\xy /r2pc/: 
{\xypolygon6{~={90}~*{\xypolynode}~>{}}},
"1";"2"**@{-},
"2";"3"**@{-},
"3";"1"**@{-},
"4";"5"**@{-},
\endxy $
\qquad
(15) $\xy /r2pc/: 
{\xypolygon6{~={90}~*{\xypolynode}~>{}}},
"1";"2"**@{-},
"2";"3"**@{-},
"3";"4"**@{-},
"5";"6"**@{-},
\endxy $
\qquad
(16) $\xy /r2pc/: 
{\xypolygon6{~={90}~*{\xypolynode}~>{}}},
"1";"2"**@{-},
"2";"3"**@{-},
"3";"4"**@{-},
"4";"1"**@{-},
"5";"6"**@{-},
\endxy $
\end{center}
up to mutation equivalence.

(\Rnum 2) There are exactly sixteen point schemes

\begin{enumerate}
\item $\bigcup_{1\leq i<j<k<l\leq 6} \cV(x_i, x_j, x_k, x_l)$\quad [$\ell=15$]
\item $\cV(x_1, x_4, x_5) \cup \cV(x_1, x_4, x_6) \cup \cV(x_1, x_5, x_6)
\cup \cV(x_3, x_4, x_5) \cup \cV(x_3, x_4, x_6)\cup \cV(x_3, x_5, x_6)
\cup \cV(x_1, x_2, x_3, x_4)\cup \cV(x_1, x_2, x_3, x_5)
\cup \cV(x_1, x_2, x_3, x_6)\cup \cV(x_2, x_4, x_5, x_6)$ \quad [$\ell=4$]
\item $\cV(x_4, x_5) \cup \cV(x_4, x_6)\cup \cV(x_5, x_6)
\cup \cV(x_1, x_2, x_3, x_4)\cup \cV(x_1, x_2, x_3, x_5)
\cup \cV(x_1, x_2, x_3, x_6)$ \quad [$\ell=3$]
\item $\cV(x_1, x_2, x_5) \cup \cV(x_1, x_2, x_6) \cup \cV(x_1, x_4, x_5)
\cup \cV(x_1, x_4, x_6) \cup \cV(x_2, x_5, x_6)\cup \cV(x_3, x_5, x_6)
\cup \cV(x_3, x_4, x_5)\cup \cV(x_3, x_4, x_6) \cup \cV(x_1, x_2, x_3, x_4)$ \quad [$\ell=1$]
\item $\cV(x_1, x_2, x_5) \cup \cV(x_1, x_2, x_6) \cup \cV(x_1, x_4, x_5)
\cup \cV(x_1, x_4, x_6) \cup \cV(x_2, x_3, x_5)\cup \cV(x_2, x_3, x_6)
\cup \cV(x_3, x_4, x_5)\cup \cV(x_3, x_4, x_6)
\cup \cV(x_1, x_2, x_3, x_4) \cup \cV(x_1, x_3, x_5, x_6) \cup \cV(x_2, x_4, x_5, x_6)$ \quad [$\ell=3$]
\item $\cV(x_3, x_6) \cup \cV(x_1, x_2, x_3) \cup \cV(x_1, x_2, x_5)
\cup \cV(x_1, x_4, x_5) \cup \cV(x_1, x_4, x_6) \cup \cV(x_2, x_5, x_6)\cup \cV(x_3, x_4, x_5)$ \quad [$\ell=0$]
\item $\cV(x_1, x_2, x_3) \cup \cV(x_1, x_2, x_5) \cup \cV(x_1, x_3, x_6) \cup \cV(x_1, x_4, x_5)
\cup \cV(x_1, x_4, x_6) \cup \cV(x_2, x_3, x_4)\cup \cV(x_2, x_4, x_6)
\cup \cV(x_2, x_5, x_6) \cup \cV(x_3, x_4, x_5)\cup \cV(x_3, x_5, x_6)$ \quad [$\ell=0$]
\item $\cV(x_4) \cup \cV(x_6) \cup \cV(x_1, x_2, x_3, x_5)$\quad [$\ell=1$]
\item $\PP^5$\quad [$\ell=0$]
\item $\cV(x_3, x_4, x_5) \cup \cV(x_3, x_4, x_6) \cup \cV(x_3, x_5, x_6) \cup \cV(x_4, x_5, x_6)
\cup \cV(x_1, x_2, x_3, x_4)\cup \cV(x_1, x_2, x_3, x_5)\cup \cV(x_1, x_2, x_3, x_6)
\cup \cV(x_1, x_2, x_4, x_5)\cup \cV(x_1, x_2, x_4, x_6)\cup \cV(x_1, x_2, x_5, x_6)$ \quad [$\ell=6$]
\item $\cV(x_5, x_6) \cup \cV(x_1, x_2, x_5) \cup \cV(x_1, x_2, x_6) \cup \cV(x_3, x_4, x_5) \cup\cV(x_3, x_4, x_6)
\cup \cV(x_1, x_2, x_3, x_4)$\quad [$\ell=1$]
\item $\cV(x_1, x_6) \cup \cV(x_3, x_6) \cup
\cV(x_1, x_2, x_3) \cup \cV(x_1, x_4, x_5) \cup \cV(x_3, x_4, x_5) \cup \cV(x_2, x_4, x_5, x_6)$\quad [$\ell=1$]
\item $\cV(x_1, x_2) \cup \cV(x_3, x_4) \cup \cV(x_5, x_6)$\quad [$\ell=0$]
\item $\cV(x_6) \cup \cV(x_4, x_5) \cup \cV(x_1, x_2, x_3)$\quad [$\ell=0$]
\item $\cV(x_1, x_2) \cup \cV(x_1, x_4) \cup \cV(x_3, x_4) \cup \cV(x_2, x_5, x_6) \cup \cV(x_3, x_5, x_6)$\quad [$\ell=0$]
\item $\cV(x_1, x_2) \cup \cV(x_1, x_4) \cup \cV(x_2, x_3) \cup \cV(x_3, x_4) \cup \cV(x_1, x_3, x_5, x_6) \cup \cV(x_2, x_4, x_5, x_6)$ \quad [$\ell=2$]
\end{enumerate}
where $\ell$ is the number of irreducible components that are isomorphic to $\PP^1$.

(\Rnum 3) 
To compute $\uCM^{\ZZ}(A_{\e})$ in the case $n=6$, the graphs (1), (2), (3), (4), (5) contain two distinct isolated vertices, so we can apply Lemma \ref{lem.R1}. On the other hand, the graphs (10), (11), (12), (13), (14), (15), (16) contain isolated segments, so we can apply Lemma \ref{lem.R2}.   By mutating at the vertex 3 in the graphs (8), (9), $[1,2]$ become isolated segments, so we can apply Lemma \ref{lem.R2}.  For the graphs (6), (7), we can apply Lemma \ref{lem.M2} as Example \ref{ex.M2} (2), (3).
Consequently $\uCM^{\ZZ}(A_\e)$ is classified as follows:

$$\begin{array}{|c|c|}
\hline
(1) &  \cD^b(\mod k^{32})  \\
\hline
(2), (3), (5), (10), (16)  & \cD^b(\mod k^8)  \\
\hline
(4), (6), (7), (8), (9), (11), (12), (13), (14), (15)  & \cD^b(\mod k^2)  \\
\hline
\end{array}$$

By the above classification, we obtain the following two theorems.

\begin{theorem}
If $n\leq 6$, then the following are equivalent: 
\begin{enumerate}
\item{} $G_{\e}$ and $G_{\e'}$ are mutation equivalent. 
\item{} $\GrMod S_{\e}\cong \GrMod S_{\e'}$. 
\item{} $E_{\e}\cong E_{\e'}$   
\end{enumerate}
\end{theorem} 

\begin{proof}  By Lemma \ref{lem.mg}, (1) $\Rightarrow$ (2), and it is well-known that (2) $\Rightarrow$ (3) in general.   
On the other hand, for each $n\leq 6$, the number of mutation equivalence classes of the graphs is 
equal to the number of isomorphism classes of the point schemes by the above classification,
so the result follows.  
\end{proof} 

\begin{theorem}
Let $\ell$ be the number of irreducible components of $E_\e$ that are isomorphic to $\PP^1$.
Assume that $n\leq 6$.
\begin{enumerate}
\item If $n$ is odd, then $\ell\leq 10$ and
\begin{align*}
\ell =0 &\Longleftrightarrow \uCM^{\ZZ}(A_\e) \cong \cD^b(\mod k),\\
0< \ell \leq 3 &\Longleftrightarrow \uCM^{\ZZ}(A_\e) \cong \cD^b(\mod k^4),\\
3< \ell \leq 10 &\Longleftrightarrow \uCM^{\ZZ}(A_\e) \cong \cD^b(\mod k^{16}).
\end{align*}
\item If $n$ is even, then $\ell\leq 15$ and
\begin{align*}
0\leq \ell \leq 1 &\Longleftrightarrow \uCM^{\ZZ}(A_\e) \cong \cD^b(\mod k^2),\\
1< \ell \leq 6 &\Longleftrightarrow \uCM^{\ZZ}(A_\e) \cong \cD^b(\mod k^8),\\
6< \ell \leq 15 &\Longleftrightarrow \uCM^{\ZZ}(A_\e) \cong \cD^b(\mod k^{32}).
\end{align*}
\end{enumerate}
In particular, \cite[Conjecture 1.3]{U} holds true for $n\leq 6$.
\end{theorem}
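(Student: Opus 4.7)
My plan is to verify the theorem by direct case-by-case inspection using the classifications already compiled in Subsection \ref{ss.class}. By Lemma \ref{lem.ramu}, the isomorphism class of $E_\e$ (and in particular the invariant $\ell$) and the triangle equivalence class of $\uCM^{\ZZ}(A_\e)$ depend only on the mutation equivalence class of $G_\e$, so it suffices to check the claim on the finite list of representatives for each $n\leq 6$. The cases $n=1,2$ are immediate, and $n=3$ follows from Example \ref{ex.n3}, which produces exactly two mutation classes with $(\ell,\uCM^{\ZZ}(A_\e))=(0,\cD^b(\mod k))$ and $(3,\cD^b(\mod k^4))$.

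For $n\in\{4,5,6\}$, I would tabulate the pairs $(\ell,\uCM^{\ZZ}(A_\e))$, reading $\ell$ directly from the bracketed numbers in step (II) of each classification and $\uCM^{\ZZ}(A_\e)$ from step (III). The verification then reduces to checking that every case lands in the stratum predicted by the theorem. For instance, for $n=6$: case (1) yields $(15,\cD^b(\mod k^{32}))$; cases (2),(3),(5),(10),(16) yield $\ell\in\{4,3,3,6,2\}$ (each with $1<\ell\leq 6$) with $\uCM^{\ZZ}(A_\e)=\cD^b(\mod k^8)$; and the remaining ten cases have $\ell\in\{0,1\}$ with $\uCM^{\ZZ}(A_\e)=\cD^b(\mod k^2)$. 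A parallel inspection works for $n=4,5$.

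The global upper bounds $\ell\leq\binom{n}{2}$ come from the totally disconnected graph (for which $S_\e\cong k_{-1}[x_1,\dots,x_n]$ and $E_\e$ is the union of all $\binom{n}{n-2}=\binom{n}{2}$ coordinate lines $\cV(x_{i_1},\dots,x_{i_{n-2}})\subset\PP^{n-1}$); this configuration is realized as case (1) of each classification for $n\geq 4$. In particular one obtains $\ell\leq 10$ for odd $n\leq 5$ and $\ell\leq 15$ for even $n\leq 6$.

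The main anticipated obstacle is purely bookkeeping: accurately counting, for each of the sixteen cases at $n=6$ (and seven cases at $n=5$), how many irreducible components of the listed $E_\e$ are linear $\PP^1$'s of the form $\cV(x_{i_1},\dots,x_{i_{n-2}})$. Once this is verified, the trichotomy stated in the theorem holds by inspection, and \cite[Conjecture 1.3]{U} for $n\leq 6$ follows immediately since that conjecture asserts exactly the correspondence between the number of linear $\PP^1$ components of $E_\e$ and the size of $\uCM^{\ZZ}(A_\e)$ that has just been established.
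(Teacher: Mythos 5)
Your proposal is correct and is essentially the paper's own argument: the paper proves this theorem simply by reading off the pairs $(\ell,\uCM^{\ZZ}(A_\e))$ from the classification in Section \ref{ss.class} (together with Example \ref{ex.n3} for $n=3$), exactly the case-by-case tabulation you describe, and your reported values of $\ell$ and the stable categories agree with the paper's lists.
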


\begin{proof}
This follows directly by the classification. (This was proved in \cite[Theorem 1.4]{U} for $n\leq 5$.)
\end{proof}

\begin{remark} \label{rem.cex}
We can show by a counterexample that \cite[Conjecture 1.3]{U} does not hold when $n=7$.
If 
\[G_\e = \xy /r2pc/: 
{\xypolygon7{~={90}~*{\xypolynode}~>{}}},
"1";"2"**@{-},
"2";"3"**@{-},
"3";"4"**@{-},
"4";"5"**@{-},
"5";"6"**@{-},
"6";"1"**@{-},
\endxy,
\]
then
\[
\mu_{5 \leftarrow 3}\mu_{6 \leftarrow 4}\mu_{3 \leftarrow 1}\mu_{6 \leftarrow 2}(G_\e) = \xy /r2pc/: 
{\xypolygon7{~={90}~*{\xypolynode}~>{}}},
"1";"2"**@{-},
"3";"4"**@{-},
\endxy,
\]
so we have $\uCM^{\ZZ}(A_\e) \cong \cD^b(\mod k^{4})$.
However one can check that
\begin{align*}
E_\e \cong \ &\cV(x_1, x_4, x_7) \cup \cV(x_2, x_5, x_7) \cup \cV(x_3, x_6, x_7) 
\cup \cV(x_1, x_2, x_3, x_4) \cup \cV(x_1, x_2, x_3, x_6)\\
&\cup \cV(x_1, x_2, x_5, x_6)
\cup \cV(x_1, x_4, x_5, x_6) \cup \cV(x_3, x_4, x_5, x_6) \cup \cV(x_2, x_3, x_4, x_5),
\end{align*}
so $\ell =0$.
\end{remark}

\section*{Acknowledgments}
The second author thanks Ruipeng Zhu for Lemma \ref{lem.M2}, which is inspired by a communication with him.
The second author also thanks Ken Nakashima for Remark \ref{rem.cex}, which is obtained using a computer program written by him.

\end{document}